\newcommand{\gespace}{\vskip 5mm}
\newcommand{\mespace}{\vskip 3mm}
\newcommand{\pespace}{\vskip 2mm}
\newcommand{\leg}{\em}
\newcommand{\iit}{1\hspace{-.36em}1}
\newcommand{\petitiit}{1 \hskip-.27em 1}
\newcommand{\emphase}{\sl}
\newcommand{\gras}{\emph}
\newcommand{\isomto}{\stackrel{\sim}{\longrightarrow}}
\newcommand{\implique}{\Rightarrow}
\newcommand{\ssi}{\Leftrightarrow}
\newcommand{\IMPLIQUE}{\Longrightarrow}
\newcommand{\SSI}{\Longleftrightarrow}
\newcommand{\gal}{{\displaystyle\nearrow}}
\newcommand{\nongal}{{\displaystyle\nwarrow}}
\newcommand{\galtou}{{\displaystyle\nearrow \hskip -4.05mm \swarrow}}
\newcommand{\nongaltou}{{\displaystyle\searrow \hskip -4.05mm \nwarrow}}
\newcommand{\galsimple}{\text{\rotatebox[origin=c]{315}{\dag}}}
\newcommand{\nongalsimple}{\text{\rotatebox[origin=c]{45}{\dag}}}
\newcommand{\simple}{{\displaystyle/ \hskip -2.1mm \circ}}
\newcommand{\nonsimple}{{\displaystyle\backslash \hskip -2.1mm \circ}}
\newtheorem{Th}{Théorème}[section]
\newtheorem*{Th*}{Théorème}
\newtheorem{prop}[Th]{Proposition}
\newtheorem{cor}[Th]{Corollaire}
\newtheorem{fait}[Th]{Fait}
\newtheorem{lem}[Th]{Lemme}
\newtheorem{propdef}[Th]{Proposition \& Définition}
\newtheorem{Thdef}[Th]{Théorème \& Définition}
\theoremstyle{definition}
\newtheorem{defn}[Th]{Définition}
\newtheorem{defnetconv}[Th]{Définition \& Convention}
\newtheorem{rem}[Th]{Remarque}
\newtheorem{rems}[Th]{Remarques}
\newtheorem{ex}[Th]{Exemple}
\newtheorem{exs}[Th]{Exemples}
\newtheorem{notas}[Th]{Notations}
\newtheorem{qus}[Th]{Questions}
\renewcommand{\@makechapterhead}[1]{%
   \vspace*{-12pt}%
   \begin{center}%
     \normalfont\sffamily\Large\chaptername\  \thechapter%
   \end{center}%
   \vspace{3pt}
   \begin{center}
    \normalfont\large\itshape\bfseries #1
   \end{center}%
   \nobreak\vspace{9pt}
}
\begin{document}
\thispagestyle{empty}
\begin{center}
UNIVERSITÉ DE VALENCIENNES ET DU HAINAUT - CAMBRÉSIS
\gespace\gespace\gespace\pespace
{\Large \bf THÈSE}\\
\gespace
pour obtenir le grade de
\pespace
{\bf DOCTEUR DE L'UNIVERSITÉ DE VALENCIENNES}\\
\mespace
{\bf{\it Discipline : } Mathématiques Pures}\\
présentée et soutenue publiquement par
\gespace
Emmanuel ANDRÉO\\
le 15 juin 2004
\gespace\gespace
\underline{Titre} :
{\it Dissociation des Extensions Algébriques de Corps par \\ les Extensions Galoisiennes ou
Galsimples non Galoisiennes}
\gespace
\begin{tabular}{ll}
Directeur de thèse :\hskip 5mm    &Richard MASSY \hskip 5mm   Professeur, \\
                        &Université de Valenciennes \\
\end{tabular}
\gespace\gespace
{\bf JURY}\\
\gespace
\begin{tabular}{ll}
Président :     &Christian U. JENSEN   \hskip 5mm  Professor, \\
                &University of Copenhagen \\
\\
Rapporteurs : &Jean-François JAULENT  \hskip 5mm Professeur,\\ 
              &Université de Bordeaux I \\
\\
              &John R. SWALLOW    \hskip 5mm Kimbrough Associate Professor,\\ 
              &Davidson College \\
\\
              &Arne LEDET    \hskip 5mm    Assistant Professor, \\
              &Texas University \\
\\
Examinateurs :   &Pierre DÈBES  \hskip 5mm  Professeur, \\
                 &Université de Lille I \\
\\
                 &Manfred HARTL \hskip 5mm  Professeur,\\ 
                 &Université de Valenciennes \\
\end{tabular}
\end{center}


\chapter*{REMERCIEMENTS}
\thispagestyle{empty}
J'exprime d'abord toute ma gratitude à mon unique directeur de recherche, le Professeur
Richard Massy, pour l'extraordinaire attention qu'il m'a procurée ces cinq
années. C'est grâce à ses conseils que je me suis lancé dans cette formidable
aventure. Qu'il soit également remercié pour la primeur qu'il me laisse de son "Théorème
$M$".

\mespace
Les Professeurs Jean-François Jaulent, Arne Ledet et John
Swallow m'ont fait l'honneur d'accepter d'être les rapporteurs de ce
travail. Merci à eux pour le temps qu'ils m'ont consacré et pour leurs
remarques pertinentes.

\mespace
Je rends hommage au Professeur Christian U. Jensen : il participa au cours de
D.E.A. qui m'introduisit aux études doctorales, et je lui dois plusieurs de mes
exemples. Qu'il figure dans mon Jury est pour moi un accomplissement.

\mespace
Les Professeurs Pierre Dèbes et Manfred Hartl ont bien voulu être les
examinateurs des sept chapitres qui suivent. Que la lecture de ceux-ci leur
laisse le meilleur souvenir...

\mespace
Ma reconnaissance va aussi au Lamath, pour les moyens mis à ma disposition et pour
l'atmosphère chaleureuse que j'y ai trouvée.

\mespace
Enfin merci à toute ma famille et à mes amis pour leurs encouragements constants.

\addtocontents{toc}{\gespace}
\chapter*{INTRODUCTION}
\addtocontents{toc}{\pespace}
\thispagestyle{empty}
Le théorème fondamental de l'Arithmétique factorise tout nombre entier en
produit de nombres premiers. En théorie des groupes, le théorème de
Jordan-Hölder dévisse de nombreux groupes par leurs suites normales qui se
raffinent en suites de composition. Le dernier théorème du chapitre 7 final de
cette thèse dissocie toute extension de corps de degré fini par ses "tours
d'élévation" qui se raffinent en "tours de composition".\index{Dissociation}

Le thème central de ce travail est donc celui de la dissociation des extensions
de corps. Nous prouvons que cette dissociation joue pour les extensions finies
un rôle analogue à celui de la factorisation pour les entiers.\\

Détaillons maintenant ce point de vue. Pour les groupes, on connaît les deux
célébres théorèmes suivants :

\pespace
{\bf Théorème de Schreier} Deux suites normales d'un même groupe admettent des
raffinements équivalents.

\pespace
{\bf Théorème de Jordan-Hölder} Soit $G$ un groupe admettant une suite de
composition.\\
\noindent
(1) Toute suite normale stricte de $G$ admet un raffinement qui est une suite de
composition de $G$.\\
\noindent
(2) Deux suites de composition de $G$ sont équivalentes.\\

La problématique, le questionnement de cette thèse est de se demander s'il
existe pour les extensions de corps des analogues à ces deux profonds théorèmes
qui révèlent la structure des groupes.

Et dans l'affirmative, peut-on obtenir ces analogues à l'instar de la théorie
des groupes, c'est à dire de manière intrinsèque, en restant à l'intérieur des
extensions considérées, par opposition à une approche extrinsèque faisant
intervenir leurs clôtures galoisiennes ? Notre démarche se veut en effet
effective, calculatoire, ce qui exclut de procéder via les clôtures
galoisiennes, beaucoup trop grandes voire inconnues en général.

Les extensions non galoisiennes peuvent être considérées comme chaotiques. Est-il
possible "d'approximer" les extensions algébriques par exemple, ou tout au moins
certaines d'entre elles, par les extensions galoisiennes ? Peut-on dissocier ces
extensions par leurs corps intermédiaires de façon à constituer 
une tour qui comporte le plus grand nombre possible de "marches galoisiennes" ?

\mespace
Dans \cite{M-MD}, Massy a introduit la notion de parallélogramme 
galoisien qui généralise celle d'extension galoisienne. Cependant, le
théorème final ne s'énonce qu'en degrés finis. Pour ne pas limiter notre
analogue du théorème de Schreier aux extensions finies, il a fallu, de manière
déterminante, utiliser certaines propriétés des parallélogrammes galoisiens
infinis.
Leur étude est justement l'objet du chapitre 1, où nous présentons une
théorie de Galois infinie
en dimension 2 généralisant aux parallélogrammes de degré quelconque le
théorème de Krull pour les extensions galoisiennes infinies.

\mespace
Dès le chapitre 2, nous sommes amenés à définir précisément ce que sont les
tours de corps, les marches d'une tour de corps, les tours galoisiennes :\\
Soit $L/K$ une extension de corps. Une "tour $(F)$ de $L/K$" est une suite finie
croissante $\{F_i\}_{0 \leq i \leq m}$ de corps intermédiaires entre $K$ et $L$,
telle que $F_0 = K$ et $F_m = L$ :
$$(F) \qquad K = F_0 \leq F_1 \leq \dots \leq F_i \leq F_{i +1} \leq \dots \leq F_m
= L.$$
Nous disons que les extensions $F_{i +1}/F_i \; (i = 0, \dots,
m-1)$ sont les "marches" de la tour $(F)$. Nous appelons "tour galoisienne de
$L/K$"
une tour de $L/K$ dont toutes les marches sont galoisiennes. En reprenant le
symbole de théorie des groupes exprimant le fait d'être "normal dans", nous
écrirons les tours galoisiennes
$$(F) \qquad K = F_0 \unlhd F_1 \unlhd \dots \unlhd F_i \unlhd F_{i +1} \unlhd \dots
\unlhd F_m = L.$$

Le but du chapitre 2 est d'introduire une généralisation de la notion
d'extension galoisienne : celle d'extension galtourable. Il existe des extensions qui 
ne peuvent se dissocier en une tour galoisienne : mis à part les extensions non
galoisiennes de degré premier, c'est le cas par exemple pour ${\mathbb
Q}(\sqrt[6]{2}) / {\mathbb Q}$. Nous appelons "extension galtourable" une
extension qui admet une tour galoisienne. Toute extension galoisienne est
évidemment galtourable, la réciproque étant fausse. La classe des extensions
galtourables contient donc strictement celle des extensions galoisiennes.
Cependant nous parvenons à étendre aux extensions galtourables les
propriétés essentielles de la théorie de Galois générale classique. Par exemple :\\ 

{\it Soient $K/J$ et $L/J$ deux extensions algébriques. Sous la seule condition que
$K$ et $L$ soient contenus dans un même corps, on a l'implication
$$(L/J \;\text{galtourable }) \qquad \IMPLIQUE \qquad (KL/K \;\text{galtourable
})\:.$$}

De même, tout compositum d'extensions galtourables est galtourable. Précisément
:

{\it Quelles que soient les extensions galtourables $K/J$ et $L/J$ dont les sommets
sont contenus dans un même corps, l'extension compositum $KL/J$ est
galtourable.}\\

Quand on empile deux extensions galoisiennes, on obtient une extension
galtourable non nécessairement galoisienne en général. La situation est
différente avec les extensions galtourables. Précisément :\\

{\it Pour toute tour $K \leq L \leq M$, avoir $L/K$ galtourable et $M/L$ galtourable
implique que $M/K$ est galtourable.}\\

\pespace
Mentionnons que les extensions galtourables constituent une "généralisation
maximale" des extensions galoisiennes : une extension admettant une tour
galtourable est encore une extension galtourable.

\mespace
Le chapitre 3, très technique, est nécessaire pour parvenir à des démonstrations rigoureuses dans la
suite. Nous y introduisons, par analogie avec la théorie des groupes, la notion 
de raffinement de tour de corps :\\
Soient $L/K$ une extension algébrique, et
$$(F) \qquad K=F_0 \leq F_1 \leq \dots \leq F_i \leq F_{i+1} \leq \dots \leq F_m=L$$
une tour de $L/K$.\\
\noindent (1) Nous appelons "raffinement de (F)" toute tour
$$(E) \qquad  K=E_0 \leq E_1 \leq \dots \leq E_j \leq E_{j+1} \leq \dots \leq E_n=L$$
de $L/K$ vérifiant les deux conditions suivantes :
$$\begin{array}{lll}
\text{(RAF1)} 	& m\leq n \;.\hfill & \qquad\qquad\qquad\qquad\qquad\qquad\\
\text{(RAF2)}	& \text{Il existe une suite finie d'indices}	&\\
		& 0 \leq j_0 < j_1 < \dots < j_m \leq n	&
\end{array}$$
telle que
$$ \forall i \in \{ 0, \dots, m \} \quad F_i=E_{j_i} \:.$$

\noindent (2) Nous appelons "raffinement propre de $(F)$" tout
raffinement $(E)$ de $(F)$ qui vérifie la condition supplémentaire
$$\text{(RAF3)} \qquad \exists j \in \{1, \dots, n-1 \} \quad \forall i \in \{0, \dots, m\}
\quad E_j \neq F_i \;.\qquad\qquad$$

\noindent (3) Nous disons que $(E)$ est un "raffinement strict" de $(F)$ si et seulement
si c'est une tour stricte.\\
\noindent (4) Nous disons que $(E)$ est un "raffinement trivial" de $(F)$ si et seulement
si c'est un raffinement de $(F)$ non propre, autrement dit qui vérifie comme
condition supplémentaire la négation de (RAF3) précédente, i.e.
$$\text{(RAFT)} \qquad \forall j \in \{1, \dots, n-1\} \quad \exists i \in \{0, \dots,
m\} \quad E_j=F_i \:. \qquad\qquad\qquad\qquad$$
\noindent (5) Nous disons que $(E)$ est un "raffinement galoisien" de $(F)$ si et seulement
si c'est un raffinement de $(F)$ qui vérifie la condition
supplémentaire
$$\text{(RAFG)} \quad \forall j \in \{1, \dots, n-1\} \quad \text{\rm \Large(} \forall i \in \{0, \dots,
m\} \quad E_j\neq F_i \text{\rm \Large)} \implique E_{j-1} \unlhd E_j\;.$$

Nous démontrons en particulier qu'un raffinement galoisien d'une tour
galoisienne est encore une tour galoisienne (d'où la terminologie).

\mespace
Le chapitre 4 énonce les premiers théorèmes de dissociation. Nous définissons
tout d'abord les tours de composition galoisiennes et
l'équivalence de deux tours galoisiennes :\\
Soient $L/K$ une extension galtourable et
$$(F) \qquad K=F_0 \unlhd \dots \unlhd F_i \unlhd \dots \unlhd F_m=L$$
une tour galoisienne de $L/K$.\\
(1) Nous disons que $(F)$ est "une tour de composition galoisienne de $L/K$" si
et seulement si elle est stricte et n'admet aucun raffinement galoisien propre.
\pespace
\noindent
(2) Soit
$$\begin{array}{c}
(E) \qquad K=E_0 \unlhd \dots \unlhd E_j \unlhd \dots
\unlhd E_n=L\\
\end{array}\\$$
une autre tour galoisienne de $L/K$. Nous disons que $(E)$ et $(F)$ sont
"équivalentes", et nous notons $(E) \sim (F)$, si et seulement si elles ont même
nombre de marches : $m=n$, et si, à permutation près, les groupes de Galois de
ces marches sont isomorphes (topologiquement en degrés infinis) :
$$\exists \sigma \in S_m \quad \forall i \in \{1, \dots, m=n \} \quad
Gal(F_i/F_{i-1}) \isomto Gal(E_{\sigma(i)}/E_{\sigma(i)-1}) \;.$$ 

Avec ces définitions, nous énonçons et prouvons les analogues suivants, pour les extensions 
galtourables, des théorèmes de Schreier et de Jordan-Hölder :

\pespace
\begin{Th*}[]  {($1^{\text{er}}$ théorème de dissociation)} \\ 
Si $L/K$ est une extension galtourable, deux tours galoisiennes de $L/K$
admettent des raffinements équivalents.
\end{Th*}

{\noindent \it Scholie.} L'extension $L/K$ peut être ici de degré infini.

\pespace
\begin{Th*}[] {($3^{\text{ème}}$ théorème de dissociation)}\\
Soit $L/K$ une extension galtourable de degré fini.\\
(1) Toute tour stricte de $L/K$ admet un raffinement galoisien qui est une tour de
composition galoisienne de $L/K$.\\
(2) Deux tours de composition galoisiennes de $L/K$ sont équivalentes.
\end{Th*}

Le deuxième théorème de dissociation fournit une caractérisation des
extensions admettant une tour de composition galoisienne. On sait qu'un groupe
admet une suite de composition si et seulement s'il satisfait la condition de
chaîne normale ; c'est en particulier le cas des groupes finis. Voici la version
galoisienne de ce résultat :

\pespace
\begin{Th*}[] {($2^{\text{ème}}$ théorème de dissociation)} \\
Une extension de corps admet une tour de composition galoisienne si et seulement
si elle est galtourable de degré fini.
\end{Th*}

\mespace
Un autre parallèle avec les groupes est fourni par la notion de "galsimplicité".
En terme de dissociation, les extensions galsimples jouent le rôle des groupes
simples en théorie des groupes. Nous appelons "extension galsimple" une
extension $L/K$ non triviale n'admettant aucune extension quotient galoisienne
propre :
$$(L/K \;\text{galsimple}) \qquad \stackrel{\text{Déf.}}{\SSI} \qquad
\left(\begin{array}{c}
	L \neq K ,\quad \forall F \quad K \leq F \leq L\\
	(F/K \;\text{galoisienne}) \implique (F=K \;\text{ou} \; F=L)
\end{array}\right)\:.$$

On sait que pour qu'une suite normale de groupes soit de composition, il faut et
il suffit que chacun de ses facteurs soit simple. Voici la version galoisienne
de ce résultat :\\

{\it Soit $L/K$ une extension galtourable quelconque. Pour qu'une tour
galoisienne de $L/K$ soit de composition, il faut et il suffit que chacune de
ses marches soit galsimple.}\\

Le chapitre 5 est essentiellement constitué d'exemples des notions précédentes. Il comporte également quelques propriétés des extensions 
galsimples qui nous sont utiles dans les deux derniers chapitres.

\mespace
Le coeur du chapitre 6 est le "théorème $M$", dû à Richard Massy. Celui-ci montre 
qu'à toute extension finie est attachée un invariant, son "corps d'intourabilité", 
au-delà duquel l'extension n'est plus galtourable :
\pespace
\begin{Th*}[] {($4^{\text{ème}}$ théorème de dissociation)} \\
Pour toute extension finie $L/K$ , il existe un corps intermédiaire $M$ et un
seul entre $K$ et $L$, vérifiant à la fois les deux propriétés suivantes :\\
\noindent (1) L'extension $M/K$ est galtourable ;\\
\noindent (2) La sous-extension $L/M$ est soit triviale, soit galsimple non galoisienne.
\end{Th*}

Nous mettons en évidence le rôle central du corps d'intourabilité, noté
$M(L/K)$, en prouvant deux maximalités : pour une relation d'ordre canonique,
l'extension $M(L/K)/K$ (resp. $L/M(L/K)$) se réalise comme l'extension quotient
galtourable (resp. la sous-extension galsimple non galoisienne) maximale de
$L/K$. Nous achevons le chapitre 6 en exhibant une large classe d'exemples de ce
corps d'intourabilité en termes de corps cyclotomiques.

\mespace
Le chapitre 7 final est l'aboutissement des précédents.
Nous y introduisons, grâce au corps d'intourabilité, la notion de "tour
d'élévation" associée à  une tour de corps : elle fait correspondre canoniquement
à toute tour de $L/K$ une tour galtourable de l'extension quotient galtourable
maximale $M(L/K)/K$ de $L/K$. Nous avons noté horizontalement $F \unlhd E$ une
extension galoisienne $E/F$ ; notons $F \lessgtr E$ lorsque $E/F$ n'est que
galtourable. Ceci permet d'écrire les tours d'élévation de $M(L/K)/K$ :\\

{\it Soit $L/K$ une extension finie quelconque. Toute tour
$$(F) \qquad K=F_0 \leq F_1 \leq \dots \leq F_i \leq \dots \leq F_m=L$$
de $L/K$ induit une tour galtourable constituée des corps d'intourabilité 
sur $K$ de chacun des corps de $(F)$ :
$$\begin{array}{r}
K=M_0 := M(F_0/K) \lessgtr M_1:= M(F_1/K) \lessgtr \dots \lessgtr
M_i:=M(F_i/K) \lessgtr \dots \\
\dots \lessgtr M_m:=M(F_m/K)=M(L/K) \;.
\end{array}$$}
Nous l'appelons "tour d'élévation de $M(L/K) \galtou K$ associée à $(F)$"\\

Les tours d'élévation de l'extension $L/K$ elle-même s'obtiennent comme "tours
induites" des tours d'élévation de $M(L/K) \galtou K$. Précisément :\\
Soient $M$ un corps d'intermédiaire entre $K$ et $L$ : $K
\leq M \leq L$, et 
$$(E) \qquad K=E_0 \leq E_1 \leq \dots \leq E_m=M$$
une tour de $M/K$. Nous appelons "tour de $L/K$ induite par $(E)$", et nous
notons
$$((E) \dasharrow L) \;,$$
la tour de $L/K$ définie de la façon suivante
$$((E) \dasharrow L) := \left\{\begin{array}{ll}
(E)						&\text{si } M=L\\
K=E_0 \leq E_1 \leq \dots \leq E_m=M < L	&\text{si } M\neq L
\end{array}\right. \;.$$

\pespace
Soient $L/K$ une extension finie et $(F)$ une tour quelconque de $L/K$.
Nous appelons "tour d'élévation de $L/K$ associée à $(F)$" la tour de $L/K$ induite par la tour d'élévation
associée à $(F)$ de l'extension quotient galtourable maximale de $M(L/K) \galtou
K$ de $L/K$.
 
Ceci nous permet de définir des tours de compositions non nécessairement 
galoisiennes :\\
Soit $L/K$ une extension finie quelconque. Nous appelons "tour de composition de
$L/K$" toute tour d'élévation de $L/K$ stricte qui n'admet aucun raffinement
galoisien propre.\\
Une caractérisation de ces tours de composition est la suivante :\\

{\it Soient $L/K$ une extension finie et
$$(C) \qquad K=C_0 \leq \dots \leq C_i \leq \dots \leq C_m=L$$
une tour de $L/K$. On a l'équivalence :\\
$(C)$ est une tour de composition si et seulement si elle est induite par une
tour de composition galoisienne de l'extension quotient galtourable maximale de
$L/K$.}

\pespace
Nous n'avons jusqu'ici défini l'équivalence de deux tours
d'une même extension que lorsque ces tours sont galoisiennes. La définition
ci-après de l'équivalence de deux tours induites implique en particulier celle
de l'équivalence de deux tours de composition non galoisiennes :\\
Soient $L/K$ une extension finie quelconque, $(T)$ et $(T')$ deux tours
galoisiennes de l'extension quotient galtourable maximale $M(L/K) \galtou K$ de
$L/K$. Nous disons que les tours induites de $L/K$ par $(T)$ et $(T')$ sont
équivalentes si et seulement si les tours galoisiennes $(T)$ et $(T')$ le sont
au sens du chapitre 4 :
$$((T) \dasharrow L) \, \sim \, ((T') \dasharrow L) \quad
\stackrel{\text{Déf.}}{\SSI} \quad (T) \sim (T') \;.$$

Ceci pour aboutir enfin à la généralisation aux extensions finies quelconques
des théorèmes de dissociation obtenus au chapitre 4 pour des extensions
galtourables :

\pespace
\begin{Th*}[] ($5^{\text{ème}}$ théorème de dissociation) \\
Deux tours d'élévation d'une même extension finie quelconque admettent des
raffinements galoisiens qui sont des tours d'élévation équivalentes de cette
extension.
\end{Th*}

\pespace
\begin{Th*}[] ($6^{\text{ème}}$ théorème de dissociation) \\
Soit $L/K$ une extension finie quelconque.\\
\noindent (1) Toute tour d'élévation stricte de $L/K$ admet un raffinement galoisien qui
est une tour de composition de $L/K$.\\
\noindent (2) Deux tours de composition de $L/K$ sont équivalentes.
\end{Th*}

\addtocontents{toc}{\gespace\gespace}
\chapter{PARALLÉLOGRAMMES GALOISIENS INFINIS}
\addtocontents{lof}{\gespace\pespace}
\addtocontents{lof}{\noindent Chapitre \thechapter}
\addtocontents{lof}{\pespace}
\addtocontents{toc}{\pespace}

\section{Introduction}
Dans \cite{M-MD}, Massy a introduit la notion de parallélogramme 
galoisien qui généralise celle d'extension galoisienne. Cependant, le
théorème final se limite au degré fini. Une application est fournie dans
\cite{M-MD2}. Le but de ce premier chapitre est d'étendre les
résultats de \cite{M-MD} aux parallélogrammes de degré infini. Nous
mettons en évidence section 5 une théorie générale des parallélogrammes
galoisiens de nature essentiellement algébrique. Les topologies de Krull sur les
groupes de Galois des extensions constituant ces parallélogrammes
n'interviennent que dans la section 6. Nous y présentons une théorie de Galois infinie
en dimension 2 généralisant aux parallélogrammes de degré quelconque le
théorème de Krull pour les extensions galoisiennes infinies. Cette théorie sera
appliquée aux chapitres 3 et 4 pour développer en toute généralité une notion de
"raffinement de tours galoisiennes" jouant pour les extensions galoisiennes un rôle
analogue à celui du raffinement des suites normales de groupes.

\vskip 10mm

\section{Définitions}
Plusieurs des démonstrations de \cite{M-MD} ne nécessitent pas  que les
sous-groupes considérés soient normaux, autrement dit que les extensions
quotients (cf. infra) soient galoisiennes : voir en particulier la section 5
pour de nouveaux résultats n'utilisant pas la normalité. Ceci justifie que l'on
introduise la notion de quadrilatère corporel suivante qui généralise celle de
parallélogramme galoisien.

\vskip 2mm

\begin{defn}
Nous appelons "quadrilatère corporel" \index{Quadrilatère}(ou "quadrilatère" en abrégé) tout
quadruplet de corps $(J,K,N,L)$ dans lequel :
\begin{align*}
& (Q_{0}) \quad  K \text{ et } L \text{ sont contenus dans un même corps} \:; \\
& (Q_{1}) \quad K \cap L = J \:; \\
& (Q_{2}) \quad KL = N \:.
\end{align*}

\noindent Le quadrilatère  ${}^t (J,K,N,L)=(J,L,N,K)$ sera dit "transposé"
\index{Quadrilatère!transposé}de $(J,K,N,L)$.

\newpage

\begin{figure}[!h]
\begin{center}
\vskip -3mm
\includegraphics[width=6cm]{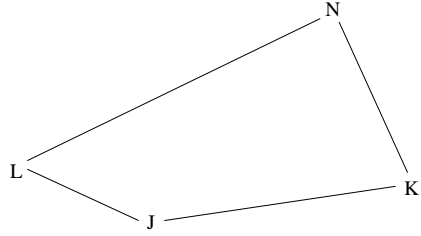}
\end{center}
\vskip -5mm
\rm{\caption{\label{fig:1}\leg Quadrilatère corporel}}
\vskip -4mm
\end{figure}
\end{defn}

\vskip 3mm
La définition suivante figure dans la section 3 de \cite{M-MD} pour des
extensions galoisiennes qui justifient la terminologie.

\begin{defn} 
(1) Soit $E/F$ une extension algébrique. Nous appelons "sous-extension" (resp.
"extension quotient") \index{Sous-extension}\index{Extension!quotient}de $E/F$ toute extension $E/M$ (resp. $M/F$) où $M$ est un corps
intermédiaire : $F \subseteq M \subseteq E$.

(2) Soit $(J,K,N,L)$ un quadrilatère corporel. Nous appelons "sous-quadrilatère"
(resp. "quadrilatère quotient")
\index{Sous-quadrilatère}\index{Quadrilatère!quotient}de $(J,K,N,L)$ tout quadrilatère $(M,E,N,F)$ (resp.
$(J,E,C,F)$ ) où $E$ et $F$ sont deux corps intermédiaires :

$K \subseteq E \subseteq N \; , \; L \subseteq F \subseteq N \qquad ${\Large
(}resp. $J \subseteq E \subseteq K \; , \; J \subseteq F \subseteq L${\Large )}.
\end{defn}

\begin{figure}[!h]
\begin{center}
\vskip -4mm
\includegraphics[width=8cm]{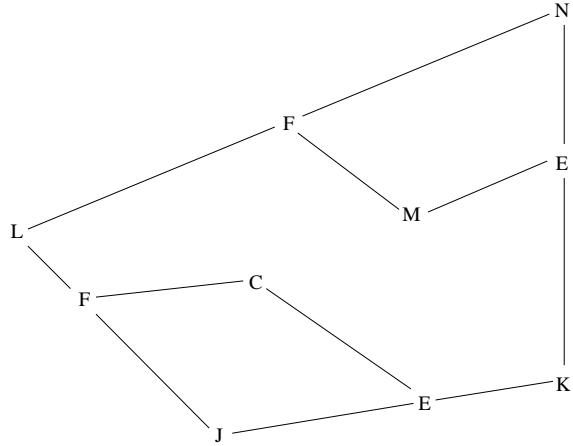}
\end{center}
\vskip -6mm
\rm{\caption{\label{fig:2}\leg Sous-quadrilatère \& quadrilatère quotient}}
\vskip -1mm
\end{figure}
Nous aurons besoin du lemme immédiat suivant pour les monotonies des bijections
de la section 5.

\begin{lem} \label{lem:relordre}
(1) Soit $E/F$ une extension algébrique. Dans l'ensemble des sous-extensions (resp.
des extensions quotients) de $E/F$, la relation définie par

$ (E/M') \leq (E/M) \; \ssi \; M \subseteq M'\quad$ {\Large (}resp. $ (M/F)
\leq  (M'/F) \; \ssi \; M\subseteq M'$ {\Large )} est une relation d'ordre.

(2) Soit $(J,K,N,L)$ un quadrilatère corporel. Dans l'ensemble des
sous-quadrila\-tères
(resp. des quadrilatères quotients) de $(J,K,N,L)$, la relation
définie par
\begin{align*} 
(M',E',N,F') \: \leq \: (M,E,N,F) \quad &\ssi \quad (E \subseteq E'\; , \;F \subseteq F') \\
\text{{\Large (}resp. } (J,E,C,F) \: \leq \: (J,E',C',F') \quad &\ssi \quad (E \subseteq E'\; ,
\; F \subseteq F') \text{{\Large )}}
\end{align*}
est une relation d'ordre.
\end{lem}

\vskip 2mm
\begin{defn}
Nous appelons "parallélogramme galoisien" \index{Parallélogramme galoisien}(ou "parallélogram\-me"  en abrégé) un
quadrilatère corporel $(J,K,N,L)$ dans lequel toutes les arêtes $K/J$, $N/K$,
$N/L$, $L/J$
sont des extensions galoisiennes. Nous le notons alors $[J,K,N,L]$.
\end{defn}

Clairement, pour qu'un quadrilatère $(J,K,N,L)$ soit un parallélogramme, il faut
et il suffit que les extensions $K/J$ et $L/J$ soient galoisiennes.
En convenant de figurer par des segments parallèles de longueurs égales les
extensions dont les groupes de Galois sont isomorphes, on obtient une figure du
type (où les flêches symbolisent les extensions galoisiennes)

\begin{figure}[!h]
\begin{center}
\vskip -4mm
\includegraphics[width=6cm]{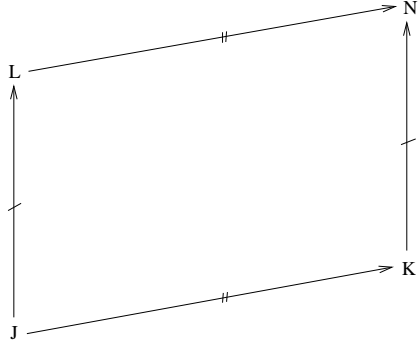}
\end{center}
\vskip -7mm
\rm{\caption{\label{fig:3}\leg Parallélogramme galoisien}}
\vskip -2mm
\end{figure}

\noindent Par composition, la "diagonale $N/J$" \index{Diagonale d'un parallélogramme
galoisien}d'un parallélogramme $[J,K,N,L]$
est nécessairement galoisienne. Dans \cite{M-MD} , "le degré" \index{Degré!d'un parallélogramme
galoisien}d'un
parallélogramme galoisien $[J,K,N,L]$ est défini comme étant le couple des
degrés :
$$ deg[J,K,N,L] := \text{{\Large (}} \,[N:K] \:,\: [K:J] \, \text{{\Large )}}.$$

\noindent Ce degré sera dit "infini" quand l'un des degrés $[N:K]$ ou $[K:J]$ est infini.
Lorsque $K=J$ ou $L=J$, nous disons que le quadrilatère $(J,K,N,L)$ est "plat". Tout
corps s'identifie au quadrilatère plat $(F,F,F,F)$. Toute extension (resp. toute
extension galoisienne) $E/F$ s'identifie au quadrilatère plat (resp. au
parallélogramme plat) $(F,E,E,F)$ (resp. $[F,E,E,F]$) ou à son transposé.
\vskip 5mm

Pour définir enfin le groupe de Galois d'un parallélogramme\index{Groupe de
Galois d'un parallélogramme \\ galoisien}, plaçons-nous dans
la catégorie produit ${\mathbf{Gr^2}} = {\mathbf{Gr}} \times {\mathbf{Gr}}$ de la
catégorie des groupes par elle-même. Nous appelons "bigroupe" un objet de
$\mathbf{Gr^2} $, c'est à dire un couple de groupes. Autrement dit, un
bigroupe est un (objet en) groupe(s) dans la catégorie produit
$\mathbf{Ens^2}$ de la catégorie des ensembles par elle-même.

\vskip 4.5mm
\begin{defn} \cite[Déf.3.6]{M-MD} \label{def:grgal}
Soit $[J,K,N,L]$ un parallélogramme galoisien. Nous appelons "groupe de Galois de
$[J,K,N,L]$ ", et nous notons $Gal[J,K,N,L]$ le bigroupe :
$$ Gal[J,K,N,L] := (Gal(N/K),Gal(N/L)). $$
\end{defn}

Les notions de sous-groupe, de sous-groupe normal et de groupe quotient de
$Gal[J,K,N,L]$ se définissent de manière évidente à partir des objets
correspondants de $\mathbf{Gr^2} $ (cf. \cite[Sect.3]{M-MD} ). Munis des
topologies convenables, nous y reviendrons section 6.


\vskip 8mm
\section{Propriétés topologiques}
Cette section 3 est préparatoire ; elle regroupe des résultats indépendants
nécessaires aux raisonnements des sections 4 à 6.

Soit $E/F$ une extension galoisienne de degré infini. Munissons le groupe de
Galois $G:=Gal(E/F)$ de sa topologie de Krull (cf. \cite{Kr} ou
\cite[p.340]{Ka}). Par les propriétés générales des
groupes topologiques, on sait que pour tout sous-groupe $H$ de $G$, l'adhérence
$\overline{H}$ de $H$ est un sous-groupe de $G$ \cite[TG III.7]{Bo}. C'est le
groupe de Galois de $E$ sur le corps des invariants de $H$ dans $E$ 
\cite[p.344]{Ka}  i.e.
$$\begin{array}{cr}
\qquad \qquad \qquad \qquad \qquad \qquad\overline{H}=Gal(E/E^{H}).\qquad \qquad
\qquad \qquad \qquad \qquad 	&(0)
\end{array}$$

\vskip 3mm
\begin{prop} \label{prop:adh} 
(1) La normalité d'un sous-groupe de $G$ dans un autre implique la normalité de leurs
adhérences :
$$ \forall A \leq G \quad \forall B \leq G \quad \qquad A \unlhd B \quad \implique \quad \overline{A} \unlhd
\overline{B}.$$

(2) Pour tout sous-groupe $H$ de $G$, le corps des invariants dans E du sous-groupe H et de son
adhérence $\overline{H}$ sont égaux :
$$E^H=E^{\overline{H}} \:.$$
\end{prop}

\begin{proof} 
(1) Fixons-nous $\alpha \in A$, et considérons l'application
\begin{align*}
f_{\alpha}\: : \: &G \longrightarrow G \\
&\gamma \longmapsto \alpha ^{\gamma}=\gamma^{-1}\alpha\,\gamma
\end{align*}
Par la normalité de $A$ dans $B$, on a clairement $f_{\alpha}(B) \subseteq A$, et comme
$f_{\alpha}$ est continue \cite[TGI.9]{Bo}
$$f_{\alpha}(\overline{B}) \subseteq \overline{f_{\alpha}(B)} \subseteq \overline{A}.$$
Fixons-nous ensuite un $\beta \in \overline{B}$. Pour l'automorphisme intérieur
\begin{align*}
g_{\beta}\: : \: &G \longrightarrow G \\
&\gamma \longmapsto \gamma^{\beta},
\end{align*}
on a
$$\forall \alpha \in A \quad g_{\beta}(\alpha)=f_{\alpha}(\beta) \in \overline{A},$$
de sorte que
$$\forall \beta \in \overline{B} \quad g_{\beta}(A) \subseteq \overline{A}.$$
Par la continuité de $g_{\beta}$, on en déduit que
$$g_{\beta}(\overline{A}) \subseteq \overline{g_{\beta}(A)} \subseteq \overline{A}.$$
Donc
$$\forall \beta \in \overline{B} \quad \forall \alpha \in \overline{A} \quad
g_{\beta}(\alpha)=\alpha^{\beta} \in \overline{A},$$
ce qui exprime précisément que $\overline{A} \unlhd \overline{B}$.

\noindent (2) D'après le (0) ci-dessus et le fait que la sous-extension $E/E^H$ soit
galoisienne, on a directement \vskip -3mm
$$E^{\overline{H}}=E^{Gal(E/E^H)}=E^H \:.$$ \vskip -5mm
\end{proof}

\vskip 3mm
\begin{prop} \label{prop:topind}
Soit $N/K$ une extension galoisienne. Pour tout corps intermédiaire $E$, $K
\subseteq E \subseteq N$, la topologie de Krull de $Gal(N/E)$ est égale à la topologie induite sur
$Gal(N/E)$ par la topologie de Krull de $Gal(N/K)$.
\end{prop}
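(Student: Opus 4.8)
The plan is to prove the equality of topologies by exhibiting, for each of the two topologies, an explicit neighbourhood base of the identity in the group $Gal(N/E)$ and showing the two bases generate the same filter. Write $G := Gal(N/K)$ and $H := Gal(N/E)$. Since $N/K$ is galoisienne, so is $N/E$, so $H$ carries both its own Krull topology and the topology induced as a subspace of $(G, \text{Krull})$. I recall that a base of neighbourhoods of the identity for the Krull topology of $G$ is furnished by the subgroups $Gal(N/L)$, where $L$ ranges over the \emph{finite} intermediate fields $K \subseteq L \subseteq N$ (the finite galoisiennes $L/K$ already give a base by definition, and an arbitrary finite $L$ sits below its galoisian closure inside $N$, so enlarging to all finite $L$ yields the same topology). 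Symmetrically, $\{\, Gal(N/M) : E \subseteq M \subseteq N,\ [M:E] < \infty \,\}$ is a base for the intrinsic Krull topology of $H$.

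The computational heart of the argument is the compositum identity: for any intermediate $L$ one has $Gal(N/L) \cap H = Gal(N/L) \cap Gal(N/E) = Gal(N/LE)$, since an automorphism fixes both $L$ and $E$ exactly when it fixes the field $LE$ they generate. First I would treat the inclusion ``induced $\subseteq$ intrinsic'': if $L/K$ is finite then $LE/E$ is finite (the generators of $L$ over $K$ generate $LE$ over $E$ and are algebraic), so the trace $Gal(N/L) \cap H = Gal(N/LE)$ is a basic neighbourhood for the intrinsic topology. Hence every basic subspace neighbourhood is intrinsically open, i.e. the intrinsic Krull topology on $H$ is at least as fine as the induced one.

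For the reverse inclusion I would start from a basic intrinsic neighbourhood $Gal(N/M)$ with $M/E$ finite. Writing $M = E(\alpha_{1}, \dots, \alpha_{r})$ with each $\alpha_{i}$ algebraic over $E$, hence over $K$ (as $N/K$ is algebraic), I set $L := K(\alpha_{1}, \dots, \alpha_{r})$, a finite subextension of $N/K$. Because $K \subseteq E$ one gets $LE = E(\alpha_{1}, \dots, \alpha_{r}) = M$, so the compositum identity yields $Gal(N/L) \cap H = Gal(N/M)$; thus every basic intrinsic neighbourhood is literally a basic subspace neighbourhood, and the induced topology is at least as fine as the intrinsic one. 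Combining the two inclusions gives the claimed equality. The argument is essentially routine; the only step requiring genuine care — and the one I would flag as the main obstacle — is the preliminary justification that arbitrary finite subextensions (not merely the galoisiennes ones) describe a neighbourhood base of the Krull topology, together with the lifting of a finite generating set of $M/E$ to one over $K$, which is exactly where the algebraicity of $N/K$ is used.
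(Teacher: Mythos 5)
Your proof is correct, but it takes a slightly different route from the paper's. Both arguments ultimately rest on the same two facts: the compositum identity $Gal(N/L)\cap Gal(N/E)=Gal(N/LE)$, and the translation of finite (Galois) subextensions of $N/K$ up to $E$. The difference lies in how the delicate direction --- a basic intrinsic neighbourhood $Gal(N/M)$, $M/E$ finite Galois, is a subspace neighbourhood --- is handled. The paper keeps the literal definition of the Krull topology (base indexed by the \emph{finite Galois} subextensions only), invokes the primitive element theorem to write $M=E(x)$, and then passes to the splitting field $F=K(R)$ of $Irr(x,K,X)$, a finite Galois extension of $K$ with $M\subseteq EF$; this yields only the inclusion $Gal(N/M)\supseteq Gal(N/E)\cap Gal(N/F)$, which suffices. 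You instead front-load the work into the preliminary observation that \emph{all} finite subextensions $L/K$ (Galois or not) already describe a neighbourhood base of the identity; once that is granted, the descent is exact, since $L:=K(\alpha_1,\dots,\alpha_r)$ satisfies $LE=M$ on the nose and $Gal(N/L)\cap Gal(N/E)=Gal(N/M)$ is literally a basic trace. The trade-off is that your preliminary lemma absorbs the same Galois-closure argument (normality and separability of $N/K$ guarantee that the closure of a finite $L$ stays inside $N$ and is finite over $K$), so no primitive element theorem is needed, at the cost of a base that is not the standard one. One point you leave implicit and should state: comparing neighbourhood filters at the identity suffices only because both topologies are group topologies on $Gal(N/E)$ (the induced topology on a subgroup of a topological group is again a group topology); the paper sidesteps this by arguing at an arbitrary element $\alpha$ and translating cosets explicitly.
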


\begin{proof}
Posons $\Gamma := Gal(N/K)$, $A :=Gal(N/E)$ et soit $\alpha$ un élément quelconque
de $A$.
Prouvons d'abord que tout voisinage $V$ de $\alpha$ pour la topologie de Krull
de $A$ est aussi un voisinage de $\alpha$ pour la topologie induite sur $A$ par
celle\- de $\Gamma$. Par définition de la topologie de Krull de $A$, il existe une
extension galoisienne finie $M/E$, avec $M \subseteq N$, telle que $V \supseteq
\alpha \,Gal(N/M)$.

\begin{figure}[!h]
\begin{center}
\vskip -5mm
\includegraphics[width=5.5cm]{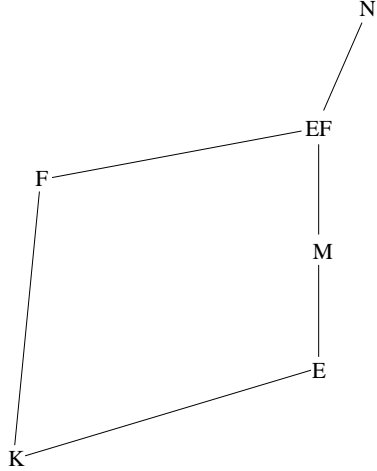}
\end{center}
\vskip -8mm
\rm{\caption{\label{fig:4} \leg Topologie de Krull induite}}
\vskip -2mm
\end{figure}

\noindent En vertu du théorème de l'élément primitif, il existe d'autre part
un élément $x \in M$ tel que $M=E(x)$. Soient $P(X)=Irr(x,K,X)$ le polynôme
minimal de $x$ sur $K$, et $R:=\{x{\scriptstyle =}x_1\,,\,\dots\,,\,x_n\}$ l'ensemble des racines de $P(X)$
dans une clôture algébrique fixée de $K$. Puisque l'extension $N/K$ est normale, on 
a $R \subseteq N$. Considérons alors le corps intermédiaire $F := K(R)$. C'est un
corps de décomposition, donc l'extension $F/K$ est galoisienne finie, et comme $x=x_1
\in R $, on a clairement $M=E(x) \subseteq EF$. On en déduit que
$$Gal(N/M) \geq Gal(N/EF)=Gal(N/E) \cap Gal(N/F)=A \cap Gal(N/F) \:,$$
d'où
$$V \supseteq \alpha \,Gal(N/M) \supseteq \alpha \, (A \cap Gal(N/F)) = \alpha A \cap
\alpha \, Gal(N/F) \:.$$
Ainsi $V \supseteq A \cap \alpha \, Gal(N/F)$ car $\alpha \in A$. Or $\alpha \, Gal(N/F)$
est un voisinage de $\alpha$ pour la topologie de Krull de $\Gamma$ puisque $F/K$
est galoisienne finie. Ceci prouve que $V$ est bien un voisinage de $\alpha$ pour la
topologie induite sur A par celle de $\Gamma$.

La réciproque reprend certains des arguments précédents dans l'ordre inverse.
Soit $V=U \cap A$ un voisinage de $\alpha$ dans lequel $U$ est un voisinage de
$\alpha$ pour la topologie de Krull de $\Gamma$. Il existe une extension galoisienne
finie $F/K \,,\:F \subseteq N$, telle que $U \supseteq \alpha \, Gal(N/F)$. Ainsi
$$V \supseteq \alpha \, Gal(N/F) \cap A = \alpha \, Gal(N/F) \cap \alpha 
A=\alpha \, (Gal(N/F) \cap A).$$
Or
$$Gal(N/F) \cap A=Gal(N/F) \cap Gal(N/E)=Gal(N/EF),$$
d'où $V \supseteq \alpha \, Gal(N/EF)$. Comme l'extension $EF/E$ est galoisienne
finie par translation de $F/K$ par $E/K$, on a bien prouvé que $V$ est un voisinage
de $\alpha$ pour la topologie de Krull de $A$.
\end{proof}

\begin{lem} \label{lem:topquo}
Soient $N/K$ une extension galoisienne et $E/K,\; E \subseteq N$, une extension
galoisienne quotient de $N/K$. Soit $\rho_E$ l'homomorphisme de restriction de
$Gal(N/K)$ sur $Gal(E/K)$. Alors, pour toute extension galoisienne quotient $F/K$ de
$N/K$,

\begin{figure}[!h]
\begin{center}
\vskip -5mm
\includegraphics[width=7cm]{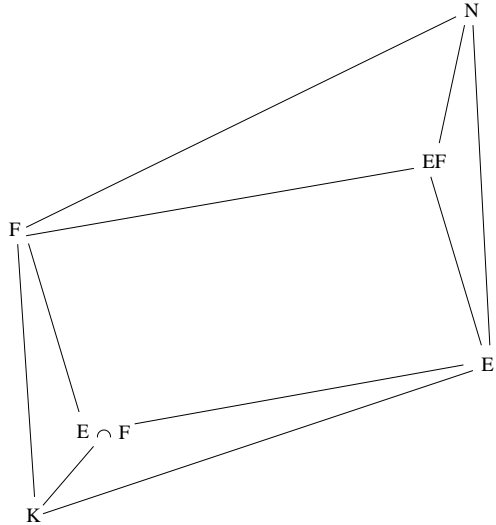}
\end{center}
\vskip -7mm
\rm{\caption{\label{fig:5}\leg Parallélogramme galoisien inscrit}}
\vskip -5mm
\end{figure}

\newpage
\noindent on a
$$ \rho_E(Gal(N/F))=Gal(E/E \cap F).$$
\end{lem}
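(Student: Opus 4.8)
The plan is to prove the two inclusions separately, the nontrivial one being $Gal(E/E\cap F) \subseteq \rho_E(Gal(N/F))$. First I would dispose of the easy inclusion: any $\sigma \in Gal(N/F)$ fixes $K$, so since $E/K$ is Galois (hence normal) it satisfies $\sigma(E)=E$, and $\rho_E(\sigma)=\sigma|_E$ is a $K$-automorphism of $E$; as $\sigma$ fixes $F$ pointwise it fixes $E\cap F$, whence $\rho_E(\sigma)\in Gal(E/E\cap F)$. This already yields $\rho_E(Gal(N/F)) \subseteq Gal(E/E\cap F)$, without even using that $F/K$ is Galois.

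For the reverse inclusion I would exploit the inscribed parallelogram of Figure~\ref{fig:5}, namely $[E\cap F, E, EF, F]$ (here $E/K$ Galois forces $E/(E\cap F)$ Galois, and $EF\subseteq N$ since $E,F\subseteq N$). The translation theorem for the Galois extension $E/(E\cap F)$ along $F$ yields a topological isomorphism $Gal(EF/F)\isomto Gal(E/E\cap F)$ given precisely by restriction to $E$. Given $\tau \in Gal(E/E\cap F)$, I lift it to the unique $\hat\tau \in Gal(EF/F)$ with $\hat\tau|_E=\tau$. Since $N/F$ is Galois, hence normal, the $F$-automorphism $\hat\tau$ of the intermediate field $EF$ extends to some $\sigma\in Gal(N/F)$ (equivalently, the restriction $Gal(N/F)\twoheadrightarrow Gal(EF/F)$ onto this Galois quotient is surjective). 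Then $\rho_E(\sigma)=\sigma|_E=\hat\tau|_E=\tau$, proving $\tau\in\rho_E(Gal(N/F))$.

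The step I expect to be the real obstacle is making this reverse inclusion airtight in infinite degree, where I cannot simply invoke the finite fundamental correspondence. Both ingredients above — the translation isomorphism $Gal(EF/F)\isomto Gal(E/E\cap F)$ and the surjectivity of $Gal(N/F)\to Gal(EF/F)$ — must be read as statements about profinite groups (a topological isomorphism and a continuous surjection respectively), and it is here that the Krull-topology machinery of this section is needed. An alternative, perhaps more in the spirit of Proposition~\ref{prop:adh} and relation $(0)$, is to compute fixed fields: writing $H:=\rho_E(Gal(N/F))$, one checks $E^{H}=N^{Gal(N/F)}\cap E=F\cap E$ because $N/F$ is Galois, so that $(0)$ gives $\overline{H}=Gal(E/E^H)=Gal(E/E\cap F)$; one then closes the argument by noting that $H$ is the continuous image of the compact group $Gal(N/F)$ under $\rho_E$, hence compact and therefore closed in the Hausdorff group $Gal(E/K)$, which forces $H=\overline{H}=Gal(E/E\cap F)$. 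In the finite case this closedness is automatic and the proof collapses to a one-line application of the fundamental theorem.
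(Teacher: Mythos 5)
Votre démonstration est correcte et suit essentiellement la même voie que celle du texte : translation de $E/K$ par $F/K$ pour obtenir l'isomorphisme de restriction $Gal(EF/F)\isomto Gal(E/E\cap F)$, puis relèvement à $Gal(N/F)$ via la surjectivité de la restriction sur le quotient galoisien $EF/F$ de $N/F$, l'autre inclusion étant immédiate. L'argument alternatif par corps des invariants et compacité est un supplément valable mais non nécessaire.
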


\begin{proof}
En translatant l'extension galoisienne $E/K$ par $F/K$, on obtient l'extension
galoisienne $EF/F$ et un isomorphisme de restriction $\mid_E$ du groupe
$Gal(EF/F)$ sur $Gal(E/E\cap F)$. Soit donc $r \in Gal(E/E \cap F)$ ; il existe $s \in Gal(EF/F)$
tel que $s_{\mid_E}=r$. Et en considérant $EF/F$ comme une extension galoisienne
quotient de $N/F$, il existe $t\in Gal(N/F)$ tel que $t_{\mid_{EF}}=s$. Alors, pour tout $x
\in E$,
$$(\rho_E(t))(x)=t_{\mid_{EF}} (x)=s_{\mid_E} (x)=r(x)\;,$$
de sorte que $\rho_E(t)=r$, et $Gal(E/E \cap F) \subseteq \rho_E(Gal(N/F))$. L'autre
inclusion est claire.
\end{proof}

\vskip 5mm
Le lemme précédent nous permet d'énoncer un analogue de la proposition
\ref{prop:topind} en remplaçant topologie induite par topologie quotient.

\begin{prop}
Soit $N/K$ une extension galoisienne. Pour tout corps intermédiaire $E,\,
K\subseteq E\subseteq N$, tel que l'extension $E/K$ soit galoisienne, la topologie
de Krull de $Gal(E/K)$ est égale à la topologie quotient sur $Gal(E/K)$ de la
topologie de Krull de $Gal(N/K)$.
\end{prop}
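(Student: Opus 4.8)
The plan is to realize the restriction homomorphism $\rho_E : Gal(N/K) \to Gal(E/K)$ as a continuous, open and surjective map. Once this is done, the elementary topological fact that a continuous open surjection is automatically a quotient map forces the Krull topology on $Gal(E/K)$ to coincide with the quotient topology transported from $Gal(N/K)$, which is exactly the assertion. This is the natural analogue of Proposition~\ref{prop:topind}, with Lemme~\ref{lem:topquo} playing the role that the primitive-element argument played there: it converts every image computation into an intersection of fields.

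First I would record surjectivity of $\rho_E$, which is the classical extension property of $K$-automorphisms along the normal extension $N/K$ (it is also recovered from Lemme~\ref{lem:topquo} applied to the trivial Galois quotient $K/K$, since $K\subseteq E$ gives $\rho_E(Gal(N/K))=Gal(E/E\cap K)=Gal(E/K)$). For continuity, fix $t\in Gal(N/K)$ and a basic Krull-neighbourhood of $\rho_E(t)$, which may be taken of the form $\rho_E(t)\,Gal(E/M)$ for a finite Galois subextension $M/K$ with $M\subseteq E$. The set $t\,Gal(N/M)$ is then a basic Krull-neighbourhood of $t$ in $Gal(N/K)$ (legitimate because $M\subseteq E\subseteq N$ and $M/K$ is finite Galois), and Lemme~\ref{lem:topquo} together with $E\cap M=M$ yields
$$\rho_E(t\,Gal(N/M))=\rho_E(t)\,Gal(E/E\cap M)=\rho_E(t)\,Gal(E/M),$$
so this neighbourhood lands in the prescribed one; hence $\rho_E$ is continuous.

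For openness it suffices to send basic open sets to open sets. Given a finite Galois subextension $F/K$ with $F\subseteq N$ and any $t$, Lemme~\ref{lem:topquo} gives
$$\rho_E(t\,Gal(N/F))=\rho_E(t)\,Gal(E/E\cap F).$$
Here $E\cap F$ is the intersection inside $N$ of the two Galois extensions $E/K$ and $F/K$, hence $E\cap F/K$ is again Galois, and it is finite since $[E\cap F:K]\leq[F:K]<\infty$. Consequently $Gal(E/E\cap F)$ is an open subgroup for the Krull topology of $Gal(E/K)$, so its translate $\rho_E(t)\,Gal(E/E\cap F)$ is open; thus $\rho_E$ is an open map.

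Finally I would conclude by pure topology. Let $\tau$ denote the Krull topology on $Gal(E/K)$ and $\tau_q$ the quotient topology defined by $\rho_E$. Continuity of $\rho_E$ with respect to $\tau$ gives $\tau\subseteq\tau_q$; conversely, if $U$ satisfies that $\rho_E^{-1}(U)$ is open, then openness and surjectivity give $U=\rho_E(\rho_E^{-1}(U))$, which is $\tau$-open, whence $\tau_q\subseteq\tau$. Therefore $\tau=\tau_q$, as required. The only genuinely nontrivial input is Lemme~\ref{lem:topquo}, which trivialises both image computations; accordingly I expect the openness step — precisely the verification that $E\cap F/K$ is finite (so that $Gal(E/E\cap F)$ really belongs to the Krull basis) — to be the delicate point to state cleanly, while continuity and the final topological wrap-up are routine.
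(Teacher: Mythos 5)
Your proposal is correct and is essentially the paper's own argument: the paper proves continuity of $\rho_E$ exactly as you do (basic neighbourhoods plus Lemme~\ref{lem:topquo} with $E\cap M=M$), and its converse direction — decomposing $\rho_E^{-1}(\Theta)$ into basic cosets $\omega\,Gal(N/E_\omega)$ and pushing forward to $\bigcup_\omega \omega_{\mid_E}Gal(E/E\cap E_\omega)$ — is precisely your openness computation applied to the preimage. You merely package the two inclusions as the standard fact that a continuous open surjection is a quotient map, which is a tidier wrapper around the same substance.
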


\begin{proof}
Soit 
\begin{align*}
\rho_E\: :\quad \Gamma :=Gal(N/K) & \longrightarrow G:=Gal(E/K) \\
\gamma &\longmapsto \gamma_{\mid_E}
\end{align*}
l'homomorphisme de restriction à $E$. Considérons d'abord un voisinage $V$ d'un
élément $\gamma_{\mid_E}=\rho_{E}(\gamma)\,,\,\gamma\in\Gamma$, de $G$ muni de sa
topologie de Krull. Par définition de celle-ci, il existe une extension galoisienne
finie $E_{\gamma}/K\,,\:E_{\gamma}\subseteq E$, telle que $V\supseteq
\gamma_{\mid_E}\,Gal(E/E_{\gamma})$. D'après le lemme \ref{lem:topquo},
$$\rho_E(\gamma\,Gal(N/E_{\gamma}))=\gamma_{\mid_E}\,Gal(E/E \cap
E_{\gamma})=\gamma_{\mid_E}\,Gal(E/E_{\gamma}).$$  
Il s'ensuit que
$$\rho_E^{-1} (V) \supseteq 
\rho_E^{-1} (\gamma_{\mid_E} \, Gal(E/E_{\gamma}))=\rho_E^{-1} 
(\rho_E(\gamma\,Gal(N/E_{\gamma})))\supseteq \gamma\,Gal(N/E_{\gamma}).$$
Comme $\gamma\,Gal(N/E_{\gamma})$ est un ouvert pour la topologie de Krull
de $\Gamma$, on obtient que $\rho_E^{-1}(V)$ est un voisinage de $\gamma$ pour cette
topologie. Autrement dit, la topologie de Krull sur $G$ rend $\rho_E$ continue. Or
la topologie quotient sur $G$ est par définition la plus fine de celles rendant la
surjection $\rho_E$ continue. Ceci signifie que toute partie ouverte pour la
topologie de Krull de $G$ est ouverte pour la topologie quotient sur $G$
\cite[TGI.11]{Bo}.

Considérons maintenant un ouvert $\Theta$ pour la topologie quotient sur $G$ de la
topologie de Krull de $\Gamma$. Par la continuité de $\rho_E$ pour cette topologie,
$\Omega:=\rho_E^{-1}(\Theta)$ est un ouvert de $\Gamma$, donc voisinage de chacun
de ses points. Par définition de la topologie de Krull sur $\Gamma$, il existe, pour
tout $\omega \in \Omega$, une extension galoisienne finie
$E_{\omega}/K\,,\: E_{\omega} \subseteq N$, telle que
$$\omega \, Gal(N/E_{\omega}) \subseteq \Omega \:.$$
Donc clairement
$$ \Omega = \bigcup_{\omega \in \Omega} \omega \,Gal(N/E_{\omega}) \: .$$
Et comme $\rho_E$ est surjective
$$\Theta=\rho_E(\rho_E^{-1} (\Theta))=\rho_E(\Omega)=\bigcup_{\omega \in \Omega}
\omega_{\mid_E} \, \rho_E( \, Gal(N/E_{\omega}) \, ) \: .$$
En appliquant le lemme \ref{lem:topquo} à l'extension galoisienne quotient
$E_{\omega}/K$ de $N/K$, on obtient donc que
$$\Theta=\bigcup_{\omega \in \Omega} \omega_{\mid_E} \,Gal(E/E
\cap E_{\omega}) \: .$$
Or les extensions $E\cap E_{\omega}/K$ sont galoisiennes comme intersections
d'extensions galoisiennes de $K$, et elles sont finies comme chaque
$E_{\omega}/K$.
Par conséquent les $\omega_{\mid_E} \,Gal(E/E \cap E_{\omega})\quad (\omega \in
\Omega)$ sont des ouverts pour la topologie de Krull de $G$, et il en est de même
de $\Theta$ par union. On a donc montré que toute partie ouverte pour la
topologie quotient sur $G$ est ouverte pour la topologie de Krull de $G$. D'où
la conclusion.
\end{proof}


\vskip 8mm
\section{Propriétés générales}

La propriété suivante de décomposition en produit direct du groupe de Galois de
la diagonale d'un parallélogramme intervient fréquemment dans les démonstrations
des sections 4 à 6.

\begin{prop}{\rm (dite de "scindement de la diagonale")} \label{prop:scind} \\
Pour tout parallélogram\-me galoisien $[J,K,N,L]$, le groupe de Galois de la
diagonale $N/J$ se décompose en produit direct sous la forme
$$Gal(N/J)=Gal(N/K)\, \times \, Gal(N/L).$$
\end{prop}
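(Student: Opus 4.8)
The plan is to realize $Gal(N/J)$ as an \emph{internal} direct product of the two subgroups $Gal(N/K)$ and $Gal(N/L)$. First I would record that the diagonal $N/J$ is Galois (as observed just after the definition of a parallelogram), so that the Galois correspondence applies to $N/J$ and both $Gal(N/K)$ and $Gal(N/L)$ sit inside $Gal(N/J)$ as the subgroups fixing $K$, respectively $L$. Since $K/J$ and $L/J$ are Galois, these two subgroups are normal: $Gal(N/K) \unlhd Gal(N/J)$ and $Gal(N/L) \unlhd Gal(N/J)$.

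Second, I would verify the two algebraic conditions for an internal direct product. The intersection is trivial: $Gal(N/K) \cap Gal(N/L) = Gal(N/KL) = Gal(N/N) = \{1\}$ by $(Q_2)$. Normality of both factors together with this trivial intersection forces the two subgroups to commute elementwise, since for $\sigma \in Gal(N/K)$ and $\tau \in Gal(N/L)$ the commutator $\sigma\tau\sigma^{-1}\tau^{-1}$ lies in both factors and is therefore trivial. Consequently the multiplication map $Gal(N/K) \times Gal(N/L) \to Gal(N/J)$ is an injective homomorphism.

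Third, and this is the crux, I would prove surjectivity, i.e. that every $\gamma \in Gal(N/J)$ factors as $\gamma = \sigma\tau$. Here I would invoke the translation isomorphism supplied by Lemma~\ref{lem:topquo}, applied to the Galois extension $N/J$ with the Galois quotients $K/J$ and $L/J$: restriction to $K$ sends $Gal(N/L)$ onto $Gal(K/K \cap L) = Gal(K/J)$, and it is injective because any element of $Gal(N/L)$ trivial on $K$ fixes $KL = N$. Thus there is a unique $\tau \in Gal(N/L)$ with $\tau|_K = \gamma|_K$; setting $\sigma := \tau^{-1}\gamma$, one checks that for $x \in K$ one has $\sigma(x) = (\gamma|_K)^{-1}(\gamma(x)) = x$, so $\sigma$ fixes $K$ pointwise, $\sigma \in Gal(N/K)$, and $\gamma = \tau\sigma = \sigma\tau$. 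This exhibits the required factorisation, whence the multiplication map is a bijection.

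Finally, in infinite degree I would upgrade this bijection to a topological isomorphism, and I expect this topological refinement to be the only genuine obstacle. The multiplication map is continuous on the compact product $Gal(N/K) \times Gal(N/L)$ (each factor is a closed, hence compact, subgroup of the profinite group $Gal(N/J)$), so it is a continuous bijection between compact Hausdorff groups and therefore a homeomorphism. Alternatively, one may note that the fixed field of the product subgroup is $N^{Gal(N/K)} \cap N^{Gal(N/L)} = K \cap L = J$ and conclude through the Galois correspondence for $N/J$, using the topological comparisons of Section~3 to ensure that the product subgroup is already closed. Either route shows that the algebraic direct product is simultaneously the topological one, which completes the proof.
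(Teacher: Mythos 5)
Your proof is correct and takes essentially the same route as the paper's: trivial intersection from $KL=N$, normality of both factors from $K/J$ and $L/J$ being Galois, and surjectivity by using the restriction isomorphism $Gal(N/L)\isomto Gal(K/J)$ to peel off the $Gal(N/L)$-component of an arbitrary element. The topological coda you add is harmless but not required, since the proposition is stated as a purely algebraic decomposition.
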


\begin{proof}
Posons pour abréger
$$\Delta:=Gal(N/J),\quad \Gamma:=Gal(N/K), \quad \Lambda:=Gal(N/L).$$
On a $\Gamma \cap \Lambda= 1$ car tout élément de l'intersection doit laisser fixe le
compositum $KL=N$. De plus, les extensions $K/J$ et $L/J$ étant galoisiennes,
les sous-groupes $\Gamma$ et $\Lambda$ sont normaux dans $\Delta$. On en déduit
que leurs éléments commutent ; d'où l'existence du produit direct $\Gamma \times
\Lambda$. Reste à prouver que $\Delta=\Gamma\,\Lambda$. Soit $\delta \in\Delta$.
Dans le parallélogramme $[J,K,N,L]$, la restriction à $K$ 
\begin{align*}
\rho_K\: : \: \Lambda &\isomto Gal(K/J) \\
\lambda &\longmapsto \lambda_{\mid_K}
\end{align*}
est un isomorphisme de groupes. Considérons l'antécédent
$\lambda:=\rho_K^{-1}(\delta_{\mid_K})$, et posons $\gamma:=\delta\lambda^{-1}$.
Comme $\lambda_{\mid_K} = \delta_{\mid_K}$, on a pour tout élément $k\in K$
$$\gamma (k)=\delta ((\lambda^{-1})_{\mid_K}(k))=\delta_{\mid_K} 
((\lambda_{\mid_K})^{-1} (k))=\lambda_{\mid_K} \circ (\lambda_{\mid_K})^{-1}
 (k)=k,$$
ce qui prouve que $\gamma\in\Gamma$. Donc
$\delta=\gamma\lambda\in\Gamma\Lambda$, ce que l'on voulait.
\end{proof}

\vskip 5mm
Nous généralisons maintenant la partie I du théorème 1.3 de \cite{M-MD}.
Il est surprenant de constater qu'aucune propriété topologique (de fermeture)
n'est exigée sur les groupes considérés.

\begin{Th} \label{th:propfonct}
Soit $[J,K,N,L]$ un parallélogramme galoisien de degré quelconque.\\ 
(1) Pour tout sous-groupe $A$ de $Gal(L/J)$ :

(1-1) On a le sous-parallélogramme $[L^A,KL^A,N,L]$ où $L^A$ désigne le corps des
invariants dans $L$ de $A$.

(1-2) Si de plus $A$ est normal dans $Gal(L/J)$, on a le parallélogramme quotient
$[J,K,KL^A,L^A]$.  

\noindent (2) Pour tout sous-groupe $A$ de $Gal(N/K)$ :

(2-1) On a le sous-parallélogramme $[L^{(A_{\mid_L})},N^A,N,L]$ où $L^{(A_{\mid_L})}$ désigne
le corps des invariants dans $L$ de l'image de $A$ par la restriction à $L$.

(2-2) Si de plus $A$ est normal dans $Gal(N/K)$, on a le parallélogramme quotient
$[J,K,N^A,L^{(A_{\mid_L})}]$. 

\noindent (3) Pour tous sous-groupes $A_0$ et $A_1$ de $Gal(L/J)$ (resp. $Gal(N/K)$),
avoir $A_1$ normal dans $A_0$: $A_1 \unlhd A_0$, implique que l'on ait le
parallélogramme
$$[L^{A_0},KL^{A_0},KL^{A_1},L^{A_1}] \qquad \text{{\Large (}resp. } [L^{(A_{0\mid L})},N^{A_0},N^{A_1},
L^{(A_{1\mid L})}] \text{ {\Large )}}.$$
\end{Th}

\begin{proof} 
(1) Posons pour simplifier $F:=L^A$.
 
(1-1) La donnée du parallélogramme
$[J,K,N,L]$ induit l'isomorphisme de restriction à $K$
\begin{align*}
\rho_K\: : \: Gal(N/L) &\isomto Gal(K/J) \: .\\
\lambda &\longmapsto \lambda_{\mid_K}
\end{align*}
Clairement, $K \cap L =J$ implique $K \cap F=J$. En translatant l'extension
galoisienne $K/J$ par $F/J$, on obtient l'extension galoisienne $KF/F$ et
l'isomorphisme de restriction à $K$
$$ r_K\: : \: Gal(KF/F) \isomto Gal(K/J). $$
\cite[p.266,Th.1.12]{La}. Translatons ensuite l'extension galoisienne $KF/F$
par $L/F$ : comme $KFL=N$, on obtient l'isomorphisme de restriction à $KF$
$$\rho_{KF}\: : \: Gal(N/L) \isomto Gal(KF/KF \cap L) \leq Gal(KF/F).$$
Il est clair que $\rho_K=r_K \circ \, \rho_{KF}$. Soit alors $\gamma\in Gal(KF/F)$.
Il existe $\lambda \in Gal(N/L)$ tel que $r_K(\gamma)=\rho_K(\lambda)$, d'où
$$r_K(\gamma)=r_K(\rho_{KF} (\lambda)) \quad \ssi \quad
\gamma=\rho_{KF} (\lambda)$$
par injectivité de $r_K$. On en déduit que $\gamma$ appartient à $Gal(KF/KF \cap L)$ et
l'égalité $Gal(KF/F)=Gal(KF/KF \cap L)$. Ainsi :
$$F=(KF)^{Gal(KF/F)} = (KF)^{Gal(KF/KF \cap L)} =KF \cap L \:; $$
d'où le parallélogramme $[F,KF,N,L]=[L^A,KL^A,N,L]$.

\vskip 5mm
(1-2) D'après la proposition \ref{prop:adh}(1), la normalité de $A$ dans
$Gal(L/J)$ implique celle de son adhérence pour la topologie de Krull de
$Gal(L/J)$ :
$$\overline{A}=Gal(L/L^A) \unlhd Gal(L/J).$$
On en déduit que l'extension $F=L^A/J$ est galoisienne. En la translatant par
$K/J$, on obtient l'extension galoisienne $KF/K$. Comme $KF/F$ est galoisienne
par le (1-1), ceci suffit à prouver l'existence du parallélogramme $[J,K,KF,F]$.

\vskip 5mm
\noindent (2) (2-1) Par la proposition \ref{prop:scind}, $Gal(N/J)=Gal(N/K) \times
Gal(N/L)$. En particulier :
$$\forall \delta \in Gal(N/L^{(A_{\mid_L})}) \quad \exists! \kappa \in Gal(N/K) \quad
\exists! \lambda \in Gal(N/L) \qquad \delta=\kappa \lambda,$$
et par restriction à $L$ dans $Gal(N/J) \, : \,
\delta_{\mid_L}=\kappa_{\mid_L}$. Restreinte au sous-groupe $Gal(N/L^{(A_{\mid_L})})$,
cette restriction à $L$ est à valeurs dans $Gal(L/L^{(A_{\mid_L})})$, de sorte que
$\kappa_{\mid_L} \in Gal(L/L^{(A_{\mid_L})})$. De plus, en vertu du parallélogramme
$[J,K,N,L]$, on a l'homéomorphisme de groupes profinis munis de leurs topologies
de Krull :
\begin{align*}
\rho_L\: : \: Gal(N/K) &\isomto Gal(L/J) \;. \\
\gamma &\longmapsto \gamma_{\mid_L}
\end{align*}
Or, par un homéomorphisme, l'adhérence de l'image d'une partie est égale à
l'image de l'adhérence de cette partie. D'où
$$\overline{\rho_L(A)}=\rho_L(\overline{A})\:.$$
Ainsi, par le (0) de la section 3,
$$Gal(L/L^{(A_{\mid_L})})=Gal(L/L^{\rho_L(A)})=\rho_L(\overline{A})=\rho_L(Gal(N/N^A)).$$
Donc $\kappa_{\mid_L} =\rho_L(\kappa) \in \rho_L(Gal(N/N^A))$, et par
injectivité, $\kappa$ appartient nécessairement à $Gal(N/N^A)$. Ceci prouve que
$Gal(N/L^{(A_{\mid_L})})$ est inclus dans le produit direct $Gal(N/N^A) \times
Gal(N/L)$. Comme par ailleurs $Gal(N/N^A)$ et $Gal(N/L)$ sont inclus dans
$Gal(N/L^{(A_{\mid_L})})$, on a l'égalité
$$Gal(N/L^{(A_{\mid_L})})=Gal(N/N^A) \times Gal(N/L).$$
On en déduit que
\begin{align*} 
N^A \cap L &= N^{Gal(N/N^A)} \cap N^{Gal(N/L)} \\ 
&=N^{< Gal(N/N^A),Gal(N/L) >} \\ &=N^{Gal(N/L^{(A_{\mid_L})})}=L^{(A_{\mid_L})}.
\end{align*}
De plus, $N=KL\subseteq N^A L \subseteq N$ d'où $N=N^A L$, et l'on a bien le
parallélogramme $[L^{(A_{\mid_L})},N^A,N,L]$.

(2-2) Posons pour simplifier $F:=L^{(A_{\mid_L})}$ et $E:=N^A$. En vertu du (2-1)
précèdent, on a le parallélogramme $[F,E,N,L]$ et l'extension $E/F$ est
galoisienne. Dans les notations de la démonstration de ce même (2-1), on a
$$Gal(L/F)=\rho_L(Gal(N/E)) \:.$$
Or par la proposition \ref{prop:adh}(1), la
normalité de $A$ dans $Gal(N/K)$ implique celle de $\overline{A}=Gal(N/E)$ qui
se transmet par l'isomorphisme $\rho_L$ à $Gal(L/F)$, de sorte que $F/J$ est une
extension galoisienne. Clairement d'autre part, $K \cap F=J$ (car $K \cap L=J$)
et $KF \subseteq E$. Il reste à prouver que cette dernière inclusion est une
égalité. Appliquons pour cela la proposition \ref{prop:scind} dans les
parallélogrammes $[F,E,N,L]$ et $[F,KF,N,L]$ (cf. (1-1)). On a :
$$Gal(N/F)=Gal(N/E) \times Gal(N/L) = Gal(N/KF) \times Gal(N/L).$$
Pour tout $\kappa \in Gal(N/KF)$, il existe donc $\alpha \in Gal(N/E)$ et
$\lambda \in Gal(N/L)$ tels que $\kappa \, id_N=\alpha \lambda$. Or, de $KF
\subseteq E$ suit $Gal(N/E) \subseteq Gal(N/KF)$ d'où $\alpha \in Gal(N/KF)$.
Par unicité des décompositions dans un produit direct, on en déduit que $\kappa
= \alpha \in Gal(N/E)$, ce qui prouve que $Gal(N/KF)=Gal(N/E)$. Mais alors :
$$E=N^{Gal(N/E)}=N^{Gal(N/KF)}=KF$$
ce que l'on voulait.

\noindent (3) Par le (1-1) (resp. le (2-1)), la donnée d'un sous-groupe $A_0$ de
$Gal(L/J)$ \- (resp. $Gal(N/K)$ ) induit le parallélogramme
$$[L^{A_0},KL^{A_0},N,L] \quad \text{(resp.} \; [L^{A_{0\mid_L}},N^{A_0},N,L] \;
\text{).} $$
De plus, d'après la proposition \ref{prop:adh}(1), avoir $A_1 \unlhd A_0$
implique que $\overline{A_1} \unlhd \overline{A_0}$. Si
$\overline{A_0}=Gal(L/L^{A_0})$, le (1-2) fournit donc le parallélogramme
$$[L^{A_0},KL^{A_0},KL^{A_0}L^{\overline{A_1}},L^{\overline{A_1}}]=[L^{A_0},KL^{A_0}
,KL^{A_1},L^{A_1}] $$
en vertu du (2) de la proposition \ref{prop:adh}. Enfin, si
$\overline{A_0}=Gal(N/N^{A_0})$, en utilisant que par l'homéomorphisme
de restriction à L
$$(\overline{A_1})_{\mid_L}=\overline{({A_1}_{\mid_L})}\;,$$
on obtient par le (2-2) le parallélogramme
$$[L^{({A_0}_{\mid_L})},N^{A_0},N^{\overline{A_1}},L^{\overline{({A_1}_{\mid_L})}}]=
[L^{({A_0}_{\mid_L})},N^{A_0},N^{A_1},L^{({A_1}_{\mid_L})}] \:.$$
\vskip -5mm
\end{proof}

\begin{cor} \label{cor:propfonct} 
Soit $[J,K,N,L]$ un parallélogramme de degré quelconque. \\ 
(1) Pour tout corps intermédiaire $J \subseteq F \subseteq L$ :\\
\indent (1-1) On a le sous-parallélogramme $[F,KF,N,L]$. \\ 
\indent (1-2) Le fait d'avoir l'extension quotient $F/J$ galoisienne implique
l'existence du parallélogramme quotient $[J,K,KF,F]$.\\ 
(2) Pour tout corps intermédiaire $K \subseteq E \subseteq N$ :\\
\indent (2-1) On a le sous-parallélogramme $[E \cap L,E,N,L]$.\\
\indent (2-2) Le fait d'avoir l'extension quotient $E/K$ galoisienne implique
l'existence du parallélogramme quotient $[J,K,E,E \cap L]$.
\end{cor}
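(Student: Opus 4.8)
The plan is to read Corollary \ref{cor:propfonct} as the purely field-theoretic translation of Theorem \ref{th:propfonct}: every intermediate field of a Galois edge corresponds, via the Galois correspondence, to the subgroup fixing it, and conversely the fixed field of such a subgroup recovers that intermediate field. So for each of the four items I would choose the subgroup $A$ whose fixed field is the prescribed intermediate field, invoke the matching item of Theorem \ref{th:propfonct}, and rewrite the resulting (sub- or quotient) parallelogram in terms of that field. The entire argument is thus a dictionary translation, the only genuine computations being the identification of a few fixed fields.

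For (1), given $J \subseteq F \subseteq L$, I would set $A := Gal(L/F)$. Since $L/J$ is a Galois edge of the parallelogram, the sub-extension $L/F$ is again Galois, whence $L^A = L^{Gal(L/F)} = F$. Item (1-1) of Theorem \ref{th:propfonct} then produces the sub-parallelogram $[L^A, KL^A, N, L] = [F, KF, N, L]$, which is exactly (1-1). For (1-2), the hypothesis that $F/J$ be Galois is, by the fundamental theorem of (possibly infinite) Galois theory applied to $L/J$, equivalent to $A \unlhd Gal(L/J)$; item (1-2) of the theorem then yields $[J, K, KL^A, L^A] = [J, K, KF, F]$, as required.

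For (2), given $K \subseteq E \subseteq N$, I would set $A := Gal(N/E)$. Because $N/K$ is a Galois edge, $N/E$ is Galois and $N^A = E$. The one point needing a short verification is the identification $L^{(A_{\mid_L})} = E \cap L$: an element $x \in L$ is fixed by every $\sigma_{\mid_L}$ with $\sigma \in A$ if and only if $\sigma(x) = x$ for all $\sigma \in Gal(N/E)$, i.e. if and only if $x \in N^A = E$; intersecting with $L$ gives $L^{(A_{\mid_L})} = L \cap E$. Feeding these into item (2-1) yields the sub-parallelogram $[L^{(A_{\mid_L})}, N^A, N, L] = [E \cap L, E, N, L]$, which is (2-1). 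For (2-2), the assumption that $E/K$ be Galois is equivalent to $A = Gal(N/E) \unlhd Gal(N/K)$, so item (2-2) of the theorem delivers $[J, K, N^A, L^{(A_{\mid_L})}] = [J, K, E, E \cap L]$.

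The delicate part is not any single computation but the arbitrary-degree bookkeeping: I must ensure the Galois correspondence may legitimately be invoked even in infinite degree. This is harmless here because the subgroups I choose, $Gal(L/F)$ and $Gal(N/E)$, are automatically closed, each being the Galois group of a genuine sub-extension, so the fixed-field identities $L^{Gal(L/F)} = F$ and $N^{Gal(N/E)} = E$ hold with no closure hypothesis; and the equivalences between $F/J$ (resp. $E/K$) being Galois and $Gal(L/F) \unlhd Gal(L/J)$ (resp. $Gal(N/E) \unlhd Gal(N/K)$) are precisely the content of Krull's theorem, in accordance with Proposition \ref{prop:adh}. With these observations in place, each of the four items reduces to a one-line application of the corresponding item of Theorem \ref{th:propfonct}.
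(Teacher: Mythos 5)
Your proof is correct and follows exactly the route the paper intends: the corollary is stated without proof precisely because it is the specialization of Theorem \ref{th:propfonct} to $A := Gal(L/F)$ (resp. $A := Gal(N/E)$), together with the identifications $L^{Gal(L/F)}=F$, $N^{Gal(N/E)}=E$ and $L^{(A_{\mid_L})}=E\cap L$, all of which you verify correctly. Your additional remark that these subgroups are closed, so that the fixed-field identities and the Krull equivalence with normality hold in arbitrary degree, is exactly the point that makes the reduction legitimate.
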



\vskip 12mm
\section{Théorie de Galois générale algébrique en dimension 2}

On développe dans cette section une théorie de Galois générale en dimension 2
dont les résultats sont indépendants de toute topologie. Elle contient la théorie
de Galois générale des extensions de corps, celles-ci n'étant que des parallélogrammes
plats (Sect.2).

Le théorème suivant associe à tout sous-groupe du groupe de Galois d'un \-
parallélogramme (Déf. \ref{def:grgal}) un sous-parallélogramme et un
quadrilatère quotient.

\begin{Th} \label{th:exalg} 
Soit $[J,K,N,L]$ un parallélogramme galoisien de degré quelconque.

(1) Pour tout sous-groupe $A$ (resp. $B$) de $Gal(N/K)$ (resp. $Gal(N/L)$), on a
le sous-parallélogramme galoisien
$$[N^{A \times B},N^A,N,N^B]\:,$$
et le quadrilatère corporel $$(J,K^{(B_{\mid_K})},N^{A \times
B},L^{(A_{\mid_L})})\:,$$
où $A_{\mid_L}$ (resp. $B_{\mid_K}$) est l'image de $A$ (resp. $B$) par la
restriction à $L$ (resp. $K$).

(2) Si de plus $A$ (resp. $B$) est normal dans $Gal(N/K)$ (resp. $Gal(N/L)$), on
a le parallélogramme galoisien quotient $[J,K^{(B_{\mid_K})},N^{A \times
B},L^{(A_{\mid_L})}]$. 
\end{Th}

\begin{figure}[!h]
\begin{center}
\vskip -3mm
\includegraphics[width=10cm]{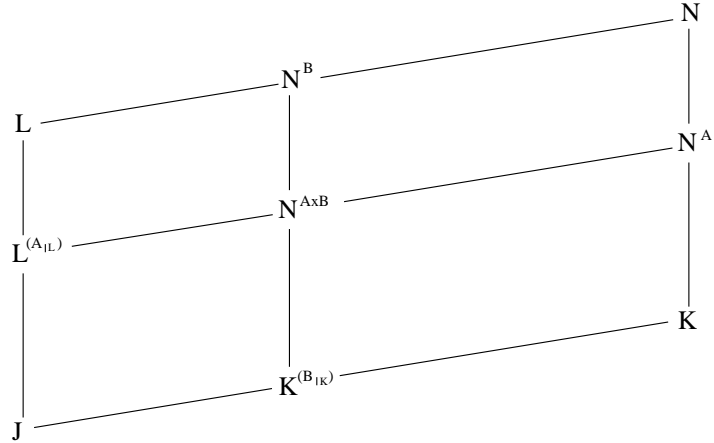}
\end{center}
\vskip -6mm
\rm{\caption{\label{fig:6}\leg Sous-parallélogramme \& parallélogramme quotient}}
\vskip -1mm
\end{figure}

\newpage
\begin{proof}
 (1) \emph {Existence de} $[N^{A \times B},N^A,N,N^B]$. Clairement, le sous-groupe de
 $Gal(N/J)$ engendré par $A$ et $B$ est égal au produit direct de $A$ par $B$,
 d'où
 $$N^A \cap N^B = N^{<A,B>} = N^{A \times B} \: .$$
 Ensuite, on déduit de $K \subseteq N^A \subseteq N$ et $L \subseteq N^B
 \subseteq N$ que
 $$KL=N \subseteq N^A N^B \subseteq N \: ,$$
 d'où $N^A N^B =N$. Ceci prouve l'existence du sous-quadrilatère $(N^{A \times
 B},N^A,N,N^B)$. De plus, on a le sous-parallélogramme
 $[L^{(A_{\mid_L})},N^A,N,L]$ d'après le (2-1) du théorème \ref{th:propfonct}. 
 En particulier, l'extension $N^A/L^{(A_{\mid_L})}$ est galoisienne. Comme
 $$L^{(A_{\mid_L})}=N^A \cap L \subseteq N^A \cap N^B = N^{A \times B} \: ,$$
 la sous-extension $N^A/N^{A \times B}$ est galoisienne. Par le même raisonnement
 dans le parallélogramme transposé $[J,L,N,K]$, on déduit cette fois du sous-parallélogramme\\
 $[K^{(B_{\mid_K})},N^B,N,K]$ que l'extension $N^B/N^{A \times B}$ est
 galoisienne.

  \emph {Existence de} $(J,K^{(B_{\mid_K})},N^{A \times
 B},L^{(A_{\mid_L})})$. A l'évidence, $K \cap L=J$ implique $K^{(B_{\mid_K})}
 \cap L^{(A_{\mid_L})}=J$. Reste à prouver que $K^{(B_{\mid_K})} \, L^{(A_{\mid_L})}= N^{A \times B}$. En appliquant le scindement de la diagonale
 (Prop.\ref{prop:scind}) dans les parallélogrammes galoisiens $[K^{(B_{\mid_K})},N^B,N,K]$
 et $[L^{(A_{\mid_L})},N^A,N,L]$, on a respectivement
 $$Gal(N/K^{(B_{\mid_K})})=Gal(N/K) \times Gal(N/N^B) \;, $$
 $$Gal(N/L^{(A_{\mid_L})})=Gal(N/N^A) \times Gal(N/L) \;. $$
 Ainsi, l'intersection
 $$Gal(N/K^{(B_{\mid_K})}) \cap Gal(N/L^{(A_{\mid_L})})=Gal(N/K^{(B_{\mid_K})} \,
 L^{(A_{\mid_L})})$$
 est égale à
 $$Gal(N/N^A) \times Gal(N/N^B) = Gal(N/N^{A \times B})$$
 en vertu du sous-parallélogramme $[N^{A \times B},N^A,N,N^B]$.
 On en déduit
 $$K^{(B_{\mid_K})} \, L^{(A_{\mid_L})} =N^{Gal(N/K^{(B_{\mid_K})}L^{(A_{\mid_L})})}=N^{Gal(N/N^{A \times B})}=N^{A \times B}$$
 ce que l'on voulait.
 
 (2) D'après le (2-2) du théorème \ref{th:propfonct}, le fait que $A$ (resp.
 $B$) soit normal dans $Gal(N/K)$ (resp. $Gal(N/L)$) induit le parallélogramme
 $$[J,K,N^A,L^{(A_{\mid_L})}] \qquad \text{{\Large (} resp. }
 [J,L,N^B,K^{(B_{\mid_K})}]  \text{ {\Large )}.}$$
  En particulier, les extensions $L^{(A_{\mid_L})}/J$ et $K^{(B_{\mid_K})}/K$  sont
 galoisiennes.
 Ceci suffit à prouver que le quadrilatère $(J,K^{(B_{\mid_K})},N^{A \times
 B},L^{(A_{\mid_L})})$ du (1) est bien un parallélogramme.
\end{proof}

\vskip 2mm
La proposition suivante est générale et algébrique dans la mesure où elle
s'énonce sans argument topologique. Elle conduira, par restriction aux
sous-groupes fermés pour la topologie de Krull, au théorème \ref{th:final}.

\begin{prop} \label{prop:phialg}
Soit $[J,K,N,L]$ un parallélogramme galoisien de degré quelconque.\\ 
(1) Sous-parallélogrammes galoisiens

(1-1) L'application
$$\Phi_s \; : \; [M,E,N,F] \longmapsto Gal[M,E,N,F]$$
est une injection de l'ensemble des sous-parallélogrammes de $[J,K,N,L]$ dans
l'ensemble des sous-bigroupes du groupe de Galois de $[J,K,N,L]$ (cf. Déf.
\ref{def:grgal}).

(1-2) L'application
$$\Psi_s \; : \; (A,B) \longmapsto [N^{A \times B},N^A,N,N^B]$$
est une surjection de l'ensemble des sous-bigroupes de $Gal[J,K,N,L]$
sur l'ensemble des sous-parallélogrammes de $[J,K,N,L]$.

(1-3) Le composé $\Psi_s \circ \Phi_s$ est l'identité. 
\vskip 3mm
\noindent (2)  Parallélogrammes galoisiens quotients

(2-0) Pour tout parallélogramme quotient $[J,E,C,F]$ de $[J,K,N,L]$,
il existe un unique sous-bigroupe normal $(A,B)$ de $Gal[J,K,N,L]$ tel que l'on
ait par restriction $A_{\mid_L}=Gal(L/F)$ et $B_{\mid_K}=Gal(K/E)$. Précisément :
$$A=Gal(N/KF) \quad , \qquad B=Gal(N/EL) \:.$$

(2-1) Dans les notations du (2-0), l'application
$$\Phi_q \; : \; [J,E,C,F] \longmapsto (A,B)$$
est une injection de l'ensemble des parallélogrammes quotients de
$[J,K,N,L]$ dans l'ensemble des sous-bigroupes normaux de $Gal[J,K,N,L]$.

(2-2) L'application
$$\Psi_q \; : \; (A,B) \longmapsto [J,K^{(B_{\mid_K})},N^{A \times
 B},L^{(A_{\mid_L})}]$$
 est une surjection de l'ensemble des sous-bigroupes normaux de $Gal[J,K,N,L]$ sur
 l'ensemble des parallélogrammes quotients de $[J,K,N,L]$.
 
 (2-3) Le composé $\Psi_q \circ \Phi_q$ est l'identité.
 
 (2-4) Dans les notations du (2-0), on a l'isomorphisme
 $$Gal[J,E,C,F] \isomto  Gal[J,K,N,L] \,/\, (A,B)\:.$$
\end{prop}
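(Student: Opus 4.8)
The whole statement is an exercise in assembling Theorems \ref{th:propfonct} and \ref{th:exalg} with the elementary fact $N^{Gal(N/E)} = E$, valid for every Galois extension $N/E$; no fresh fixed-field computation is required. The organizing principle is that each surjection $\Psi$ will be a one-sided inverse of the matching injection $\Phi$: once I prove the single identity $\Psi\circ\Phi = \mathrm{id}$, the injectivity of $\Phi$ (it then has a left inverse) and the surjectivity of $\Psi$ (it then has a right inverse) both follow formally.

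For (1), that $\Phi_s$ lands in sub-bigroups is immediate from $K \subseteq E$, $L \subseteq F$, which give $Gal(N/E) \leq Gal(N/K)$ and $Gal(N/F) \leq Gal(N/L)$, and that $\Psi_s$ lands in sub-parallelograms is exactly Theorem \ref{th:exalg}(1). It then remains to prove (1-3). Starting from $[M,E,N,F]$, I put $(A,B) = (Gal(N/E),Gal(N/F))$; since the edges $N/E$, $N/F$ are Galois I recover $N^A = E$ and $N^B = F$, and since $\langle A,B\rangle = A\times B$ inside $Gal(N/J)$ (Proposition \ref{prop:scind}) I get $N^{A\times B} = N^A \cap N^B = E\cap F = M$, the last equality being axiom $(Q_1)$. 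Hence $\Psi_s\circ\Phi_s = \mathrm{id}$, which yields (1-1) and (1-2).

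The heart of (2) is (2-0). Uniqueness is structural: the parallelogram supplies the restriction isomorphisms $\rho_L\colon Gal(N/K)\isomto Gal(L/J)$ and $\rho_K\colon Gal(N/L)\isomto Gal(K/J)$, so the prescriptions $A_{\mid_L} = Gal(L/F)$ and $B_{\mid_K} = Gal(K/E)$ pin down $A$ and $B$ as the unique $\rho$-preimages. For existence I verify the proposed $A = Gal(N/KF)$: because $F \subseteq L$, a $\gamma \in Gal(N/K)$ fixes $KF$ iff it fixes $F$ iff $\gamma_{\mid_L} \in Gal(L/F)$, so $A = \rho_L^{-1}(Gal(L/F))$ and therefore $A_{\mid_L} = Gal(L/F)$; symmetrically for $B = Gal(N/EL)$. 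Normality of $A$ in $Gal(N/K)$ follows from $KF/K$ being Galois (translation of the Galois extension $F/J$ by $K/J$), and likewise for $B$.

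The remaining items are formal. From $(A,B)$ one reads back $F = L^{(A_{\mid_L})}$, $E = K^{(B_{\mid_K})}$ (using $K^{Gal(K/E)} = E$, legitimate since $K/J$ Galois forces $K/E$ Galois) and $C = EF$, so $\Phi_q$ is injective and $\Psi_q\circ\Phi_q = \mathrm{id}$, giving (2-1) and (2-3); that $\Psi_q$ lands among quotient parallelograms is Theorem \ref{th:exalg}(2), whence (2-2). For (2-4) I restrict elements of $Gal(N/K)$ to the Galois field $C = EF$, obtaining $\pi\colon Gal(N/K)\to Gal(C/E)$ with kernel $Gal(N/KC) = Gal(N/KF) = A$ (as $E\subseteq K$ gives $KC = KF$); the first isomorphism theorem then yields $Gal(N/K)/A \isomto Gal(C/E)$, and symmetrically $Gal(N/L)/B \isomto Gal(C/F)$, which componentwise is the claimed bigroup isomorphism. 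The one step needing a genuine construction rather than a citation is the surjectivity of $\pi$: I would produce a preimage of $r\in Gal(C/E)$ by lifting $r_{\mid_F}\in Gal(F/J)$ through the restriction $Gal(N/K)\twoheadrightarrow Gal(KF/K)\isomto Gal(F/J)$ and checking, via injectivity of $Gal(C/E)\isomto Gal(F/J)$, that the lift restricts to $r$ on $C$; this is the place where I expect to have to work, everything else being bookkeeping on top of the two structure theorems.
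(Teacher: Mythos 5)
Your proposal is correct and follows essentially the same route as the paper: the same ingredients (Théorème \ref{th:exalg}, le corollaire \ref{cor:propfonct}, le scindement de la diagonale et la prise d'invariants $N^{Gal(N/E)}=E$) organised around l'identité $\Psi\circ\Phi=\mathrm{id}$, dont le texte tire lui aussi la surjectivité de $\Psi_s$, $\Psi_q$ et l'injectivité de $\Phi_q$. Les seules variantes sont cosmétiques : vous déduisez l'injectivité de $\Phi_s$ formellement de (1-3) là où le texte la prouve directement, et pour (2-4) vous appliquez le premier théorème d'isomorphisme à la restriction $Gal(N/K)\to Gal(C/E)$ de noyau $Gal(N/KF)$ au lieu de composer les isomorphismes de restriction via $Gal(F/J)$ et $Gal(KF/K)$ — votre argument de surjectivité passe d'ailleurs par ces mêmes groupes intermédiaires.
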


\begin{proof} 
(1) (1-1) Par la définition \ref{def:grgal}, 
$$Gal[M,E,N,F] = Gal[M',E',N,F'] \quad \ssi \quad 
\left\{ \begin{array}{rl}
Gal(N/E) &= Gal(N/E') \\
Gal(N/F) &= Gal(N/F')
\end{array} \right. \: .$$
En prenant les invariants dans $N$ de ces groupes, on en déduit que $E=E'$ et
$F=F'$ ; d'où $M=E \cap F=E' \cap F'=M'$, et l'injectivité de $\Phi_s$ est
prouvée.

(1-2) L'application $\Psi_s$ existe en vertu du (1) du théorème \ref{th:exalg}.
Sa surjectivité résulte immédiatement du (1-3).

(1-3) Soit $[M,E,N,F]$ un sous-parallélogramme de $[J,K,N,L]$. D'après la
proposition \ref{prop:scind}, $Gal(N/E) \times Gal(N/F)=Gal(N/M)$, de sorte que
\begin{align*}
\Psi_s \circ \Phi_s ([M,E,N,F]) &= \Psi_s(Gal(N/E)\, , \, Gal(N/F)) \\
&= [N^{Gal(N/M)},N^{Gal(N/E)},N,N^{Gal(N/F)}] \\
&= [M,E,N,F] \: .
\end{align*}

\noindent (2) (2-0) D'après le (1-1) du corollaire \ref{cor:propfonct}, on a le
parallélogramme $[F,KF,N,L]$ dans lequel 
$Gal(N/KF)_{\mid_L} = Gal(L/F) \: .$
De plus, dans $[J,E,C,F]$, l'extension $F/J$ est galoisienne
et, d'après le (1-2) de ce même corollaire, on a le parallélogramme quotient
$[J,K,KF,F]$. En particulier l'extension $KF/K$ est galoisienne et
$Gal(N/KF)$ est un sous-groupe normal de $Gal(N/K)$, ce qui permet de poser
$A:=Gal(N/KF)$. Même raisonnement dans le parallélogramme transposé $[J,L,N,K]$
en prenant $B:=Gal(N/EL)$. De plus, dans $[J,K,N,L]$, les restrictions à $L$ et
$K$ sont des isomorphismes ; donc les sous-groupes $A$ et $B$ sont nécessairement uniques.

(2-1),(2-2),(2-3) L'existence de l'application $\Phi_q$ résulte de l'unicité du
sous-bigroupe normal $(A,B)$ du (2-0). L'application $\Psi_q$ résulte quant à
elle du (2) du théorème \ref{th:exalg}. Pour $A=Gal(N/KF)$ et $B=Gal(N/EL)$, on
a par (2-0) :
\begin{align*}
\Psi_q \circ \Phi_q ([J,E,C,F]) &= \Psi_q(A,B) \\
&=[J,K^{Gal(K/E)},N^{A \times B},L^{Gal(L/F)}] \\
&=[J,E,N^{A \times B},F] \: .
\end{align*}

\noindent De ce dernier parallélogramme, on déduit en particulier que $EF=N^{A \times B}$, et
dans $[J,E,C,F]$, on a $EF=C$. Finalement le composé $\Psi_q \circ \Phi_q$ est
l'identité, ce qui implique que $\Phi_q$ est injective et $\Psi_q$ surjective.

(2-4) Par définition
$$Gal[J,E,C,F]=(Gal(C/E),Gal(C/F))\isomto (Gal(F/J),Gal(E/J)).$$
De l'existence des parallélogrammes $[J,K,KF,F]$ et $[J,L,EL,E]$ (cf. Th
\ref{th:propfonct} \\(1-2)), on déduit alors que
\begin{align*}
Gal[J,E,C,F]&\isomto (Gal(KF/K),Gal(EL/L)) \\
&\isomto (Gal(N/K)/Gal(N/KF)\, ,\, Gal(N/L)/Gal(N/EL)) \\
&\isomto Gal[J,K,N,L] /(A,B)
\end{align*}
par (2-0).
\end{proof}

\vskip 5mm
Le théorème suivant fournit des égalités générales entre corps assez
surprenantes au vu des hypothèses minimalistes. Elles laissent entrevoir des
propriétés inattendues des extensions galoisiennes qui seront étudiées
au chapitre 4 avec la notion de tour galoisienne de composition.
De plus, ces égalités induisent comme une dualité entre le compositum et l'intersection 
de deux corps $K$ et $L$. Nous la traduisons pour ce qui nous concerne en termes de quadrilatères 
dans la proposition \ref{prop:rets} à suivre : il y a correspondance biunivoque entre les sous-quadrilatères 
et les quadrilatères quotients de tout parallélogramme $[K\cap L,K,KL,L]$.

\vskip 2mm
\begin{Th}{\rm( dit "de l'écartelé"\,\footnote[2]{\,du nom du blason qu'évoquent, en héraldique, les
figures correspondant au théorème.})} \label{th:ecart}\index{Théorème!de
l'écartelé}\index{Ecartelé@Écartelé} \\
Soient $K$ et $L$ deux corps contenus dans un même corps et $J:=K \cap L$. On
suppose seulement les extensions $K/J$ et $L/J$ galoisiennes, leurs degrés étant
quelconques. Alors :\\ 
(1) Pour tous corps intermédiaires $E$ et $F$ : $\; J \subseteq E \subseteq K
\;, \; J \subseteq F \subseteq L \: ,$
on a l'égalité
$$KF \cap EL =EF \:.$$

\begin{figure}[!h]
\begin{center}
\vskip -4mm
\includegraphics[width=8.2cm]{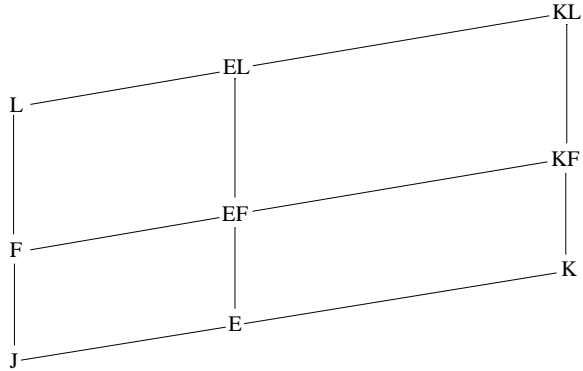}
\end{center}
\vskip -7mm
\rm{\caption{\label{fig:7}\leg Ecartelé compositum}}
\vskip -2mm
\end{figure}

\vskip 2mm
\noindent (2) Pour tous corps intermédiaires $E$ et $F$ : $\; K \subseteq E
\subseteq KL \;, \; L \subseteq F \subseteq KL \: ,$
on a l'égalité
$$(K \cap F) (E \cap L) = E \cap F \:.$$

\begin{figure}[!h]
\begin{center}
\vskip -4mm
\includegraphics[width=8.2cm]{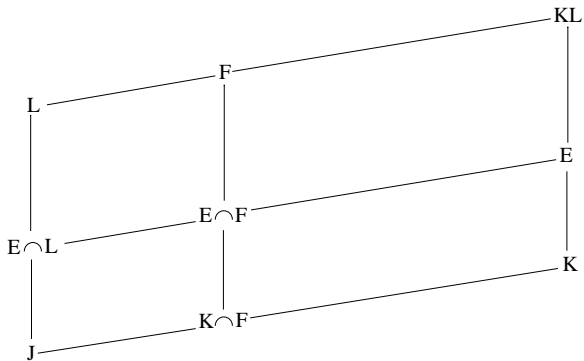}
\end{center}
\vskip -7mm
\rm{\caption{\label{fig:8} \leg Ecartelé intersection}}
\vskip -3mm
\end{figure}

\end{Th}

\newpage
\begin{proof}
Posons $N:=KL$. Comme les extensions $K/J$ et $L/J$ sont galoisiennes, on dispose
du parallélogramme galoisien $[J,K,N,L]$.\\ 
(1) D'après le (1) du corollaire \ref{cor:propfonct},
on a le sous-parallélogramme $[F,KF,N,L]$. Dans celui-ci, le scindement de la
diagonale (Prop. \ref{prop:scind}) fournit l'égalité
$$Gal(N/F) = Gal(N/KF) \times Gal(N/L) \:.$$
De même, dans le parallélogramme transposé $[J,L,N,K]$, on a le
sous-parallélo\-gramme $[E,EL,N,K]$ et l'égalité
$$Gal(N/E)=Gal(N/K) \times Gal(N/EL) \:.$$
On en déduit que
$$Gal(N/EF)=Gal(N/F) \cap Gal(N/E) = Gal(N/KF) \times Gal(N/EL)$$
et ainsi
$$EF=N^{Gal(N/EF)}=N^{Gal(N/KF) \times Gal(N/EL)} \:.$$
Il en résulte, par le (1-1) de la proposition \ref{prop:phialg}, que
$$\Psi_s(Gal(N/KF),Gal(N/EL))=[EF,KF,N,EL]\:,$$
et en particulier $KF \cap EL =EF$.

\noindent (2) Posons $A:=Gal(N/E),\: B:=Gal(N/F)$. D'après le (2-1) du théorème
\ref{th:propfonct}, on a les parallélogrammes $[L^{(A_{\mid_L})},E,N,L] \; , \;
[K^{(B_{\mid_K})},F,N,K]$, et donc 
$$E \cap L= L^{(A_{\mid_L})} \; , \; K \cap F= K^{(B_{\mid_K})} \:.$$
D'autre part, d'après le (1) du théorème \ref{th:exalg}, on a le quadrilatère
$$(J,K^{(B_{\mid_K})},N^{A \times B},L^{(A_{\mid_L})}) $$
où $N^{A \times B}=N^A \cap N^B$. Ainsi :
$$E \cap F =N^A \cap N^B=N^{A \times B}=K^{(B_{\mid_K})} L^{(A_{\mid_L})}=(K
\cap F)(E \cap L) \: .$$ \vskip -5mm
\end{proof}

\vskip 5mm
\begin{cor}
Soient $K$ et $L$ deux corps contenus dans un même corps. On suppose seulement
que les extensions $K/(K \cap L)$ et $L/(K \cap L)$ sont galoisiennes.

Alors :

\noindent (1) Pour tous corps intermédiaires $E$ et $F$ :$\; K \cap L \subseteq
E \subseteq K \; , \; K \cap L \subseteq F \subseteq L \:,$
on a le parallélogramme galoisien
$$[EF,KF,KL,EL]$$
et le quadrilatère corporel
$$(K \cap L,E,EF,F) \:.$$

\noindent (2) Pour tous corps intermédiaires $E$ et $F$ :$\; K \subseteq E
\subseteq KL \; , \; L \subseteq F \subseteq KL \:,$
on a le parallélogramme galoisien
$$[E \cap F,E,KL,F]$$
et le quadrilatère corporel
$$(K \cap L,K \cap F,E \cap F,E \cap L) \:.$$
\end{cor}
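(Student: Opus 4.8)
The plan is to read off both statements as specializations of Theorem~\ref{th:exalg}(1), choosing the subgroups $A$ and $B$ so that the fixed fields $N^A$, $N^B$, $N^{A \times B}$ are the composita named in part~(1), respectively the intermediate fields named in part~(2); the \'ecartel\'e theorem (Theorem~\ref{th:ecart}) then identifies the entries of the resulting quadrilateral. I would set $N := KL$ and $J := K \cap L$. Since $K/J$ and $L/J$ are Galois, $[J,K,N,L]$ is a Galois parallelogram and, by composition, its diagonal $N/J$ is Galois; in particular $N/K$ and $N/L$ are Galois, whence $N/M$ is Galois for every $M$ with $K \subseteq M \subseteq N$ and for every $M$ with $L \subseteq M \subseteq N$. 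This is the point that lets the fixed-field operator recover each such $M$ as $N^{Gal(N/M)}$.

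For part~(1), I would apply Theorem~\ref{th:exalg}(1) to $[J,K,N,L]$ with $A := Gal(N/KF)$ and $B := Gal(N/EL)$. The inclusions $K \subseteq KF$ and $L \subseteq EL$ give $A \le Gal(N/K)$ and $B \le Gal(N/L)$, and since $N/KF$ and $N/EL$ are Galois one has $N^A = KF$ and $N^B = EL$. Then $N^{A \times B} = N^A \cap N^B = KF \cap EL$, which is $EF$ by Theorem~\ref{th:ecart}(1), so the subparallelogram $[N^{A \times B},N^A,N,N^B]$ furnished by Theorem~\ref{th:exalg}(1) is exactly $[EF,KF,KL,EL]$. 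For the attached quadrilateral $(J,K^{(B_{\mid_K})},N^{A \times B},L^{(A_{\mid_L})})$, the subparallelograms of Theorem~\ref{th:propfonct}(2-1), applied in $[J,K,N,L]$ and in its transpose, give $L^{(A_{\mid_L})} = N^A \cap L = KF \cap L$ and $K^{(B_{\mid_K})} = N^B \cap K = EL \cap K$; evaluating Theorem~\ref{th:ecart}(1) in the degenerate cases $E = J$ and $F = J$ then collapses these to $KF \cap L = F$ and $EL \cap K = E$, so the quadrilateral is $(K \cap L,E,EF,F)$.

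For part~(2), I would instead take $A := Gal(N/E)$ and $B := Gal(N/F)$. Here $K \subseteq E$ and $L \subseteq F$ give $A \le Gal(N/K)$ and $B \le Gal(N/L)$, and $N/E$, $N/F$ Galois give $N^A = E$, $N^B = F$; hence $N^{A \times B} = E \cap F$ and Theorem~\ref{th:exalg}(1) yields the subparallelogram $[E \cap F,E,KL,F]$ directly. For the quadrilateral, Theorem~\ref{th:propfonct}(2-1) gives $L^{(A_{\mid_L})} = N^A \cap L = E \cap L$ and $K^{(B_{\mid_K})} = N^B \cap K = K \cap F$, while the top $N^{A \times B} = E \cap F$ equals $(K \cap F)(E \cap L)$ by Theorem~\ref{th:ecart}(2); thus the quadrilateral is $(K \cap L,K \cap F,E \cap F,E \cap L)$.

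I do not expect a real obstacle, since all the force is already in Theorems~\ref{th:exalg} and~\ref{th:ecart}; the work is purely in the bookkeeping. The only steps needing care are choosing the four correct subgroups, checking the two group inclusions each time, and recognizing the degenerate instances $KF \cap L = F$ and $EL \cap K = E$ of the \'ecartel\'e identity. The single substantive remark I would flag is that $N/KF$, $N/EL$, $N/E$ and $N/F$ are all Galois precisely because they are top-subextensions of the Galois diagonal $N/J$, which is what makes the fixed-field computations match the stated fields.
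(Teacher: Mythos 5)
Your proof is correct and follows essentially the same route as the paper's: both statements are read off Theorem~\ref{th:exalg}(1) combined with Theorem~\ref{th:ecart}, and your choice $A=Gal(N/KF)$, $B=Gal(N/EL)$ in part~(1) reproduces exactly the computation $\Psi_s(Gal(N/KF),Gal(N/EL))=[EF,KF,N,EL]$ that the paper simply cites from the proof of Theorem~\ref{th:ecart}, while part~(2) with $A=Gal(N/E)$, $B=Gal(N/F)$ is verbatim the paper's argument. The only cosmetic difference is the quadrilateral in part~(1), where the paper just observes directly that $E\cap F=K\cap L$, whereas you extract $(K\cap L,E,EF,F)$ from the second quadrilateral of Theorem~\ref{th:exalg}(1) via Theorem~\ref{th:propfonct}(2-1) and the degenerate identities $KF\cap L=F$ and $EL\cap K=E$; both derivations are valid.
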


\begin{proof} 
(1) Le parallélogramme $[EF,KF,KL,EL]$ apparaît déjà dans la démonstration du (1)
du théorème \ref{th:ecart}. Et il est clair que $E \cap F = K \cap L$.

\noindent (2) Soit $N:=KL$. D'après le (1) du théorème \ref{th:exalg} appliqué avec
$A=Gal(N/E)$ et $B=Gal(N/F)$, on a le sous-parallélogramme
$$[N^{A \times B},E,N,F]=[E \cap F,E,N,F]$$
en vertu du (2) de la démonstration du théorème \ref{th:ecart} qui donne aussi le
quadrilatère $$(K \cap L,K \cap F,E \cap F,E \cap L) \:.$$
\vskip -7mm
\end{proof}

\vskip 5mm
\begin{prop} \label{prop:rets} 
Soit $[J,K,N,L]$ un parallélogramme galoisien de degré quelconque. Notons 
$${\mathcal S}quad \, [J,K,N,L] \;\text{ou}\;\: {\mathcal S}quad \; \text{{\Large (}
resp. }{\mathcal R}quad \, [J,K,N,L] \;\text{ou}\:\;{\mathcal R}quad \text{{\Large
)}}$$
l'ensemble des sous-quadrilatères (resp. des quadrilatères quotients) de
$[J,K,N,L]$. Pour les relations d'ordre du (2) du lemme \ref{lem:relordre} :

\noindent (1) L'application
\vskip -7mm
\begin{align*}
{\mathcal R} \: : \: {\mathcal S}quad &\longrightarrow {\mathcal R}quad \\ 
(M,E,N,F) &\longmapsto (J,K \cap F,M,E \cap L) 
\end{align*}
est une bijection décroissante.

\begin{figure}[!h]
\begin{center}
\vskip -5mm
\includegraphics[width=8cm]{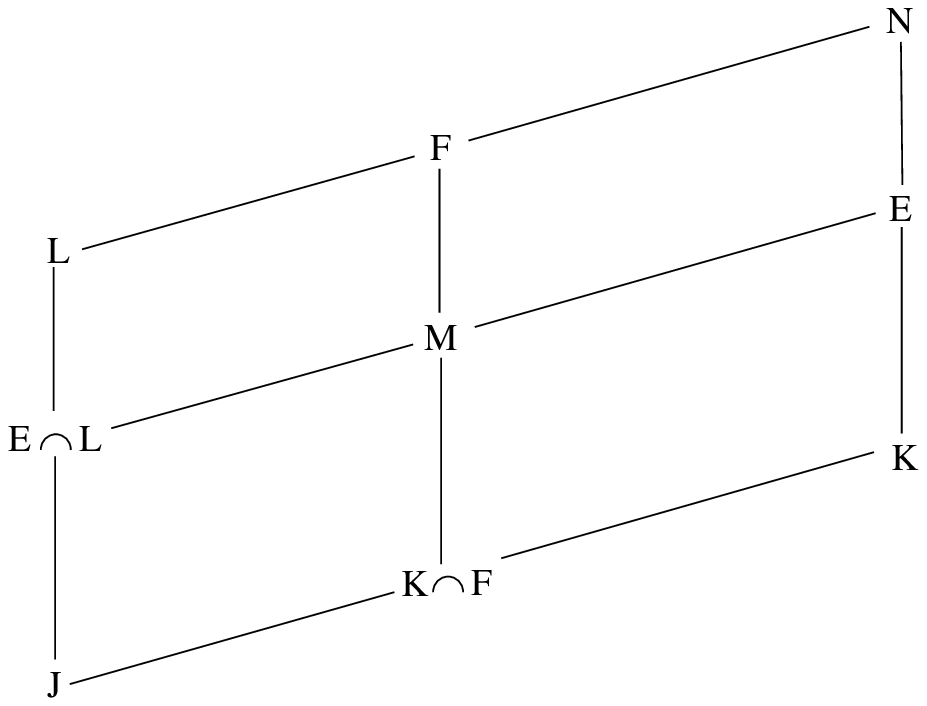}
\end{center}
\vskip -9mm
\rm{\caption{\label{fig:9}\leg Bijection $\mathcal R$}}
\vskip -1mm
\end{figure}

\noindent (2) L'application
\vskip -7mm
\begin{align*}
{\mathcal S} \: : \: {\mathcal R}quad &\longrightarrow {\mathcal S}quad \\ 
(J,E,C,F) &\longmapsto (C,KF,N,EL) 
\end{align*}
est une bijection décroissante.

\begin{figure}[!h]
\begin{center}
\vskip -5mm
\includegraphics[width=8cm]{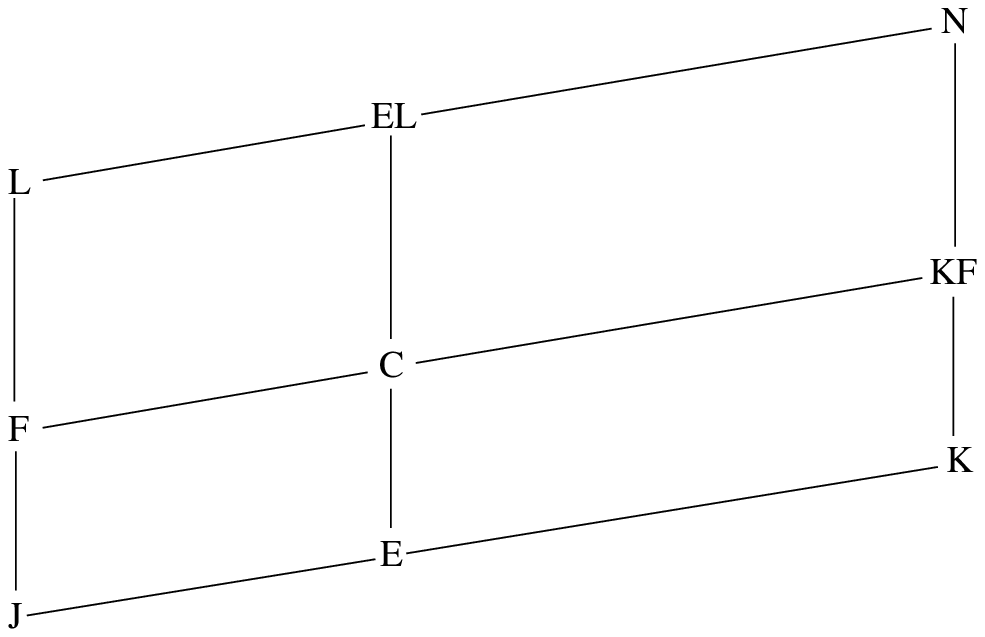}
\end{center}
\vskip -9mm
\rm{\caption{\label{fig:10}\leg Bijection $\mathcal S$}}
\vskip -1mm
\end{figure}

\newpage
\noindent (3) Les applications ${\mathcal R}$ et ${\mathcal S}$ sont réciproques l'une
de l'autre.

\noindent (4) On a l'égalité des cardinaux (éventuellement infinis)
$$\mid {\mathcal R}quad \mid \:=\: \mid {\mathcal S}quad \mid \:.$$
\end{prop}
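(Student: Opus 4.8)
The plan is to reduce every assertion to the écartelé theorem (Theorem~\ref{th:ecart}), the crucial observation being that each of the four field identities I will need is a boundary case of that theorem, obtained by pushing one of its two free parameters to an extreme ($N$ or $J$).

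First I would check that $\mathcal{R}$ and $\mathcal{S}$ land in the announced sets. Take a sub-quadrilateral $(M,E,N,F)$, so $K \subseteq E \subseteq N$, $L \subseteq F \subseteq N$, $M = E \cap F$ and $N = EF$. The inclusions $J \subseteq K \cap F \subseteq K$ and $J \subseteq E \cap L \subseteq L$ are immediate from $J = K \cap L$ together with $K \subseteq E$ and $L \subseteq F$; and $(K \cap F) \cap (E \cap L) = (K \cap L) \cap (E \cap F) = J \cap M = J$. Finally the compositum relation $(K \cap F)(E \cap L) = E \cap F = M$ is exactly Theorem~\ref{th:ecart}(2), so $\mathcal{R}(M,E,N,F) = (J, K \cap F, M, E \cap L)$ is a genuine quotient quadrilateral. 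Dually, for a quotient quadrilateral $(J,E,C,F)$ I would note $(KF)(EL) \supseteq KL = N$, whence $(KF)(EL) = N$, while $KF \cap EL = EF = C$ is Theorem~\ref{th:ecart}(1); thus $\mathcal{S}(J,E,C,F) = (C, KF, N, EL)$ is a genuine sub-quadrilateral.

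The monotonicity statements are then immediate from the order relations of Lemma~\ref{lem:relordre}(2). If $(M_1,E_1,N,F_1) \le (M_2,E_2,N,F_2)$, i.e.\ $E_2 \subseteq E_1$ and $F_2 \subseteq F_1$, then $K \cap F_2 \subseteq K \cap F_1$ and $E_2 \cap L \subseteq E_1 \cap L$, which is precisely $\mathcal{R}(M_2,E_2,N,F_2) \le \mathcal{R}(M_1,E_1,N,F_1)$; so $\mathcal{R}$ reverses the order, and the same one-line argument handles $\mathcal{S}$. The heart of the proof is the mutual-inverse statement (3), which I would again extract from the écartelé theorem by specialising its parameters. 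On a sub-quadrilateral,
$$\mathcal{S}\circ\mathcal{R}\,(M,E,N,F) = \bigl(M,\, K(E \cap L),\, N,\, (K \cap F)L\bigr),$$
where $K(E \cap L) = E$ is Theorem~\ref{th:ecart}(2) with $F$ replaced by $N$, and $(K \cap F)L = F$ is the same theorem with $E$ replaced by $N$; hence $\mathcal{S}\circ\mathcal{R} = \mathrm{id}$. Symmetrically,
$$\mathcal{R}\circ\mathcal{S}\,(J,E,C,F) = \bigl(J,\, K \cap EL,\, C,\, KF \cap L\bigr),$$
where $K \cap EL = E$ is Theorem~\ref{th:ecart}(1) with $F$ replaced by $J$, and $KF \cap L = F$ is the same with $E$ replaced by $J$; hence $\mathcal{R}\circ\mathcal{S} = \mathrm{id}$.

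With (3) in hand, $\mathcal{R}$ and $\mathcal{S}$ are mutually inverse, so each is a bijection, and combined with the monotonicity above this gives the decreasing bijections of (1) and (2); statement (4) is then the trivial remark that two sets in bijection have the same cardinal. I expect the only genuine obstacle to be bookkeeping: keeping the four positions of each quadruple straight and recognising which extreme specialisation of Theorem~\ref{th:ecart} yields each identity. No topological input and no Galois theory beyond the écartelé theorem itself is required.
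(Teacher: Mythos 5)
Your proof is correct and follows essentially the same route as the paper: both reduce the well-definedness of $\mathcal R$ and $\mathcal S$ to parts (2) and (1) of the écartelé theorem respectively, get monotonicity directly from Lemma~\ref{lem:relordre}(2), and establish $\mathcal S\circ\mathcal R=\mathrm{id}$ and $\mathcal R\circ\mathcal S=\mathrm{id}$ by specialising the écartelé identities at the extreme fields $N$ and $J$. The only (harmless) difference is that you spell out the intersection and compositum conditions $(K\cap F)\cap(E\cap L)=J$ and $(KF)(EL)=N$, which the paper leaves implicit.
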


\gespace
\begin{proof}
Dans le parallélogramme $[J,K,N,L]$, les extensions $K/J$ et $L/J$ sont
galoisiennes.\\ 
(1) L'application ${\mathcal R}$ existe bien car d'après le (2) du théorème de
l'écartelé
$$(K \cap F) (E \cap L) = E \cap F =M \:,$$
d'où le quadrilatère quotient $(J,K\cap F, M,E \cap L)$. La décroissance de
${\mathcal R}$ résulte directement de la définition des relations d'ordre du
lemme \ref{lem:relordre}(2) car
$$(M',E',N,F') \leq (M,E,N,F) \: \ssi \: (E \subseteq E' \, ,\: F \subseteq F')$$
implique que
\begin{align*} 
(E \cap L \subseteq E' \cap L \, &, \: K \cap F \subseteq K \cap F') \\
&\Updownarrow \\ 
(J,K \cap F,M,E \cap L) &\leq (J,K \cap F',M',E' \cap L) \:.
\end{align*}
La bijectivité de ${\mathcal R}$ est une conséquence du (3) ci-dessous.

\pespace
\noindent (2) L'application  ${\mathcal S}$ existe bien car d'après le (1) du théorème de
l'écartelé
$$KF \cap EL=EF \:,$$
d'où le sous-quadrilatère $(C,KF,N,EL)$. La décroissance de ${\mathcal S}$
résulte directement du lemme \ref{lem:relordre}(2) car
$$(J,E,C,F) \leq (J,E',C',F') \: \ssi \: (E \subseteq E' \, ,\: F \subseteq F')$$
implique que
\begin{align*} 
(KF \subseteq KF' \, &, \: EL \subseteq E'L) \\ 
&\Updownarrow \\ 
(C',KF',N,E'L) &\leq (C,KF,N,EL) \:.
\end{align*}
La bijectivité de ${\mathcal S}$ est une conséquence du (3) suivant.

\noindent (3) Pour tout sous-quadrilatère $(M,E,N,F) \in {\mathcal S}quad$, on a 
$${\mathcal S} \circ {\mathcal R} (M,E,N,F) \:=\: {\mathcal S}(J,K \cap F, M, E \cap
L) \:=\: (M,K(E \cap L),N,(K \cap F) L) \:.$$
Or d'après le (2) du théorème de l'écartelé appliqué à $E$ et $N$ (resp. $N$ et
$F$) on obtient
$$K(E \cap L)=E \qquad \text{{\Large (} resp. } (K \cap F) L=F \text{\Large )}
\: ,$$
ce qui prouve que ${\mathcal S} \circ {\mathcal R}=id_{{\mathcal S}quad}$.\\ 
Pour tout quadrilatère quotient $(J,E,C,F) \in {\mathcal R}quad$, on a
$${\mathcal R} \circ {\mathcal S} (J,E,C,F) \:=\: {\mathcal R}(C,KF,N,EL)\:=\:
(J,K \cap EL,C,KF \cap L) \:.$$
Or d'après le (1) du théorème de l'écartelé appliqué à $J$ et $E$ (resp. $F$ et
$J$ ), on obtient
$$ K \cap EL=E\qquad \text{{\Large (} resp. } KF \cap L =F \text{\Large )} \:
,$$
ce qui prouve que ${\mathcal R} \circ {\mathcal S}=id_{{\mathcal R}quad}$.
\end{proof}


\vskip 10mm
\section[Généralisation de la théorie de Galois finie en dimension 2]
{Généralisation topologique de la théorie de Galois finie \newline en dimension
2 \hfill {} } 

Dans cette section, nous enrichissons la proposition algébrique
\ref{prop:phialg} en munissant les groupes de Galois de leur topologie de Krull.
Considérées dans la catégorie produit $\mathbf {ProGr^2}$ de la catégorie
 des groupes profinis $\mathbf {ProGr}$ par elle-même, les applications $\Phi$ et
$\Psi$ de la proposition \ref{prop:phialg} deviennent des bijections. Le
théorème \ref{th:final} généralise ainsi, d'une part le théorème principal de
la théorie de Galois finie en dimension 2 \cite[Th.4-2]{M-MD} généralisant
lui-même la bijection de Galois classique, d'autre part le théorème de Krull qui
se retrouve en particularisant à des parallélogrammes galoisiens infinis plats.

Par définition dans \cite[p.101]{P}, tout sous-groupe d'un sous-groupe
topologique est fermé. On sait que tout sous-groupe fermé $H$ d'un groupe
profini $G$ est profini, et si $H$ est normal, le quotient $G/H$ est aussi
profini. Ceci nous conduit à poser la définition suivante.

\begin{defn} \label{def:prof} 
(1) Nous appelons "sous-groupe profini" (resp. "sous-groupe profini normal")
d'un groupe profini $G$ tout sous-groupe (resp. sous-groupe normal) $H$ de $G$
fermé pour la topologie de $G$. Nous écrirons
$$H \leq_c G \qquad \text{{\Large (} resp. } H \unlhd_c G \text{\Large )} \:
.$$
(2) Nous appelons "sous-bigroupe profini" (resp. "sous-bigroupe profini normal")
d'un bigroupe profini $(G_1,G_2)$ tout bigroupe $(H_1,H_2)$ tel que l'on ait
$$H_i \leq_c G_i \qquad \text{{\Large (} resp. } H_i \unlhd_c G_i \text{\Large
)} \quad \text{(i=1,2)}  \:. $$
Nous écrirons
$$(H_1,H_2) \leq_c (G_1,G_2) \qquad \text{{\Large (} resp. } (H_1,H_2) \unlhd_c (G_1,G_2) \text{\Large
)}   \:.$$
(3) Nous appelons "bigroupe profini quotient" d'un bigroupe profini $(G_1,G_2)$
par un sous-bigroupe profini normal $(H_1,H_2)$ le bigroupe $(G_1/H_1,G_2/H_2)$.
Nous écrirons
$$(G_1,G_2)/(H_1,H_2):=(G_1/H_1,G_2/H_2) \:.$$
(4) Nous appelons "isomorphisme de bigroupes profinis"
$$(f_1,f_2) \: : \: (G_1,G_2) \isomto (G'_1,G'_2)$$
un morphisme de $\mathbf {ProGr^2}$ tel que chacun des
$$ f_i \: : \: G_i \isomto G'_i \quad (i=1,2)$$
soit un isomorphisme de groupes profinis.
\end{defn}

Pour éviter toute ambiguïté dans la démonstration de la proposition
\ref{prop:relordre} ci-dessous, sortons du contexte le fait général suivant.

\begin{lem} \label{lem:transferm}
Soient $X$ un espace topologique et $A$ une partie fermée de $X$. Pour toute
partie $B$ de $A$, avoir $B$ fermée dans $A$ muni de la topologie induite par
celle de $X$ équivaut à avoir $B$ fermée dans $X$.
\end{lem}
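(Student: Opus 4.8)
The plan is to reduce everything to the defining property of the subspace (induced) topology, stated in terms of closed sets: a subset $B \subseteq A$ is closed in $A$ for the topology induced by $X$ if and only if there exists a closed subset $C$ of $X$ with $B = C \cap A$. In other words, the closed sets of $A$ are exactly the traces on $A$ of the closed sets of $X$. This is the complementary (dual) form of the usual definition of the induced topology by open sets, so I would simply invoke it as the starting point and then prove the two implications separately.

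First I would dispatch the implication that does not use the hypothesis on $A$. Suppose $B$ is closed in $X$. Since $B \subseteq A$ we have $B = B \cap A$, so $B$ is the trace on $A$ of a closed subset of $X$, namely $B$ itself, and therefore $B$ is closed in the topology induced on $A$. No assumption on $A$ is needed here.

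For the converse I would use precisely that $A$ is closed in $X$. If $B$ is closed in $A$, the characterization above gives a closed subset $C$ of $X$ with $B = C \cap A$. Now both $C$ and $A$ are closed in $X$, and a finite intersection of closed sets is closed, so $B = C \cap A$ is closed in $X$. This completes the equivalence.

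The only real point to watch is identifying which direction genuinely depends on the closedness of $A$: the passage from ``closed in $A$'' to ``closed in $X$'' is the one that uses this hypothesis and fails without it (for example $B = (0,1]$ is closed in $A = (0,2)$ but not in $\mathbb{R}$), whereas the reverse implication is automatic. Everything else is a one-line manipulation with the definition, so I expect no substantive obstacle.
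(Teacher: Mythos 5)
Your proof is correct and follows essentially the same route as the paper's: both directions reduce to the characterization of closed sets in the induced topology as traces $C\cap A$ of closed sets of $X$, with $B=B\cap A$ for the easy direction and the closedness of a finite intersection of closed sets (using that $A$ is closed) for the other. Your added remark isolating which implication actually uses the hypothesis on $A$ is a nice touch but does not change the argument.
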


\begin{proof}
Si $B$ est fermée dans $A$, il existe un fermé $F$ de $X$ tel que $B=F \cap A$,
et inversement, il suffit d'écrire $B=B \cap A$.
\end{proof}

\begin{lem} \label{lem:relordrebis}
Soit $E/F$ une extension galoisienne. Pour tous corps intermédiaires $M$ et $M'$
entre $F$ et $E$, on a les équivalences
$$M \subseteq M' \; \ssi \; Gal(E/M') \leq Gal(E/M) \; \ssi \; Gal(E/M') \leq_c
Gal(E/M)$$
où les groupes $Gal(E/M)$ et $Gal(E/M')$  sont munis de leurs topologies de
Krull.
\end{lem}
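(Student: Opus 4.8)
Le plan est d'établir la chaîne cyclique d'implications
$$(M \subseteq M') \implique (Gal(E/M') \leq_c Gal(E/M)) \implique (Gal(E/M') \leq Gal(E/M)) \implique (M \subseteq M'),$$
dont l'implication centrale sera immédiate, tout sous-groupe ferm� �tant a fortiori un sous-groupe. Je commencerais par observer que, $E/F$ �tant galoisienne, chacune des sous-extensions $E/M$ et $E/M'$ l'est �galement, de sorte que $Gal(E/M)$ et $Gal(E/M')$ sont des sous-groupes de $\Gamma := Gal(E/F)$ ; de plus, ils y sont ferm�s pour la topologie de Krull, car le $(0)$ de la section 3 donne $\overline{Gal(E/M)}=Gal(E/E^{Gal(E/M)})=Gal(E/M)$, et de m�me pour $M'$.

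Pour la premi�re implication, l'inclusion $M \subseteq M'$ fournit d'embl�e $Gal(E/M') \leq Gal(E/M)$ au niveau des sous-groupes abstraits, puisque tout automorphisme de $E$ fixant $M'$ point par point fixe a fortiori $M$. Le point d�licat sera de relever cette inclusion en une inclusion \emph{ferm�e} pour la topologie de Krull de $Gal(E/M)$. J'invoquerais alors le lemme \ref{lem:transferm} avec $X=\Gamma$, $A=Gal(E/M)$ et $B=Gal(E/M')$ : le ferm� $B$ de $X$ �tant contenu dans le ferm� $A$, il est ferm� dans $A$ pour la topologie induite par $\Gamma$. Or la proposition \ref{prop:topind} identifie pr�cis�ment cette topologie induite � la topologie de Krull de $Gal(E/M)$ ; d'o� $Gal(E/M') \leq_c Gal(E/M)$.

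La derni�re implication rel�vera du renversement d'ordre usuel de la correspondance de Galois : $E/M$ et $E/M'$ �tant galoisiennes, le th�or�me de Krull fournit $M=E^{Gal(E/M)}$ et $M'=E^{Gal(E/M')}$, et l'inclusion de groupes $Gal(E/M') \leq Gal(E/M)$ entra�ne l'inclusion renvers�e des corps d'invariants $E^{Gal(E/M)} \subseteq E^{Gal(E/M')}$, c'est-�-dire $M \subseteq M'$.

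La difficult� principale r�sidera dans le passage d'une inclusion de sous-groupes abstraits � une inclusion de sous-groupes ferm�s : tout repose sur la remarque que $Gal(E/M')$ est \emph{d�j�} ferm� dans le groupe ambiant $Gal(E/F)$, combin�e � l'identification de la topologie de Krull de $Gal(E/M)$ avec la topologie induite (proposition \ref{prop:topind}), transport�e par le lemme \ref{lem:transferm}.
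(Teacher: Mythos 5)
Votre preuve est correcte et suit essentiellement la m�me d�marche que celle du texte : le point cl� est identique, � savoir l'application du lemme \ref{lem:transferm} avec $X=Gal(E/F)$, $A=Gal(E/M)$, $B=Gal(E/M')$ (tous deux ferm�s dans $X$ par le th�or�me de Krull) combin�e � la proposition \ref{prop:topind} pour identifier la topologie induite sur $A$ � sa topologie de Krull. Votre organisation en cha�ne cyclique de trois implications, plut�t qu'en deux �quivalences dont seule la seconde demande un argument, est une variante purement r�dactionnelle.
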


\begin{proof}
La première équivalence étant claire, il suffit de prouver la seconde et plus
précisément le sens direct de celle-ci. Appliquons le lemme \ref{lem:transferm}
précédent avec
$$X:=Gal(E/F) \: , \; A:=Gal(E/M) \: , \; B:=Gal(E/M') \:.$$
Par hypothèse $B \subseteq A$, et d'après le théorème de Krull classique, on a
$$A \leq_c X \: , \; B \leq_c X \:.$$
Donc $B$ est fermé dans $A$ muni de la topologie induite par celle de $X$. Or, en vertu de la
proposition \ref{prop:topind}, cette topologie induite sur $A$ coïncide avec la
topologie de Krull de $A$. On a donc bien montré que $Gal(E/M')$ est fermé dans
$Gal(E/M)$ muni de sa topologie de Krull.
\end{proof}
\vskip 2mm
Les groupes profinis qui interviennent dans la suite sont des groupes de Galois.
Nous convenons une fois pour toutes qu'étant donnée une extension galoisienne,
son groupe de Galois est muni de sa topologie de Krull.

\begin{prop} \label{prop:relordre}
Lorsque $E/F$ est une extension galoisienne, la relation d'ordre du lemme
\ref{lem:relordre}.(1) dans l'ensemble des sous-extensions de $E/F$ (resp. des
extensions galoisiennes quotients de $E/F$) s'écrit
\begin{align*} 
(E/M') \leq (E/M) \quad &\ssi \quad Gal(E/M') \leq_c Gal(E/M) \\ 
\text{{\Large (} resp. } (M/F) \leq (M'/F) \quad &\ssi \quad Gal(E/M') \leq_c
Gal(E/M) \: \text{\Large )} \:. 
\end{align*}
\end{prop}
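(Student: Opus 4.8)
Le plan est de d\'eduire les deux \'equivalences directement du lemme \ref{lem:relordrebis} que l'on vient d'\'etablir, dont cette proposition n'est essentiellement qu'une reformulation dans le langage des relations d'ordre introduites au lemme \ref{lem:relordre}. Je traiterais donc les deux cas (sous-extensions et extensions galoisiennes quotients) en parall\`ele, puisqu'ils reposent exactement sur le m\^eme argument.

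Je commencerais par rappeler que, par d\'efinition des relations d'ordre du lemme \ref{lem:relordre}(1), les deux relations consid\'er\'ees se traduisent toutes deux par l'inclusion des corps interm\'ediaires :
$$(E/M') \leq (E/M) \quad \ssi \quad M \subseteq M' \quad \text{et} \quad (M/F) \leq (M'/F) \quad \ssi \quad M \subseteq M'.$$
Ensuite, le lemme \ref{lem:relordrebis} fournit la traduction d\'ecisive : pour des corps interm\'ediaires $M$ et $M'$ de l'extension galoisienne $E/F$, on a l'\'equivalence $M \subseteq M' \ssi Gal(E/M') \leq_c Gal(E/M)$, les groupes \'etant munis de leurs topologies de Krull. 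Il suffit alors de composer ces \'equivalences par transitivit\'e pour obtenir imm\'ediatement les deux assertions annonc\'ees.

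Je pr\'eciserais au passage que les groupes $Gal(E/M)$ et $Gal(E/M')$ sont bien d\'efinis comme groupes topologiques : l'extension $E/F$ \'etant galoisienne, chaque sous-extension $E/M$ reste galoisienne pour tout corps interm\'ediaire $M$, ce qui assure l'existence de la topologie de Krull ; dans le cas des extensions quotients, l'hypoth\`ese suppl\'ementaire que $M/F$ et $M'/F$ soient galoisiennes n'intervient pas dans l'argument. Il n'y a donc ici aucun obstacle v\'eritable : tout le travail de fond---reposant sur la proposition \ref{prop:topind} relative \`a la co\"incidence de la topologie de Krull et de la topologie induite, ainsi que sur le lemme topologique \ref{lem:transferm}---a d\'ej\`a \'et\'e accompli dans la d\'emonstration du lemme \ref{lem:relordrebis}, dont cette proposition est un simple corollaire.
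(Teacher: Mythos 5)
Votre preuve est correcte et suit exactement la d\'emarche du texte, dont la d\'emonstration se r\'eduit \`a la mention \og imm\'ediate par le lemme \ref{lem:relordrebis} \fg : les deux relations d'ordre du lemme \ref{lem:relordre}(1) se traduisent par $M \subseteq M'$, et le lemme \ref{lem:relordrebis} fournit pr\'ecis\'ement l'\'equivalence avec $Gal(E/M') \leq_c Gal(E/M)$. Vos pr\'ecisions sur la bonne d\'efinition des topologies de Krull sont exactes mais non n\'ecessaires, le travail ayant d\'ej\`a \'et\'e fait en amont.
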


\begin{proof}
Immédiate par le lemme \ref{lem:relordrebis}.
\end{proof}
\vskip 2mm
Pour le généraliser en dimension 2, reformulons maintenant le théorème de Krull
classique \cite[AV.64,Th.4]{Bo2}.

\begin{Th}{\rm(Théorème de Krull revisité)}\index{Théorème!de Krull}

Soit $N/K$ une extension galoisienne de degré quelconque. On munit l'ensemble
des sous-extensions (resp. des extensions galoisiennes quotients) de $N/K$ de la
relation d'ordre de la proposition \ref{prop:relordre} ci-dessus. Alors : \\ 
(1) L'application
$$(N/E) \longmapsto Gal(N/E)$$
est une bijection croissante de l'ensemble des sous-extensions de $N/K$ sur
l'ensemble des sous-groupes profinis de $Gal(N/K)$. Sa réciproque est
l'application, elle-même croissante, 
$$H \longmapsto (N/N^H) \:.$$

\noindent (2) L'application
$$(E/K) \longmapsto Gal(N/E)$$
est une bijection décroissante de l'ensemble des extensions galoisiennes
quotients de $N/K$ sur l'ensemble des sous-groupes profinis normaux de
$Gal(N/K)$. Sa réciproque est l'application, elle-même décroissante, 
$$H \longmapsto (N^H/K) \:.$$
De plus, la restriction à $E$ induit un isomorphisme de groupes profinis
$$Gal(E/K) \isomto Gal(N/K) / Gal(N/E) \:.$$
\end{Th}

\begin{proof}
Tout résulte directement du théorème de Krull classique, à l'exception de la
monotonie des réciproques (lorsque les ordres sont partiels, la réciproque d'une
bijection monotone n'est pas nécessairement monotone). \\ 
(1) Soient $H_1$ et $H_2$ deux sous-groupes profinis de $Gal(N/K)$ tels que $H_1
\leq_c H_2$. Par définition, on a
$$H_1=\overline{H_1}=Gal(N/N^{H_1}) \leq_c H_2=\overline{H_2}=Gal(N/N^{H_2}) \:,$$
ce qui équivaut à $(N/N^{H_1}) \leq (N/N^{H_2})$ par la proposition
\ref{prop:relordre}. \\ 
(2) Dans les notations du (1), supposons de plus $H_1$ et $H_2$ normaux dans
$Gal(N/K)$. On a toujours $Gal(N/N^{H_1}) \leq_c Gal(N/N^{H_2})$, ce qui
équivaut par la proposition \ref{prop:relordre} à avoir $(N^{H_2}/K) \leq
(N^{H_1}/K)$.
\end{proof}

\begin{prop}  \label{prop:relordre2}
Soit $[J,K,N,L]$ un parallélogramme galoisien. La relation d'\\ordre du lemme
\ref{lem:relordre} dans l'ensemble des sous-parallélogrammes galoisiens
(resp. des parallélogrammes galoisiens quotients) de $[J,K,N,L]$
s'écrit 
$$[M',E',N,F'] \leq [M,E,N,F] \quad \ssi \quad Gal[M',E',N,F'] \leq_c
Gal[M,E,N,F] $${\Large (} resp. 
$$[J,E,C,F] \leq [J,E',C',F'] \; \ssi \;
Gal[C',KF',N,E'L] \leq_c Gal[C,KF,N,EL] \; \text{\Large )} \:.$$
\end{prop}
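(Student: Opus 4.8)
Le plan est de traiter s\'epar\'ement les deux cas, en ramenant celui des parall\'elogrammes quotients \`a celui des sous-parall\'elogrammes gr\^ace \`a la bijection d\'ecroissante $\mathcal S$ de la proposition \ref{prop:rets}.

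Pour les sous-parall\'elogrammes, je d\'eroulerais simplement les d\'efinitions. Par la d\'efinition composante par composante de $\leq_c$ sur les sous-bigroupes (d\'efinition \ref{def:prof}), l'in\'egalit\'e $Gal[M',E',N,F'] \leq_c Gal[M,E,N,F]$ signifie exactement que l'on a \`a la fois $Gal(N/E') \leq_c Gal(N/E)$ et $Gal(N/F') \leq_c Gal(N/F)$. Dans un sous-parall\'elogramme $[M,E,N,F]$ de $[J,K,N,L]$, le corps $E$ est interm\'ediaire entre $K$ et $N$ tandis que $F$ est interm\'ediaire entre $L$ et $N$; et comme $N/K$ et $N/L$ sont les ar\^etes (galoisiennes) de $[J,K,N,L]$, le lemme \ref{lem:relordrebis} s'applique \`a chaque composante. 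Il fournit $E \subseteq E' \ssi Gal(N/E') \leq_c Gal(N/E)$ et $F \subseteq F' \ssi Gal(N/F') \leq_c Gal(N/F)$. En combinant ces deux \'equivalences avec la d\'efinition de la relation d'ordre sur les sous-parall\'elogrammes (lemme \ref{lem:relordre}(2)), \`a savoir $[M',E',N,F'] \leq [M,E,N,F] \ssi (E \subseteq E' \text{ et } F \subseteq F')$, on obtient aussit\^ot la premi\`ere assertion. Le seul point d\'elicat --- le passage de la simple inclusion $\leq$ \`a l'inclusion ferm\'ee $\leq_c$ --- est pr\'ecis\'ement ce que garantit le lemme \ref{lem:relordrebis}, et c'est l\`a qu'est trait\'e le degr\'e infini (via la proposition \ref{prop:topind}); en degr\'e fini il est automatique.

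Pour les parall\'elogrammes quotients, j'exploiterais le fait que l'application $\mathcal S$ de la proposition \ref{prop:rets}(2), $(J,E,C,F) \mapsto (C,KF,N,EL)$, est une bijection d\'ecroissante, et que, d'apr\`es le corollaire du th\'eor\`eme de l'\'ecartel\'e \ref{th:ecart}, elle envoie tout parall\'elogramme quotient $[J,E,C,F]$ sur un v\'eritable sous-parall\'elogramme $[C,KF,N,EL]$ (de sorte que son groupe de Galois est bien d\'efini). Alors $[J,E,C,F] \leq [J,E',C',F']$ \'equivaut, par la d\'ecroissance de $\mathcal S$, \`a l'in\'egalit\'e renvers\'ee $(C',KF',N,E'L) \leq (C,KF,N,EL)$ entre les sous-parall\'elogrammes images. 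En appliquant la premi\`ere assertion \`a ces deux sous-parall\'elogrammes, cette derni\`ere se transforme en $Gal[C',KF',N,E'L] \leq_c Gal[C,KF,N,EL]$, qui est exactement le membre de droite cherch\'e.

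Je n'attends pas d'obstacle s\'erieux : tout l'argument repose sur des r\'esultats d\'ej\`a \'etablis. Le seul endroit exigeant du soin est la comptabilit\'e des monotonies dans le cas quotient --- $\mathcal S$ renverse l'ordre, et il faut donc suivre lequel des deux parall\'elogrammes se retrouve de chaque c\^ot\'e de $\leq_c$ --- ainsi que la v\'erification que $[C,KF,N,EL]$ est bien un parall\'elogramme et non un simple quadrilat\`ere, ce que le corollaire du th\'eor\`eme \ref{th:ecart} assure.
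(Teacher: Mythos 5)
Votre démonstration est correcte et suit essentiellement la même démarche que celle du texte : pour les sous-parallélogrammes, on déroule la définition de $\leq_c$ composante par composante et on applique le lemme \ref{lem:relordrebis} aux arêtes $N/K$ et $N/L$ ; pour les parallélogrammes quotients, on se ramène au cas précédent via la bijection décroissante $\mathcal S$ de la proposition \ref{prop:rets}. Votre remarque sur la nécessité de vérifier que $[C,KF,N,EL]$ est bien un parallélogramme (et pas seulement un quadrilatère) est un soin supplémentaire bienvenu, déjà garanti par le théorème de l'écartelé et son corollaire.
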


\begin{proof} 
(1) \gras {Sous-parallélogrammes}. Par définition 
\begin{align*} 
[M',E',N,F'] \leq [M,E,N,F] \quad &\ssi \quad (E \subseteq E' \; ,
\; F \subseteq F') \\ 
\text{et en vertu du lemme \ref{lem:relordre}} \qquad \quad \qquad& \\
\ssi \left\{ 	\begin{array}{rl} Gal(N/E') &\leq_c Gal(N/E) \\
		Gal(N/F') &\leq_c Gal(N/F) 
		\end{array} \right.  
&\ssi Gal[M',E',N,F'] \leq_c Gal[M,E,N,F] \:.
\end{align*}

\noindent (2) \gras {Parallélogrammes quotients.} D'après la proposition
\ref{prop:rets}, on a l'équivalence
$$[J,E,C,F] \leq [J,E',C',F'] \; \ssi \; [C',KF',N,E'L] \leq [C,KF,N,EL] \:;$$
d'où la conclusion par le (1) précédent.
\end{proof}

\vskip 2mm
Le théorème \ref{th:final} qui suit généralise en degré quelconque le théorème
principal de la théorie de Galois finie en dimension 2 (Th.4.2. de \cite{M-MD}).
En se limitant à des sous-groupes fermés, il rend bijectives les injections
$\Phi$ et les surjections $\Psi$ de la proposition algébrique \ref{prop:phialg}.
De plus, en se limitant à des parallélogrammes plats, il redonne exactement la
double bijection du théorème de Krull revisité. Ainsi, le théorème
\ref{th:final} suivant généralise en dimension 2 le théorème de Krull, tout
comme le théorème 4.2. de \cite{M-MD} généralisait le théorème de Galois
classique pour des extensions finies.
 
\begin{Th} \label{th:final}\index{Théorème!de Krull}
Soit $[J,K,N,L]$ un parallélogramme galoisien de degré quelconque, de groupe de Galois
$Gal[J,K,N,L]$ (Déf. \ref{def:grgal}). On munit l'ensemble des sous-parallélogrammes galoisiens (resp.
des parallélogrammes galoisiens quotients) de $[J,K,N,L]$ de la relation d'ordre
de la proposition \ref{prop:relordre2}. Alors : \\ 
(1) Sous-parallélogrammes galoisiens

 L'application 
$$[M,E,N,F] \longmapsto Gal[M,E,N,F] $$
est une bijection croissante de l'ensemble des sous-parallélogrammes de $[J,K,N,L]$ sur l'ensemble des sous-bigroupes profinis de $Gal[J,K,N,L]$,
dont la réciproque est l'application, elle-même croissante,
$$(A,B) \longmapsto [N^{A \times B},N^A,N,N^B] \:.$$

\noindent (2) Parallélogrammes galoisiens quotients

(2-0) Pour tout parallélogramme quotient $[J,E,C,F]$ de $[J,K,N,L]$,
il existe un unique sous-bigroupe profini normal $(A,B)$ de $Gal[J,K,N,L]$ tel
que l'on ait $A_{\mid_L}=Gal(L/F)$ et $B_{\mid_K}=Gal(K/E)$. Précisément :
$$A=Gal(N/KF) \quad , \qquad B=Gal(N/EL) \:.$$

\begin{figure}[!h]
\begin{center}
\vskip -5mm
\includegraphics[width=8cm]{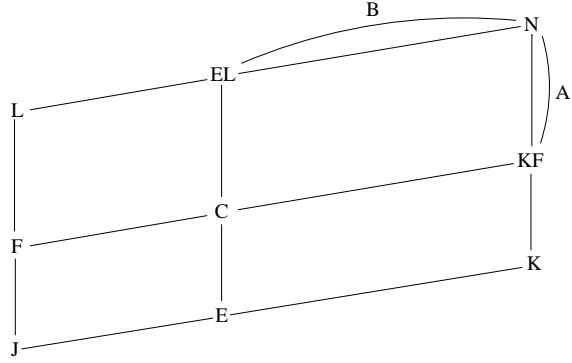}
\end{center}
\vskip -7mm
\rm{\caption{\label{fig:11}\leg Sous-bigroupe associé à un parallélogramme
quotient}}
\vskip 1mm
\end{figure}

(2-1) Dans les notations du (2-0), l'application
$$[J,E,C,F] \longmapsto (A,B)$$
est une bijection décroissante de l'ensemble des parallélogrammes quotients de
$[J,K,N,L]$ dans l'ensemble des sous-bigroupes profinis normaux de
$Gal[J,K,N,L]$ dont la réciproque est l'application, elle-même décroissante, 
$$(A,B) \longmapsto [J,K^{(B_{\mid_K})},N^{A \times B},L^{(A_{\mid_L})}]\:.$$

(2-2) Dans les notations du (2-0), on a l'isomorphisme de bigroupes profinis
 $$Gal[J,E,C,F] \isomto   Gal[J,K,N,L] \,/\, (A,B)\:.$$
\end{Th}

\begin{proof}
On reprend les notations $\Phi$ et $\Psi$ des applications de la proposition
\ref{prop:phialg} en ajoutant des tildes pour marquer que l'on se limite aux
sous-bigroupes profinis de $Gal[J,K,N,L]$. \\ 
(1) L'application $\widetilde{\Phi}_s \; : \; [M,E,N,F] \longmapsto
Gal[M,E,N,F]$ du (1) existe bien car par définition
$Gal[M,E,N,F]=(Gal(N/E),Gal(N/F))$
où $Gal(N/E)$ (resp. $Gal(N/F)$) est fermé dans $Gal(N/K)$ (resp. $Gal(N/L)$) en
vertu du théorème de Krull, de sorte que
$$(Gal(N/E),Gal(N/F)) \leq_c Gal[J,K,N,L]\;.$$
Soit $\widetilde{\Psi}_s \; :
\;(A,B) \longmapsto [N^{A \times B},N^A,N,N^B] \:$ la restriction à l'ensemble
des sous-bigroupes profinis de $Gal[J,K,N,L]$ de
l'application $\Psi_s$ du (1-2) de la proposition \ref{prop:phialg}. D'après le
(1-3) de cette même proposition, le composé $\widetilde{\Psi}_s \circ
\widetilde{\Phi}_s$ est l'identité. De plus, pour tout sous-bigroupe profini
$(A,B)$ de $Gal[J,K,N,L]$, on a 
$$\widetilde{\Phi}_s \circ \widetilde{\Psi}_s (A,B) \:=\: \widetilde{\Phi}_s ([N^{A
\times B},N^A,N,N^B])\:=\:(Gal(N/N^A),Gal(N/N^B)) \:=\:(\overline{A},\overline{B}) \:.$$
Mais par définition 
$$(A,B) \leq_c Gal[J,K,N,L] \quad \implique \quad \left\{ 
	\begin{array}{l} 
	A \leq_c Gal(N/K) \\
	B \leq_c Gal(N/L) \\
	\end{array} \right.  $$
de sorte que $\widetilde{\Phi}_s \circ \widetilde{\Psi}_s (A,B)=(A,B) \:.$
Les applications $\widetilde{\Phi}_s$, $\widetilde{\Psi}_s$ sont donc
réciproques l'une de l'autre, et en particulier bijectives. Montrons qu'elles
sont croissantes. C'est clair directement pour $\widetilde{\Phi}_s$ d'après la
proposition \ref{prop:relordre2}. Pour $\widetilde{\Psi}_s$, considérons deux
sous-bigroupes profinis de $Gal[J,K,N,L]$
$$(A',B') \leq_c (A,B) \:;$$
et posons
$$[M,E,N,F] := \widetilde{\Psi}_s(A,B) \: , \quad [M',E',N,F'] :=
\widetilde{\Psi}_s(A',B') \:.$$
Par ce qui précède
$$Gal[M,E,N,F]=\widetilde{\Phi}_s ([M,E,N,F])=\widetilde{\Phi}_s \circ
\widetilde{\Psi}_s (A,B)=(A,B) \:.$$
Donc
$$Gal[M',E',N,F']=(A',B') \leq_c (A,B)=Gal[M,E,N,F] \:,$$
ce qui équivaut, par la proposition \ref{prop:relordre2}, à $[M',E',N,F']\leq
[M,E,N,F] \:.$

\noindent (2) (2-0) Ce n'est que le (2-0) de la proposition \ref{prop:phialg} en
rajoutant le fait que $A=Gal(N/KF)$ et $B=Gal(N/EL)$ sont des sous-groupes
fermés en vertu du théorème de Krull.

(2-1) L'application $\widetilde{\Phi}_q \; : \; [J,E,C,F] \longmapsto (A,B)$
du (2-1) existe bien car d'après le (2-0) précèdent, on a
$$(A,B) \unlhd_c Gal[J,K,N,L]$$
Soit $\widetilde{\Psi}_q\; : \;
(A,B) \longmapsto [J,K^{(B_{\mid_K})},N^{A \times B},L^{(A_{\mid_L})}]$
la restriction à l'ensemble des sous-bigroupes profinis normaux de $Gal[J,K,N,L]$
de l'application $\Psi_q$ du (2-2) de la proposition \ref{prop:phialg}.
D'après le (2-3) de cette même proposition, le composé $\widetilde{\Psi}_q \circ
\widetilde{\Phi}_q$ est l'identité. De plus, pour tout sous-groupe profini
normal $(A,B)$ de $Gal[J,K,N,L]$, on a
$$\widetilde{\Phi}_q \circ \widetilde{\Psi}_q (A,B) \:=\:
\widetilde{\Phi}_q([J,K^{(B_{\mid_K})},N^{A \times
B},L^{(A_{\mid_L})}])\:=\:(A',B')$$
où par définition $A'$ (resp. $B'$) est l'unique sous-groupe de $Gal(N/K)$
(resp. $Gal(N/L)$) tel que $A'_{\mid_L}=Gal(L/L^{(A_{\mid_L})})\quad
\text{{\Large (} resp. } B'_{\mid_K}=Gal(K/K^{(B_{\mid_K})}  \text{\Large )}\:.$
Or dans le parallélogramme galoisien $[J,K,N,L]$, la restriction à $L$ est un
isomorphisme de groupes profinis de $Gal(N/K)$ sur $Gal(L/J)$. Ainsi :
$\overline{(A_{\mid_L})}=(\overline{A})_{\mid_L}$. Comme $A$ est fermé, on en
déduit que $Gal(L/L^{(A_{\mid_L})})=A_{\mid_L} \:.$ De la même façon
$Gal(K/K^{(B_{\mid_K})})=B_{\mid_K} \:.$ Il en résulte par unicité que
$\widetilde{\Phi}_q \circ \widetilde{\Psi}_q (A,B)=(A,B) \:.$
Les applications $\widetilde{\Phi}_q$, $\widetilde{\Psi}_q$ sont donc
réciproques l'une de l'autre, et en particulier bijectives. Montrons qu'elles
sont décroissantes. Pour $\widetilde{\Phi}_q$, on a d'après la proposition
\ref{prop:relordre2}
\begin{align*} 
[J,E,C,F] \leq [J,E',C',F'] \quad \ssi \quad &Gal[C',KF',N,E'L] \leq_c
Gal[C,KF,N,EL] \:,\\ 
\text{et par définition} \qquad \qquad\qquad\qquad & \\
\ssi \left\{ \begin{array}{l}
	Gal(N/KF') \leq_c Gal(N/KF) \\
	Gal(N/E'L) \leq_c Gal(N/EL)
	\end{array} \right.       &\ssi \: \widetilde{\Phi}_q([J,E',C',F']) \: \leq_c \:
\widetilde{\Phi}_q([J,E,C,F]) \:.
\end{align*}
Pour $\widetilde{\Psi}_q$, considérons deux sous-bigroupes profinis normaux de
$Gal[J,K,N,L]$
$$(A',B') \leq_c (A,B) \:.$$
Comme le composé $\widetilde{\Phi}_q \circ \widetilde{\Psi}_q$ est l'identité,
on a d'après l'équivalence obtenue précedemment pour la décroissance de
$\widetilde{\Phi}_q$,
\begin{align*} 
(A',B') \leq_c (A,B) \quad &\ssi \quad \widetilde{\Phi}_q ( \widetilde{\Psi}_q (A',B'))
\: \leq_c \: \widetilde{\Phi}_q ( \widetilde{\Psi}_q (A,B)) \\ 
&\ssi \quad \widetilde{\Psi}_q (A,B) \: \leq_c \: \widetilde{\Psi}_q (A',B')
\end{align*}
ce qui exprime la décroissance de $\widetilde{\Psi}_q$.

(2-2) Tous les isomorphismes du (2-4) de la proposition \ref{prop:phialg} sont
topologiques.
\end{proof}

\addtocontents{toc}{\mespace\pespace}
\chapter{EXTENSIONS GALTOURABLES}
\addtocontents{lof}{\gespace}
\addtocontents{lof}{\noindent Chapitre \thechapter}
\addtocontents{lof}{\pespace}
\addtocontents{toc}{\pespace}

\gespace

Il s'agit ici d'introduire une notion nouvelle, celle d'extension galtourable,
qui généralise celle d'extension galoisienne. Cette notion est la clef de notre
démarche pour étudier les tours de corps (chapitre 3 et suivants).

On expose dans ce chapitre les premières propriétés des extensions galtourables.
Nous examinons plus particulièrement les propriétés des extensions galoisiennes
qui se généralisent aux extensions galtourables.

\gespace
\gespace
\section{Définitions, notations, exemples}
Formulons tout d'abord un certain nombre de définitions.

A l'instar de la notation de la théorie des groupes désignant les sous-groupes,
nous remplaçons l'inclusion $\subseteq$ par un $\leq$ pour exprimer qu'il y a
conservation de la structure de corps :
\vskip -3mm \begin{equation}
\forall E \text{ corps,} \quad F \leq E \quad \stackrel{\text{déf.}}{\SSI} \quad (F
\text{ sous-corps de } E \text{).} \tag{1-0}
\end{equation}

\gespace
\begin{defnetconv}[] \label{def:tour} Soit $L/K$ une extension quelconque.

\pespace
\noindent (1) On appelle "tour (de corps)\index{Tour} de $L/K$" une suite
finie\footnote{\,C'est le point de vue de cette thèse (à l'instar de la théorie
des groupes), bien que des auteurs récents appellent "tour de corps" une suite
infinie \cite{Y}.} croissante
$\{F_i\}_{0 \leq i \leq m}$ de corps intermédiaires de $L/K$ dans laquelle
$F_0=K$ et $F_m=L$.\\
Nous notons une telle tour
$$(F) \qquad K=F_0 \leq F_1 \leq \,\dots \, \leq F_i \leq F_{i+1} \leq \,\dots \,
\leq F_m=L \:.$$
Nous appelons	\begin{itemize}
		\item "marche de la tour $(F)$"\index{Marche d'une tour de
		corps} : toute extension $F_{i+1}/F_i
		\quad (i=0,\dots ,m-1)$ ;
		\item "hauteur de la tour $(F)$"\index{Hauteur d'une tour de
		corps} : l'entier $m$.
		\end{itemize}

\pespace		
\noindent Nous disons que la tour $(F)$ est "triviale"\index{Tour!triviale} si et seulement si elle est de hauteur
nulle : $m=0$. Lorsque
$$\forall i \in \{0,\dots,m-1\} \quad F_{i+1}\neq F_i$$
 \vskip -3mm \noindent la tour 
$$\begin{array}{c}
(F_<):=(F) \qquad K=F_0 <F_1 < \,\dots \, < F_i < F_{i+1} < \,\dots \,
< F_m=L \:. \\
\end{array}$$
est dite "stricte"\index{Tour!stricte}. En particulier, la tour
triviale est stricte.\\
Nous convenons que pour $m=0$ la tour $(F_<)$ ci-dessus se réduit à
$$(F_<) \qquad K=F_0=L \;.$$

\pespace
\noindent (2) Lorsque $L/K$ est algébrique, nous appelons "tour
galoisienne"\index{Tour!galoisienne} de
$L/K$ toute tour
$$(T) \qquad K=T_0 \leq T_1 \leq \,\dots \, \leq T_i \leq T_{i+1} \leq \,\dots \,
\leq T_m=L$$
dont toutes les marches $T_{i+1}/T_i \quad (i=0,\dots,m-1)$ sont des extensions
galoisiennes.

\pespace
\noindent (3) Deux tours de $L/K$ 
$$\begin{array}{cc}
(F) \quad	& K=F_0 \leq F_1 \leq \,\dots \, \leq F_i \leq \,\dots \,
\leq F_m=L  \\
&\\
(E) \quad	& K=E_0 \leq E_1 \leq \,\dots \, \leq E_j \leq \,\dots \,
\leq E_n=L \\
\end{array}$$
sont égales si et seulement si les deux suites $\{F_i\}_{0 \leq i \leq m}$ et
$\{E_j\}_{0 \leq j \leq n}$ le sont ; autrement dit lorsque $m=n$ et
$$\forall i \in \{0,\dots,m\} \qquad F_i=E_i \:.$$
\end{defnetconv}

\gespace
\begin{rems} 
 Les notations sont celles de la définition \& convention \ref{def:tour}.\\ 
(1) Pour une tour $(F)$ triviale, on a nécessairement $L=K$. Mais on peut avoir
$L=K$ et $(F)$ non triviale, i.e. de hauteur non nulle : $m\neq 0$ (répétition
de corps).

\pespace
\noindent (2) Notons que la tour triviale est la seule tour stricte de $L/K$
lorsque $L=K$, et qu'elle est toujours galoisienne.

\pespace
\noindent (3) Il ne suffit pas d'avoir 
$$\{E_0,\dots,E_n\}=\{F_0,\dots,F_m\}$$
pour que les tours $(F)$ et $(E)$ soient égales.
\end{rems}

\gespace
\begin{fait} \label{fait:hauteur}
La hauteur d'une tour stricte d'une extension finie $L/K$ est au plus égale au
nombre de diviseurs premiers, comptés avec leur multiplicité, du degré de $L/K$.
\end{fait}

\mespace
\begin{proof}
Cela découle directement de la transitivité des degrés.
\end{proof}

\mespace
De même que les extensions algébriques (même séparables) ne sont pas
nécessairement galoisiennes, elles n'admettent pas nécessairement de tour
galoisienne (cf. exemples \ref{ex:existence} ), ce qui justifie la définition suivante.

\mespace
\begin{defn} \label{def:galtourable}
Nous disons qu'une extension $L/K$ est
"galtourable"\index{Extension!galtourable} si et seulement si elle
admet une tour galoisienne au sens de la définition \& convention \ref{def:tour}.(2).
\end{defn}

\mespace
{\it \noindent Scholies. } 
\noindent (1) Par la transitivité des notions d'algébricité et de séparabilité,
une extension galtourable est toujours algébrique et séparable.

\pespace
\noindent (2) Une extension galoisienne est évidemment galtourable. Mais une
extension galtourable n'est pas nécessairement galoisienne vu la non
transitivité de la normalité. La notion d'extension galtourable
généralise donc celle d'extension galoisienne.

\pespace
\noindent (3) Dans \cite{BM-LE} est introduite la notion d' "extension radicale
répétée". Il s'agit des extensions admettant une tour de corps dont chaque
marche est une extension radicale. Nous dirions quant à nous que ce sont des
extensions "radtourables".

\gespace
Les tours galoisiennes introduites dans la définition \& convention \ref{def:tour}.(2) se
généralisent en des tours dont les marches ne sont que galtourables.

\mespace
\begin{defn} \label{def:tourgaltourable} 
Soit $L/K$ une extension algébrique. Nous appelons "tour
galtourable\index{Tour!galtourable} de $L/K$"
toute tour
$$(F) \qquad K=F_0 \leq F_1 \leq \,\dots \, \leq F_i \leq F_{i+1} \leq \,\dots \,
\leq F_m=L$$
dont toutes les marches $F_{i+1}/F_i \quad (i=0,\dots,m-1)$ sont des extensions
galtourables.
\end{defn}

\gespace
Pour une extension donnée, l'existence d'une telle tour galtourable n'est pas
automatique.
Elle est discutée dans le corollaire 1.11 du chapitre 3.  

\gespace
\begin{defn} \label{def:galsimple}
(1) Nous appelons "extension simple"\index{Extension!simple} toute extension $L/K$ non triviale
n'admettant aucun corps intermédiaire propre :
$$\begin{array}{ccl}
(L/K \text{ simple}) &\stackrel{\text{déf.}}{\SSI}&  (L \neq K, \quad \forall F \quad K\leq F \leq L
\:\implique\: (F=K\: \text{ou} \: F=L)\;) \\
&\SSI &(L \neq K, \quad \forall F \quad K\leq F < L
\:\implique\: F=K)\:.\\
\end{array}$$

\pespace
\noindent (2) Nous appelons "extension galsimple"\index{Extension!galsimple} toute extension $L/K$ non triviale
n'admettant aucune extension quotient galoisienne propre :
$$\begin{array}{ccl}
(L/K \text{ galsimple}) &\stackrel{\text{déf.}}{\SSI}&  \left(\begin{array}{c}
					L \neq K, \quad \forall F \quad K\leq F
					\leq L\\
					(F/K \text{ galoisienne})\:\implique\: (F=K\: \text{ou} \: F=L)\;
					\end{array}\right) \\
		&&\\			
		&\SSI 			&\left(\begin{array}{c}
					L \neq K, \quad \forall F \quad K\leq F
					< L\\
					(F/K \text{ galoisienne})\:\implique\: F=K\\
					\end{array}\right) \:.\\
\end{array}$$
\end{defn}

\gespace
\begin{rem}
Dans la définition ci-dessus de la galsimplicité, aucune hypothèse n'est faite
sur la sous-extension $L/F$ : elle peut être galoisienne ou ne pas l'être.
\end{rem}

\gespace
\begin{prop}
(1) Toute extension simple est galsimple.

\pespace
\noindent (2) Une extension galsimple est galtourable si et seulement si elle est galoisienne.
\end{prop}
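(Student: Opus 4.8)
The plan is to treat the two assertions separately; part (1) is essentially a tautology, and part (2) reduces to a single observation about strict Galois towers. For part (1), I would argue straight from the definitions. If $L/K$ is simple (Def.~\ref{def:galsimple}(1)), then $L \neq K$ and the only intermediate fields $F$ with $K \leq F \leq L$ are $K$ and $L$ themselves. A fortiori, any intermediate field $F$ for which $F/K$ is Galois must equal $K$ or $L$, which is exactly the defining condition for galsimplicity (Def.~\ref{def:galsimple}(2)). Hence $L/K$ is galsimple.

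For part (2), the implication ``$L/K$ Galois $\implies L/K$ galtourable'' is immediate, since the height-one tower $K \unlhd L$ is already a Galois tower (this is the scholie following Def.~\ref{def:galtourable}). The substance lies in the converse, so suppose $L/K$ is galsimple and galtourable. Being galtourable, $L/K$ admits a Galois tower, and I would first extract from it a strict Galois tower
$$K = F_0 < F_1 < \dots < F_m = L$$
by deleting the repeated fields. The point to verify here is that deleting a trivial step $F_i = F_{i+1}$ leaves each surviving step Galois: the two neighbouring steps merge into $F_{i+2}/F_i = F_{i+2}/F_{i+1}$, which was already Galois. Since $L/K$ is galsimple we have $L \neq K$, whence $m \geq 1$.

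Now the bottom step $F_1/F_0 = F_1/K$ is Galois, and strictness gives $F_1 \neq K$. Applying galsimplicity to the intermediate field $F_1$ (which satisfies $K \leq F_1 \leq L$ with $F_1/K$ Galois) forces $F_1 = L$; by strictness this means $m = 1$, so that $L/K = F_1/F_0$ is itself Galois, as wanted. I expect the only mildly delicate point to be this reduction to a strict Galois tower --- i.e.\ justifying that a galtourable extension always admits one --- while the rest is a direct unwinding of the three definitions of simple, galsimple, and galtourable.
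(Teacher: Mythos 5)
Your proof is correct and follows essentially the same route as the paper: part (1) is read off the definitions, and part (2) collapses a Galois tower of a galsimple extension by applying galsimplicity to the intermediate fields. The only cosmetic difference is that the paper iterates galsimplicity bottom-up on the original tower (successively identifying $T_1, T_2, \dots$ with $K$ until one of them equals $L$), whereas you first pass to the associated strict tower and apply galsimplicity once; your justification that the strict tower remains Galois is sound and anticipates Corollaire 2.6 of chapter 3.
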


\mespace
\begin{proof}
Le (1) est clair. Prouvons le sens direct du (2). Soit $L/K$ une extension
galsimple galtourable. Par la définition \ref{def:galtourable}, elle admet une
tour galoisienne (définition \& convention  \ref{def:tour}.(2) )
$$(T) \qquad K=T_0 \leq T_1 \leq \,\dots \, \leq T_i \leq T_{i+1} \leq \,\dots \,
\leq T_m=L \:.$$
Comme $L/K$ est galsimple (Déf. \ref{def:galsimple}.(2) ), on tire de $K \leq T_1
\leq L$ l'implication
$$(T_1/K \:\text{galoisienne}) \quad \implique \quad (T_1=K \: \text{ou} \:
T_1=L)\:.$$
Si $T_1=L$, $(T_1/K)=(L/K)$ est bien galoisienne. Si $T_1 \neq L$, on a la tour 
$$(T) \qquad K=T_0 = T_1 \leq T_2 \leq \,\dots \, \leq T_i \leq T_{i+1} \leq \,\dots \,
\leq T_m=L \:.$$
En itérant le procédé sans avoir déjà conclu, on obtient
$$(T) \qquad K=T_0 = T_1 =\,\dots \, = T_{m-2} \leq T_{m-1} \leq T_m=L \:.$$
Donc, ou bien $T_{m-1}=K$, ou bien $T_{m-1}=L$, et dans les deux cas $L/K$ est
galoisienne.
\end{proof}

\mespace
Nous étudierons les extensions simples ou galsimples au chapitre 5. La
galsimplicité nous permettra d'énoncer le "Théorème $M$" (Chap. 6) qui est le
point crucial de la dissociation de toutes les extensions finies.

\mespace
Afin d'alléger le texte qui suit, convenons de quelques notations. Elles
concernent les définitions \ref{def:tour}, \ref{def:galtourable},
\ref{def:tourgaltourable} et \ref{def:galsimple} précédentes, et ont été
rassemblées ici pour y référer facilement.

\gespace
\begin{notas}
(1) (Extension stricte $L/K$) $\quad \ssi \quad K< L \quad \ssi \quad L
> K \:.$\\
\pespace \noindent
(2) (Extension galoisienne $L/K$) $\quad \ssi \quad L \gal K \quad
\ssi \quad K\unlhd L \quad \ssi \quad L
\unrhd K \:.$\\
\pespace \noindent
(3) (Extension stricte galoisienne $L/K$) $\quad \ssi \quad K\lhd L \quad
\ssi \quad L \rhd K \:.$\\
\pespace \noindent
(4) (Extension non galoisienne $L/K$) $\quad \ssi \quad L \nongal K \:.$\\
\pespace \noindent
(5) (Extension galtourable $L/K$) $\quad \ssi \quad L \galtou K \quad
\ssi \quad K      \lessgtr L \quad \ssi \quad L \gtrless K 
\:.$\\
\pespace \noindent
(6) (Extension non galtourable $L/K$) $\quad \ssi \quad L \nongaltou K \:.$\\
\pespace \noindent
(7) (Extension galtourable non galoisienne $L/K$) $\quad \ssi \quad L
\galtou \nongal K \:.$\\
\pespace \noindent
(8) (Extension simple $L/K$) $\quad \ssi \quad L \,\simple K \:.$\\
\pespace \noindent
(9) (Extension non simple $L/K$) $\quad \ssi \quad L \,\nonsimple K \:.$\\
\pespace \noindent
(10) (Extension galsimple $L/K$) $\quad \ssi \quad L \galsimple K  \:.$\\
\pespace \noindent
(11) (Extension non galsimple $L/K$) $\quad \ssi \quad L \nongalsimple K$\\
\pespace \noindent
(12) (Extension galsimple galoisienne $L/K$) $\quad \ssi \quad L \galsimple \gal
K \:.$\\
\pespace \noindent
(13) (Extension galsimple non galoisienne $L/K$)
$$ \quad \ssi \quad L \galsimple \nongal K  \quad \ssi \quad K \blacktriangleleft L 
\quad \ssi \quad L \blacktriangleright K \:.$$
\end{notas}

\mespace
Les notations précédentes ne sont évidemment pas redondantes ; en voici quelques
exemples.

\mespace
\begin{exs} \label{ex:existence}
Pour un réel $r \in {\mathbb R}_+$ et un entier $n \in {\mathbb N}$, on note,
 $\sqrt[n]{r}$ la racine $n$-ième réelle de $r$ : $ \sqrt[n]{r}
\in {\mathbb R} \:.$ 
\pespace
\noindent (i) Il existe des extensions séparables non galtourables :
$${\mathbb Q}(\sqrt[6]{2}) \nongaltou {\mathbb Q} \:.$$
(ii) Il existe des extensions galtourables non galoisiennes :
$${\mathbb Q}(\sqrt[4]{2}) \galtou \nongal {\mathbb Q} \:.$$
(iii) \begin{itemize}
\item Toute extension cyclique de degré premier est simple.
\item Quel que soit l'entier $n \geq 3$, l'extension ${\mathbb Q}(\theta) / {\mathbb
Q}$ où 
\end{itemize}
$$\theta^n - \theta - 1 =0$$
est simple, mais non galoisienne :
$${\mathbb Q}(\theta)\, \simple \nongal {\mathbb Q} \:.$$
(iv) Il existe des extensions galsimples non simples non galoisiennes :
$${\mathbb Q}(\sqrt[9]{2}) \galsimple \nonsimple \nongal {\mathbb Q} \:.$$
(v) Il existe des extensions galsimples non simples galoisiennes : soit $L$ le
corps de décomposition dans $\mathbb C$ du polynôme 
$$X^5 + 20 X +16 \qquad \text{(resp. } \sum\limits_{n=0}^8 \frac{X^n}{n!} \text{ ).}$$
On a :
$$Gal(L/{\mathbb Q}) \isomto A_5 \qquad \text{(resp. }Gal(L/{\mathbb Q}) \isomto A_8 \text{ )} $$
$$[L:{\mathbb Q}]=60  \qquad \quad\text{(resp. } [L:{\mathbb Q}]=20160 \text{
)}\;,$$
et $L/{\mathbb Q}$ est une extension galsimple non simple
galoisienne :
$$ L \galsimple \nonsimple \gal {\mathbb Q}\:.$$
\end{exs}

\mespace
\begin{proof}
(i) On a $[{\mathbb Q}(\sqrt[6]{2}):{\mathbb Q}]=6$ par le critère d'Eisenstein.
Supposons que l'extension ${\mathbb Q}(\sqrt[6]{2})/{\mathbb Q}$ soit
galtourable, i.e. par définition qu'il existe une tour galoisienne
$$(T) \qquad {\mathbb Q}=T_0 \unlhd \,\dots \, \unlhd T_i \unlhd \,\dots \,
\unlhd T_m={\mathbb Q}(\sqrt[6]{2}) \:.$$
Prouvons qu'alors ${\mathbb Q}(\sqrt[6]{2})/{\mathbb Q}$ admet nécessairement
une tour galoisienne stricte à deux marches (ceci est un cas particulier d'un
résultat général démontré au corollaire 2.6 du chapitre 3). 
Posons :
$$j:=min\{i\in\{0,\dots,m\} \; | \; T_i \neq {\mathbb Q}\}\:, \quad
k:=max\{i\in\{0,\dots,m\} \; | \; T_i \neq T_m\}\:.$$
De $j>k$ suivrait
$${\mathbb Q}(\sqrt[6]{2})=T_{k+1} \gal T_k={\mathbb Q} \quad : \;
\text{absurde.}$$
Donc nécessairement $j\leq k$ et
$$(T) \quad {\mathbb Q}=T_0 = \,\dots \, = T_{j-1} \lhd T_{j} \unlhd \,\dots \,
\unlhd T_{k} \lhd T_{k+1}=\,\dots\,=T_m={\mathbb Q}(\sqrt[6]{2}) \:.$$
On en déduit la tour à trois marches
$${\mathbb Q} \lhd T_j \leq T_k \lhd {\mathbb Q}(\sqrt[6]{2})$$
qui ne peut pas être stricte, en vertu du Fait \ref{fait:hauteur}. C'est donc que $T_j=T_k$ et, dans notre hypothèse, on a ainsi prouvé l'existence d'une tour galoisienne stricte à deux marches
$${\mathbb Q}=F_0 \lhd F_1 \lhd F_2={\mathbb Q}(\sqrt[6]{2}) \;.$$
Par ailleurs, on a la \\ 

{\bf Proposition. }{\it Soient $n \in {\mathbb N}$ et $a \in {\mathbb Q}_+$ un
rationnel positif. On suppose que pour tout diviseur $d$ de $n$, $a \notin
{\mathbb Q}^d$. Alors, pour tout corps
intermédiaire $F$ entre ${\mathbb Q}$ et $L:={\mathbb Q}(\sqrt[n]{a})$ :
${\mathbb Q} \leq F \leq L$, il existe un entier $d$ divisant $n$ : $d \mid n$,
tel que $F={\mathbb Q}(\sqrt[d]{a})$.}

\begin{proof}
Conséquence directe du Théorème 2.1 de \cite{A-V} ou du Théorème 2.2 de
\cite{Vi}.
\end{proof}

Avec ce qui précède, il résulte de la proposition ci-dessus que les seules tours
strictes à deux marches de ${\mathbb Q}(\sqrt[6]{2}) / {\mathbb Q}$ sont
$${\mathbb Q}=F_0 \lhd F_1 = {\mathbb Q}(\sqrt{2}) < F_2={\mathbb
Q}(\sqrt[6]{2})$$
et
$${\mathbb Q}=F_0 < F_1 = {\mathbb Q}(\sqrt[3]{2}) \lhd F_2={\mathbb
Q}(\sqrt[6]{2}) \;.$$
Or ni l'une ni l'autre n'est galoisienne, d'où la contradiction.

\pespace
\noindent (ii) Il suffit de considérer la tour galoisienne
$${\mathbb Q} \lhd {\mathbb Q}(\sqrt{2}) \lhd {\mathbb Q}(\sqrt[4]{2})\:.$$

\pespace
\noindent (iii) La première assertion est claire ; quant à la seconde, elle sera
démontrée au chapitre 5. 

\pespace
\noindent (iv) Posons $L:={\mathbb Q}(\sqrt[9]{2})$. Clairement, l'extension $L/{\mathbb Q}$ est non simple et non galoisienne. Si elle n'était pas galsimple, il existerait un corps intermédiaire $N$ tel que ${\mathbb Q} \lhd N < L$. Or, en vertu de la
proposition de la démonstration du (i) ci-dessus, on a nécessairement
$N={\mathbb Q}(\sqrt[3]{2})$ qui n'est pas galoisien sur ${\mathbb Q}$.

\pespace
\noindent (v) Pour $X^5 + 20 X + 16$, on vérifie que $Gal(L/{\mathbb Q}) \isomto A_5$
par PARI \cite{Co2}. Pour $\sum\limits_{n=0}^8 \frac{X^n}{n!}$, on a $Gal(L/{\mathbb Q})
\isomto A_8$ d'après un résultat de Schur (\cite{Sc} ou \cite{Col}). Les
sous-groupes de Sylow de $A_5$ (resp. $A_8$) assurent que $L/{\mathbb Q}$ n'est
pas simple. Mais $L \gal {\mathbb Q}$ est galsimple puisque $A_5$ (resp. $A_8$) est
un groupe simple (c'est un phénomène général : cf. Chap. 4, Prop. 2.7).
\end{proof} 


\gespace
\gespace
\section{Théorie générale des extensions galtourables}
On dispose d'une théorie générale des extensions galoisiennes finies ou
infinies. Nous allons montrer qu'il existe aussi une théorie générale des
extensions galtourables. Rappelons d'abord le bien connu "théorème de
l'extension galoisienne translatée"  \cite[p.266,Th.1.12]{La} (dit "natural
irrationalities" dans \cite{Mor}).

\gespace
\begin{Th} \label{th:galtranslatée} \index{Théorème!de l'extension galoisienne
translatée} 
Soient $K/J$ et $L/J$ deux extensions algébriques. Sous la seule hypothèse que
$K$ et $L$ soient contenus dans un même corps, avoir $L/J$ galoisienne implique
que $KL/K$ est galoisienne :
$$(L \gal J) \qquad \IMPLIQUE \qquad (KL \gal K)\:.$$
\end{Th}

\gespace
\begin{cor}
La translatée d'une extension galoisienne par un corps donné est toujours une
extension galoisienne.\\
Précisément, soit $L/J$ une extension galoisienne. Pour tout corps $C$ contenu
dans une clôture algébrique de $J$ contenant $L$, l'extension $CL/CJ$ est encore
une extension galoisienne. Autrement dit, on a l'implication
$$(L \gal J) \qquad \IMPLIQUE \qquad (CL \gal CJ)\:.$$
\end{cor}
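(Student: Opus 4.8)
Le plan est de d\'eduire ce corollaire directement du th\'eor\`eme de l'extension galoisienne translat\'ee \'enonc\'e ci-dessus, en sp\'ecialisant le corps $K$ de mani\`ere judicieuse. Ce th\'eor\`eme affirme l'implication $(L \gal J) \IMPLIQUE (KL \gal K)$ pour toute extension alg\'ebrique $K/J$ telle que $K$ et $L$ soient contenus dans un m\^eme corps ; l'id\'ee est de l'appliquer avec $K := CJ$.

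Je commencerais par v\'erifier que les hypoth\`eses du th\'eor\`eme sont bien satisfaites pour ce choix. Puisque le corps $C$ est contenu dans une cl\^oture alg\'ebrique de $J$, chacun de ses \'el\'ements est alg\'ebrique sur $J$, de sorte que l'extension $CJ/J$ est alg\'ebrique : l'hypoth\`ese portant sur $K = CJ$ est donc acquise. Par ailleurs, $CJ$ et $L$ sont tous deux contenus dans la cl\^oture alg\'ebrique de $J$ consid\'er\'ee, laquelle contient $L$ par hypoth\`ese ; ils sont bien inclus dans un m\^eme corps, ce qui fournit la condition $(Q_0)$.

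Il resterait ensuite \`a identifier le compositum produit par le th\'eor\`eme. Comme $J \subseteq L$, on a $JL = L$, d'o\`u $(CJ)L = C(JL) = CL$. Le th\'eor\`eme appliqu\'e \`a $K := CJ$ donne alors $(L \gal J) \IMPLIQUE ((CJ)L \gal CJ)$, c'est-\`a-dire exactement $(CL \gal CJ)$, qui est la conclusion recherch\'ee.

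Je n'anticipe aucune difficult\'e s\'erieuse : les deux seuls points \`a contr\^oler sont l'alg\'ebricit\'e de $CJ/J$ --- c'est elle qui l\'egitime l'emploi du th\'eor\`eme m\^eme lorsque $C$ est de degr\'e infini sur $J$ --- et l'\'egalit\'e $(CJ)L = CL$ ; ces deux v\'erifications sont imm\'ediates. L'int\'er\^et de cette reformulation est de faire ressortir le corps de base $CJ$ comme translat\'e de $J$ par $C$, ce qui \'etend l'\'enonc\'e du th\'eor\`eme \`a la translation par un corps $C$ arbitraire.
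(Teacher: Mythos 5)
Votre démonstration est correcte et suit exactement la même voie que celle du texte : on applique le théorème de l'extension galoisienne translatée en translatant $L/J$ par $K:=CJ/J$, les vérifications de l'algébricité de $CJ/J$, de la condition $(Q_0)$ et de l'égalité $(CJ)L=CL$ étant immédiates. Le texte se contente d'une phrase ; vous explicitez les mêmes vérifications, sans rien changer à l'argument.
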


\mespace
\begin{proof}
Il suffit d'appliquer le théorème \ref{th:galtranslatée} en translatant
l'extension galoisienne $L/J$ par $CJ/J$.
\end{proof}

\gespace
Ces énoncés se généralisent aux extensions galtourables.

\gespace
\begin{Th} \label{th:translatée}\index{Théorème!de l'extension galtourable
translatée} 
Soient $K/J$ et $L/J$ deux extensions algébriques. Sous la seule condition que
$K$ et $L$ soient contenus dans un même corps, on a l'implication
$$(L/J \;\text{galtourable }) \qquad \IMPLIQUE \qquad (KL/K \;\text{galtourable
})\:.$$
\end{Th}

\gespace
\begin{proof}
Soit $(F)$ une tour galoisienne de l'extension galtourable $L \galtou J$ :
$$(F) \qquad J=F_0 \unlhd F_1 \unlhd \,\dots \, \unlhd F_i \unlhd F_{i+1} \unlhd \,\dots \,
\unlhd F_m=L \:.$$
Posons $E_i:=KF_i \quad(i=0,\dots,m)$, et appliquons le théorème
\ref{th:galtranslatée} en translatant chacune des marches galoisienne $F_{i+1} \gal F_i
\quad (i=0,\dots,m-1)$ par l'extension algébrique $E_i/F_i$.

\begin{figure}[!h]
\begin{center}
\vskip -7mm
\includegraphics[width=9cm]{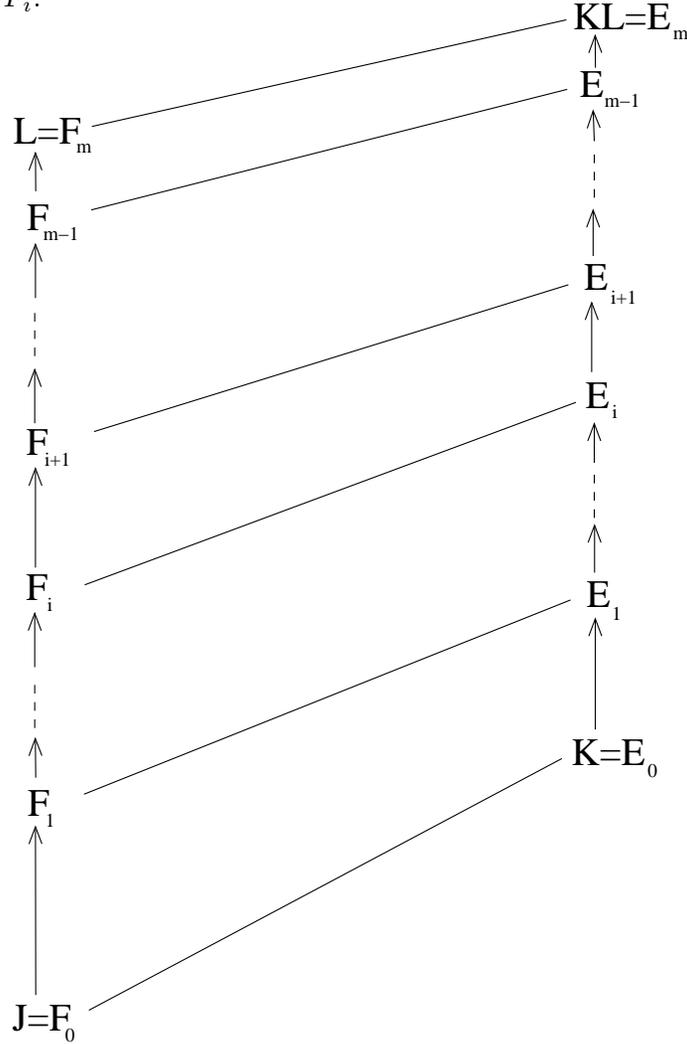}
\end{center}
\vskip -4mm
\rm{\caption{\label{fig:12}\leg Translatée d'une tour galoisienne}}
\vskip 1mm
\end{figure}

\noindent On en déduit que les extensions
$$ E_i\,F_{i+1}=K\,F_{i+1}=E_{i+1} \:/\: E_i \qquad (i=0,\dots,m-1)$$
sont galoisiennes. On obtient donc la tour galoisienne
$$(E) \qquad K=KF_0=E_0 \unlhd \,\dots \, \unlhd E_i \unlhd E_{i+1}
\unlhd \,\dots \, \unlhd E_m=KF_m=KL \:.$$
Or ceci n'exprime rien d'autre que la galtourabilité de $KL/K$.
\end{proof}

\gespace
Enonçons maintenant trois corollaires au théorème \ref{th:translatée}.
\mespace

\gespace
\begin{cor}
La translatée d'une extension galtourable par un corps donné est toujours une
extension galtourable.\\
Précisément, soit $L/J$ une extension galtourable. Pour tout corps $C$ contenu dans
une clôture algébrique de $J$ contenant $L$, l'extension $CL/CJ$ est
encore galtourable. Autrement dit on a l'implication
$$(L \galtou J) \quad \IMPLIQUE \quad (CL \galtou CJ) \:.$$
\end{cor}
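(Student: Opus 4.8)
The plan is to deduce this corollary directly from the galtourable-translate theorem (Theorem~\ref{th:translat�e}), in exact parallel with the way the Galois analogue was obtained from Theorem~\ref{th:galtranslat�e}. The trick is to translate the galtourable extension $L/J$ not by $C$ itself, but by the compositum $CJ$. Concretely, I would set $K := CJ$ and feed the two algebraic extensions $K/J = CJ/J$ and $L/J$ into Theorem~\ref{th:translat�e}. Everything then reduces to checking that the hypotheses of that theorem hold and to identifying the resulting compositum.

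First I would verify the hypotheses. Since $C$ is contained in an algebraic closure of $J$, every element of $C$ is algebraic over $J$, so $CJ/J$ is algebraic; and $L/J$ is algebraic because it is galtourable (a galtourable extension is always algebraic, as noted in the Scholie following Definition~\ref{def:galtourable}). Moreover $CJ$ and $L$ both sit inside the fixed algebraic closure of $J$ containing $L$, hence inside a common field, which is the only remaining requirement of Theorem~\ref{th:translat�e}. Next I would identify the output: with $K = CJ$ one has $KL = (CJ)L = C(JL) = CL$, using $J \subseteq L$ so that $JL = L$, while $K = CJ$. Thus Theorem~\ref{th:translat�e} yields $(L \galtou J) \implique (KL \galtou K)$, that is $CL \galtou CJ$, which is exactly the assertion.

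I do not expect any genuine obstacle here: the whole substance of the result is carried by Theorem~\ref{th:translat�e}, and the present corollary is merely its specialization to $K = CJ$. The only point deserving a moment's care is the elementary identification $CJ \cdot L = CL$ together with the algebraicity of $CJ/J$, both of which follow at once from the facts that $J \subseteq L$ and that $C$ lies in an algebraic closure of $J$.
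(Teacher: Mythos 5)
Votre démonstration est correcte et suit exactement la même voie que celle du texte : on applique le théorème de l'extension galtourable translatée avec $K:=CJ$, les vérifications d'algébricité et l'identification $CJ\cdot L=CL$ étant immédiates. Rien à redire.
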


\pespace
\begin{proof}
On applique le théorème \ref{th:translatée} avec $K:=CJ$. 
\end{proof}

\gespace
\begin{fait} 
La réciproque du théorème \ref{th:translatée} est fausse.
\end{fait}

\mespace
{\noindent \it 
Scholie.} Il en est de même de la réciproque du théorème de l'extension
galoisienne translatée \ref{th:galtranslatée}.

\mespace
\begin{proof}
Reprenons l'extension $L:={\mathbb Q}(\sqrt[6]{2})/{\mathbb Q}$ de l'exemple
\ref{ex:existence}.(i) et translatons la par l'extension $K:={\mathbb Q}(j) /
{\mathbb Q}$ où $j:=e^{2i\pi/3}$.
Comme $K$ contient les racines $6^{\text{èmes}}$ de l'unité, l'extension $KL=
K(\sqrt[6]{2}) / K$ est kummérienne, donc galoisienne et a fortiori galtourable.
Tandis que l'on a prouvé que l'extension $L:={\mathbb Q}(\sqrt[6]{2})/{\mathbb
Q}$ n'est même pas galtourable.

\begin{figure}[!h]
\begin{center}
\vskip -5mm
\includegraphics[width=6.5cm]{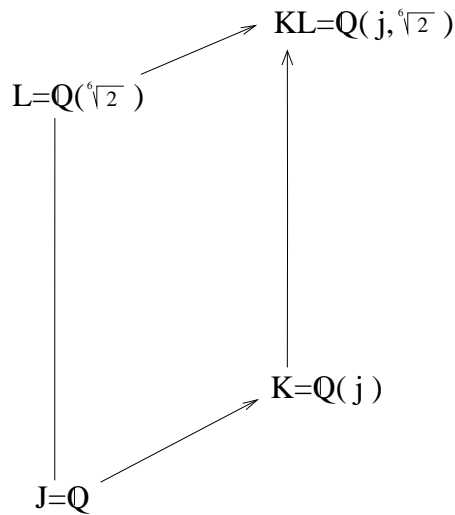}
\end{center}
\vskip -7mm
\rm{\caption{\label{fig:13}\leg Descente non galtourable}}
\vskip -10mm
\end{figure}

\end{proof}

\begin{cor}
La translatée d'une tour galtourable par une extension algébrique est une tour
galtourable.

\noindent Précisément, soit $L/J$ une extension admettant une tour galtourable (cf. Déf.
\ref{def:tourgaltourable})
$$(F) \qquad J=F_0 \lessgtr \,\dots \, \lessgtr F_i \lessgtr F_{i+1}
\lessgtr \,\dots \, \lessgtr F_m=L \:.$$
Alors, pour toute extension algébrique $K/J$ telle que $K$ et $L$ soient
contenus dans un même corps, on a la tour galtourable
$$(E) \qquad K=E_0 \lessgtr \,\dots \, \lessgtr E_i \lessgtr E_{i+1}
\lessgtr \,\dots \, \lessgtr E_m=KL$$
où l'on a posé $\: E_i:=KF_i \quad (i=0,\dots,$ $m)$.
\end{cor}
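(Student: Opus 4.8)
The plan is to obtain this corollary as a step-by-step application of Theorem~\ref{th:translatée}; the argument is in fact the verbatim analogue of the proof of Theorem~\ref{th:translatée} itself, the only change being that the Galois-translation theorem invoked there is now replaced by Theorem~\ref{th:translatée}.

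First I would check that $(E)$ is a genuine tower of $KL/K$ and record how its steps factorise. Since $K/J$ is an extension we have $J \subseteq K$, so $E_0 = KF_0 = KJ = K$ and $E_m = KF_m = KL$, while $F_i \subseteq F_{i+1}$ gives $E_i = KF_i \subseteq KF_{i+1} = E_{i+1}$. Moreover, using once more that $F_i \subseteq F_{i+1}$,
$$E_{i+1} = KF_{i+1} = (KF_i)F_{i+1} = E_i\,F_{i+1},$$
so each step $E_{i+1}/E_i$ of $(E)$ is precisely the translate of the step $F_{i+1}/F_i$ of $(F)$ by the extension $E_i/F_i$ (note that $F_i \subseteq E_i$).

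It then remains to apply Theorem~\ref{th:translatée} for each $i \in \{0,\dots,m-1\}$, taking $F_i$, $F_{i+1}$ and $E_i$ in the roles of $J$, $L$ and $K$. Its hypotheses hold: $E_i/F_i$ is algebraic (a translate of the algebraic extension $K/J$) and $F_{i+1}/F_i$ is galtourable, hence algebraic, because $(F)$ is a galtourable tower; and the common-field condition is satisfied since $E_i = KF_i$ and $F_{i+1}$ both lie inside $KL$ (all the $F_i$ lie in $L$, whence all the $E_i$ lie in $KL$). The theorem then gives that $E_i F_{i+1}/E_i = E_{i+1}/E_i$ is galtourable, i.e.\ $E_i \lessgtr E_{i+1}$; as this holds for every $i$, the tower $(E)$ is galtourable by Definition~\ref{def:tourgaltourable}. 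I expect no genuine obstacle here: all the substance is already contained in Theorem~\ref{th:translatée}, and only two routine verifications — the step factorisation $E_{i+1} = E_i F_{i+1}$ and the common-field hypothesis at each application — are needed, both being immediate from $F_i \subseteq F_{i+1} \subseteq L$.
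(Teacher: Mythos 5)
Your proof is correct and follows exactly the paper's route: the paper's own argument is the one-line observation that it suffices to apply Theorem~\ref{th:translat�e} to each galtourable step $F_{i+1} \galtou F_i$, translated by the algebraic extension $E_i/F_i$. You merely spell out the routine verifications (that $E_{i+1}=E_iF_{i+1}$, that $E_0=K$ and $E_m=KL$, and that the common-field hypothesis holds) which the paper leaves implicit.
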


\mespace
\begin{proof}
Il suffit d'appliquer le théorème \ref{th:translatée} aux extensions
galtourables $F_{i+1} \galtou F_i$  en les translatant par les extensions
algébriques $E_i /F_i$, pour $i \in \{0, \dots,\\ m-1\}$.
\end{proof}

\mespace
Quand on empile deux extensions galoisiennes, on obtient une extension
galtourable non nécessairement galoisienne en général (cf. exemple
\ref{ex:existence}.(ii)). Les extensions galtourables n'ont pas ce défaut : quand on
empile deux extensions galtourables, on obtient toujours une extension
galtourable. Précisément :

\gespace
\begin{fait} \label{fait:concaténation}
Pour toute tour $K \leq L \leq M$, avoir $L/K$ galtourable et $M/L$ galtourable
implique que $M/K$ est galtourable. Autrement dit on a l'implication
$$(\, L \galtou K\:, \; M \galtou L \,) \quad \IMPLIQUE \quad (M \galtou K) \:.$$
\end{fait}

\mespace
{\it \noindent Scholie.} Nous avons déjà prouvé que l'implication inverse est
fausse en général : cf. \ref{fait:quotient}.

\mespace
\begin{proof}
Il suffit d'utiliser que la juxtaposition de tours galoisiennes est encore une
tour galoisienne. Précisément, si
$$(F) \qquad F_0 \unlhd F_1 \unlhd \,\dots \, \unlhd F_i \unlhd \,\dots \,
\unlhd F_m $$
et
$$(E) \qquad F_m=E_0 \unlhd E_1 \unlhd \,\dots \, \unlhd E_j \unlhd \,\dots \,
\unlhd E_n $$
sont deux tours galoisiennes, la tour
$$\begin{array}{lr}
(T)\quad	& F_0=:T_0  \unlhd \,\dots \, \unlhd T_i:=F_i  \unlhd \,\dots \,
\unlhd T_m:=F_m=E_0  \unlhd T_{m+1}:=E_1 \unlhd \,\dots \\
	& \dots \unlhd \, T_{m+j}:=E_j \,\unlhd \dots \,\unlhd T_{m+n}=E_n
\end{array}$$
est évidemment galoisienne.
\end{proof}

\pespace
\begin{cor} \label{cor:compositum}
Tout compositum d'extensions galtourables est galtourable. Précisément :
quelles que soient les extensions galtourables $K/J$ et $L/J$ dont les sommets
sont contenus dans un même corps, l'extension compositum $KL/J$ est galtourable.
Autrement dit, on a l'implication
$$(\, K \galtou J\:, \; L \galtou J \,) \quad \IMPLIQUE \quad (\,KL \galtou J
\,) \:.$$
\end{cor}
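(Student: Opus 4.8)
Le plan est d'exploiter la tour $J \leq K \leq KL$ et d'appliquer successivement les deux résultats qui précèdent immédiatement. D'abord, puisque $K/J$ et $L/J$ sont galtourables, ce sont en particulier des extensions algébriques (premier scholie suivant la définition \ref{def:galtourable}), et par hypothèse les sommets $K$ et $L$ sont contenus dans un même corps, ce qui assure l'existence du compositum $KL$. Les hypothèses du théorème \ref{th:translat�e} sont donc satisfaites : en translatant l'extension galtourable $L/J$ par $K$, on obtient que l'extension compositum $KL/K$ est galtourable.

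Ensuite, je considère la tour $J \leq K \leq KL$. Sa marche inférieure $K/J$ est galtourable par hypothèse, tandis que sa marche supérieure $KL/K$ vient d'être établie galtourable par le paragraphe précédent. Le fait \ref{fait:concat�nation} (empilement de deux extensions galtourables), appliqué à cette tour, fournit alors directement
$$(\, K \galtou J\:,\; KL \galtou K \,) \quad \IMPLIQUE \quad (\, KL \galtou J \,)\:,$$
ce qui est exactement la conclusion voulue.

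Il n'y a pas ici d'obstacle véritable : la démonstration n'est qu'une combinaison immédiate du théorème de translation \ref{th:translat�e} et du fait de concaténation \ref{fait:concat�nation}, déjà disponibles. Le seul point demandant un minimum d'attention est la vérification que les hypothèses de chacun de ces deux énoncés sont bien réunies — essentiellement que $J$, $K$, $L$ et $KL$ vivent dans un même surcorps, ce qui légitime à la fois la translation de $L/J$ par $K$ et la formation de la tour $J \leq K \leq KL$.
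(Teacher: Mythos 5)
Votre démonstration est correcte et suit exactement la même voie que celle du texte : translation de $L \galtou J$ par $K$ via le théorème \ref{th:translat�e} pour obtenir $KL \galtou K$, puis concaténation avec $K \galtou J$ par le Fait \ref{fait:concat�nation}. Les vérifications d'hypothèses que vous explicitez sont justes mais immédiates ; rien à redire.
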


\pespace
\begin{proof}
Le théorème \ref{th:translatée} assure que $KL \galtou K$. Comme de plus $K
\galtou J$, le Fait \ref{fait:concaténation} précédent entraine que $KL \galtou
J$.
\end{proof}

\gespace
\begin{prop} \label{prop:sousextension}
Toute sous-extension d'une extension galtourable est galtourable. Précisément, 
soit $L \galtou K$ une extension galtourable. Pour tout corps intermédiaire $M$,
$K \leq M \leq L$, la sous extension $L/M$ est galtourable. 
\end{prop}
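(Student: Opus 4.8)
Le plan est d'obtenir ce r�sultat comme cas particulier du th�or�me de l'extension galtourable translat�e (Th�or�me~\ref{th:translat�e}), en translatant l'extension galtourable $L/K$ par le corps interm�diaire $M$ lui-m�me. Je commencerais par v�rifier que les hypoth�ses de ce th�or�me sont satisfaites : �tant galtourable, l'extension $L/K$ est alg�brique (scholie suivant la D�finition~\ref{def:galtourable}), donc la sous-extension interm�diaire $M/K$ l'est aussi ; de plus $M$ et $L$ sont tous deux contenus dans le corps commun $L$. Le Th�or�me~\ref{th:translat�e}, appliqu� avec $K$ comme corps de base, $M$ comme corps translatant et $L/K$ comme extension galtourable, donne alors que l'extension compositum $ML/M$ est galtourable.

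Il ne reste qu'une observation imm�diate : puisque $M \leq L$, on a $ML = L$, de sorte que la conclusion $ML \galtou M$ s'�crit exactement $L \galtou M$, ce qui est l'�nonc� voulu. Il n'y a donc pas v�ritablement d'obstacle ici ; tout le contenu r�side dans le fait de reconna�tre que translater $L/K$ par un sous-corps de $L$ ram�ne le compositum sur $L$, le cas le plus d�g�n�r� de la translation.

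Si l'on souhaite �viter d'invoquer le Th�or�me~\ref{th:translat�e} comme une bo�te noire, je referais plut�t sa d�monstration dans ce cas particulier. Partant d'une tour galoisienne $K = F_0 \unlhd \dots \unlhd F_m = L$ de $L/K$, je poserais $E_i := M F_i$, de sorte que $E_0 = MK = M$ et $E_m = ML = L$, et que $E_i \leq E_{i+1}$. Chaque marche $E_{i+1}/E_i$ est alors galoisienne : en translatant la marche galoisienne $F_{i+1}/F_i$ par l'extension alg�brique $E_i/F_i$ via les irrationalit�s naturelles (Th�or�me~\ref{th:galtranslat�e}), on obtient que $E_i F_{i+1}/E_i$ est galoisienne, et $E_i F_{i+1} = M F_i F_{i+1} = M F_{i+1} = E_{i+1}$ car $F_i \leq F_{i+1}$. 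Ceci fournit une tour galoisienne de $L/M$, �tablissant directement sa galtourabilit�.
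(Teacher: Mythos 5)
Votre preuve est correcte et suit exactement la d�marche du texte : la d�monstration originale consiste pr�cis�ment � appliquer le th�or�me~\ref{th:translat�e} en translatant l'extension galtourable $L \galtou K$ par l'extension alg�brique $M/K$, le compositum $ML$ se r�duisant � $L$. Votre d�veloppement explicite (translation marche par marche via le th�or�me~\ref{th:galtranslat�e}) ne fait que d�rouler la preuve de ce m�me th�or�me dans ce cas particulier.
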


\mespace
\begin{proof}
Il suffit d'appliquer le théorème \ref{th:translatée} en translatant
l'extension galtourable $L \galtou K$ par l'extension algébrique $M/K$.
\end{proof}

\gespace
La proposition précédente généralise une propriété analogue des extensions
galoisiennes. Le fait suivant établit que la question "duale", c'est à dire pour
les extensions quotients, trouve la même réponse dans le cas galtourable que
dans le cas galoisien.

\mespace
\begin{fait} \label{fait:quotient}
En général, une extension quotient d'une extension galtourable n'est pas
galtourable.
\end{fait}

\begin{proof}
Considérons la situation

\begin{figure}[!h]
\begin{center}
\vskip -5mm
\includegraphics[width=7cm]{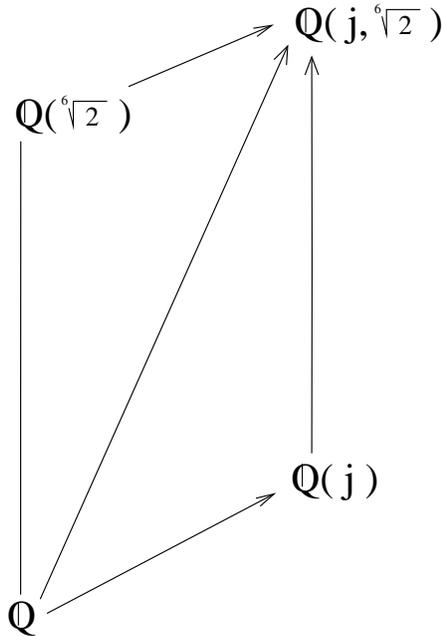}
\end{center}
\vskip -8mm
\rm{\caption{\label{fig:14}\leg Extension quotient non galtourable}}
\vskip -1mm
\end{figure}

\noindent L'extension ${\mathbb Q}(j,\sqrt[6]{2}) / {\mathbb Q}$ est galoisienne
(puisque son corps sommet est le corps de décomposition de $X^6 -2$ sur $\mathbb
Q$). Mais on a déjà vu que l'extension quotient
${\mathbb Q}(\sqrt[6]{2}) / {\mathbb Q}$ n'est pas galtourable (cf. exemple
\ref{ex:existence}.(i) ).
\end{proof}

\mespace
Les différentes propriétés précédentes constituent le début d'une théorie de la
galtourabilité, similaire à la théorie de Galois générale. On peut numériquement
décider si une extension donnée est galtourable ou non, de même que l'on peut
décider si elle est galoisienne ou pas. Cependant il paraît illusoire de
chercher une caractérisation "universelle" des extensions galtourables, ou même
seulement des extensions galtourables finies. Rappelons qu'une telle
caractérisation n'existe déjà pas pour la classe, plus restreinte, des
extensions galoisiennes.

\gespace
Un important problème de théorie de Galois est de conserver le caractère
galoisien d'une extension en la faisant glisser sur un sous-corps de son corps
de base. Ceci constitue le délicat

\mespace
{\bf "Problème de la descente galoisienne".}\\
Soient $N \gal K$ une extension galoisienne et $K/J$ une extension algébrique.
Existe-t-il un sous corps $L$ de $N$ : $L \leq N$, tel que les trois propriétés
suivantes soient vérifiées :
$$\begin{array}{ll}
(D_0) &L/J \text{ est galoisienne} \:: L \gal J\:;\\
(D_1) &\text{L'intersection de }K  \text{ et de } L \text{ est égale à }J \:: K
\cap L =J \:;\\
(D_2) &\text{Le compositum de }K  \text{ et de } L \text{ est égal à }N \:: KL=N \:.\\
\end{array} $$

\gespace
\noindent Ce problème, maintes fois abordé dans la littérature
(\cite{Mo}, \cite{MD}, \cite{M-MD2}, \cite{Ka}, \cite{Co}, ...) se
généralise en le problème suivant sur lequel tout reste à découvrir.

\mespace
{\bf "Problème de la descente galtourable".}\\
Soient $N \galtou K$ une extension galtourable et $K/J$ une extension
algébrique. Existe-t-il un sous corps $L$ de $N$ tel que
$$\begin{array}{ll}
(D_0) & L/J \text{ est galtourable} \:: L \galtou J \:; \;\quad\qquad\qquad\qquad \qquad \qquad\\
(D_1) & K \cap L =J \:;\\
(D_2) & KL=N \:.\\
\end{array} $$

\gespace
Pour le problème de la descente galoisienne, Massy a introduit la notion de
parallélogramme galoisien \cite{M-MD}, essentiellement en degrés finis. Une
généralisation en degrés infinis figure dans \cite{A-M2} (confer Chap. 1). Dans
le cas galtourable introduisons la

\mespace
\begin{defn} 
Nous appellons "quadrilatère galtourable"\index{Quadrilatère!galtourable} tout quadrilatère corporel $(J,K,N,L)$
(cf. Chap. 1, Déf. 1.1) dans lequel les quatre extensions sont galtourables :
$$K \galtou J\,, \: N \galtou K \,,\: N \galtou L \,,\: L \galtou J \;.$$

\begin{figure}[!h]
\begin{center}
\vskip -5mm
\includegraphics[width=5.5cm]{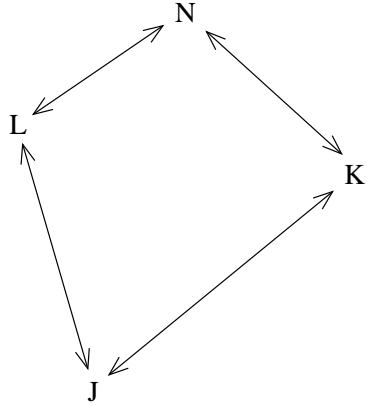}
\end{center}
\vskip -8mm
\rm{\caption{\label{fig:15}\leg Quadrilatère galtourable}}
\vskip 1mm
\end{figure}

\end{defn}

\newpage
\begin{rem}
D'après le théorème \ref{th:translatée}, la galtourabilité de $L/J$ (resp.
$K/J$) implique celle de $N/K$ (resp. $N/L$) :
$$(L \galtou J)\: \implique \: (N \galtou K) \quad \text{{\Large (} resp. }\: (K \galtou
J) \: \implique \: (N \galtou L) \text{ \Large )}\:.$$ 
\end{rem}

\mespace
\begin{prop}
Soient $(J,K,N,L)$ un quadrilatère galtourable, et $F$ un corps
intermédiaire entre $J$ et $L$, $J \leq F \leq L$.\\
(1) Si $KF \cap L =F$, on a le sous-quadrilatère galtourable $(F,KF,N,L)$.\\
(2) Si $F/J$ est galtourable, on a le quadrilatère quotient galtourable
$(J,K,KF,F)$.

\begin{figure}[!h]
\begin{center}
\vskip -5mm
\includegraphics[width=5.5cm]{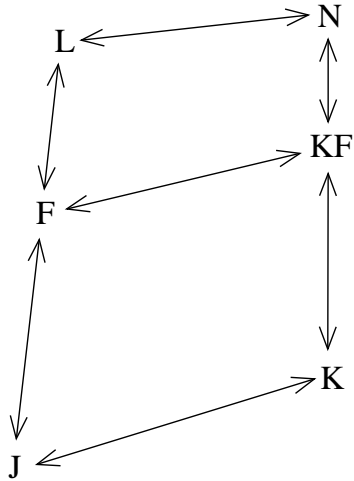}
\end{center}
\vskip -8mm
\rm{\caption{\label{fig:16}\leg Sous galtourabilité \& galtourabilité quotient}}
\vskip 1mm
\end{figure}

\end{prop}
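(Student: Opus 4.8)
The plan is to handle both assertions by one uniform two-step procedure. First I would check that the proposed quadruple is an honest field quadrilateral in the sense of the axioms $(Q_0)$--$(Q_2)$ of Definition~1.1; then I would verify that each of its four edges is galtourable, drawing only on Proposition~\ref{prop:sousextension} (a sub-extension of a galtourable extension is galtourable) and on Theorem~\ref{th:translat�e} (a translate of a galtourable extension is galtourable). Since galtourable extensions are algebraic and all fields in sight lie inside $N$, the ambient hypotheses of the translation theorem (the two fields being contained in a common field) are met automatically, so no extra care is needed on that point.

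For (1), I would first show that $(F,KF,N,L)$ is a sub-quadrilateral of $(J,K,N,L)$: the positions $K \leq KF \leq N$ and $L \leq L \leq N$ hold because $F \leq L \leq N$; axiom $(Q_1)$ is exactly the hypothesis $KF \cap L = F$; and axiom $(Q_2)$, namely $(KF)L = N$, is immediate since $F \leq L$ forces $KFL = KL = N$. For galtourability of the four edges: $N \galtou L$ is part of the data of a galtourable quadrilateral; $L \galtou F$ holds by Proposition~\ref{prop:sousextension} applied to the galtourable $L \galtou J$ with $J \leq F \leq L$; $N \galtou KF$ holds by the same proposition applied to the galtourable $N \galtou K$ with $K \leq KF \leq N$; and $KF \galtou F$ follows from Theorem~\ref{th:translat�e} by translating the galtourable $K \galtou J$ along $F/J$.

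For (2), I would show that $(J,K,KF,F)$ is a quotient-quadrilateral: the positions $J \leq K \leq K$ and $J \leq F \leq L$ are clear, axiom $(Q_2)$ is the tautology $K \cdot F = KF$, and axiom $(Q_1)$, namely $K \cap F = J$, follows from $F \leq L$ together with $K \cap L = J$, since then $K \cap F \subseteq K \cap L = J \subseteq K \cap F$. For the four edges: $K \galtou J$ is part of the data and $F \galtou J$ is the hypothesis of (2); $KF \galtou K$ follows from Theorem~\ref{th:translat�e} by translating $F \galtou J$ along $K/J$, and $KF \galtou F$ follows likewise by translating $K \galtou J$ along $F/J$.

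I do not expect any genuine obstacle: the argument is essentially bookkeeping, and both parts reduce to the two structural results already established. The only points deserving a second look are the two elementary field identities, $KFL = KL = N$ in part (1) and $K \cap F = J$ in part (2), and the discipline of invoking Theorem~\ref{th:translat�e} with the galtourable edge and the algebraic translating edge placed in the correct roles, so that the resulting translated extension is precisely the edge one is trying to control.
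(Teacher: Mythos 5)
Your proof is correct and follows essentially the same route as the paper: both rest on Proposition~2.9 (sub-extensions of galtourable extensions are galtourable) and Theorem~2.3 (translation of a galtourable extension), the only cosmetic difference being that you obtain $N \galtou KF$ as a sub-extension of $N \galtou K$ while the paper translates $L \galtou F$ by $KF/F$. Your explicit verification of the quadrilateral axioms $(Q_1)$--$(Q_2)$ is a welcome bit of extra care that the paper leaves implicit.
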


\mespace
\begin{proof} 
(1) D'après le théorème \ref{th:translatée},
$$(K \galtou J) \quad \implique \quad (KF \galtou F) \quad \implique \quad (N \galtou L)\:.$$
D'après la proposition \ref{prop:sousextension} et le théorème \ref{th:translatée},
$$(L \galtou J) \quad \implique \quad (L \galtou F) \quad \implique \quad (N \galtou KF)\:.$$
D'où le quadrilatère galtourable annoncé puisque $F=KF \cap L$ par hypothèse.
\pespace
\noindent (2) Un nouvelle fois par le théorème \ref{th:translatée}
$$(F \galtou J) \quad \implique \quad (KF \galtou K) \:.$$
D'où la conclusion par la démonstration précédente du (1).
\end{proof}

\pespace
Le lecteur qui se sera référé au théorème 1.3 de \cite{M-MD} et au théorème
4.2 
du chapitre 1 aura noté que la proposition précédente est bien moins
consistante que ceux-ci. En effet, plusieurs questions demeurent irrésolues. En
voici un résumé.

\mespace
\begin{qus}
On se donne un quadrilatère galtourable $(J,K,N,L)$. Dans quels cas peut-on
répondre par l'affirmative aux questions suivantes :\\
(1) Si $F$ est un corps intermédiaire entre $J$ et $L$, $J \leq F \leq L$,
$$KF \cap L=F \quad ?$$

\noindent (2) Si $E$ est un corps intermédiaire entre $K$ et $N$, $K \leq E \leq N$,
$$\begin{array}{ll}
\text{(2-1)} &E /(E \cap L) \text{ est-elle galtourable :} \: E \galtou (E\cap L) \quad
? \: \\
\text{(2-2)} &\text{ Pour }  E / K  \text{ galtourable } \\
                    &\text{(2-2-1)} \quad  K(E \cap L)=E	\quad ?	  \\
                    &\text{(2-2-2)} \quad  (E \cap L) \galtou J  \quad ?  \\

\end{array}$$
\pespace
Lorsque $(J,K,N,L)=[J,K,N,L]$ est un parallélogramme galoisien, toutes les
réponses sont affirmatives en vertu du corollaire 4.3 du chapitre 1. 
\end{qus}


\gespace
\gespace
\section{Extensions galtourables définies par un polynôme}
La galtourabilité d'une extension définie par une racine d'un polynôme
irréductible dépend-elle de cette racine ? Nous allons prouver qu'il n'en est
rien, bien qu'à deux racines différentes correspondent en général deux
extensions différentes (contrairement au cas galoisien). La proposition suivante
permet en particulier d'utiliser le logiciel PARI sur les extensions de corps
définies par un polynôme pour prouver la galtourabilité d'une extension sans
avoir d'état d'âme sur la racine considérée.

\mespace
\begin{prop}	\label{prop:conjuguée}
Soient $J$ un corps quelconque et $\tilde J$ une clôture algébrique fixée de
$J$.
Soit $P(X)$ un polynôme séparable irréductible de $J[X]$. Quelles que soient les
racines $t_1$ et $t_2$ de $P(X)$ dans $\tilde J$ :
$$P(t_1)=P(t_2)=0 \:,$$
l'extension $J(t_1)/J$ est galtourable si et seulement si l'extension $J(t_2)/J$
est galtourable :
$$(J(t_1) \galtou J) \quad \SSI \quad (J(t_2) \galtou J) \:.$$
\end{prop}

\mespace
{\it \noindent Scholie.} La séparabilité de $P(X)$ est exigée par la
galtourabilité des extensions.

\mespace
\begin{proof}
Supposons l'extension $L^1:=J(t_1) / J$ galtourable. Soit donc une
tour galoisienne $(T^1)$ de $L^1 \galtou J$
$$(T^1) \qquad J=T^1_0 \unlhd \,\dots \, \unlhd T^1_{i-1} \unlhd T^1_i \unlhd \,\dots \,
\unlhd T^1_m=L^1 \:.$$
Montrons que $(T^1)$ détermine canoniquement une tour galoisienne $(T^2)$ de
$L^2:=J(t_2) / J$.
L'extension $L^1/J$ est finie (de degré celui de $P(X)$) ; chacune des marches
galoisiennes $T^1_i/T^1_{i-1} \quad (i=1,\dots,m)$ est ainsi séparable de
degré fini. Donc par le théorème de l'élément primitif
$$\forall i \in \{1,\dots,m\} \quad \exists \theta_{i} \in T^1_{i}
\qquad T^1_i=T^1_{i-1}(\theta_{i}) \:.$$
On sait que les corps $J(t_1)$ et $J(t_2)$  sont conjugués, i.e. il
existe un $J$-isomorphisme $\Phi$ induit par $\Phi(t_1)=t_2$ :
$$\begin{array}{cccc}
\Phi \,: & L^1 & \isomto & L^2 \\
         & t_1 & \longmapsto          & t_2 \\
\end{array} \quad.$$
Posons alors
$$ v_i:=\Phi(\theta_i) \qquad (i=1,\dots,m)$$
et
$$T^2_0:=J \:, \quad T^2_i:=T^2_{i-1}(v_i)\:.$$
Il est clair par récurrence que
$$T^2_i=\Phi(T^1_i) \qquad (i=0,\dots,m)$$
car
$$\begin{array}{cccccl}
\Phi(T^1_0)	&=& \Phi (J)			&=& J		& \\
\vdots		& &\vdots			& &\vdots 	& \\
\Phi(T^1_i)	&=& \Phi (T^1_{i-1}(\theta_i))	&=& \Phi
(T^1_{i-1})(\Phi(\theta_i))			&\\
		& &				&=&T^2_{i-1}(v_i)&= T^2_i \:.\\
\end{array} $$
\pespace
\noindent En prenant l'image de $(T^1)$ par $\Phi$, on obtient donc canoniquement la tour
\pespace
$$\begin{array}{*{12}{c}}
(T^2)	& \Phi(J)	&=	&\Phi(T^1_0)	&\leq \dots
\leq 	&\Phi(T^1_{i-1})	&\leq 	&\Phi(T^1_i)
&\leq  \dots \leq 	&\Phi(T^1_m)	&=	&\Phi(L^1) \\
			&||		&	&||		&	
	&||		&	&||
&			&||		&	&||	\\
			& J		&=	&T^2_0		&\leq  \dots
			\leq 	&T^2_{i-1}		&\leq &T^2_i	
&\leq  \dots \leq 	&T^2_m		&=	&L^2	\\

\end{array}$$
\pespace
\noindent Il reste à prouver que la tour $(T^2)$ est bien galoisienne.
Posons :
$$P_i(X):=Irr(\theta_i,T^1_{i-1},X) \quad (i=1,\dots,m)$$
et ${\mathcal R}_i:=\{\text{ racines de } P_i(X) \text{ dans } \tilde J\:\}$.
Puisque, par hypothèse, l'extension $T^1_i / T^1_{i-1}$ est galoisienne, on a
$${\mathcal R}_i \subseteq T^1_i  \quad (i=1,\dots,m) \:.$$
Donc directement
$$\Phi({\mathcal R}_i) \subseteq \Phi(T^1_i)= T^2_i \:.$$
Or $\Phi({\mathcal R}_i) =\{\text{ racines de } P^{\Phi}_i(X) \text{ dans }\tilde
J\:\} $
car $P_i(X)$ se décompose en facteurs du premier degré dans $T^1_i$ :
$$P_i(X)=(X-z_{i\,1}{\scriptstyle \,=\,}\theta_i) \dots (X-z_{i\,d_i}) \:,$$
et donc
$$P^{\Phi}_i(X)=(X-\Phi(\theta_i){\scriptstyle =}v_i) \dots (X-\Phi(z_{i\,d_i})) \:.$$
On en déduit que $T^2_i$ est le corps de décomposition sur $T^2_{i-1}$ du
polynôme $P^{\Phi}_i(X)$. En effet comme $\Phi({\mathcal R}_i) \subseteq
T^2_i$, on a trivialement
$$T^2_i=T^2_i(\Phi(z_{i\,1}){\scriptstyle \,=\,}v_i,\dots,\Phi(z_{i\,d_i}))\:;$$
et puisque $T^2_i=T^2_{i-1}(v_i)$,
$$T^2_i=T^2_{i-1}(\Phi(z_{i\,1}){\scriptstyle \,=\,}v_i,\dots,\Phi(z_{i\,d_i}))
 \:.$$
L'extension $T^2_i / T^2_{i-1}$ est donc normale, et par suite galoisienne.
\end{proof}

\addtocontents{toc}{\mespace\pespace}
\chapter{RAFFINEMENTS DE TOURS DE CORPS}
\addtocontents{toc}{\pespace}

\gespace
Le chapitre précédent introduisait la notion de tour galoisienne jouant pour les
corps un rôle analogue à celui des suites normales pour les groupes. Ce
parallèle avec les groupes se poursuivra au chapitre 4 en un quasi-dictionnaire
par des analogues, au départ inespérés, des  célèbres théorèmes de Schreier et
de Jordan-Hölder. Nous nous intéressons ici à la notion clef de ces futurs
énoncés, celle de raffinement d'une tour (galoisienne) de corps qui nécessite,
hélas, une mise au point technique assez lourde.

\gespace
\gespace
\section{Définition d'un raffinement et d'un raffinement galoisien}

\begin{defnetconv} \label{def:raffinement}
Soient $L/K$ une extension algébrique, et
$$(F) \qquad K=F_0 \leq F_1 \leq \dots \leq F_i \leq F_{i+1} \leq \dots \leq F_m=L$$
une tour de $L/K$ (cf. définition \& convention 1.1 du chapitre 2).

\noindent (1) Nous appelons "raffinement de (F)"\index{Raffinement} toute tour
$$(E) \qquad  K=E_0 \leq E_1 \leq \dots \leq E_j \leq E_{j+1} \leq \dots \leq E_n=L$$
de $L/K$ vérifiant les deux conditions suivantes :
$$\begin{array}{lll}
\text{(RAF1)} 	& m\leq n \hfill & \qquad\qquad\qquad\qquad\qquad\qquad\\
\text{(RAF2)}	& \text{il existe une suite finie d'indices}	&\\
		& 0 \leq j_0 < j_1 < \dots < j_m \leq n	&
\end{array}$$
telle que
$$ \forall i \in \{ 0, \dots, m \} \quad F_i=E_{j_i} \:.$$
En particulier pour la tour triviale $(F)$ (cf. Chap. 2, Déf. \& Conv. 1.1), 
nous convenons que pour $m=0$, la suite finie d'indices ci-dessus se réduit à
$$0 \leq j_0 \leq n \;.$$

\noindent (2) Nous appelons "raffinement propre \index{Raffinement!propre}de $(F)$" tout
raffinement $(E)$ de $(F)$ qui vérifie la condition supplémentaire
$$\text{(RAF3)} \qquad \exists j \in \{1, \dots, n-1 \} \quad \forall i \in \{0, \dots, m\}
\quad E_j \neq F_i \;.\qquad\qquad$$

\end{defnetconv}

\pespace
{\noindent \emphase Scholie.} Cette définition est un analogue exact, mutatis mutandis, de celle
utilisée pour les suites normales de groupes qui figure clairement dans
\cite[p.120]{Ro}.
\\ \phantom{ dd}

\mespace
\begin{rems}[] \label{rem:raffinement}
\noindent (1) La condition (RAF1) est redondante : elle est impliquée par
(RAF2), puisque l'on a clairement
$$\forall i \in \{0, \dots, m\} \quad i \leq j_i \:.$$
Cependant (RAF1) offre un moyen commode pour présélectionner les tours
susceptibles d'être des raffinements.

\noindent (2) Le raffinement $(E)$ de la définition \& convention \ref{def:raffinement}.(1)
peut s'écrire
$$\begin{array}{r}
(E) \qquad K=E_0 \leq \dots \leq E_{j_0}=F_0=K \leq \dots \leq E_{j_1}=F_1 \leq
\dots \leq E_{j_i}=F_i \leq \dots \\ 
\dots \leq E_j \leq E_{j+1} \leq \dots \leq E_{j_m}=F_m=L \leq \dots \leq E_n=L
\:.\\
\end{array}$$

\noindent (3) Toutes les répétitions de la tour $(F)$ sont reproduites dans la
tour $(E)$. Il ne peut exister de raffinement supprimant des corps. Un
raffinement qui ajoute un corps nouveau, distinct de tous ceux de la tour de
départ (condition (RAF3)), est "propre".
\end{rems}

\gespace
La définition suivante précise la définition \& convention générale \ref{def:raffinement}
précédente, à l'instar de ce qui a été fait au chapitre 2 pour les tours
strictes, galoisiennes, etc.

\mespace
\begin{defn}[] \label{def:raffinementsuite}
Soient $L/K$ une extension algébrique, $(F)$ une tour de $L/K$ et $(E)$ un
raffinement de $(F)$ (cf. Déf. \& Conv. \ref{def:raffinement}).\\
(1) Nous disons que $(E)$ est un "raffinement strict" \index{Raffinement!strict}de $(F)$ si et seulement
si c'est une tour stricte (cf. Chap. 2, Déf. \& Conv. 1.1
.(1) ).\\
(2) Nous disons que $(E)$ est un "raffinement trivial" \index{Raffinement!trivial}de $(F)$ si et seulement
si c'est un raffinement de $(F)$ non propre, autrement dit qui vérifie comme
condition supplémentaire la négation de (RAF3) dans la définition \& convention 
\ref{def:raffinement}.(2), i.e.
$$\text{(RAFT)} \qquad \forall j \in \{1, \dots, n-1\} \quad \exists i \in \{0, \dots,
m\} \quad E_j=F_i \:. \qquad\qquad\qquad\qquad$$
(3) Nous appelons "raffinement identité" \index{Raffinement!identité}de $(F)$, la tour $(F)$ elle-même.\\
(4) Nous disons que $(E)$ est un "raffinement galoisien" \index{Raffinement!galoisien}de $(F)$ si et seulement
si c'est un raffinement de $(F)$ qui vérifie la condition
supplémentaire
$$\begin{array}{clcl}
\text{(RAFG)} \quad 	& \forall j \in \{1, \dots, n-1\} \quad \text{\rm \Large(} \forall i \in \{0, \dots,
m\} \quad E_j\neq F_i \text{\rm \Large)} \implique	& E_{j-1} \unlhd E_j & \\
		& 	&\Updownarrow	&\\
		&	&E_j \gal E_{j-1} \;.	&\\
\end{array}$$
\end{defn}

\pespace
{\noindent \emphase Scholie.} Toutes ces notions sont évidemment cumulables : raffinement
strict propre, raffinement galoisien strict, etc.

\begin{rems} \label{rem:raffinementsuite}
(1) Une tour $(F)$ non stricte (cf. Chap. 2, Déf. \& Conv. 1.1
) n'admet pas de raffinement strict.\\
(2) Le raffinement identité est un raffinement galoisien trivial.\\
(3) Un raffinement galoisien $(E)$ d'une tour $(F)$ n'est pas nécessairement une
tour galoisienne : le raffinement identité d'une tour non galoisienne par
exemple.
\end{rems}

\mespace
\begin{fait} \label{fait:raffinement}
(1) Tout raffinement trivial est galoisien.\\
(2) Un raffinement qui est une tour galoisienne est un raffinement galoisien.
\end{fait}

\pespace
\begin{proof}
(1) Soient $L/K$ une extension,
$$(F) \qquad K=F_0 \leq \dots \leq F_i \leq \dots \leq F_m=L$$
une tour de $L/K$, et
$$(E) \qquad K=E_0 \leq \dots \leq E_j \leq \dots \leq E_n=L$$
un raffinement trivial de (F). Dire que $(E)$ est galoisien signifie que que dès
qu'un corps $E_j$ est distinct de tous les $F_i$, il est nécessairement
galoisien sur $E_{j-1}$ (condition (RAFG) de la définition
\ref{def:raffinementsuite}). Mais la trivialité de $(E)$ signifie que tous les
$E_j$ sont des $F_i$ (condition (RAFT)). La condition (RAFG) est donc satisfaite
puisqu'elle n'impose rien.\\
(2) Dire que la tour $(E)$ est galoisienne revient à dire (cf. Chap. 2, Déf. \&
Conv. 1.1.(2)) que 
toutes les marches $E_j/E_{j-1} \;(j=1,\dots,n)$ sont galoisiennes. La condition
(RAFG) est dès lors clairement vérifiée.
\end{proof}

\gespace
\begin{fait} ({\emphase "Transitivité des notions de raffinement et de raffinement
galoisien"})\\
Soient $L/K$ une extension et 
$$(F) \qquad K=F_0 \leq \dots \leq F_i \leq \dots \leq F_m=L$$
une tour de $L/K$. Tout raffinement (resp. raffinement galoisien) $(R)$ d'un
raffinement (resp. d'un raffinement galoisien) $(E)$ de $(F)$ est un raffinement
(resp. un raffinement galoisien) de $(F)$.
\end{fait}

\begin{proof}
En vertu de la remarque \ref{rem:raffinement}.(2)
$$\begin{array}{r}
(E) \qquad K=E_0 \leq \dots \leq E_{j_0}=F_0=K \leq \dots \leq E_{j_i}=F_i \leq
\dots \leq E_j \leq \dots \\
\dots \leq E_{j_m}=F_m \leq \dots \leq E_n=L
\end{array}$$
où $0 \leq j_0 < j_1 < \dots < j_m \leq n$ et $F_i=E_{j_i}$ pour tout $i \in
\{0,\dots,m\}$. De même, comme $(R)$ est un raffinement de $(E)$, on a en particulier
$$\begin{array}{r}
(R) \qquad K=R_0 \leq \dots \leq R_{k_0}=E_0=K \leq \dots \leq
R_{k_{j_i}}=E_{j_i} \leq \dots \leq R_k \leq \dots \\
\dots \leq R_{k_{j_m}}=E_{j_m} \leq \dots \leq R_l=L
\end{array}$$
avec $\,0 \leq k_{j_0} < k_{j_1} < \dots < k_{j_m} \leq l\,$ et
$F_i=E_{j_i}=R_{k_{j_i}}$ pour tout $ i \in \{0,\dots,m\}$.
Ceci suffit à établir que $(R)$ est un raffinement de $(F)$ elle-même.
\pespace 
Montrons maintenant que lorsque $(E)$ (resp. $(R)$) est en fait un raffinement
galoisien de $(F)$ (resp. $(E)$), $(R)$ est nécessairement un raffinement
galoisien de $(F)$, i.e. vérifie la condition (cf. Déf.
\ref{def:raffinementsuite}.(4))
$$\text{(RAFG)} \quad  \forall k \in \{1, \dots, l-1\} \quad \text{\rm \Large(} \forall i \in \{0, \dots,
m\} \quad R_k\neq F_i  \text{\rm \Large)} \; \implique \; R_{k-1} \unlhd R_k \:.$$
Soit $k \in \{1, \dots, l-1\}$ tel que $R_k\neq F_i$ pour tout $i \in \{0, \dots,
m\}$. De deux choses l'une :\\
\indent - Ou bien $R_k$ est distinct de tous les $E_j$, i.e. 
$$\forall j \in \{0, \dots, n\} \quad R_k \neq E_j \;,$$
et alors la condition (RAFG) traduisant que $(R)$ est un
raffinement galoisien de $(E)$ implique immédiatement $R_{k-1} \unlhd R_k$.\\
\indent - Ou bien $R_k$ est déjà un corps de la tour $(E)$, i.e. $$\exists j \in \{0, \dots,
n\} \quad R_k=E_j\:.$$
Alors, par définition de $k$,
$$\forall i \in \{0, \dots, m\} \quad R_k=E_j\neq F_i \:.$$
Si l'on traduit cette fois que $(E)$ est un raffinement galoisien de $(F)$, nous
obtenons 
$$R_{k_{j-1}}=E_{j-1} \unlhd E_j=R_{k_j} \:.$$
Supposons que l'on ait choisi $j$ minimal dans l'ensemble des indices $j$ tels
que $R_k=E_j$. Notons tout d'abord que $j \geq 1$, sans quoi $j=0$ et
$R_k=E_0=K=F_0$ qui contredit notre hypothèse initiale sur $k$. Maintenant si
l'on avait $k-1 <k_{j-1},\text{ i.e. } k \leq k_{j-1}$, on aurait par croissance
de la suite des $R_k$ (Chap. 2, Déf. \& Conv. 1.1
)
$$E_j=R_k \leq R_{k_{j-1}}=E_{j-1} \leq E_j \;;$$
d'où $R_k=E_{j-1}$, qui contredirait la minimalité de $j$. Donc
nécessairement $k_{j-1}\leq k-1$ et 
$$\begin{array}{ccc}
R_{k_{j-1}} 	&\leq R_{k-1} \leq 	&R_k \\
\text{\rotatebox[origin=c]{90}{=}}	&			&\text{\rotatebox[origin=c]{90}{=}}\\
E_{j-1}		&\unlhd			&E_j \,.
\end{array}$$
L'extension $R_k / R_{k-1}$ est ainsi bien galoisienne.
\end{proof}

On a déjà dit dans la remarque \ref{rem:raffinementsuite}.(3) qu'un raffinement galoisien d'une
tour quelconque n'est pas nécessairement une tour galoisienne. Cependant, nous
disposons de la proposition suivante, que l'on peut regarder comme une
justification de notre définition d'un raffinement galoisien.

\pespace
\begin{prop}[] \label{prop:ragtourgal}
Un raffinement galoisien d'une tour galoisienne est encore une tour
galoisienne. Précisément, soit
$$(T) \qquad  K=T_0 \unlhd T_1 \unlhd \dots \unlhd T_i \unlhd \dots \unlhd
T_m=L$$
une tour galoisienne (cf. Chap. 2, Déf. \& Conv. 1.1
.(2)). Tout raffinement galoisien de $(F)$ 
$$\begin{array}{r}
(R) \qquad K=R_0 \leq \dots \leq R_{j_0}=T_0=K \leq \dots \leq R_{j_1}=T_1
\leq \dots \leq R_{j_i}=T_i \leq \dots \\
\dots \leq R_j \leq \dots \leq R_{j_m}=T_m=L \leq \dots \leq R_n=L
\end{array}
$$
est une tour galoisienne :
$$ \forall j \in \{0,\dots,n-1\} \quad R_j \unlhd R_{j+1} \ssi R_{j+1} \gal R_j
\:.$$
\end{prop}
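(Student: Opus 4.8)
Le plan consiste � montrer que chacune des marches $R_{j}/R_{j-1}$ $(j=1,\dots,n)$ de $(R)$ est galoisienne, i.e. $R_{j-1}\unlhd R_j$, en distinguant deux cas selon que le sommet $R_j$ co�ncide ou non avec l'un des corps $T_i$ de la tour de d�part. Je me servirais constamment de la condition (RAF2), qui fournit des indices marqu�s $0\leq j_0<j_1<\dots<j_m\leq n$ avec $R_{j_i}=T_i$, et du fait que les corps de $(R)$ forment une cha�ne totalement ordonn�e $R_0\leq\dots\leq R_n$ : comparer deux de ces corps revient donc � comparer leurs indices.

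\emph{Premier cas : $R_j$ est un corps ``nouveau''}, i.e. $R_j\neq T_i$ pour tout $i\in\{0,\dots,m\}$. Alors $R_j\neq T_0=K=R_0$ et $R_j\neq T_m=L=R_n$ entra�nent $j\in\{1,\dots,n-1\}$, de sorte que la condition (RAFG) de la d�finition \ref{def:raffinementsuite}.(4) s'applique directement et fournit $R_{j-1}\unlhd R_j$.

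\emph{Second cas : $R_j$ est un corps de $(T)$.} Je choisirais alors $i$ \emph{minimal} tel que $R_j=T_i$. Si $i=0$, on a $R_j=K$, donc $R_{j-1}=K=R_j$ et la marche $K/K$ est galoisienne. Si $i\geq 1$, la minimalit� de $i$ donne $T_{i-1}<T_i=R_j$, et il s'agit de localiser $R_{j-1}$ par comparaison des indices $j_{i-1}$ et $j-1$ : si l'on avait $j_{i-1}\geq j$, la croissance de $(R)$ donnerait $R_j\leq R_{j_{i-1}}=T_{i-1}$, contredisant $T_{i-1}<R_j$ ; donc $j_{i-1}\leq j-1$, d'o� $T_{i-1}=R_{j_{i-1}}\leq R_{j-1}\leq R_j=T_i$. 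Comme la marche $T_i/T_{i-1}$ de la tour galoisienne $(T)$ est galoisienne et que $R_{j-1}$ en est un corps interm�diaire, la sous-extension $T_i/R_{j-1}$ est galoisienne (une extension galoisienne, finie ou infinie, reste galoisienne sur tout corps interm�diaire, normalit� et s�parabilit� se transmettant vers le haut). Ainsi $R_{j-1}\unlhd R_j$.

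Le point d�licat est pr�cis�ment ce second cas lorsque $R_j$ appartient � $(T)$ mais que $R_{j-1}$ est un corps nouveau : la condition (RAFG) ne dit alors rien sur la marche $R_j/R_{j-1}$, puisqu'elle ne contr�le que les marches \emph{aboutissant} � un corps nouveau. L'astuce est de ne pas invoquer (RAFG) ici, mais d'exploiter que $R_{j-1}$ est pi�g� entre deux corps $T_{i-1}\leq R_{j-1}\leq T_i$ et de s'appuyer sur la galoisianit� de la marche de d�part $T_i/T_{i-1}$. La v�rification $T_{i-1}\leq R_{j-1}$ ne repose que sur l'ordre des indices marqu�s et ne fait intervenir aucune des propri�t�s topologiques des groupes de Galois du chapitre 1.
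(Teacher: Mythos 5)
Votre preuve est correcte et suit essentiellement la m�me d�marche que celle du texte : la distinction de cas selon que $R_j$ est un corps nouveau (o� (RAFG) s'applique) ou un corps $T_i$ de la tour de d�part (o� l'on prend $i$ minimal, on encadre $T_{i-1}\leq R_{j-1}\leq R_j=T_i$ via la comparaison des indices, et l'on conclut par la galoisianit� des sous-extensions d'une extension galoisienne) est exactement l'argument du texte, simplement r�dig� de fa�on directe plut�t que par l'absurde.
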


\pespace
\begin{proof}
Raisonnons par l'absurde en supposant que la tour $(R)$ ne soit pas
galoisienne, donc comporte au moins une marche non galoisienne :
$$\exists g \in \{0, \dots, n-1 \} \quad R_{g+1} \nongal R_g \:.$$
Considérons alors l'ensemble
$$J:=\{j \in \{1, \dots, n-1\} \; | \; \forall i \in \{0, \dots, m \}
\quad R_j \neq T_i \} \:.$$
L'indice $g+1$ ne peut être dans $J$, sans quoi l'on aurait 
$$\forall i \in \{0, \dots, m \} \quad R_{g+1} \neq T_i \;,$$
et par la condition (RAFG) de $(R)$ (cf. Déf. \ref{def:raffinementsuite}.(4))
$$R_g \unlhd R_{g+1} \; : \; \text{contradiction.}$$
Si $g+1=n$, $R_{g+1}=L=T_m$. Si $g+1 \in \{1, \dots, n-1\}$,
$$g+1 \notin J \; \SSI \; (\exists i \in \{0, \dots, m \} \quad R_{g+1}=T_i) \:.$$
Donc, dans tous les cas,
$$\exists i \in \{0, \dots, m \} \quad R_{g+1}=T_i \:.$$
Soit $i_p$ le plus petit des indices $i$ tels que l'on ait $R_{g+1}=T_i$ :
$$i_p:=\min \{i \in \{0, \dots, m \} \; | \; R_{g+1}=T_i \} \:.$$
Nécessairement $i_p \geq 1$, car sinon $i_p=0$ et
$$K=T_0=R_{g+1} \geq R_g \geq K$$
qui impliquerait $R_{g+1}=R_g=K$ et l'extension $R_{g+1}/R_g$ serait
galoisienne : contradiction. Donc $i_p-1 \in \{0, \dots, m \}$. La minimalité de $i_p$ interdit que
$R_{g+1}=T_{i_p-1}$. Ainsi $R_{g+1} \neq T_{i_p-1}$ et par la croissance de la
suite $\{T_i\}_{0\leq i \leq m}$,
$$\left.\begin{array}{c}
T_{i_p-1} \leq T_{i_p}=R_{g+1}\\
T_{i_p-1} \neq R_{g+1}
\end{array}\right\} \; \implique \; T_{i_p-1} < R_{g+1} \:.$$
Par ailleurs, comme $(R)$ est un raffinement de $(T)$, $T_{i_p-1}$ est l'un des
corps de la tour $(R)$. Écrivons pour alléger la notation $j_p:=j_{i_p-1}$ ; alors 
$$R_{j_p}=T_{i_p-1} < R_{g+1} \; \implique \; j_p < g+1 \; \ssi \; j_p \leq g$$
(puisque $j_p \geq g+1$ impliquerait $R_{j_p} \geq R_{g+1}$ : contradiction).
Donc
$$T_{i_p-1}=R_{j_p} \leq R_g \;.$$
Finalement
$$T_{i_p-1} \leq R_g \leq  R_{g+1}=T_{i_p} \:.$$
L'hypothèse que $(T)$ est une tour galoisienne nous assure en particulier que
l'extension $T_{i_p} / T_{i_p-1}$ est galoisienne.
Il en est donc de même de sa sous-extension $R_{g+1}/R_g$, ce qui contredit
notre hypothèse initiale $R_{g+1} \nongal R_g$. Cette dernière ne peut être
faite ; c'est donc que la tour $(R)$ est bien galoisienne.
\end{proof}

\gespace
La proposition précédente admet un analogue galtourable.
\mespace

\begin{prop} []
Tout raffinement galoisien d'une tour galtourable est une tour galtourable.
\end{prop}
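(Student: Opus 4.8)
The plan is to show that each individual step $R_{k+1}/R_k$ of the refinement $(R)$ is galtourable; since a tower all of whose steps are galtourable is by definition a galtourable tower, this will suffice. I would fix an index $k \in \{0,\dots,n-1\}$ and split the argument according to whether the upper corps $R_{k+1}$ is a genuinely new corps of the refinement or coincides with one of the corps $F_i$ of the starting tower. Every step falls into exactly one of these two cases.

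First I would treat the case where $R_{k+1}$ is distinct from every $F_i$. Because $R_n = L = F_m$ is never new, this forces $k+1 \leq n-1$, so the hypothesis that $(R)$ is a raffinement galoisien applies: condition (RAFG) of the d�finition \ref{def:raffinementsuite} yields $R_k \unlhd R_{k+1}$, i.e. $R_{k+1}/R_k$ is galoisienne. A Galois extension is in particular galtourable (Scholie (2) suivant la d�finition \ref{def:galtourable}), so this step is galtourable.

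The remaining case is $R_{k+1} = F_i$ for some $i$; here I would set $i_0 := \min\{\,i \mid R_{k+1} = F_i\,\}$, exactly as in the d�monstration of the proposition \ref{prop:ragtourgal}. If $i_0 = 0$ then $R_{k+1} = K = R_0$ forces $R_k = R_{k+1}$, a trivial (hence galtourable) step. If $i_0 \geq 1$, the minimality of $i_0$ gives $F_{i_0-1} < F_{i_0} = R_{k+1}$; writing $F_{i_0-1} = R_{j_p}$ with $j_p := j_{i_0-1}$, the strict inequality $R_{j_p} < R_{k+1}$ forces $j_p \leq k$ by the monotonicity of $(R)$, whence
$$F_{i_0-1} = R_{j_p} \leq R_k \leq R_{k+1} = F_{i_0}.$$
The step $R_{k+1}/R_k$ is thus the sous-extension $F_{i_0}/R_k$ of the galtourable extension $F_{i_0}/F_{i_0-1}$, and the proposition \ref{prop:sousextension} guarantees that it is galtourable.

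This argument is the galtourable analogue of the proposition \ref{prop:ragtourgal}, but markedly lighter: the only delicate point is the index bookkeeping establishing $F_{i_0-1} \leq R_k$, and once this encadrement is in place the essential input is simply the stability of galtourability under passage to sous-extensions. I therefore expect no real obstacle beyond carefully reproducing that minimality-and-monotonicity step from the earlier proof, the heavy lifting having already been done in the proposition \ref{prop:sousextension}.
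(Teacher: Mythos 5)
Votre d�monstration est correcte et suit essentiellement la m�me voie que celle du texte, qui renvoie pr�cis�ment � la preuve de la proposition \ref{prop:ragtourgal} \og mutatis mutandis \fg{} en invoquant la proposition \ref{prop:sousextension} : vous explicitez exactement cette adaptation (encadrement $F_{i_0-1}\leq R_k\leq R_{k+1}=F_{i_0}$ par minimalit� de $i_0$, puis stabilit� de la galtourabilit� par sous-extension). La seule diff�rence est de pr�sentation, votre argument �tant men� directement par disjonction de cas plut�t que par l'absurde.
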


\pespace
\begin{proof}
Celle de la proposition \ref{prop:ragtourgal}, mutatis mutandis, puisque l'on
sait que toute sous-extension d'une extension galtourable est galtourable (cf.
Prop. 2.9 du chapitre 2 
).
\end{proof}

\gespace
Précisons maintenant le lien entre tours galtourables et tours galoisiennes.
\pespace

\begin{prop}[] \label{prop:ragtourgaltou}
Toute tour galtourable admet un raffinement galoisien qui est une tour
galoisienne.
\end{prop}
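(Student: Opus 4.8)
The plan is to manufacture the desired refinement by splicing together, march by march, the Galois towers that galtourability provides locally. Write the given galtourable tower as
$$(F) \qquad K = F_0 \leq F_1 \leq \dots \leq F_i \leq F_{i+1} \leq \dots \leq F_m = L,$$
so that by the very definition of a galtourable tour every march $F_{i+1}/F_i$ is galtourable. First I would apply the definition of galtourability (Def. \ref{def:galtourable}) to attach to each index $i \in \{0,\dots,m-1\}$ a Galois tower of that march,
$$F_i = T^{(i)}_0 \unlhd T^{(i)}_1 \unlhd \dots \unlhd T^{(i)}_{k_i} = F_{i+1}.$$
When $F_{i+1}/F_i$ is trivial I would deliberately take $k_i = 1$ with the single trivial step $F_i \unlhd F_i$, which is Galois, so that no repeated field of $(F)$ is ever erased; in every case I arrange $k_i \geq 1$.

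Next I would juxtapose these local towers along their shared endpoints $T^{(i)}_{k_i} = F_{i+1} = T^{(i+1)}_0$, producing
$$(E) \qquad K = F_0 = T^{(0)}_0 \unlhd \dots \unlhd T^{(0)}_{k_0} = F_1 = T^{(1)}_0 \unlhd \dots \unlhd T^{(m-1)}_{k_{m-1}} = F_m = L.$$
That $(E)$ is genuinely a refinement of $(F)$ is then pure bookkeeping: each $F_i$ reappears in $(E)$ at the junction position $j_i := k_0 + \dots + k_{i-1}$, with $j_0 := 0$, and since every $k_i \geq 1$ these satisfy $0 \leq j_0 < j_1 < \dots < j_m \leq n$, which is exactly condition (RAF2) of Def.\ \& Conv. \ref{def:raffinement} (and (RAF1) follows from it).

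It then remains to observe that $(E)$ is a Galois tower, since it is a juxtaposition of Galois towers and the concatenation of Galois towers is again Galois, exactly by the juxtaposition argument given in the proof of the concatenation Fait of Chapter 2. Once this is in hand, no further computation is needed to reach the conclusion: by Fait \ref{fait:raffinement}.(2), a refinement of $(F)$ that happens to be a Galois tower is automatically a Galois refinement of $(F)$. Hence $(E)$ is a Galois refinement of $(F)$ which is moreover a full Galois tower, as required.

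I do not foresee a serious obstacle, because galtourability of each march already hands us the local Galois towers and the global one is obtained by mere gluing. The single point demanding care is the treatment of repeated fields, i.e.\ of trivial marches: these must be kept (as trivial Galois steps) rather than collapsed, for otherwise the index sequence $(j_i)$ would fail to be strictly increasing and $(E)$ would not formally qualify as a refinement in the sense of Def.\ \& Conv. \ref{def:raffinement}. With that convention fixed, verifying (RAF1)--(RAF2) and invoking Fait \ref{fait:raffinement}.(2) completes the argument.
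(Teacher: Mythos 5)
Your proof is correct and follows essentially the same route as the paper's: choose for each (galtourable) march a Galois tower, juxtapose them at the shared endpoints, read off the index sequence $(j_i)$ from the junctions to verify (RAF2), and conclude via Fait \ref{fait:raffinement}.(2). Your explicit insistence on keeping trivial marches as one-step Galois towers (so that the $j_i$ remain strictly increasing) is exactly the convention the paper uses implicitly when it writes $j_i < j_i+1 \leq j_{i+1}$.
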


\begin{proof}
Soit
$$(F) \qquad K=F_0 \lessgtr F_1 \lessgtr \,\dots \, \lessgtr F_i \lessgtr \,\dots \,
\lessgtr F_m=L $$
une tour galtourable (cf. Déf. 1.5 
et Not. 1.9.(5) du chapitre 2). 
En tant qu'extension galtourable, chacune des marches $F_{i+1}/F_i$ admet une
tour galoisienne
$$(T_i) \qquad F_i=T_{j_i} \unlhd T_{j_i+1} \unlhd \dots \unlhd
T_{j_{i+1}}=F_{i+1}$$
où
$$j_i < j_i+1 \leq j_{i+1} \quad \implique \quad j_i < j_{i+1} \:.$$
En juxtaposant, c'est à dire en mettant bout à bout, les tours galoisiennes
$(T_i)$, on obtient la tour galoisienne
$$(T) \qquad K=F_0=T_{j_0} \unlhd T_{j_0+1} \unlhd \dots \unlhd
T_{j_1}=F_{1} \unlhd \dots \unlhd T_{j_m}=F_m=L$$
avec
$$0 \leq j_0 < j_1 < \dots < j_m \leq j_m$$
et
$$F_i=T_{j_i} \quad (i =0, \dots, m) \:,$$
ce qui montre que $(T)$ est un raffinement de $(F)$.
Comme c'est une tour galoisienne, c'est un raffinement galoisien de $(F)$ (cf.
Fait \ref{fait:raffinement}.(2)). 
\end{proof}

\pespace
{\noindent \emphase Scholie.} La démonstration précédente n'est qu'une généralisation
de celle du Fait 2.7 du chapitre 2
.

\gespace
En contraste avec les extensions galoisiennes la proposition
\ref{prop:ragtourgaltou} admet le

\pespace
\begin{cor} \label{cor:tourgaltouextgaltou}
Soit $L/K$ une extension admettant une tour galtourable. Alors $L/K$ est une
extension galtourable.
\end{cor}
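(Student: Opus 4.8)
Le plan est de d\'eduire ce corollaire directement de la proposition \ref{prop:ragtourgaltou} qui pr\'ec\`ede. Par hypoth\`ese, l'extension $L/K$ admet une tour galtourable
$$(F) \qquad K=F_0 \lessgtr \dots \lessgtr F_i \lessgtr \dots \lessgtr F_m=L \:.$$
Je commencerais par appliquer \`a cette tour la proposition \ref{prop:ragtourgaltou}, qui lui fournit un raffinement galoisien $(T)$ constituant une tour galoisienne.

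Il resterait alors \`a observer que $(T)$ est, par construction, une tour de l'extension $L/K$ tout enti\`ere : tout raffinement de $(F)$ conserve en effet les extr\'emit\'es $F_0=K$ et $F_m=L$ (cf. D\'ef. \& Conv. \ref{def:raffinement}.(1), o\`u l'on a $F_0=E_{j_0}$ et $F_m=E_{j_m}$). Ainsi $(T)$ est une tour galoisienne de $L/K$, c'est-\`a-dire une tour reliant $K$ \`a $L$ dont toutes les marches sont galoisiennes. Par la d\'efinition \ref{def:galtourable} de la galtourabilit\'e, l'existence d'une telle tour galoisienne de $L/K$ signifie pr\'ecis\'ement que $L/K$ est galtourable, ce qui conclut la preuve.

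Je n'anticipe aucun obstacle s\'erieux, l'essentiel du travail ayant d\'ej\`a \'et\'e accompli dans la proposition \ref{prop:ragtourgaltou} : sa construction, par juxtaposition des tours galoisiennes de chaque marche, produit d'embl\'ee une tour galoisienne globale. Le seul point \`a ne pas n\'egliger est de v\'erifier que le raffinement obtenu est bien une tour compl\`ete de $L/K$, et non d'une sous-extension stricte, ce qui r\'esulte de la conservation des extr\'emit\'es d'une tour sous raffinement.
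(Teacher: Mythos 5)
Votre démonstration est correcte et suit exactement la même voie que celle du texte : on applique la proposition \ref{prop:ragtourgaltou} à la tour galtourable donnée pour obtenir un raffinement qui est une tour galoisienne de $L/K$, puis on conclut par la définition \ref{def:galtourable}. Votre vérification explicite que le raffinement conserve les extrémités $K$ et $L$ est un complément juste mais que le texte laisse implicite.
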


\begin{proof}
D'après la proposition \ref{prop:ragtourgaltou}, cette tour galtourable
admet un raffinement galoisien qui est une tour galoisienne (de $L/K$). Donc
$L/K$ admet une tour galoisienne, i.e. est galtourable (cf. Chap. 2, Déf. 1.4 
).
\end{proof}

\pespace
A partir de la notion d'extension galoisienne, nous avons défini la classe plus
large des extensions admettant une tour galoisienne. Le corollaire
\ref{cor:tourgaltouextgaltou} précédent exprime que l'on ne peut pas aller plus
loin dans cette direction : la classe des extensions admettant une tour
galtourable n'est que celle des extensions galtourables elle-même. Précisément :

\begin{cor}
Soit $\Omega$ un corps fixé. Dans $\Omega$, l'ensemble des extensions
galtourables est égal à celui des extensions admettant une tour
galtourable.
\end{cor}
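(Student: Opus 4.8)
Le plan est de d\'emontrer cette \'egalit\'e ensembliste par double inclusion ; le corps ambiant $\Omega$ n'y joue aucun r\^ole essentiel, il ne sert qu'\`a donner un sens commun aux extensions consid\'er\'ees, de sorte qu'il suffit de raisonner extension par extension.

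Pour l'inclusion facile, je montrerais que toute extension galtourable admet une tour galtourable. C'est imm\'ediat : si $L \galtou K$, la tour triviale \`a une seule marche
$$(F) \qquad K=F_0 \leq F_1=L$$
est par d\'efinition \ref{def:tourgaltourable} une tour galtourable de $L/K$, puisque son unique marche $F_1/F_0=L/K$ est galtourable par hypoth\`ese. Ainsi $L/K$ figure dans l'ensemble des extensions admettant une tour galtourable.

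Pour l'inclusion r\'eciproque, il s'agit de voir que toute extension admettant une tour galtourable est elle-m\^eme galtourable ; mais c'est exactement l'\'enonc\'e du corollaire \ref{cor:tourgaltouextgaltou} qui pr\'ec\`ede. L'\'egalit\'e des deux ensembles en d\'ecoule aussit\^ot.

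Il n'y a donc ici aucun obstacle r\'eel : la difficult\'e a d\'ej\`a \'et\'e absorb\'ee en amont, dans la proposition \ref{prop:ragtourgaltou} (toute tour galtourable admet un raffinement galoisien qui est une tour galoisienne) et le corollaire \ref{cor:tourgaltouextgaltou} qu'elle entra\^ine. Le seul point \`a ne pas perdre de vue est que l'inclusion triviale repose sur la convention autorisant les tours \`a une marche, ce qui l\'egitime de voir une extension galtourable comme cas particulier d'extension admettant une tour galtourable.
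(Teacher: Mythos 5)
Votre démonstration est correcte et suit exactement la même voie que celle du texte : l'inclusion directe par la tour à une marche $K=F_0 \lessgtr F_1=L$, et l'inclusion réciproque par le corollaire \ref{cor:tourgaltouextgaltou}. Rien à redire.
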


\begin{proof}
Toute extension galtourable $L \galtou K$ admet la tour galtourable 
$$K=F_0 \lessgtr F_1=L \;.$$ L'autre inclusion est fournie par le corollaire
\ref{cor:tourgaltouextgaltou}.
\end{proof}


\gespace
\gespace
\section{Tour stricte associée}

La définition même d'un raffinement suppose la conservation des répétitions des
corps dans une tour donnée. Le but de ce qui suit est d'introduire un moyen
technique de "supprimer" ces répétitions, ce qui s'avérera nécessaire pour
parler de tour de composition (cf. Chap. 4).

\pespace
\begin{propdef} [] \label{propdef:tourstricte}
Soient $L/K$ une extension quelconque et
$$(F) \qquad K=F_0 \leq \dots \leq F_i \leq \dots \leq F_m=L$$
une tour de $L/K$. Il existe une tour stricte $(S)$ de $L/K$ (cf. Chap. 2, Déf. \&
Conv. 1.1
.(1)) et une seule, telle que $(F)$ soit un raffinement trivial de $(S)$ (cf. Déf.
\ref{def:raffinementsuite}.(2)). Nous appelons $(S)$ "la tour stricte associée
\index{Tour!stricte associée}à
$(F)$", et nous la notons
$$(F_<):=(S) \;.$$
\end{propdef}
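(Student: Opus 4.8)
Le plan est de construire $(S)$ explicitement en effa\c{c}ant les r\'ep\'etitions de la tour $(F)$, de v\'erifier que la tour obtenue convient, puis d'en \'etablir l'unicit\'e par un argument portant sur l'ensemble sous-jacent des corps.

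Pour l'existence, je partirais de la croissance de la suite $\{F_i\}$ pour l'inclusion : deux termes cons\'ecutifs sont soit \'egaux, soit strictement inclus l'un dans l'autre, $F_i < F_{i+1}$. Je d\'efinirais la suite des indices de saut par $j_0 := 0$ et, aussi longtemps que $F_{j_k} \neq L$, par $j_{k+1} := \min\{ i \mid F_i > F_{j_k} \}$ ; cet ensemble est non vide car $F_m = L > F_{j_k}$, et le proc\'ed\'e s'arr\^ete puisque les $j_k$ croissent strictement sans d\'epasser $m$ (le cas $m=0$ fournit aussit\^ot $p=0$ et la tour triviale). En notant $p$ le dernier indice atteint et en posant $S_k := F_{j_k}$, on obtient une tour
$$ (S) \qquad K = S_0 < S_1 < \dots < S_p = L $$
stricte par construction. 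Il resterait \`a voir que $(F)$ en est un raffinement trivial : la condition (RAF2) est acquise avec la suite $j_0 < \dots < j_p$ puisque $S_k = F_{j_k}$, ce qui impose aussi (RAF1), c'est-\`a-dire $p \leq m$ ; et (RAFT) est satisfaite car, par la d\'efinition m\^eme des $S_k$, tout terme $F_j$ de $(F)$ est l'une des valeurs distinctes $S_0, \dots, S_p$.

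Pour l'unicit\'e, je me donnerais deux tours strictes $(S)$ et $(S')$ dont $(F)$ est un raffinement trivial, et je montrerais qu'elles ont le m\^eme ensemble sous-jacent que $(F)$. En effet, (RAF2) donne $\{S_k\} \subseteq \{F_i\}$, tandis que (RAFT), jointe aux \'egalit\'es d'extr\'emit\'es $F_0 = K = S_0$ et $F_m = L = S_p$, donne $\{F_i\} \subseteq \{S_k\}$ ; d'o\`u $\{S_k\} = \{F_i\}$, et de m\^eme $\{S'_k\} = \{F_i\}$. Or une tour stricte est une suite finie de corps totalement ordonn\'ee par l'inclusion et sans r\'ep\'etition : elle n'est autre que l'\'enum\'eration croissante de son ensemble sous-jacent, lequel la d\'etermine enti\`erement. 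Ayant le m\^eme ensemble sous-jacent, $(S)$ et $(S')$ sont \'egales.

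Le point demandant le plus de soin me para\^{\i}t \^etre la gestion des indices dans (RAFT), qui ne porte que sur les termes interm\'ediaires $j \in \{1, \dots, m-1\}$ : il faut adjoindre s\'epar\'ement les deux extr\'emit\'es pour obtenir l'inclusion $\{F_i\} \subseteq \{S_k\}$. Une fois la construction pos\'ee, le reste se r\'eduit \`a un effacement routinier des r\'ep\'etitions.
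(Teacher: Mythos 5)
Votre preuve est correcte et suit essentiellement la d\'emarche du texte : construction de $(S)$ par effacement des r\'ep\'etitions (le texte renum\'erote directement l'ensemble $\mathcal F$ des corps de $(F)$ l\`a o\`u vous it\'erez sur des indices de saut, ce qui revient au m\^eme), puis unicit\'e obtenue en montrant que toute tour stricte dont $(F)$ est raffinement trivial a pour ensemble sous-jacent $\mathcal F$ et qu'une tour stricte est d\'etermin\'ee par son ensemble sous-jacent. Votre remarque sur la n\'ecessit\'e d'adjoindre s\'epar\'ement les extr\'emit\'es dans (RAFT) correspond bien au point d\'elicat de l'argument d'unicit\'e du texte.
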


\begin{proof}
(1) {\emphase Existence.} Il s'agit en fait d'effacer les répétitions de la tour
$(F)$. Précisément, notons $\mathcal F$ l'ensemble des corps de $(F)$ :
$${\mathcal F} :=\{F_0,\dots, F_i, \dots , F_m \} \:;$$
et soit 
$$n:=|{\mathcal F}|-1 \leq m \:.$$
Renumérotons ${\mathcal F}$ de manière croissante en écrivant
$${\mathcal F}=\{F_{j_0}, \dots, F_{j_i}, \dots ,F_{j_n}\}$$
avec, donc,
$$\forall (k,l) \in \{0, \dots , n \}^2 \quad k<l \implique F_{j_k}<F_{j_l} \:.$$
En posant $ S_i:= F_{j_i} \; (i \in \{0, \dots, n \})$ , on obtient clairement la
tour stricte
$$(S) \qquad S_0=F_{j_0}=F_0=K < \dots < S_i=F_{j_i} < \dots <
S_n=F_{j_n}=F_m=L$$
de $L/K$, avec la suite finie d'indices
$$0 \leq j_0 < j_1 < \dots < j_n \leq m \:.$$
En effet, on aurait dans le cas contraire l'existence d'un couple $(k,l) \in
\{0, \dots, n \}^2$ avec $k <l$ et $j_l \leq j_k$, et par croissance de la suite
$\{F_i\}_{0 \leq i \leq m}$ , $F_{j_l} \leq F_{j_k}$ qui contredit la croissance de
la numérotation de ${\mathcal F}$.\\
Comme on a $S_i=F_{j_i} \; (i \in \{0, \dots, n \})$ par construction, ceci
prouve la condition (RAF2) de la définition \& convention \ref{def:raffinement}.(1), et $(F)$
est un raffinement de $(S)$. La condition (RAFT) de la définition
\ref{def:raffinementsuite}.(2) étant évidemment vérifiée, il est de plus
trivial.
\pespace 
(2) {\emphase Unicité.} Soit maintenant
$$(S') \qquad K=S'_0 < \dots < S'_{i'} < \dots < S'_{n'}=L$$
une autre tour stricte de $L/K$ de raffinement trivial $(F)$. On a aussi
$${\mathcal F}=\{S'_0 , \dots , S'_{n'} \} \:.$$
En effet soit $F_i \in {\mathcal F} =\{F_0,\dots, F_i, \dots , F_m \}$. Si l'on
avait $F_i \notin \{S'_0 , \dots , S'_{n'} \}$, la tour $(F)$ serait un raffinement
propre (cf. Déf. \& Conv.  \ref{def:raffinement}.(2)), i.e. non trivial : contradiction
avec la définition de $(S')$. L'inclusion inverse traduit simplement que $(F)$ est un
raffinement de $(S')$. L'égalité précédente implique que
$$n+1= |{\mathcal F}|=|\{S'_0 , \dots , S'_{n'} \}| \:.$$
Mais $(S')$ étant stricte,
$$|\{S'_0 , \dots , S'_{n'} \}|=n'+1 \:,$$
de sorte que nécessairement $n'=n$. Raisonnons maintenant par l'absurde en supposant que
$(S) \neq (S')$. Par le (3) de la définition \& convention 1.1  du chapitre 2 
cela équivaut à ce que
$$\{i \in \{0, \dots , n \} \; | \; S_i \neq S'_i \,\} \neq \emptyset\:.$$
Soit $l$ le plus petit élément de cet ensemble non vide d'entiers. Comme
d'après ce qui précède
$${\mathcal F}=\{S_0 , \dots , S_n \}=\{S'_0 , \dots , S'_n \} \:,$$
il existe $j$ dans $\{0, \dots, n \}$ tel que
$$j \neq l \:, \quad  S_l=S'_j \:.$$
Si l'on avait $j<l$, on déduirait de la minimalité de $l$ que $S_j=S'_j=S_l$ qui
contredit que $(S)$ est une tour stricte. Donc $j \geq l$, i.e. $j > l$ (puisque
$j \neq l$ par définition).
Comme $(S')$ est stricte, on obtient alors $S'_l < S'_j$. De même, il existe $k$
dans $\{0, \dots, n \}$ tel que $S'_l=S_k$. Mutatis mutandis, on
montre que $k >l$ et $S_l < S_k$. Mais alors
$$\begin{array}{cccc}
S_l					& <	& S_k &\\
\text{\rotatebox[origin=c]{90}{=}}	&
&\text{\rotatebox[origin=c]{90}{=}}	&  \text{ contradiction.}\\
S'_j					&>	& S'_l&
\end{array}$$
\end{proof}

\mespace
\begin{rems} \label{rem:tourstricte}
(1) Lorsque $(F)$ est stricte, on a clairement $(F_<)=(F)$. En particulier, la
tour triviale est stricte (cf. Chap. 2, Déf. \& Conv. 1.1.(1)) et
$$(F)=(F_<) \qquad K=F_0=L \;.$$ 
(2) La démonstration de l'unicité de la tour stricte consiste en fait à montrer
que celle-ci ne peut être obtenue que par renumérotation croissante de
${\mathcal F}$ (numérotation qui est unique).
\end{rems}

\gespace
La notion de tour stricte associée à une tour donnée est compatible avec celle
de raffinement.

\pespace
\begin{prop} [] \label{prop:raffinementstrict}
Soient $L/K$ une extension quelconque et
$$(F) \qquad K=F_0 \leq \dots \leq F_i \leq \dots \leq F_m=L$$
une tour de $L/K$. Pour tout raffinement
$$(E) \qquad K=E_0 \leq \dots \leq E_j \leq \dots \leq E_n=L$$
de $(F)$, $(E_<)$ est un raffinement de $(F_<)$ (Prop.\&Déf.
\ref{propdef:tourstricte}).
\end{prop}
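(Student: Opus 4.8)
The plan is to reduce the statement to a purely combinatorial fact about the sets of distinct fields occurring in the two towers. Recall from Proposition \& Definition~\ref{propdef:tourstricte} that the associated strict tower $(F_<)$ is obtained by deleting the repetitions of $(F)$, i.e. it is the strictly increasing enumeration of the set $\mathcal{F} := \{F_0,\dots,F_m\}$ of fields appearing in $(F)$; likewise $(E_<)$ is the strictly increasing enumeration of $\mathcal{E} := \{E_0,\dots,E_n\}$. Since all the fields of a tower lie on the chain $K \leq \cdots \leq L$, both $\mathcal{F}$ and $\mathcal{E}$ are finite totally ordered subsets of the intermediate fields of $L/K$, and it suffices to prove that $\mathcal{F} \subseteq \mathcal{E}$ and then that the increasing enumeration of a subchain yields a refinement.

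First I would establish the inclusion $\mathcal{F} \subseteq \mathcal{E}$. By hypothesis $(E)$ is a refinement of $(F)$, so condition (RAF2) of Definition \& Convention~\ref{def:raffinement} provides indices $0 \leq j_0 < j_1 < \cdots < j_m \leq n$ with $F_i = E_{j_i}$ for every $i \in \{0,\dots,m\}$. Thus each field of $(F)$ occurs as a field of $(E)$, which is exactly the asserted inclusion of sets.

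Then I would produce the index sequence required by (RAF2) for $(E_<)$ over $(F_<)$. Write $(F_<)$ as $K = S_0 < \cdots < S_p = L$ and $(E_<)$ as $K = T_0 < \cdots < T_q = L$. For each $i \in \{0,\dots,p\}$ one has $S_i \in \mathcal{F} \subseteq \mathcal{E}$, so there is a unique index $k_i$ with $S_i = T_{k_i}$, the uniqueness coming from the strictness of $(E_<)$. Since the enumeration $(T_j)_{0 \leq j \leq q}$ is strictly increasing, $T_a < T_b$ holds if and only if $a < b$; combined with $S_0 < S_1 < \cdots < S_p$ this forces $0 \leq k_0 < k_1 < \cdots < k_p \leq q$ (in fact $k_0 = 0$ and $k_p = q$, as $K$ and $L$ are the extreme fields of both chains). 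This is precisely condition (RAF2), and (RAF1), namely $p \leq q$, then follows either from $|\mathcal{F}| \leq |\mathcal{E}|$ or from $i \leq k_i$; hence $(E_<)$ is a refinement of $(F_<)$.

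The argument is essentially bookkeeping, so I expect no serious obstacle; the only point requiring care is the monotonicity of the map $i \mapsto k_i$, which rests on the fact that within each tower the fields form a chain and the strict tower is its order-preserving enumeration, so that the strict inclusions between the $S_i$ translate faithfully into strict inequalities between the indices $k_i$.
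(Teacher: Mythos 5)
Your proposal is correct and follows essentially the same route as the paper: both reduce the statement to the set inclusion $\{F_0,\dots,F_m\}\subseteq\{E_0,\dots,E_n\}$ obtained from (RAF2), then match each field of $(F_<)$ to its (unique) position in the strict tower $(E_<)$ and verify that the resulting index sequence is strictly increasing because $(F_<)$ is strict and $(E_<)$ is an increasing enumeration. The paper phrases the monotonicity step as a proof by contradiction rather than via the equivalence $T_a<T_b\Leftrightarrow a<b$, but the content is identical.
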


\pespace
{\noindent \emphase Scholie.} En général $(E_<)$ ne raffine pas $(F)$, car les
répétitions éventuelles de $(F)$ ont disparu dans $(F_<)$, et ne peuvent se
retrouver dans la tour stricte $(E_<)$.

\mespace
\begin{proof}
\'Ecrivons 
$$(F_<) \qquad K=F_0=F_{<0} < \dots < F_{<m'}= F_m=L \:;$$ 
$$(E_<) \qquad K=E_0=E_{<0} < \dots < E_{<n'}= E_n=L \:.$$
Comme $(F)$ est un raffinement trivial de $(F_<)$, on a
$${\mathcal F}:=\{F_0,\dots , F_m \}=\{F_{<0}, \dots , F_{<m'} \} \:;$$
de même
$${\mathcal E}:=\{E_0,\dots , E_n \}=\{E_{<0}, \dots , E_{<n'} \} \:.$$
L'hypothèse que $(E)$ est un raffinement de $(F)$ signifie qu'il existe une
suite finie d'entiers
$$0 \leq j_0 < j_1 < \dots < j_m \leq n$$
telle que
$$\forall i \in \{0, \dots, m \} \qquad F_i=E_{j_i} \:.$$
Cela implique en particulier que ${\mathcal F} \subseteq {\mathcal E}$. Ainsi
$$\forall i' \in \{0, \dots, m' \} \quad \exists j'_{i'} \in \{0, \dots, n' \}
\qquad F_{<i'}=E_{<j'_{i'}} \:.$$
Fixons une suite de tels entiers : $\{j'_{i'}\}_{0 \leq i' \leq m'}$. Elle est
strictement croissante. En effet, raisonnons par l'absurde en supposant qu'il
existe $0 \leq i'_1 < i'_2 \leq m'$ avec $j'_{i'_2} \leq j'_{i'_1}$. On a alors
$E_{<j'_{i'_2}} \leq E_{<j'_{i'_1}} $ par croissance de toute tour de corps.
Tandis que, comme la tour $(F_<)$ est stricte, l'inégalité $i'_1 < i'_2$
entraine
$$E_{<j'_{i'_1}}=F_{<i'_1}<F_{<i'_2}=E_{<j'_{i'_2}} \quad : \;
\text{contradiction.}$$
Par conséquent
$$0 \leq j'_0 < j'_1 < \dots < j'_{m'} \leq n'$$
avec
$$\forall i' \in \{0, \dots, m' \} \qquad F_{<i'}=E_{<j'_{i'}} \:.$$
Or ceci exprime précisément que $(E_<)$ est un raffinement de $(F_<)$
(Déf. \& Conv.  \ref{def:raffinement}.(1)).
\end{proof}

\gespace
\begin{cor} []  \label{cor:raffinementstrict}
Pour toute tour stricte $(F)$ de $L/K$ et tout raffinement $(E)$ de $(F)$,
$(E_<)$ est encore un raffinement de $(F)$.
\end{cor}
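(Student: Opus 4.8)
Le plan est d'observer que ce corollaire n'est rien d'autre qu'une sp\'ecialisation imm\'ediate de la proposition \ref{prop:raffinementstrict} au cas o\`u la tour de d\'epart $(F)$ est d\'ej\`a suppos\'ee stricte. Aucun argument nouveau n'est requis : il s'agit uniquement de faire disparaître, grâce à l'hypoth\`ese de stricte, la distinction entre $(F)$ et sa tour stricte associ\'ee $(F_<)$.

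Je commencerais par invoquer la remarque \ref{rem:tourstricte}.(1), qui affirme que lorsque $(F)$ est stricte on a $(F_<)=(F)$, puisqu'une tour stricte ne comporte par d\'efinition aucune r\'ep\'etition de corps et coïncide donc avec le r\'esultat de sa propre renum\'erotation croissante. Ensuite j'appliquerais directement la proposition \ref{prop:raffinementstrict} au raffinement $(E)$ de $(F)$ : cette proposition garantit que $(E_<)$ est un raffinement de $(F_<)$. En rempla\c{c}ant $(F_<)$ par $(F)$ au moyen de l'\'egalit\'e pr\'ec\'edente, on obtient que $(E_<)$ est un raffinement de $(F)$, ce qui est exactement l'\'enonc\'e cherch\'e.

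Je ne vois pas ici d'obstacle r\'eel : la seule subtilit\'e est de ne pas confondre $(F_<)$ et $(F)$ dans le cas g\'en\'eral. C'est pr\'ecis\'ement l'hypoth\`ese de stricte qui autorise cette identification et lib\`ere ainsi la restriction signal\'ee dans la scholie de la proposition \ref{prop:raffinementstrict}, o\`u l'on rappelait qu'en g\'en\'eral $(E_<)$ ne raffine que $(F_<)$, et non $(F)$, les r\'ep\'etitions de $(F)$ ayant \'et\'e effac\'ees. Le corollaire exprime donc que ce ph\'enom\`ene de perte des r\'ep\'etitions ne se produit pas lorsque $(F)$ est d\'ej\`a d\'epourvue de r\'ep\'etitions.
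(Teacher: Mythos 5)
Votre démonstration est correcte et suit exactement la même voie que celle du texte : on identifie $(F_<)=(F)$ grâce à la remarque \ref{rem:tourstricte}.(1), puis on applique la proposition \ref{prop:raffinementstrict} pour conclure que $(E_<)$ raffine $(F_<)=(F)$. Rien à ajouter.
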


\pespace
\begin{proof}
Cela découle immédiatement de ce que $(F_<)=(F)$ (cf. Rem.
\ref{rem:tourstricte}.(1)) et de la proposition \ref{prop:raffinementstrict}
précédente.
\end{proof}

\gespace
Nous énonçons maintenant un fait bien intuitif, de démonstration élémentaire
mais subtile. Il sera précisé davantage au chapitre 4. Nous le faisons figurer
ici car il est indispensable pour généraliser la proposition
\ref{prop:raffinementstrict} aux raffinements galoisiens.

\gespace
\begin{fait} [] \label{fait:marchetourstricte}
Soient $L/K$ une extension quelconque, 
$$(F) \qquad K=F_0 \leq \dots \leq F_i \leq \dots \leq F_m=L$$
une tour de $L/K$, et
$$(F_<) \qquad K=F_0=F_{<0} <\dots < F_{<j} < \dots < F_{<m'}=F_m=L$$
la tour stricte associée à $(F)$ (cf. Prop. \& Déf. \ref{propdef:tourstricte}).
Alors toute marche de $(F_<)$ est une marche de $(F)$.
\end{fait}

\pespace
\begin{proof}
Le résultat est trivial si $K=L$. Supposons $L \neq K$ de sorte que l'on puisse
considérer $F_{<j+1}/F_{<j} \; (j \in \{0, \dots, m'-1 \})$ une marche de
$(F_<)$. Comme $(F)$ est un raffinement de $(F_<)$, on sait
$$\exists i_j < i_{j+1} \quad F_{i_j}=F_{<j}<F_{<j+1}=F_{i_{j+1}} \:.$$
Considérons l'ensemble d'entiers
$$I_{j+1}:=\{i \in \{0, \dots, m \} \; | \; F_i=F_{<j+1} \}\:.$$
Comme l'ensemble $I_{j+1} \neq \emptyset$ puisque $F_{i_{j+1}}=F_{<j+1}$, il
admet un plus petit élément, et nous pouvons considérer
$$l:=(min \, I_{j+1})-1.$$
Avoir $l+1 \leq i_j$ conduirait à $F_{l+1} \leq F_{i_j}$ qui contredit
$F_{i_j}=F_{<j}<F_{<j+1}=F_{l+1}$. On a donc $0 \leq i_j < l+1$ et en
particulier $l \geq 0$. D'où l'existence de $F_l$. La minimalité de $l+1$ dans
$I_{j+1}$ interdit que $F_l=F_{<j+1}=F_{l+1}$. Donc
$$F_l < F_{l+1}=F_{<j+1} \:.$$
Comme par ailleurs $F_{i_j} \leq F_l$ (puisque $i_j < l+1 \ssi i_j \leq l$), on
a la tour
$$F_{<j}=F_{i_j} \leq F_l < F_{l+1}=F_{<j+1} \:.$$
Or, par définition, $(F)$ raffine trivialement $(F_<)$ et donc
$$\exists j' \in \{0, \dots, m' \} \quad F_l=F_{<j'} \:.$$
Montrons maintenant que nécessairement $j'=j$. Si tel n'était pas le cas, on
aurait : \\
\indent - Ou bien $j' < j$  et $F_l=F_{<j'} < F_{<j}$ ; d'où
$$F_l < F_{<j}=F_{i_j} < F_{<j+1} = F_{l+1} \:.$$
Mais ceci pose un insurmontable problème à $i_j$, qui ne peut ni être inférieur
ou égal à $l$ (par l'inégalité stricte de gauche), ni supérieur ou égal à $l+1$
(par celle de droite) : contradiction.\\
\indent - Ou bien $j+1 \leq j'$ et $F_{<j+1} \leq F_{<j'}= F_l$ qui contredit
directement
$$F_l < F_{l+1} = F_{<j+1} \:.$$
Il est donc prouvé que $j'=j$. Finalement
$$(F_{<j+1} / F_{<j} ) = (F_{l+1} / F_l)$$
et donc $(F_{<j+1} / F_{<j} )$ est bien une marche de $(F)$.
\end{proof}

\mespace
Nous montrerons au chapitre 4, corollaire 1.6,  
 que la réciproque est vraie : toute marche non triviale de $(F)$ est une
marche de $(F_<)$.

\gespace
Tirons ici du Fait \ref{fait:marchetourstricte} précédent l'important

\pespace
\begin{cor} [] \label{cor:tourgalstricte}
Soit $L/K$ une extension galtourable (Chap. 2, Déf. 1.4
). Pour toute tour galoisienne
$$(T) \qquad K=T_0 \unlhd \dots \unlhd T_i \dots \unlhd T_m=L$$
de $L/K$, la tour stricte $(T_<)$ associée à $(T)$ (Prop. \& Déf.
\ref{propdef:tourstricte}) est aussi galoisienne :
$$(T_<) \qquad K=T_0=T_{<0} \lhd \dots \lhd T_{<j} \lhd \dots \lhd T_{<m'}=T_m=L
\:.$$
\end{cor}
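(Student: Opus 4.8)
Le plan est d'exploiter directement le Fait \ref{fait:marchetourstricte} que l'on vient d'établir, car c'est lui qui contient toute la difficulté technique ; la présente preuve n'en sera qu'une application. D'abord, je considère la tour stricte $(T_<)$ associée à la tour galoisienne $(T)$ au sens de la proposition \& définition \ref{propdef:tourstricte}. Par construction même de $(T_<)$, cette tour est stricte, de sorte que chacune de ses marches $T_{<j+1}/T_{<j} \; (j \in \{0,\dots,m'-1\})$ est déjà une extension stricte : $T_{<j} < T_{<j+1}$. Il ne restera donc qu'à prouver que chacune de ces marches est galoisienne pour conclure que $(T_<)$ est une tour galoisienne stricte, c'est-à-dire $T_{<j} \lhd T_{<j+1}$.

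Ensuite, j'applique le Fait \ref{fait:marchetourstricte} au couple formé par $(T)$ et sa tour stricte associée $(T_<)$ : toute marche de $(T_<)$ est une marche de $(T)$. Autrement dit, pour chaque indice $j \in \{0,\dots,m'-1\}$, il existe un indice $i \in \{0,\dots,m-1\}$ tel que l'on ait l'égalité d'extensions
$$(T_{<j+1} / T_{<j}) = (T_{i+1} / T_i) \:.$$
Or l'hypothèse que $(T)$ est une tour galoisienne signifie précisément (cf. Chap. 2, Déf. \& Conv. \ref{def:tour}.(2)) que chacune de ses marches $T_{i+1}/T_i$ est une extension galoisienne. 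Par transport le long de l'égalité ci-dessus, chaque marche $T_{<j+1}/T_{<j}$ de $(T_<)$ est donc galoisienne.

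En combinant la stricte croissance de $(T_<)$ avec le caractère galoisien de chacune de ses marches, j'obtiens
$$\forall j \in \{0,\dots,m'-1\} \quad T_{<j} \lhd T_{<j+1} \:,$$
ce qui est exactement l'énoncé voulu. Je n'anticipe aucun obstacle sérieux dans cette rédaction, le point réellement délicat ayant été traité en amont dans le Fait \ref{fait:marchetourstricte}. On peut d'ailleurs remarquer que l'hypothèse de galtourabilité de $L/K$ n'intervient pas dans le raisonnement lui-même : elle ne sert qu'à garantir l'existence d'au moins une tour galoisienne $(T)$ à laquelle appliquer la construction (une extension admettant une tour galoisienne étant galtourable par la définition \ref{def:galtourable}). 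Le seul soin à prendre est de bien distinguer les indices de $(T)$ et de $(T_<)$, la correspondance entre marches fournie par le Fait \ref{fait:marchetourstricte} n'étant pas indexée de façon monotone a priori.
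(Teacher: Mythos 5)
Votre démonstration est correcte et suit exactement la même voie que celle du texte : la preuve de la thèse se réduit elle aussi à la phrase « toute marche de $(T_<)$ est une extension galoisienne, en tant que marche de $(T)$ », c'est-à-dire à l'application directe du Fait \ref{fait:marchetourstricte}. Votre remarque annexe (la galtourabilité de $L/K$ ne sert qu'à garantir l'existence d'une tour galoisienne) est juste et ne modifie en rien l'argument.
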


\pespace
\begin{proof}
Toute marche de $(T_<)$ est une extension galoisienne, en tant que marche de
$(T)$.
\end{proof}

\gespace
On a vu que la notion de tour stricte associée est compatible avec celle de
raffinement (Prop. \ref{prop:raffinementstrict}). Voyons qu'il en est de même
avec la notion de raffinement galoisien.

\pespace
\begin{prop}[] \label{prop:ragstrict}
Soient $L/K$ une extension quelconque, et
$$(F) \qquad K=F_0 \leq \dots \leq F_i \leq \dots \leq F_m=L$$
une tour de $L/K$. Pour tout raffinement galoisien (Déf.
\ref{def:raffinementsuite}.(4))
$$(E) \qquad K=E_0 \leq \dots \leq E_k \leq \dots \leq E_n=L$$
de $(F)$, $(E_<)$ est un raffinement galoisien de $(F_<)$.
\end{prop}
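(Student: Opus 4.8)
The plan is to build directly on the two results already in hand: Proposition~\ref{prop:raffinementstrict}, which gives that $(E_<)$ is a refinement of $(F_<)$, and Fait~\ref{fait:marchetourstricte}, which says that every step of the strict tower associated to a tower is itself a step of that tower. Since $(E_<)$ is already known to refine $(F_<)$, the only thing left to verify is the Galois condition (RAFG) of D\'efinition~\ref{def:raffinementsuite}.(4) for $(E_<)$ viewed as a refinement of $(F_<)$. First I would fix notation, writing $(F_<)$ as $K=F_{<0}<\dots<F_{<m'}=L$ and $(E_<)$ as $K=E_{<0}<\dots<E_{<n'}=L$, and recall that passing to the associated strict tower does not change the underlying set of corps, so that $\{F_{<0},\dots,F_{<m'}\}=\{F_0,\dots,F_m\}$ and likewise $\{E_{<0},\dots,E_{<n'}\}=\{E_0,\dots,E_n\}$.

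Next I would fix an index $j\in\{1,\dots,n'-1\}$ such that $E_{<j}\neq F_{<i}$ for every $i$, with the goal of proving $E_{<(j-1)}\unlhd E_{<j}$. Using the equality of the sets of corps of $(F)$ and $(F_<)$, the hypothesis $E_{<j}\neq F_{<i}$ for all $i$ is the same as $E_{<j}\neq F_i$ for all $i$. The crucial move is to transport the step $E_{<(j-1)}<E_{<j}$ of $(E_<)$ back to $(E)$: applying Fait~\ref{fait:marchetourstricte} to the tower $(E)$ and its associated strict tower $(E_<)$, this step coincides with a genuine step of $(E)$, so there is an index $l$ with $E_l=E_{<(j-1)}$, $E_{l+1}=E_{<j}$ and $E_l<E_{l+1}$.

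It then remains to invoke (RAFG) for the Galois refinement $(E)$ of $(F)$. Since $E_{l+1}=E_{<j}\neq L=E_n$, we have $l+1\neq n$, whence $l+1\in\{1,\dots,n-1\}$; moreover $E_{l+1}=E_{<j}\neq F_i$ for every $i$. Condition (RAFG) for $(E)$ therefore yields $E_l\unlhd E_{l+1}$, i.e.\ $E_{l+1}/E_l$ is Galois. As $E_{<j}/E_{<(j-1)}=E_{l+1}/E_l$, this is precisely $E_{<(j-1)}\unlhd E_{<j}$, which establishes (RAFG) for $(E_<)$ and, together with Proposition~\ref{prop:raffinementstrict}, shows that $(E_<)$ is a Galois refinement of $(F_<)$.

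The main obstacle I anticipate is the index bookkeeping needed to align $(E_<)$ with $(E)$ across the deleted repetitions; this is exactly what Fait~\ref{fait:marchetourstricte} is designed to absorb, so once that lemma is granted the argument is short. A secondary point requiring a little care is verifying that the transported index $l+1$ lands in the admissible range $\{1,\dots,n-1\}$ on which (RAFG) for $(E)$ applies, which follows from $E_{<j}$ being distinct from both $K$ and $L$ (a consequence of $1\le j\le n'-1$ and the strictness of $(E_<)$).
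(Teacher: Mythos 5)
Your proof is correct and follows essentially the same route as the paper's: Proposition~\ref{prop:raffinementstrict} for the refinement part, Fait~\ref{fait:marchetourstricte} to identify each step of $(E_<)$ with a genuine step of $(E)$, the equality $\{F_{<0},\dots,F_{<m'}\}=\{F_0,\dots,F_m\}$ to transfer the hypothesis, and then condition (RAFG) for $(E)$. The only (welcome) addition is your explicit verification that the transported index lies in $\{1,\dots,n-1\}$, a point the paper leaves implicit.
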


\begin{proof}
On sait déjà, par la proposition \ref{prop:raffinementstrict}, que $(E_<)$ est
un raffinement de 
$$(F_<) \qquad K=F_0=F_{<0} <\dots < F_{<j} < \dots < F_{<m'}=F_m=L \:.$$
Il reste à voir que 
$$(E_<) \qquad K=E_0=E_{<0} <\dots < E_{<l} < \dots < E_{<n'}=E_n=L$$ vérifie la condition
(RAFG) du (4) de la définition \ref{def:raffinementsuite}.
Notons que le Fait \ref{fait:marchetourstricte} assure que toutes les marches de
$(E_<)$ sont des marches de $(E)$ :
$$\forall l \in \{1, \dots, n' \} \quad \exists k_l \in \{1, \dots, n \}
\quad (E_{<l}/E_{<l-1})=(E_{k_l}/E_{k_l-1}) \:.$$
Supposons qu'il existe $l \in \{1, \dots, n'-1 \}$ tel que
$$\forall j \in \{0, \dots, m' \} \quad E_{<l} \neq F_{<j} \:.$$
(Si un tel $l$ n'existe pas, le raffinement est trivial, donc galoisien (Fait
\ref{fait:raffinement}.(1))).
En vertu de la démonstration  de la proposition \& définition \ref{propdef:tourstricte}, on
sait par ailleurs que
$$\{F_{<0}, \dots, F_{<m'} \}=\{F_0, \dots, F_m\} \:.$$
Dès lors on a
$$\forall i \in \{0, \dots, m \} \quad E_{<l} \neq F_i \:.$$
Comme $E_{<l}=E_{k_l}$, c'est donc que
$$\forall i \in \{0, \dots, m \} \quad E_{k_l} \neq F_i \:.$$
Par la condition (RAFG) du raffinement galoisien $(E)$ de $(F)$, on obtient
finalement que
$$(E_{<l}=E_{k_l} \gal E_{k_l-1}=E_{<l-1})$$
ce que l'on voulait.
\end{proof}

\mespace
\begin{cor} \label{cor:ragstrict} 
Pour toute tour stricte $(F)$ de $L/K$ et tout raffinement galoisien $(E)$ de
$(F)$, $(E_<)$ est encore un raffinement galoisien de $(F)$.
\end{cor}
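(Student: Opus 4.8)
Le plan est de sp\'ecialiser la proposition \ref{prop:ragstrict} qui pr\'ec\`ede, en exploitant le fait que toute tour stricte co\"incide avec sa tour stricte associ\'ee. Autrement dit, le r\'esultat voulu doit s'obtenir sans aucun argument nouveau, par simple substitution dans un \'enonc\'e d\'ej\`a \'etabli, \`a l'image de ce qui a \'et\'e fait pour le corollaire \ref{cor:raffinementstrict} \`a partir de la proposition \ref{prop:raffinementstrict}.

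Je proc\'ederais en deux \'etapes. D'abord, j'observe que par hypoth\`ese $(F)$ est stricte ; la remarque \ref{rem:tourstricte}.(1) fournit alors imm\'ediatement l'\'egalit\'e $(F_<)=(F)$. Ensuite, j'applique la proposition \ref{prop:ragstrict} au raffinement galoisien $(E)$ de $(F)$ : elle garantit que $(E_<)$ est un raffinement galoisien de $(F_<)$. En rempla\c{c}ant $(F_<)$ par $(F)$ dans cette conclusion gr\^ace \`a l'\'etape pr\'ec\'edente, on obtient directement que $(E_<)$ est un raffinement galoisien de $(F)$ lui-m\^eme, ce qui est exactement l'\'enonc\'e cherch\'e.

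Il n'y a donc pas d'obstacle v\'eritable \`a franchir ici : tout le travail technique a d\'ej\`a \'et\'e accompli en amont. La proposition \ref{prop:ragstrict} repose en effet sur le Fait \ref{fait:marchetourstricte}, qui assure que toute marche de $(E_<)$ est une marche de $(E)$, puis sur la condition (RAFG) du raffinement galoisien $(E)$ de $(F)$ transport\'ee \`a la tour stricte associ\'ee. Le pr\'esent corollaire n'en est que la sp\'ecialisation au cas o\`u la tour de base est d\'ej\`a stricte ; la seule subtilit\'e, purement formelle, est l'identification $(F_<)=(F)$ qui permet de passer d'un raffinement galoisien de $(F_<)$ \`a un raffinement galoisien de $(F)$.
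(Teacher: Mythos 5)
Votre démonstration est correcte et suit exactement la même voie que celle du texte : on utilise l'égalité $(F_<)=(F)$ donnée par la remarque \ref{rem:tourstricte}.(1) pour la tour stricte $(F)$, puis on applique la proposition \ref{prop:ragstrict} afin de conclure que $(E_<)$ est un raffinement galoisien de $(F_<)=(F)$. Le texte renvoie simplement à la démonstration du corollaire \ref{cor:raffinementstrict} \og mutatis mutandis \fg, ce qui est précisément l'argument que vous explicitez.
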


\pespace
\begin{proof}
Celle du corollaire \ref{cor:raffinementstrict} mutatis mutandis.
\end{proof}

\mespace
L'introduction des tours strictes associées nous permet d'écrire la version stricte
suivante de la proposition \ref{prop:ragtourgaltou} :

\pespace
\begin{prop}[]
Toute tour galtourable stricte admet un raffinement galoisien qui est une tour
galoisienne stricte.
\end{prop}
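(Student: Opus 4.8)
Le plan est de ramener cette version stricte à la proposition \ref{prop:ragtourgaltou} déjà établie, puis de passer à la tour stricte associée pour éliminer les répétitions. Précisément, soit
$$(F) \qquad K=F_0 \lessgtr F_1 \lessgtr \dots \lessgtr F_m=L$$
une tour galtourable stricte de $L/K$. Je commencerais par remarquer que $L/K$ est elle-même galtourable, en vertu du corollaire \ref{cor:tourgaltouextgaltou}, puisqu'elle admet la tour galtourable $(F)$. Ensuite, la proposition \ref{prop:ragtourgaltou} fournit un raffinement galoisien $(T)$ de $(F)$ qui est une tour galoisienne de $L/K$. Le point délicat est que cette tour $(T)$, obtenue par juxtaposition de tours galoisiennes, n'est en général pas stricte : elle peut comporter des répétitions de corps introduites par le recollement. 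C'est précisément pour contourner cet écueil que l'on dispose de la notion de tour stricte associée (Prop. \& Déf. \ref{propdef:tourstricte}).

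Je considérerais alors la tour stricte $(T_<)$ associée à $(T)$, et il s'agirait d'établir trois propriétés : que $(T_<)$ est stricte, qu'elle est galoisienne, et qu'elle raffine galoisiennement $(F)$. La stricticité est immédiate par la définition même de $(T_<)$. Le caractère galoisien résulte du corollaire \ref{cor:tourgalstricte} : comme $L/K$ est galtourable et $(T)$ est une tour galoisienne, toute marche de $(T_<)$ est une marche de $(T)$, donc galoisienne. Enfin, puisque la tour $(F)$ est stricte et que $(T)$ en est un raffinement galoisien, le corollaire \ref{cor:ragstrict} assure que $(T_<)$ est encore un raffinement galoisien de $(F)$.

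En rassemblant ces trois points, $(T_<)$ est bien un raffinement galoisien de $(F)$ qui est une tour galoisienne stricte, ce qui est la conclusion recherchée. L'essentiel du travail aura en fait déjà été accompli dans les corollaires \ref{cor:ragstrict} et \ref{cor:tourgalstricte}, qui garantissent respectivement la conservation du caractère \emph{raffinement galoisien} et du caractère \emph{galoisien} lors du passage à la tour stricte associée ; l'unique obstacle, à savoir la non-stricticité de la tour galoisienne produite par la proposition \ref{prop:ragtourgaltou}, se trouve ainsi levé sans aucun calcul supplémentaire, la démonstration se réduisant à l'enchaînement de ces résultats préparatoires.
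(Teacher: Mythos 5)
Votre démonstration est correcte et suit essentiellement la même démarche que celle du texte : application de la proposition \ref{prop:ragtourgaltou}, passage à la tour stricte associée $(T_<)$, puis conclusion par le corollaire \ref{cor:tourgalstricte} et le corollaire \ref{cor:ragstrict} (le texte invoque directement la proposition \ref{prop:ragstrict} avec $(F_<)=(F)$, ce qui revient au même). Votre remarque préliminaire sur la galtourabilité de $L/K$ via le corollaire \ref{cor:tourgaltouextgaltou} est un ajout pertinent qui justifie proprement l'hypothèse du corollaire \ref{cor:tourgalstricte}.
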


\pespace
\begin{proof}
Soit $(F)$ une tour galtourable stricte
$$(F) \qquad K=F_0 \lessgtr \dots \lessgtr F_i \lessgtr \dots \lessgtr F_m=L \:.$$
La proposition \ref{prop:ragtourgaltou} fournit une tour galoisienne
$$(T) \qquad K=F_0=T_0 \unlhd \dots \unlhd T_j \unlhd \dots \unlhd T_n=L$$
qui est un raffinement (nécessairement galoisien par le Fait
\ref{fait:raffinement}.(2)) de $(F)$. La proposition \ref{prop:ragstrict} assure
que la tour stricte associée $(T_<)$ est un raffinement galoisien de $(F_<)=(F)$
(Remarque \ref{rem:tourstricte}.(1)). Et le corollaire \ref{cor:tourgalstricte}
certifie que $(T_<)$ est encore une tour galoisienne.
\end{proof}

\gespace
Notons enfin le fait suivant concernant les tours strictes.

\mespace
\begin{fait}[]
Soient $L/K$ une extension et
$$(F) \qquad K=F_0 < \dots < F_i < \dots < F_m=L $$
une tour stricte de $L/K$. Pour tout raffinement trivial strict
$$(E) \qquad K=E_0 =F_0= E_{j_0} < \dots < E_{j_i}=F_i < \dots < E_{j_m}=F_m=L$$
de $(F)$, on a nécessairement $(E)=(F)$ (Chap. 2, Déf. \& Conv. 1.1.(3)). 
\end{fait}

\pespace
\begin{proof}
La tour $(F)$ est stricte de raffinement trivial $(E)$. Par l'unicité de la
Prop. \& Déf. \ref{propdef:tourstricte}, c'est donc que $(F)=(E_<)$. Or $(E)$
étant une tour stricte, on a $(E_<)=(E)$ d'après la remarque
\ref{rem:tourstricte}.(1). D'où la conclusion.
\end{proof}


\gespace
\gespace
\section{Tour restreinte, tour ratio}
L'obtention de raffinements de tours de corps sera notre objet dans les
chapitres 4, 6 et 7. Nous voulons ici introduire une méthode de fragmentation
qui sera utilisée au chapitre 7 final. La définition suivante précise la notion
intuitive de suppression, à gauche ou à droite, des corps d'une tour donnée.

\mespace
\begin{defn} [] \label{def:resrat}
Soient $L/K$ une extension quelconque et
$$(F) \qquad K=F_0 \leq \dots \leq F_i \leq \dots \leq F_m=L $$ 
une tour de $L/K$. Pour tout indice fixé $r \in \{0, \dots, m \}$, nous appelons
:
\pespace
\noindent (1) "Tour restreinte de $(F)$ à l'indice $r$"\index{Tour!restreinte}, et nous notons
$$(res_r(F))$$
la tour obtenue en supprimant dans $(F)$ les $r$ premiers corps, i.e. $F_0,F_1,
\dots, F_{r-1}$ :\\
$$(res_r(F)) \qquad F_r \leq \dots \leq F_i \leq \dots \leq F_m=L \:.$$
\pespace \noindent (2) "Tour ratio de $(F)$ à l'indice $r$"\index{Tour!ratio}, et nous notons
$$(rat_r(F))$$
la tour obtenue en supprimant dans $(F)$ les $m-r$ derniers corps, i.e. $F_{r+1},
\dots, F_{m}$ :\\
$$(rat_r(F)) \qquad K=F_0 \leq \dots \leq F_i \leq \dots \leq F_r \:.$$
\pespace \noindent (3) "Tour inflatée à $L$ de $(rat_r(F))$"\index{Tour!inflatée}, et nous notons
$$\text{\rm \Large(}inf_L(rat_r(F))\text{\rm \Large)} \quad \text{ou} \quad (inf_{L,r}(F))$$
la tour obtenue en conservant tous les corps $F_i$ de $(rat_r(F))$ sauf le
dernier, que l'on remplace par $L$ :
$$(inf_{L,r}(F)) \qquad K=F_0 \leq \dots \leq F_i \leq \dots \leq F_{r-1} \leq L \:.$$
\pespace \noindent (4) Lorsque la tour $(F)$ est stricte (Chap. 2, Déf. \&
Conv. 1.1.(1)), 
on écrit par abus de notation $(res_{F_r}(F))$ (resp. $(rat_{F_r}(F))$) au lieu
de  $(res_r(F))$ (resp. $(rat_r(F))$).
\end{defn}

\gespace
Pour fixer les idées, donnons l'immédiat

\begin{fait}
Dans les notations de la définition \ref{def:resrat}, et en convenant d'écrire
$(K)$ (resp. $(L)$) la tour réduite au seul corps $K$ (resp. $L$), on a :\\
$$\begin{array}{lcl}
(res_0(F))=(F) \quad	&,	&\qquad (res_m(F))=(L) \;;\\
(rat_0(F))=(K) \quad	&,	&\qquad (rat_m(F))=(F) \;;\\
(inf_{L,0}(F))=(L)	&,	&\qquad (inf_{L,m}(F))=(F) \;.
\end{array}$$
\end{fait}

\gespace
La proposition suivante justifie les définitions qui précèdent.

\mespace
\begin{prop} [] \label{prop:raffinementresrat}
Soient $L/K$ une extension quelconque et
$$(F) \qquad K=F_0 \leq \dots \leq F_i \leq \dots \leq F_m=L $$
une tour de $L/K$. Soit $r$ un entier fixé quelconque dans $\{0, \dots, m \}$.
\pespace
\noindent (1) Pour tout raffinement
$$\begin{array}{r}
(E) \qquad K=E_0 \leq \dots \leq E_{j_0}=F_0=K \leq \dots \leq
E_{j_i}=F_i \leq  \dots \qquad\qquad\\ 
\dots \leq E_{j_r}=F_r \leq \dots \leq E_{j_m}=F_m=L \leq \dots \leq E_n=L 
\end{array} $$
 de $(F)$ (cf. Rem. \ref{rem:raffinement}.(2)), la tour restreinte (resp. ratio)
 à l'indice $j_r$ de $(E)$, i.e. $(res_{j_r}(E))$ {\rm \Large (}resp.
 $(rat_{j_r}(E))${\rm \Large )},  est un  raffinement de $(res_r(F))$ {\rm \Large
 (}resp. $(rat_r(F))${\rm \Large )}.
\pespace
\noindent (2) Réciproquement, pour tout raffinement $(S)$ de $(res_r(F))$ et
tout raffinement $(R)$ de $(rat_r(F))$, il existe un unique raffinement $(E)$ de
$(F)$ tel que l'on ait à la fois $(res_{j_r}(E))=(S)$ et $(rat_{j_r}(E))=(R)$.
\end{prop}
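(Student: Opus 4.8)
The plan is to obtain part (1) by pure reindexing of the given marking, and part (2) by concatenating $(R)$ and $(S)$ along their common end $F_r$.

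For part (1), write $(E)\colon K=E_0\leq\dots\leq E_n=L$ with its marking $0\leq j_0<\dots<j_m\leq n$, $F_i=E_{j_i}$, and keep the distinguished index $j_r$ fixed. First I would truncate the marking at $j_r$: the sub-sequence $j_0<\dots<j_r$ lies in $\{0,\dots,j_r\}$ and still satisfies $F_i=E_{j_i}$ for $0\leq i\leq r$, which is exactly condition (RAF2) of Definition~\ref{def:raffinement} exhibiting $(rat_{j_r}(E))=(E_0\leq\dots\leq E_{j_r})$ as a refinement of $(rat_r(F))=(F_0\leq\dots\leq F_r)$. For the restricted tower I would shift indices by $-j_r$: putting $E'_p:=E_{j_r+p}$ and $F'_p:=F_{r+p}$, the identities $F'_p=E'_{j_{r+p}-j_r}$ and the strictly increasing sequence $0=j_r-j_r<j_{r+1}-j_r<\dots<j_m-j_r\leq n-j_r$ make $(res_{j_r}(E))$ a refinement of $(res_r(F))$. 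This step is routine index bookkeeping.

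The substance is part (2). Given $(R)\colon K=R_0\leq\dots\leq R_s=F_r$ refining $(rat_r(F))$ and $(S)\colon F_r=S_0\leq\dots\leq S_t=L$ refining $(res_r(F))$, I would define $(E)$ to be their concatenation along the shared field $F_r$:
$$(E)\qquad K=R_0\leq\dots\leq R_s=F_r=S_0\leq S_1\leq\dots\leq S_t=L,$$
so that $E_q=R_q$ for $q\leq s$, $E_{s+q}=S_q$ for $q\leq t$, and $n=s+t$. Taking $j_r:=s$, one gets $(rat_s(E))=(R)$ and $(res_s(E))=(S)$ immediately, so only the claim that $(E)$ refines $(F)$ needs proof.

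The one delicate point is to splice the two partial markings into a single strictly increasing sequence that passes through the junction $F_r$ exactly once. Here I would exploit that a refinement preserves the bottom and top fields: since $R_s=F_r$ the marking of $(R)$ may be chosen with last index $k_r=s$ (collapsing any repetition of $F_r$ at the top of $(R)$), and since $S_0=F_r$ the marking of $(S)$ may be chosen with $l_0=0$. Splicing yields
$$k_0<\dots<k_{r-1}<s=s+l_0<s+l_1<\dots<s+l_{m-r},$$
a sequence of $m+1$ indices in $\{0,\dots,s+t\}$ with $F_i=E_{k_i}$ for $i\leq r$ and $F_{r+p}=E_{s+l_p}$ for $p\geq 0$, the two descriptions agreeing at the junction $i=r$ because $E_s=F_r$. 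This gives (RAF2), and (RAF1) follows from $s+t\geq r+(m-r)=m$, so $(E)$ refines $(F)$. For uniqueness, if $(E')$ refines $(F)$ with distinguished index $j'_r$ satisfying $(rat_{j'_r}(E'))=(R)$ and $(res_{j'_r}(E'))=(S)$, then matching the heights of $(rat_{j'_r}(E'))$ and $(R)$ forces $j'_r=s$ and $E'_q=R_q$ for $q\leq s$, while the second equality forces $E'_{s+q}=S_q$; hence $(E')=(E)$. I expect the junction handling — arranging that the two markings meet at one shared index — to be the only real obstacle; existence is a clean concatenation and uniqueness is immediate from matching heights.
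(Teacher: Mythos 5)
Your proof is correct and follows essentially the same route as the paper's: part (1) by truncating/shifting the marking of $(E)$ on either side of the distinguished index $j_r$, and part (2) by concatenating $(R)$ and $(S)$ at the common field $F_r$ and splicing the two markings into one (the paper handles the junction by simply declaring $j_r:=q$ rather than normalizing the two markings to meet there, but this is only a cosmetic difference), with uniqueness read off from the prescribed ratio and restricted towers. No gaps.
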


\pespace
\begin{proof}
(1) Puisque $(E)$ raffine $(F)$, on a par définition (Déf. \&
Conv. \ref{def:raffinement}.(1))
$$0 \leq j_0 < \dots < j_i < \dots < j_r < \dots < j_m \leq n $$
avec
$$ \forall i \in \{ 0, \dots, m \} \quad F_i=E_{j_i} \:.$$
Alors à l'évidence
$$j_r \leq j_r < \dots < j_m \leq n  \qquad \text{(resp.}\quad  0 \leq j_0 < \dots <
  j_r \leq j_r \text{ )}$$
avec
$$ \forall i \in \{ r, \dots, m \} \quad F_i=E_{j_i} \qquad \text{(resp.}\quad 
\forall i \in \{ 0, \dots, r \} \quad F_i=E_{j_i}\text{ )}\:.$$
Ceci établit directement que
$$\begin{array}{r}
(res_{j_r}(E)) \qquad F_r=E_{j_r} \leq \dots \leq E_{j_i}=F_i \leq \dots \leq E_j
\leq  \dots \qquad \qquad \\
\dots \leq E_{j_m}=F_m=L \leq \dots  \leq E_n=L 
\end{array} $$
$$ \begin{array}{r}
\text{{\LARGE (} resp. } \; (rat_{j_r}(E)) \qquad K=E_0=F_0 \leq \dots \leq
E_{j_0}=F_0=K \leq \dots \qquad   \\
\dots \leq E_{j_i}=F_i \leq \dots \leq E_j \leq \dots \leq E_{j_r}=F_r \quad\text{{ \LARGE )}}
\end{array} $$
est un raffinement de
$$(res_r(F)) \qquad F_r \leq \dots \leq F_i \leq \dots \leq F_m=L $$
$$\text{{\Large (} resp. } \;(rat_r(F)) \qquad K=F_0 \leq \dots \leq F_i \leq
\dots \leq F_r \quad\text{{ \Large )}}\:.$$
\pespace
\noindent (2) \'Ecrivons :
$$(S) \qquad F_r=S_0 \leq \dots \leq S_k \leq \dots \leq S_p=L \quad$$
$$(R) \qquad K=R_0 \leq \dots \leq R_l \leq \dots \leq R_q=F_r \:.$$
Comme $(S)$ (resp. $(R)$) raffine $(res_r(F))$ (resp. $(rat_r(F))$), on a par
définition
$$0 \leq k_r < \dots < k_m \leq p \qquad \text{(resp.}\quad  0 \leq l_0 < \dots <
  l_r \leq q \text{ )}$$
avec
$$ \forall i \in \{ r, \dots, m \} \quad F_i=S_{k_i} \qquad \text{(resp.}\quad 
\forall i \in \{ 0, \dots, r \} \quad F_i=R_{l_i}\text{ )}\:.$$
Posons :
$$\forall j \in \{ 0, \dots, q \} \quad E_j:=R_j \;,\quad \forall j \in \{ q+1,
\dots, q+p \} \quad E_j:=S_{j-q} \;;$$
$$ \forall i \in \{ 0, \dots, r-1 \} \quad j_i:=l_i \;, \quad j_r:=q \;, \quad \forall
i \in \{ r+1, \dots, m \} \quad j_i:=q+k_i\:.$$
On a la suite d'indices
$$\begin{array}{r}
0 \leq j_0=l_0 < \dots < j_{r-1}=l_{r-1} < j_r=q < j_{r+1}=q+k_{r+1} < \dots \\
\dots <j_m=q+k_m \leq q+p \;.
\end{array}$$
En effet
$$l_{r-1} < l_r \leq q \quad \implique \quad j_{r-1} < j_r$$
et
$$0 \leq k_r < k_{r+1} \quad \implique \quad j_r=q \leq q+k_r <
q+k_{r+1}=j_{r+1} \;.$$
Tout ceci avec
$$\forall i \in \{ 0, \dots, r-1 \} \quad F_i=R_{l_i}=E_{l_i}=E_{j_i}$$
$$F_r=R_q=E_q=E_{j_r}$$
$$\forall i \in \{ r+1, \dots, m \} \quad F_i=S_{k_i}=E_{q+k_i}=E_{j_i}$$
(ce dernier cas car
$$0 \leq k_r < k_{r+1} \leq k_i \leq p \; \implique \; 1 \leq k_i \leq p \;
\implique \; q+1 \leq q+k_i \leq q+p \;).$$
Finalement, il est prouvé que
$$\forall i \in \{ 0, \dots, m \} \quad F_i=E_{j_i}$$
ce qui exprime que $(E)$ est un raffinement de $(F)$.
\pespace
Montrons enfin que cette tour $(E)$ vérifie bien les conditions souhaitées.
Posons $n:=q+p$. D'après la définition \ref{def:resrat}
$$\begin{array}{rcccccccl}
(res_{j_r}(E))\qquad F_r=	&E_{j_r=q}	&\leq	&E_{q+1}	&\leq
\dots \leq	& E_j	& \leq \dots \leq	&E_n	&=L \\
	&\text{\rotatebox[origin=c]{90}{=}}	&
&\text{\rotatebox[origin=c]{90}{=}}	&
&\text{\rotatebox[origin=c]{90}{=}}	&
&\text{\rotatebox[origin=c]{90}{=}}	&\\
&S_0	&\leq 	&S_1	&\leq \dots \leq	& S_{k=j-q}	& \leq \dots
\leq	&S_p	& \\
\end{array}$$
ce qui exprime que $(res_{j_r}(E))=(S)$. De même, trivialement
$$\begin{array}{rcccccl}
(rat_{j_r}(E)) \qquad K=	&E_0	&\leq \dots \leq	& E_j	& \leq \dots
\leq	&E_{j_r=q}&=F_r \\
	&\text{\rotatebox[origin=c]{90}{=}}	&
&\text{\rotatebox[origin=c]{90}{=}}	&
&\text{\rotatebox[origin=c]{90}{=}}	&\\
&R_0	&\leq \dots \leq	& R_j	& \leq \dots \leq	&R_q	& \\
\end{array}$$
ce qui exprime que  $(rat_{j_r}(E))=(R)$.

Prouvons maintenant l'unicité de l'énoncé du (2). Soit $(E')$ une tour de corps
telle que $(res_{j_r}(E'))=(S)$ et $(rat_{j_r}(E'))=(R)$.
Si l'on désigne par $n'$ la hauteur de $(E')$, la première de ces deux égalités
implique directement que
$$n'=j_r+p=n \;.$$
Il faut prouver que $E'_j=E_j$ pour tout $j \in \{ 0, \dots, m \}$ (cf.
Chap. 2, Déf. \& Conv. 1.1.(3)).\\
- Si $j \in \{ 0, \dots, q=j_r \}$ 
$$\begin{array}{cccccccc}
(rat_{j_r}(E'))	&\qquad	&E'_0	&\leq \dots \leq	& E'_j	& \leq \dots
\leq	&E'_{j_r} &\qquad\qquad\\
\text{\rotatebox[origin=c]{90}{=}}	&	&\text{\rotatebox[origin=c]{90}{=}}	&
&\text{\rotatebox[origin=c]{90}{=}}	&
&\text{\rotatebox[origin=c]{90}{=}}	&\\
(R)	&	&R_0	&\leq \dots \leq	& R_j	& \leq \dots \leq	&R_{j_r}
&\\
\end{array}$$
d'où
$$\forall j \in \{ 0, \dots, q \} \quad E'_j=R_j=E_j \;.$$
- Si $j \in \{ q+1, \dots, q+p=n \}$ 
$$\begin{array}{ccccccccc}
(res_{j_r}(E')) &\qquad 	&E'_{j_r}	&\leq	&E'_{j_r+1}	&\leq \dots \leq
& E'_j	& \leq \dots \leq	&E'_n \\
\text{\rotatebox[origin=c]{90}{=}}	&	&\text{\rotatebox[origin=c]{90}{=}}	&
	&\text{\rotatebox[origin=c]{90}{=}}	&
&\text{\rotatebox[origin=c]{90}{=}}	&
&\text{\rotatebox[origin=c]{90}{=}}	\\
(S)	&	&S_0	&\leq	&S_1	&\leq \dots \leq	& S_{j-q}	& \leq
\dots \leq	&S_p	\\
\end{array}$$
d'où
$$\forall j \in \{ q+1, \dots, n \} \quad E'_j=S_{j-q}=E_j \;.$$
Ceci achève la démonstration de la proposition \ref{prop:raffinementresrat}.
\end{proof}

\gespace
Quelle est maintenant la version de la proposition \ref{prop:raffinementresrat}
pour les raffinements propres ou les raffinements galoisiens ? Nous aurons besoin
du

\pespace

\begin{lem} []\label{lem:raffinementresrat}
Soient $L/K$ une extension quelconque,
$$(F) \qquad K=F_0 \leq F_1 \leq \dots \leq F_i \leq \dots \leq F_m=L$$
une tour de $L/K$, et $r$ un entier fixé dans $\{ 0, \dots, m \}$. Pour tout
raffinement
$$\begin{array}{r}
(E) \qquad K=E_0 \leq \dots \leq E_{j_0}=F_0=K \leq \dots \leq
E_{j_i}=F_i \leq \dots \leq E_j \leq \dots \\ 
\dots \leq E_{j_m}=F_m=L \leq \dots \leq E_n=L 
\end{array}$$
de $(F)$, on a les implications suivantes :
$$\text{\rm (1)} \quad \forall j \in \{ 1, \dots, j_r-1 \} \quad 
(\exists \, i \in \{r+1, \dots, m \} \quad  E_j=F_i ) \quad \implique \quad E_j=F_r
\:. \quad \;$$
$\;\text{\rm (1-1)} \quad \forall j \in \{ 1, \dots, j_r-1 \}$,
$$(\forall i \in \{0, \dots, r \} \quad  E_j \neq F_i) \; \implique \;  (\forall
i \in \{0, \dots, m \} \quad  E_j \neq F_i) \;.$$
$$\begin{array}{rcl}
\text{\rm (1-2)}\qquad 	&\text{\rm \LARGE ( } \exists \,j \in \{ 1, \dots, j_r-1 \} \quad
\forall i \in \{0, \dots, r \} \quad  E_j \neq F_i \: \text{\rm \LARGE )}
&\qquad \qquad \qquad\\
&	\Downarrow &\\
&	\text{\rm \LARGE ( } \exists \,j \in \{ 1, \dots, n-1 \} \quad
\forall i \in \{0, \dots, m \} \quad  E_j \neq F_i \: \text{\rm \LARGE
)}\;.&
\end{array}$$

$$\text{\rm (2)} \quad \forall j \in \{ j_r+1, \dots, n-1 \} \quad 
(\exists \, i \in \{0, \dots, r-1 \} \quad  E_j=F_i ) \quad \implique \quad E_j=F_r
\:.$$
$\;\text{\rm (2-1)} \quad \forall j \in \{ j_r+1, \dots, n-1 \}$,
$$(\forall i \in \{r, \dots, m \} \quad  E_j \neq F_i) \; \implique \;  (\forall
i \in \{0, \dots, m \} \quad  E_j \neq F_i) \;.$$
$$\begin{array}{rcc}
\text{\rm (2-2)}\qquad 	&\text{\rm \LARGE ( } \exists \,j \in \{ j_r+1, \dots, n-1 \} \quad
\forall i \in \{r, \dots, m \} \quad  E_j \neq F_i \: \text{\rm \LARGE )}
&\qquad \qquad \qquad\\
&	\Downarrow &\\
&	\text{\rm \LARGE ( } \exists \,j \in \{ 1, \dots, n-1 \} \quad
\forall i \in \{0, \dots, m \} \quad  E_j \neq F_i \: \text{\rm \LARGE )}\;.&
\end{array}$$
\end{lem}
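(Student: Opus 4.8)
The plan is to exploit nothing more than the monotonicity of the two towers together with the defining data of a refinement, and to treat the block (2), (2-1), (2-2) as the exact mirror image of the block (1), (1-1), (1-2). First I would record the content of the hypothesis that $(E)$ refines $(F)$: by (RAF2) of Definition \& Convention \ref{def:raffinement} there are indices $0 \leq j_0 < j_1 < \dots < j_m \leq n$ with $F_i = E_{j_i}$, while both $\{E_j\}$ and $\{F_i\}$ are increasing. The single order relation $j < j_r$ (resp. $j > j_r$) then forces $E_j \leq E_{j_r} = F_r$ (resp. $E_j \geq E_{j_r} = F_r$), and this squeezing is the only geometric input needed.

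For (1), I would fix $j \in \{1, \dots, j_r-1\}$ and suppose $E_j = F_i$ for some $i \in \{r+1, \dots, m\}$. Monotonicity of $(F)$ gives $F_i \geq F_r$ since $i > r$, so $E_j = F_i \geq F_r$; while $j < j_r$ gives $E_j \leq E_{j_r} = F_r$. The two inequalities squeeze $E_j = F_r$. Statement (1-1) I would then obtain by contraposition from (1): if $E_j \neq F_i$ for every $i \leq r$, then in particular $E_j \neq F_r$, so by (1) no equality $E_j = F_i$ with $i \geq r+1$ can hold either, whence $E_j$ differs from every $F_i$, $0 \leq i \leq m$. Finally (1-2) is immediate from (1-1): the witnessing index $j$ lies in $\{1, \dots, j_r-1\} \subseteq \{1, \dots, n-1\}$ (because $j_r \leq j_m \leq n$), and (1-1) upgrades the relation "$E_j \neq F_i$ for all $i \leq r$" to "$E_j \neq F_i$ for all $i \leq m$", which is exactly the required witness.

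The three statements (2), (2-1), (2-2) would then be proved \emph{mutatis mutandis}, reversing every inequality: for (2) one has $j > j_r \implique E_j \geq F_r$ and $i < r \implique F_i \leq F_r$, giving again $E_j = F_r$; (2-1) follows by contraposition using that $r$ itself lies in $\{r, \dots, m\}$; and (2-2) follows from (2-1) together with $\{j_r+1, \dots, n-1\} \subseteq \{1, \dots, n-1\}$. There is essentially no deep obstacle here, the whole argument being purely order-theoretic. The only points demanding care are the bookkeeping of the index ranges, so that each squeezing inequality is legitimate (in particular checking that $j_r \in \{0, \dots, n\}$ and that the set $\{1, \dots, j_r-1\}$ may be empty, in which case (1) holds vacuously), and respecting the left/right asymmetry of the $res$ and $rat$ constructions when transcribing block (1) into block (2).
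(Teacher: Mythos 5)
Your proposal is correct and follows essentially the same route as the paper: part (1) (and its mirror (2)) by the squeeze $F_r \leq F_{r+1} \leq F_i = E_j \leq E_{j_r-1} \leq E_{j_r} = F_r$ coming from the monotonicity of both towers and the refinement indices, then (1-1) by the contrapositive of (1) (the paper phrases this as a reductio, splitting $\{0,\dots,m\}$ into $\{0,\dots,r\}$ and $\{r+1,\dots,m\}$, which is the same logical move), and (1-2) by reusing the same witness $j$ under the inclusion $\{1,\dots,j_r-1\} \subseteq \{1,\dots,n-1\}$. No gaps.
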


\pespace
\begin{proof}
(1) Par la croissance des suites $\{F_i\}_{ 0 \leq i \leq m  }$ et
$\{E_j\}_{ 0 \leq j \leq  n }$,
$$F_r \leq F_{r+1} \leq F_i=E_j \leq E_{j_r-1} \leq E_{j_r}=F_r \quad \implique
\quad E_j=F_r \:.$$
(1-1) Soit $j \in \{ 1, \dots, j_r-1 \}$ tel que $E_j \neq F_i$ pour tout
$i \in \{0, \dots, r \}$.  
Raisonnons par l'absurde en supposant
$$\exists \,i \in \{0, \dots, m \} \quad  E_j=F_i \;.$$
Comme $\{0, \dots, m \}=\{0, \dots, r \} \cup \,\{r+1, \dots, m \}$,\\
\noindent - ou bien
$$(\exists \, i \in \{r+1, \dots, m \} \quad  E_j=F_i ) \quad \text{et} \quad 
E_j=F_r \quad \text{d'après (1) : contradiction ;} $$
\noindent - ou bien
$$(\exists \, i \in \{0, \dots, r \} \quad  E_j=F_i ) \quad \text{qui contredit
notre hypothèse.}$$
D'où la conclusion voulue.\\
(1-2) Cela découle du (1-1) en prenant le même indice
$$ j \in \{ 1, \dots, j_r-1 \} \subseteq \{ 1, \dots, n-1 \} $$
des deux côtés de l'implication.
$$(2)\qquad \qquad F_r=E_{j_r}\leq E_{j_r+1} \leq E_j=F_i \leq F_{r-1} \leq F_r \quad \implique
\quad E_j=F_r \:. \qquad \qquad $$
(2-1) On raisonne par l'absurde comme dans la démonstration du (1-1) ci-dessus
en considérant $j \in \{j_r+1, \dots, n-1 \}$ tel que $E_j \neq F_i$ pour tout
$i \in \{r, \dots, m \}$. Ne pas avoir le résultat annoncé conduit à une
contradiction, ou bien par le (2) précédent, ou bien directement par hypothèse.\\
(2-2) Il découle quant à lui du (2-1) en prenant le même indice 
$$j \in \{j_r+1, \dots, n-1 \} \subseteq \{ 1, \dots, n-1 \}$$
des deux côtés de l'implication.
\end{proof}

\gespace
Avec la proposition \ref{prop:raffinementresrat} et le lemme
\ref{lem:raffinementresrat} précédent, nous pouvons énoncer la

\pespace
\begin{prop} [] \label{prop:rapresrat}
Soient $L/K$ une extension quelconque et
$$(F) \qquad K=F_0 \leq \dots \leq F_i  \leq \dots \leq F_m=L$$
une tour de $L/K$. Soit $r$ un entier fixé quelconque dans $\{0, \dots, m \}$.\\
(1) Pour tout raffinement propre (Déf. \& Conv.  \ref{def:raffinement}.(2)) $(E)$ de
$(F)$, \\
- ou la tour restreinte $(res_{j_r}(E))$ à l'indice $j_r$ de $(E)$ est un
raffinement propre de $(res_r(F))$ ;\\
- ou la tour ratio $(rat_{j_r}(E))$ à l'indice $j_r$ de $(E)$ est un
raffinement propre de $(rat_r(F))$.
\pespace
\noindent (2) Réciproquement, pour tout raffinement $(S)$ de $(res_r(F))$ et
tout raffinement $(R)$ de $(rat_r(F))$, tels que $(R)$ ou $(S)$ soit un
raffinement propre, l'unique raffinement $(E)$ induit par $(R)$ et $(S)$ (cf. Prop.
\ref{prop:raffinementresrat}.(2)) est un raffinement propre de $(F)$.
\end{prop}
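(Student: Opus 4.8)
Le plan est de tirer tout le contenu combinatoire de la proposition \ref{prop:raffinementresrat} et du lemme \ref{lem:raffinementresrat} déjà établis, la seule chose à faire étant de suivre la condition (RAF3) qui caractérise les raffinements propres (Déf. \& Conv. \ref{def:raffinement}.(2)) à travers les opérations de restriction et de ratio. Je garderais les notations $F_i = E_{j_i}$ de la proposition \ref{prop:raffinementresrat}, de sorte que $E_{j_r} = F_r$.

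Pour le (1), je partirais d'un raffinement propre $(E)$ de $(F)$. La proposition \ref{prop:raffinementresrat}.(1) assure déjà que $(res_{j_r}(E))$ (resp. $(rat_{j_r}(E))$) est un raffinement de $(res_r(F))$ (resp. $(rat_r(F))$) ; il suffit donc d'en rendre l'un propre. Par (RAF3) appliquée à $(E)$, je fixerais un indice $j \in \{1, \dots, n-1\}$ tel que $E_j \neq F_i$ pour tout $i \in \{0, \dots, m\}$. Comme $E_{j_r} = F_r$, on a $j \neq j_r$. Si $j < j_r$, alors $j \in \{1, \dots, j_r - 1\}$ est un indice intérieur de $(rat_{j_r}(E))$ et, $\{0, \dots, r\}$ étant inclus dans $\{0, \dots, m\}$, $E_j$ diffère de tous les $F_i$, $i \in \{0, \dots, r\}$ : c'est exactement (RAF3) pour $(rat_{j_r}(E))$ vu comme raffinement de $(rat_r(F))$. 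Si $j > j_r$, alors $j - j_r$ est un indice intérieur de $(res_{j_r}(E))$ et $E_j$ diffère de tous les $F_i$, $i \in \{r, \dots, m\}$, ce qui fait de $(res_{j_r}(E))$ un raffinement propre de $(res_r(F))$. Dans tous les cas l'une au moins des deux tours est propre.

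Pour le (2), je considérerais l'unique raffinement $(E)$ de $(F)$ induit par $(S)$ et $(R)$ (Prop. \ref{prop:raffinementresrat}.(2)), pour lequel $(rat_{j_r}(E)) = (R)$, $(res_{j_r}(E)) = (S)$ et $E_{j_r} = F_r$. Si $(R)$ est propre, (RAF3) donne un indice $j \in \{1, \dots, j_r - 1\}$ avec $E_j \neq F_i$ pour tout $i \in \{0, \dots, r\}$ ; c'est précisément l'hypothèse de l'implication (1-2) du lemme \ref{lem:raffinementresrat}, dont la conclusion est l'existence d'un $j \in \{1, \dots, n-1\}$ tel que $E_j \neq F_i$ pour tout $i \in \{0, \dots, m\}$, soit (RAF3) pour $(E)$. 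Si c'est $(S)$ qui est propre, le même raisonnement s'applique via l'implication (2-2) du même lemme. Donc $(E)$ est bien un raffinement propre de $(F)$.

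La seule vraie subtilité — déjà encapsulée dans le lemme \ref{lem:raffinementresrat}.(1-1) — apparaît au (2) : un corps nouveau situé dans la partie ratio n'est a priori garanti distinct que des $F_i$ avec $i \leq r$, et il reste à exclure qu'il coïncide avec un $F_i$ pour $i > r$. Cet obstacle se lève par la monotonie des suites : pour $j < j_r$ et $i > r$, on a $E_j \leq E_{j_r} = F_r \leq F_i$, si bien qu'une égalité $E_j = F_i$ forcerait $E_j = F_r$, contredisant $E_j \neq F_r$. Une fois ce point acquis, il ne reste qu'un report d'indices, déjà effectué dans les preuves citées.
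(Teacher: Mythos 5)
Votre démonstration est correcte et suit essentiellement la même démarche que celle du texte : pour le (1), fixer l'indice $j$ donné par (RAF3), constater $j \neq j_r$ puisque $E_{j_r}=F_r$, puis distinguer $j<j_r$ et $j>j_r$ pour rendre propre la tour ratio ou la tour restreinte ; pour le (2), reporter l'indice du corps nouveau de $(R)$ ou $(S)$ dans $(E)$ et conclure par les implications (1-2) et (2-2) du lemme \ref{lem:raffinementresrat}. Votre remarque finale sur la monotonie des suites est exactement l'argument qui fonde le (1) de ce lemme, donc rien ne manque.
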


\mespace
\begin{proof}
(1) On se place dans les notations de la démonstration de la proposition
\ref{prop:raffinementresrat}. Notre hypothèse signifie que (Déf. \& Conv. 
\ref{def:raffinement}.(2))
$$\exists \,j \in \{ 1, \dots, n-1 \} \quad \forall i \in \{0, \dots, m \} \quad
E_j \neq F_i \;;$$
donc $E_j \neq F_i$ pour tout $i$ dans $\{0, \dots, r \}$ ou $\{r, \dots, m
\}$. Or $j\neq j_r$ car sinon $E_j=E_{j_r}=F_r$ : contradiction. Donc\\
- ou bien 
$$\exists \,j \in \{ 1, \dots, j_r-1 \} \quad \forall i \in \{0, \dots, r \}
\quad  E_j \neq F_i $$
ce qui exprime que la tour ratio $(rat_{j_r}(E))$ est un raffinement propre de 
$(rat_r(F))$ ;\\
- ou bien 
$$\exists \,j \in \{ j_r+1, \dots, n-1 \} \quad \forall i \in \{r, \dots, m \}
\quad  E_j \neq F_i $$
ce qui exprime que la tour restreinte $(res_{j_r}(E))$ est un raffinement propre de 
$(res_r(F))$.

\pespace
\noindent (2) Les notations sont celles de la démonstration du (2) de la
proposition \ref{prop:raffinementresrat} :
$$(S) \qquad F_r=S_0 \leq \dots \leq S_k \leq \dots \leq S_p=L$$
$$(R) \qquad K=R_0 \leq \dots \leq R_l \leq \dots \leq R_q=F_r \:.$$
- Dire que (S) est un raffinement propre de $(res_r(F))$ signifie
$$\exists \, k \in \{1, \dots , p-1 \} \quad \forall i \in \{r, \dots, m \} \quad
S_k \neq F_i \;.$$
Donc pour $j:=q+k$,
$$\exists \, j \in \{q+1=j_r+1, \dots, q+p-1=n-1 \} \quad \forall i \in \{r, \dots, m \} \quad
S_{j-q} \neq F_i \;.$$
Mais, pour un tel $j$, on a posé $E_j:=S_{j-q}$. Ainsi
$$\exists \, j \in \{j_r+1, \dots, n-1 \} \quad \forall i \in \{r, \dots, m \} \quad
E_j \neq F_i \;.$$
D'après le (2-2) du lemme \ref{lem:raffinementresrat}, on en déduit que
$$\exists \, j \in \{1, \dots, n-1 \} \quad \forall i \in \{0, \dots, m \} \quad
E_j \neq F_i $$
ce qui exprime exactement que $(E)$ est un raffinement propre de $(F)$.
\pespace
\noindent - De la même façon, dire que $(R)$ est un raffinement propre de
$(rat_r(F))$ signifie
$$\exists \, l \in \{1, \dots , q-1 \} \quad \forall i \in \{0, \dots, r \} \quad
R_l \neq F_i \;.$$
Donc pour $j:=l$,
$$\exists \, j \in \{1, \dots , q-1=j_r-1 \} \quad \forall i \in \{0, \dots, r \} \quad
E_j=R_j \neq F_i \;.$$
D'après le (1-2) du lemme \ref{lem:raffinementresrat}, on en déduit que
$$\exists \, j \in \{1, \dots , n-1 \} \quad \forall i \in \{0, \dots, m \} \quad
E_j \neq F_i $$
ce qui exprime encore une fois que $(E)$ est un raffinement propre de $(F)$. 
\end{proof}

\gespace
Voici l'analogue galoisien de la proposition \ref{prop:raffinementresrat}.

\pespace
\begin{prop}[]\label{prop:ragresrat}
Dans les notations respectives de la proposition \ref{prop:raffinementresrat}:\\
(1) Pour tout raffinement galoisien $(E)$ de $(F)$, la tour restreinte $(res_{j_r}(E))$
{\rm \Large (}resp. la tour ratio $(rat_{j_r}(E))${\rm \Large )} est un raffinement galoisien de
$(res_r(F))$ {\rm \Large (}resp. $(rat_r(F))${\rm \Large )}.
\pespace
\noindent (2) Réciproquement, pour tout raffinement galoisien $(S)$ de
$(res_r(F))$ et tout raffinement galoisien $(R)$ de $(rat_r(F))$, il existe un
unique raffinement galoisien $(E)$ de $(F)$ tel que l'on ait à la fois $(res_{j_r}(E))=(S)$ et
$(rat_{j_r}(E))=(R)$. Ce raffinement est celui de la proposition
\ref{prop:raffinementresrat}.
\end{prop}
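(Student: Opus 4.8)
Mon plan est de superposer \`a la proposition \ref{prop:raffinementresrat} la seule condition (RAFG) de la d\'efinition \ref{def:raffinementsuite}.(4), puisque l'essentiel du travail combinatoire sur les indices a d\'ej\`a \'et\'e accompli l\`a ainsi que dans le lemme \ref{lem:raffinementresrat}. Autrement dit, aucun argument galoisien vraiment nouveau ne sera requis : il s'agira uniquement de propager (RAFG) \`a travers les constructions d\'ej\`a \'etablies.

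Pour le (1), je partirai d'un raffinement galoisien $(E)$ de $(F)$. La proposition \ref{prop:raffinementresrat}.(1) garantit imm\'ediatement que $(res_{j_r}(E))$ (resp. $(rat_{j_r}(E))$) est un raffinement de $(res_r(F))$ (resp. $(rat_r(F))$) ; il ne restera donc qu'\`a v\'erifier (RAFG) pour chacune de ces deux sous-tours. Pour la tour ratio, je fixerai un indice $j \in \{1, \dots, j_r-1\}$ tel que $E_j$ soit distinct de tous les corps de $(rat_r(F))$, c'est-\`a-dire $E_j \neq F_i$ pour tout $i \in \{0, \dots, r\}$ ; le (1-1) du lemme \ref{lem:raffinementresrat} assurera alors que $E_j \neq F_i$ pour tout $i \in \{0, \dots, m\}$, et la condition (RAFG) du raffinement galoisien $(E)$ de $(F)$ fournira $E_{j-1} \unlhd E_j$. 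Sym\'etriquement, pour la tour restreinte, je traiterai un indice $j \in \{j_r+1, \dots, n-1\}$ en invoquant cette fois le (2-1) du m\^eme lemme.

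Pour le (2), je reprendrai l'unique raffinement $(E)$ de $(F)$ construit au (2) de la proposition \ref{prop:raffinementresrat} \`a partir de $(S)$ et $(R)$, avec $E_j=R_j$ pour $j \in \{0,\dots,q\}$, $E_j=S_{j-q}$ pour $j \in \{q+1,\dots,q+p\}$ et $q=j_r$ ; il suffira de montrer qu'il v\'erifie (RAFG). Je fixerai donc $j \in \{1,\dots,n-1\}$ avec $E_j \neq F_i$ pour tout $i \in \{0,\dots,m\}$, et je distinguerai trois cas selon la position de $j$ par rapport \`a $q$. Si $j<q$, alors $E_j=R_j$ est distinct de tous les $F_i$, $i \in \{0,\dots,r\}$, donc la condition (RAFG) de $(R)$ donnera $R_{j-1} \unlhd R_j$, soit $E_{j-1} \unlhd E_j$. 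Le cas $j=q$ est exclu, puisqu'il forcerait $E_j=E_{j_r}=F_r$, contredisant l'hypoth\`ese $E_j \neq F_r$. Enfin si $j>q$, alors $E_j=S_{j-q}$ avec $j-q \in \{1,\dots,p-1\}$ et $S_{j-q}$ est distinct de tous les $F_i$, $i \in \{r,\dots,m\}$, de sorte que la condition (RAFG) de $(S)$ donnera $S_{j-q-1} \unlhd S_{j-q}$, soit $E_{j-1} \unlhd E_j$ (le raccord en $j=q+1$ \'etant assur\'e par $E_q=R_q=F_r=S_0$). Dans tous les cas $E_{j-1} \unlhd E_j$, donc $(E)$ est bien un raffinement galoisien, l'unicit\'e \'etant celle de la proposition \ref{prop:raffinementresrat}.

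Toute la structure provenant de la proposition \ref{prop:raffinementresrat} et du lemme \ref{lem:raffinementresrat}, le point d\'elicat sera le contr\^ole des indices : il faudra s'assurer que, dans chaque sous-tour, l'intervalle sur lequel porte (RAFG) co\"incide exactement avec celui du lemme (respectivement $\{1,\dots,j_r-1\}$ et $\{j_r+1,\dots,n-1\}$), et que l'implication selon laquelle un corps distinct de tous les corps de la sous-tour est distinct de tous les corps de $(F)$ y reste licite. C'est pr\'ecis\'ement le r\^ole des parties (1-1) et (2-1) du lemme \ref{lem:raffinementresrat}, taill\'ees sur mesure pour cette \'etape, qui transforment la condition locale \`a la sous-tour en la condition globale \`a $(F)$ exig\'ee par (RAFG).
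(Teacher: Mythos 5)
Votre démonstration est correcte et suit exactement la même démarche que celle du texte : pour le (1), propagation de (RAFG) via les parties (1-1) et (2-1) du lemme \ref{lem:raffinementresrat}, et pour le (2), vérification de (RAFG) sur le raffinement construit à la proposition \ref{prop:raffinementresrat}.(2) par disjonction de cas selon la position de $j$ par rapport à $q=j_r$ (le cas $j=q$ étant exclu car $E_{j_r}=F_r$). Votre remarque explicite sur le raccord en $j=q+1$ via $E_q=R_q=F_r=S_0$ est un petit plus de clarté par rapport au texte, qui le laisse implicite.
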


\pespace
\begin{proof}
(1) Par définition d'un raffinement galoisien (Déf.
\ref{def:raffinementsuite}.(4)), la tour $(E)$ vérifie la condition
$$\text{(RAFG)} \quad  \forall j \in \{1, \dots, n-1\} \; \text{ \Large ( }
\forall i \in \{0, \dots, m\} \quad E_j\neq F_i \text{ \Large ) } \; \implique
\; E_{j-1} \unlhd E_j   \:.$$
- Considérons $j \in \{1, \dots, j_r-1\}$ tel que $E_j \neq F_i$ pour tout
$i \in \{0, \dots, r\}$.
D'après le (1-1) du lemme \ref{lem:raffinementresrat},
$$\forall i \in \{0, \dots, m\} \quad E_j\neq F_i \;,$$
d'où $E_{j-1} \unlhd E_j$ par la condition (RAFG) ci-dessus. Ceci prouve que
$(rat_{j_r}(E))$ est un raffinement galoisien de $(rat_r(F))$.
\pespace
\noindent - Considérons $j \in \{j_r+1, \dots, n-1\}$ tel que $E_j \neq F_i$ pour
tout $i \in \{r, \dots, m\}$.
D'après le (2-1) du lemme \ref{lem:raffinementresrat},
$$\forall i \in \{0, \dots, m\} \quad E_j\neq F_i \;,$$
d'où $E_{j-1} \unlhd E_j$ par la condition (RAFG). Ceci prouve que
$(res_{j_r}(E))$ est un raffinement galoisien de $(res_r(F))$.
\pespace
\noindent (2) Les notations sont celles du (2) de la proposition
\ref{prop:raffinementresrat} :
$$(S) \qquad F_r=S_0 \leq \dots \leq S_k \leq \dots \leq S_p=L \quad $$
$$(R) \qquad K=R_0 \leq \dots \leq R_l \leq \dots \leq R_q=F_r \:.$$
Prouvons que le raffinement $(E)$ qui y est construit est nécessairement
galoisien. Soit $j \in \{1, \dots, n-1\}=\{1, \dots, j_r-1\}\cup \{j_r\}
\cup\{j_r+1, \dots, n-1\}$ tel que $E_j \neq F_i$ pour tout $i \in \{0, \dots,
m\}$. On ne peut pas avoir $j=j_r$ puisque $E_{j_r}=F_r$. Par conséquent :\\
- ou bien $j \in \{1, \dots, j_r-1=q-1\}$ et l'on a en particulier
$$\forall i \in \{0, \dots, r\} \quad R_j=E_j\neq F_i \;.$$
La condition (RAFG) vérifiée par $(R)$ assure alors que
$$E_{j-1}=R_{j-1} \unlhd R_j=E_j \;;$$
\pespace
\noindent - ou bien $j \in \{j_r+1=q+1, \dots, n-1=q+p-1\}$ et pour $k:=j-q$
$$\forall i \in \{r, \dots, m\} \quad S_k=E_j\neq F_i \;.$$
La condition (RAFG) vérifiée par $(S)$ assure alors que
$$E_{j-1}=S_{k-1} \unlhd S_k=E_j \;.$$

Dans tous les cas donc, $E_{j-1}\unlhd E_j$ ce qui exprime que $(E)$ est un
raffinement galoisien de $(F)$.
\end{proof}

\gespace
Les propositions \ref{prop:rapresrat} et \ref{prop:ragresrat} se combinent en la
proposition suivante pour les raffinements galoisiens propres.

\pespace
\begin{prop}[] \label{prop:ragpresrat}
Soient $L/K$ une extension quelconque et
$$(F) \qquad K=F_0  \leq \dots \leq F_i \leq \dots \leq F_m=L$$
une tour de $L/K$. Soit $r$ un entier fixé quelconque dans $\{0, \dots, m \}$.\\
(1) Pour tout raffinement galoisien propre $(E)$ de $(F)$ (Déf. \& Conv. 
\ref{def:raffinement}.(2)) \\
- ou la tour restreinte $(res_{j_r}(E))$ à l'indice $j_r$ de $(E)$ est un
raffinement galoisien propre de $(res_r(F))$ ;\\
- ou la tour ratio $(rat_{j_r}(E))$ à l'indice $j_r$ de $(E)$ est un
raffinement galoisien propre de $(rat_r(F))$.
\pespace
\noindent (2) Réciproquement, pour tout raffinement galoisien $(S)$ de $(res_r(F))$ et
tout raffinement galoisien $(R)$ de $(rat_r(F))$, tels que $(R)$ ou $(S)$ soit un
raffinement propre, l'unique raffinement $(E)$ induit par $(R)$ et $(S)$ (cf. Prop.
\ref{prop:raffinementresrat}.(2)) est un raffinement galoisien propre de $(F)$.
\end{prop}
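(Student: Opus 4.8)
The plan is to read a \emph{raffinement galoisien propre} as nothing more than a refinement satisfying simultaneously the properness condition (RAF3) and the Galois condition (RAFG), and to exploit the fact that the refinement $(E)$ occurring in both parts is, in each case, the single tower furnished by Proposition \ref{prop:raffinementresrat}.(2). The two attributes can then be obtained separately — properness from Proposition \ref{prop:rapresrat}, Galois character from Proposition \ref{prop:ragresrat} — and simply read off together, so that the whole statement reduces to superposing the two earlier results on one and the same $(E)$.

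For assertion (1), I would start from a Galois proper refinement $(E)$ of $(F)$. Using that $(E)$ is proper, Proposition \ref{prop:rapresrat}.(1) yields the alternative: either $(res_{j_r}(E))$ is a proper refinement of $(res_r(F))$, or $(rat_{j_r}(E))$ is a proper refinement of $(rat_r(F))$. Using that $(E)$ is also Galois, Proposition \ref{prop:ragresrat}.(1) guarantees at the same time that $(res_{j_r}(E))$ is a Galois refinement of $(res_r(F))$ and that $(rat_{j_r}(E))$ is a Galois refinement of $(rat_r(F))$. Superposing these two conclusions on whichever branch of the alternative holds gives, in the first case, that $(res_{j_r}(E))$ is both proper and Galois, hence a Galois proper refinement of $(res_r(F))$, and in the second case that $(rat_{j_r}(E))$ is a Galois proper refinement of $(rat_r(F))$. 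This is exactly the dichotomy asserted.

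For assertion (2), I would take $(S)$ a Galois refinement of $(res_r(F))$ and $(R)$ a Galois refinement of $(rat_r(F))$, one of which is proper. Proposition \ref{prop:ragresrat}.(2) then produces a unique Galois refinement $(E)$ of $(F)$ with $(res_{j_r}(E))=(S)$ and $(rat_{j_r}(E))=(R)$, and it records that this $(E)$ is precisely the refinement built in Proposition \ref{prop:raffinementresrat}.(2). Since a Galois refinement is in particular a refinement, $(S)$ and $(R)$ also satisfy the hypotheses of Proposition \ref{prop:rapresrat}.(2) (two refinements, one of them proper); that proposition applies to the very same induced tower of Proposition \ref{prop:raffinementresrat}.(2) and shows it is proper. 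Being at once Galois and proper, $(E)$ is a Galois proper refinement of $(F)$, as required.

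The only delicate point, and the one I would watch, is the identification of $(E)$ across the two auxiliary propositions: both Proposition \ref{prop:rapresrat}.(2) and Proposition \ref{prop:ragresrat}.(2) speak of "the unique refinement induced by $(R)$ and $(S)$" of Proposition \ref{prop:raffinementresrat}.(2). Because that construction is unique, the properness furnished by the former and the Galois character furnished by the latter necessarily attach to one and the same tower, so no compatibility has to be verified and there is no genuine obstacle beyond the bookkeeping of the two previous statements.
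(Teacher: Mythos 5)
Votre démonstration est correcte et suit essentiellement la même voie que celle du texte : combiner la proposition \ref{prop:ragresrat} (caractère galoisien) et la proposition \ref{prop:rapresrat} (caractère propre) sur la même tour, l'identification étant garantie par l'unicité du raffinement induit de la proposition \ref{prop:raffinementresrat}.(2). Rien à ajouter.
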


\pespace
\begin{proof}
(1) Par le (1) de la proposition \ref{prop:ragresrat}, $(res_{j_r}(E))$ et
$(rat_{j_r}(E))$ sont des raffinements galoisiens. Le (1) de la proposition
\ref{prop:rapresrat} assure que l'un ou l'autre est un raffinement propre. C'est
donc que $(res_{j_r}(E))$ ou $(rat_{j_r}(E))$ est un raffinement galoisien
propre.
\pespace
\noindent (2) Par le (2) de la proposition \ref{prop:ragresrat}, $(E)$ est un raffinement
galoisien de $(F)$. Le (2) de la proposition \ref{prop:rapresrat} assure que
$(E)$ est un raffinement propre de $(F)$. Autrement dit, $(E)$ est un
raffinement galoisien propre de $(F)$ (cf. scholie de la Déf.
\ref{def:raffinementsuite}).
\end{proof}

\addtocontents{toc}{\mespace\pespace}
\chapter{PREMIERS TH\'EOR\`EMES DE DISSOCIATION}
\addtocontents{lof}{\mespace\pespace}
\addtocontents{lof}{\noindent Chapitre \thechapter}
\addtocontents{toc}{\pespace}

\gespace
En théorie des groupes, on connaît les deux célèbres théorèmes :
\pespace
{\bf Théorème de Schreier} (\cite{Schr}, \cite{Hu})\index{Théorème!de Schreier}\\
{\it Deux suites normales d'un même groupe admettent des raffinements
équivalents.}

\mespace
{\bf Théorème de Jordan-Hölder} (\cite{Jo}, \cite{Hö},
\cite{Hu})\index{Théorème!de Jordan-Hölder}\\
{\it Soit $G$ un groupe admettant une suite de composition.\\
(i) Toute suite normale stricte de $G$ admet un raffinement qui est une suite de
composition de $G$.\\
(ii) Deux suites de composition de $G$ sont équivalentes.}

\gespace
En lieu et place d'un groupe $G$, nous considérons ici une extension galtourable
$L/K$. Nous remplaçons les suites normales de $G$ et ses suites de composition
par les tours galoisiennes de $L/K$ et ses "tours de composition". Notre but
dans ce chapitre 4 est d'établir un analogue galoisien aux théorèmes de Schreier
et de Jordan-Hölder. Pour cela, nous dévissons, nous dissocions les tours
galoisiennes de $L/K$ autant que nécessaire de façon à obtenir des "tours
équivalentes" (de marches à groupes de Galois isomorphes à l'ordre près) qui
n'admettent aucun raffinement galoisien propre. Nous appelons "théorèmes de
dissociation" les théorèmes ainsi obtenus, et nous les généraliserons dans le
chapitre 7 final à toutes les extensions algébriques finies.\index{Dissociation}

\gespace
\gespace
\section[Tours de composition galoisiennes]{Tours de composition galoisiennes,\newline tours galoisiennes
équivalentes}
\gespace
\begin{defn} \label{def:tourgalcomp} 
Soient $L/K$ une extension galtourable et
$$(F) \qquad K=F_0 \unlhd \dots \unlhd F_i \unlhd \dots \unlhd F_m=L$$
une tour galoisienne de $L/K$.
\pespace
\noindent
(1) Nous disons que $(F)$ est "une tour de composition galoisienne de
$L/K$"\index{Tour!de composition} si
et seulement si elle est stricte et n'admet aucun raffinement galoisien propre.
\pespace
\noindent
(2) Soit
$$\begin{array}{c}
(E) \qquad K=E_0 \unlhd \dots \unlhd E_j \unlhd \dots
\unlhd E_n=L\\
\end{array}\\$$
une autre tour galoisienne de $L/K$. Nous disons que $(E)$ et $(F)$ sont
"équivalentes"\index{Tour!equivalente@équivalente}, et nous notons $(E) \sim
(F)$, si et seulement si elles ont même
nombre de marches : $m=n$, et si, à permutation près, les groupes de Galois de
ces marches sont isomorphes (topologiquement en degrés infinis) :
$$\exists \sigma \in S_m \quad \forall i \in \{1, \dots, m=n \} \quad
Gal(F_i/F_{i-1}) \isomto Gal(E_{\sigma(i)}/E_{\sigma(i)-1}) \;.$$
\end{defn}

\mespace
Un cas très particulier de cette définition est fourni par le

\mespace
\begin{fait}  \label{fait:tourcompositiontriviale}
L'extension triviale $L=K$ admet une tour de composition galoisienne et une seule, celle à zéro
marche :
$$(C) \qquad K=F_0=L \;.$$
\end{fait}

\mespace
\begin{proof}
D'après le (1) de la définition \& convention 1.1 du chapitre 2, 
la tour triviale $(C)$ est stricte. Raisonnons par l'absurde en supposant que
$(C)$ admette un raffinement galoisien propre
$$(E) \qquad K=E_0 \unlhd \dots \unlhd E_{j_0}=F_0 \unlhd \dots \unlhd E_j
\unlhd \dots \unlhd E_n=L \;.$$
Par la condition (RAF3) de la définition \& convention 1.1 du
chapitre 3, 
$$\exists j \in \{1,\dots,n-1\} \quad \forall i \in \{0,\dots,m\} \quad E_j \neq
F_i \;.$$
En particulier $E_j \neq F_0 =K$. Dès lors, par la croissance de $(E)$, 
$$K=E_0 \leq E_j \leq E_n = L=K \;;$$
d'où $E_j=K$ : contradiction. Il est donc établi que $(C)$ est une tour de
composition galoisienne. Montrons que $(C)$ est l'unique tour de composition de $L/K$. Soit
$(C')$ une tour de composition galoisienne de $L/K$ de hauteur $m'$. Comme
$[L:K]=1$, on déduit du Fait 1.3 du chapitre 2 
que nécessairement $m'=0$.
Donc $(C')$ est la tour triviale et par conséquent $(C')=(C)$.
\end{proof}

\mespace
La définition \ref{def:tourgalcomp} précédente sera étendue au chapitre 7 aux
tours quelconques d'une extension finie via la notion de "tour d'élévation". En
particulier, il ne suffira pas d'y enlever les qualificatifs "galoisiens".

\gespace
En théorie des groupes, on connaît la 

\mespace
{\bf Proposition.}
{\it Pour qu'une suite normale soit de composition, il faut et il suffit que
chacun de ses facteurs soit simple.}

\mespace
Voici son analogue galoisien :

\mespace
\begin{prop} \label{prop:compssigalgalsimple}
Soit $L/K$ une extension galtourable quelconque. Pour qu'une tour galoisienne de
$L/K$ soit de composition (cf. Déf. \ref{def:tourgalcomp}.(1)), il faut et il
suffit que chacune de ses marches soit galsimple (Chap.2, Déf. 1.6.(2)). 
\end{prop}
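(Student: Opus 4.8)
The plan is to prove the two implications of the equivalence separately, the decisive tool being Proposition \ref{prop:ragtourgal}: a Galois refinement of a Galois tower is again a Galois tower, so that \emph{all} of its steps are Galois, not merely the freshly created ones. First I would record a preliminary common to both directions: a galsimple extension is by definition non-trivial (Déf. \ref{def:galsimple}.(2)), so a Galois tower whose steps are all galsimple is automatically strict; conversely a composition tower is strict by Définition \ref{def:tourgalcomp}.(1). Hence strictness is available throughout, and each $F_{i+1} \neq F_i$.

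For the necessary condition, I argue by contraposition. Suppose $(F)$ is a composition tower but some step $F_{i+1}/F_i$ is not galsimple. Since $F_{i+1} \neq F_i$, the failure of galsimplicity yields an intermediate field $F$ with $F_i < F < F_{i+1}$ and $F/F_i$ Galois. Inserting $F$ produces a new tower whose step $F_{i+1}/F$ is Galois, being the upper part of the Galois extension $F_{i+1}/F_i$, while $F/F_i$ is Galois by construction; thus the enlarged tower is a Galois tower. As $F$ lies strictly between $F_i$ and $F_{i+1}$ it equals none of the $F_k$, so condition (RAF3) holds and the refinement is proper; being a Galois tower it is a proper Galois refinement by Fait \ref{fait:raffinement}.(2), contradicting the definition of a composition tower.

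For the sufficient condition I assume every step galsimple, hence $(F)$ strict, and must show it admits no proper Galois refinement. Suppose for contradiction that $(E)$, $K=E_0 \unlhd \dots \unlhd E_n = L$, is a proper Galois refinement, with indices $j_0 < \dots < j_m$ and $F_i = E_{j_i}$. By Proposition \ref{prop:ragtourgal}, $(E)$ is a Galois tower, so every step $E_k/E_{k-1}$ is Galois. Condition (RAF3) supplies an index $j$ with $E_j$ distinct from all the $F_k$; such a $j$ cannot lie below $j_0$ nor above $j_m$ (this would force $E_j = K = F_0$ or $E_j = L = F_m$), so $j_i < j < j_{i+1}$ for some $i$, whence $F_i < E_j < F_{i+1}$.

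The delicate point, and the step I expect to be the main obstacle, is to extract from $E_j$ a genuine intermediate field that is Galois over $F_i$: a chain of Galois steps from $F_i$ up to $E_j$ does not make $E_j/F_i$ Galois, normality not being transitive. I would resolve this by descending to the first genuinely new field, letting $k$ be the least index $> j_i$ with $E_k \neq F_i$. Then $E_{k-1} = F_i$, so the single step $E_k/E_{k-1} = E_k/F_i$ is Galois; and since $k \leq j$ one has $E_k \leq E_j < F_{i+1}$ together with $E_k > F_i$, giving $F_i < E_k < F_{i+1}$. This exhibits a proper intermediate field Galois over $F_i$ inside $F_{i+1}/F_i$, contradicting its galsimplicity and thereby completing the proof.
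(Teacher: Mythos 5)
Your proof is correct and follows essentially the same route as the paper's: for necessity you insert a proper Galois intermediate field to manufacture a proper Galois refinement, and for sufficiency you invoke Proposition \ref{prop:ragtourgal} and then descend to the first genuinely new field $E_k$ above some $E_{j_i}=F_i$, so that the single Galois step $F_i \unlhd E_k$ with $F_i < E_k < F_{i+1}$ contradicts galsimplicity. The only cosmetic difference is that the paper isolates the trivial case $L=K$ at the outset and takes the global minimum of the set of new indices rather than a local one; both variants are sound.
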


\mespace
\begin{proof}
Montrons d'abord que l'équivalence est vraie pour l'extension triviale $L=K$. Par
le Fait \ref{fait:tourcompositiontriviale}, celle-ci admet une tour de
composition et une seule, celle à zéro marche :
$$(C) \qquad K=F_0=L \;.$$
Soit donc $(F)$ une tour de composition galoisienne de $L=K$. Comme $(F)=(C)$, la
galsimplicité des marches de $(F)$ est vérifiée puisqu'il n'y en a pas.
\pespace
Inversement, soit $(F)$ une tour de $L/K$ dont toutes les marches sont
galsimples. Si $(F)$ était de hauteur non nulle, elle admettrait donc une marche
stricte : contradiction.
Finalement $(F)$ est de hauteur nulle ; c'est la tour de composition galoisienne $(C)$.

\pespace
Supposons maintenant l'extension $L/K$ non triviale, et soit 
$$(F) \qquad K=F_0 \unlhd \dots \unlhd F_i \unlhd F_{i+1} \unlhd \dots \unlhd
F_m=L$$
une tour galoisienne de $L/K$. Supposons que $(F)$ soit une tour de composition
galoisienne et que l'une de ses marches $F_{i_0+1} \gal F_{i_0}$ ne soit pas galsimple.
Par définition de la galsimplicité et le fait que la marche est galoisienne, il existe un corps $F$ tel que
$$ F_{i_0} \lhd F \lhd F_{i_0+1} \;.$$
Soit alors $(E)$ la tour définie par
$$\left\{\begin{array}{cl}
\forall j \in \{0, \dots, i_0\} \quad	&E_j=F_j \\
					&E_{i_0+1}=F\\
\forall j \in \{i_0+2, \dots, m+1 \} \quad	&E_j=F_{j-1} \, \\
\end{array}\right.$$
i.e.
$$(E) \quad K=E_0 \unlhd \dots \unlhd E_{i_0}=F_{i_0} \unlhd E_{i_0+1}=F 
\unlhd E_{i_0+2}=F_{i_0+1} \unlhd \dots \unlhd E_{m+1}=F_m=L \;.$$
Comme elle est galoisienne, cette tour $(E)$ est un raffinement galoisien de
$(F)$ (cf. Chap. 3, Fait 1.5.(2)). 
De plus
$$\left\{\begin{array}{ccc}
\forall i \in \{0, \dots , i_0\}	&(F_{i} \leq F_{i_0} < F)
&\implique \quad F_i \neq F \\
\forall i \in \{i_0+1, \dots , m\}	&(F < F_{i_0+1} \leq F_{i})
&\;\;\implique \quad F_i \neq F \;. \\
\end{array}\right.$$
Donc $(E)$ est un raffinement propre de $(F)$.
On a ainsi construit un raffinement galoisien propre de $(F)$ : contradiction,
puisque $(F)$ est de composition.

\pespace
Inversement, supposons que toutes les marches de $(F)$ soient galsimples. Elles
sont donc en particulier toutes non triviales, et la tour $(F)$ est stricte.
Raisonnons par l'absurde en supposant l'existence d'un raffinement galoisien
propre $(E)$ de $(F)$. D'après la proposition 1.7 du chapitre 3, 
c'est une tour galoisienne :
$$(E) \qquad K=E_0=F_0 \unlhd \dots \unlhd E_{j_i}=F_i \unlhd \dots \unlhd E_{j}
\unlhd \dots \unlhd E_n=F_m=L \;.$$
De plus, par la définition et convention 1.1.(2) du chapitre 3, l'ensemble
$$\{j \in \{1, \dots , n-1\} \quad | \quad \forall i \in \{0,
\dots,m\} \quad E_j \neq F_i  \} $$
est non vide. Notons $l$ son plus petit élément. Deux cas :
\begin{itemize}
\item ou bien $l-1=0$, $E_{l-1}=F_0$ ;
\item ou bien $l-1 \geq 1$ et par minimalité de $l$, il existe $k$ dans $\{0,
\dots,m\}$ tel que  $E_{l-1}=F_k$.
\end{itemize}
Donc dans tous les cas
$$ \exists k \in \{0, \dots,m\} \quad E_{l-1} = F_k \;.$$
On n'a pas $E_{l-1}=F_m$ car alors
$$F_m=E_{l-1} \leq E_l \leq L = F_m \quad \implique \quad E_l=F_m 
\text{ : contradiction.}$$
Donc
$$ \exists k \in \{0, \dots,m-1\} \quad E_{l-1} = F_k \;.$$
Rappelons que $E_{j_{k+1}}=F_{k+1}$, et minorons à partir de là l'indice
$j_{k+1}$ : 
\pespace
\begin{itemize}
\item si $j_{k+1} \leq l-1$, 
$$F_{k+1}=E_{j_{k+1}} \leq E_{l-1}=F_k < F_{k+1} \text{ (car }(F)\text{ est
stricte) : absurde ;}$$
\item si $j_{k+1}=l$, $F_{k+1}=E_l$ : contradiction par définition de
$l$. 
\end{itemize}
\pespace
On a donc nécessairement $j_{k+1} \geq l+1$, de sorte que
$$F_k=E_{l-1} < E_l \leq E_{l+1} \leq \dots \leq E_{j_{k+1}}=F_{k+1} \;.$$
Retenons en particulier que
$$F_k=E_{l-1} < E_l < F_{k+1} \;.$$
Mais la marche $E_l / E_{l-1}$ est galoisienne ($(E)$ est une tour galoisienne)
; donc l'extension $F_{k+1}/F_k$ n'est pas galsimple : contradiction.
\end{proof}

\gespace
Le corollaire suivant prouve la compatibilité de la notion de tour de
composition avec l'équivalence des tours galoisiennes.

\mespace
\begin{cor} \label{cor:equivtourcomp}
Soit $L/K$ une extension galtourable et $(T)$, $(T')$ deux tours galoisiennes de
$L/K$. On suppose que $(T)$ et $(T')$ sont équivalentes (cf. Déf.
\ref{def:tourgalcomp}). Alors $(T)$ est de composition si et seulement si $(T')$
est de composition.
\end{cor}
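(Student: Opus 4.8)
Le plan est de s'appuyer enti\`erement sur la proposition \ref{prop:compssigalgalsimple} : il s'agit de traduire la propri\'et\'e d'\^etre une tour de composition en une propri\'et\'e portant uniquement sur les groupes de Galois des marches, puis d'invoquer l'\'equivalence $(T) \sim (T')$ qui porte pr\'ecis\'ement sur ces groupes.

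D'abord, je rappellerais que, par la proposition \ref{prop:compssigalgalsimple}, la tour $(T)$ (resp. $(T')$) est une tour de composition galoisienne si et seulement si chacune de ses marches est galsimple. Il resterait alors \`a montrer que la galsimplicit\'e d'une marche galoisienne ne d\'epend que du type d'isomorphisme (topologique en degr\'e infini) de son groupe de Galois. Pour une extension galoisienne $E/F$, les corps interm\'ediaires $M$ tels que $M/F$ soit galoisienne correspondent bijectivement, via le th\'eor\`eme de Krull du chapitre 1, aux sous-groupes ferm\'es normaux de $Gal(E/F)$. Par cons\'equent, $E/F$ est galsimple si et seulement si $Gal(E/F) \neq 1$ et n'admet aucun sous-groupe ferm\'e normal propre non trivial, autrement dit si et seulement si $Gal(E/F)$ est topologiquement simple. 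Cette derni\`ere propri\'et\'e est manifestement pr\'eserv\'ee par tout isomorphisme topologique de groupes.

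Ensuite, j'utiliserais l'hypoth\`ese $(T) \sim (T')$ (D\'ef. \ref{def:tourgalcomp}.(2)), qui fournit l'\'egalit\'e $m=n$ et une permutation $\sigma \in S_m$ telle que $Gal(T_i/T_{i-1}) \isomto Gal(T'_{\sigma(i)}/T'_{\sigma(i)-1})$ pour tout $i \in \{1,\dots,m\}$. D'apr\`es l'\'etape pr\'ec\'edente, la marche $T_i/T_{i-1}$ est galsimple si et seulement si la marche $T'_{\sigma(i)}/T'_{\sigma(i)-1}$ l'est. Comme $\sigma$ est une bijection, toutes les marches de $(T)$ sont galsimples si et seulement si toutes celles de $(T')$ le sont. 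En combinant ce fait avec la proposition \ref{prop:compssigalgalsimple}, on conclut que $(T)$ est de composition si et seulement si $(T')$ l'est.

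Le point d\'elicat serait le deuxi\`eme : \'etablir que la galsimplicit\'e d'une extension galoisienne se lit sur son groupe de Galois comme la simplicit\'e topologique. En degr\'e fini, cela r\'esulte imm\'ediatement de la correspondance de Galois classique ; en degr\'e infini, il faut prendre garde \`a ne consid\'erer que les sous-groupes ferm\'es normaux et les isomorphismes topologiques, ce qui est pr\'ecis\'ement ce que garantit la version du th\'eor\`eme de Krull \'etablie au chapitre 1.
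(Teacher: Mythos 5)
Votre démonstration est correcte et suit essentiellement la même voie que celle du texte : réduction à la proposition \ref{prop:compssigalgalsimple}, puis transport de la (non-)galsimplicité de chaque marche le long de l'isomorphisme topologique $Gal(T_i/T_{i-1}) \isomto Gal(T'_{\sigma(i)}/T'_{\sigma(i)-1})$ via la correspondance de Krull entre sous-groupes fermés normaux et extensions quotients galoisiennes. La seule différence est de présentation : le texte raisonne par contraposition marche par marche (marche triviale ou existence d'un corps $F$ avec $T_{i-1} \lhd F < T_i$), tandis que vous formulez directement la galsimplicité comme simplicité topologique du groupe de Galois, ce qui revient au même argument.
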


\mespace \noindent
{\it Scholie. }Nous montrerons qu'une extension galtourable n'admet une tour de
composition galoisienne que lorsqu'elle est finie (cf. $2^{\text{ème}}$ théorème de dissociation :
Th. \ref{th:2diss}).

\mespace
\begin{proof}
Posons 
$$\begin{array}{c}
(T) \qquad K=T_0 \unlhd \dots \unlhd T_i \unlhd T_{i+1} \unlhd \dots \unlhd
T_m=L \;,\\
\\
(T') \qquad K=T'_0 \unlhd \dots \unlhd T'_j \unlhd T'_{j+1} \unlhd \dots \unlhd
T'_m=L \;.
\end{array}$$
Supposons que $(T)$ ne soit pas de composition et prouvons qu'il en est ainsi de
$(T')$. Par la proposition
\ref{prop:compssigalgalsimple} précédente, au moins
une marche de $(T)$ n'est pas galsimple :
$$ \exists i \in \{1, \dots , m\} \qquad T_i \nongalsimple T_{i-1} \;.$$
Ceci signifie, par définition de la galsimplicité (Chap. 2, Déf. 1.6.(2)), 
\begin{itemize}
\item ou bien que $T_i = T_{i-1}$ ;
\item ou bien qu'il existe un corps intermédiaire $F$ tel que
$$T_{i-1} \lhd F < T_i \;,$$
et donc, par la bijection de Krull classique,
$$Gal(T_i / T_{i-1}) \rhd_f Gal(T_i / F) \rhd \iit \;.$$
\end{itemize}
On sait qu'il existe $\sigma \in S_m$ (Déf. \ref{def:tourgalcomp}) tel que
$$Gal(T_i / T_{i-1}) \isomto Gal(T'_{\sigma(i)} / T'_{\sigma(i)-1}) \;.$$
Donc
\begin{itemize}
\item ou bien, dans le premier cas,  $T'_{\sigma(i)} = T'_{\sigma(i)-1}$ ;
\item ou bien, dans le deuxième, l'image $H$ de $Gal(T_i / F)$ par cet isomorphisme
topologique est telle que
$$Gal(T'_{\sigma(i)} / T'_{\sigma(i)-1}) \rhd_f H \rhd \iit \;.$$
On en déduit, par la réciproque de la bijection de Krull classique, la tour
$$T'_{\sigma(i)-1} \lhd T'^H_{\sigma(i)}  < T'_{\sigma(i)} \;.$$
\end{itemize}
Dans les deux cas, $(T')$ a une marche qui n'est pas galsimple, et par la proposition
\ref{prop:compssigalgalsimple} précédente, $(T')$ n'est pas de composition.
\end{proof}

\gespace
Il arrive que l'on ne sache pas décider si une tour est stricte ou pas. Dans la
quête de tours équivalentes strictes, la proposition suivante y remédie.

\mespace
\begin{prop} \label{prop:tourgalstrictequiv}
Soit $L/K$ une extension galtourable. Pour toutes tours galoisiennes de $L/K$
$$(F) \qquad K=F_0 \unlhd \dots \unlhd F_i \unlhd \dots \unlhd F_m=L$$
et
$$(E) \qquad K=E_0 \unlhd \dots \unlhd E_k \unlhd \dots \unlhd E_n=L$$
équivalentes, les tours strictes associées (Chap. 3, Prop. \& Déf. 2.1
)
$$(F_<) \qquad K=F_0=F_{<0} \lhd \dots \lhd F_{<j} \lhd \dots \lhd F_{<m'}=F_m=L$$
et
$$(E_<) \qquad K=E_0=E_{<0} \lhd \dots \lhd E_{<l} \lhd \dots \lhd E_{<n'}=E_n=L$$
sont équivalentes.
\end{prop}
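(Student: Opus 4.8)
The plan is to reduce everything to the multisets of step-groups. For a Galois tower write $\mu(F):=\{Gal(F_i/F_{i-1})\}_{1\le i\le m}$ for the multiset of Galois groups of its steps, taken up to topological isomorphism, and likewise $\mu(E)$, $\mu(F_<)$, $\mu(E_<)$. By D\'efinition \ref{def:tourgalcomp}.(2) the hypothesis $(F)\sim(E)$ is exactly the conjunction $m=n$ and $\mu(F)=\mu(E)$, while the desired conclusion $(F_<)\sim(E_<)$ is exactly $m'=n'$ together with $\mu(F_<)=\mu(E_<)$; note that $(F_<)$ and $(E_<)$ are indeed Galois towers by Corollaire \ref{cor:tourgalstricte}, so the relation $\sim$ applies to them.

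The first ingredient is the triviality criterion: a Galois step is trivial if and only if its group is trivial. Indeed $Gal(F_i/F_{i-1})$ has fixed field $F_{i-1}$, so $Gal(F_i/F_{i-1})=\iit$ forces $F_i=F_{i-1}$, and conversely. Consequently the trivial group $\iit$ occurs in $\mu(F)$ precisely as often as $(F)$ has repeated (trivial) steps.

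The second ingredient is that passage to the strict tower deletes exactly these trivial steps at the level of groups, i.e. $\mu(F_<)$ is obtained from $\mu(F)$ by discarding all copies of $\iit$. I would prove this by exhibiting an endpoint-preserving bijection between the steps of $(F_<)$ and the nontrivial steps of $(F)$. One direction is Fait \ref{fait:marchetourstricte}: every step $F_{<j-1}\lhd F_{<j}$ of $(F_<)$ is a step of $(F)$, and it is nontrivial. For the reverse, if $F_{i-1}<F_i$ is a nontrivial step then in the strict tower $F_{i-1}=F_{<a}$ and $F_i=F_{<b}$ with $a<b$; here $b=a+1$, for otherwise the distinct field $F_{<a+1}$ would occur as some $F_k$ with $F_{i-1}<F_k<F_i$, which is impossible between consecutive indices of the nondecreasing tower $(F)$. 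Since each distinct value occupies a contiguous block of indices, a given consecutive pair $F_{<a}\lhd F_{<a+1}$ is realized by a unique nontrivial step of $(F)$, so the correspondence is a bijection; as it preserves endpoints it preserves groups. In particular $(F)$ has exactly $m'$ nontrivial steps, and the same analysis applies to $(E)$.

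It then remains to assemble the pieces. Because $\iit$ is isomorphic only to itself, the permutation witnessing $\mu(F)=\mu(E)$ matches trivial steps with trivial steps and nontrivial with nontrivial; hence the numbers of trivial steps agree, and with $m=n$ the numbers of nontrivial steps agree, i.e. $m'=n'$. Deleting the common copies of $\iit$ from the equal multisets $\mu(F)$ and $\mu(E)$ leaves $\mu(F_<)=\mu(E_<)$, which is precisely $(F_<)\sim(E_<)$. The one genuinely delicate point is the endpoint-preserving bijection of the previous paragraph — that stripping repetitions removes each trivial step and keeps each nontrivial step's group exactly once; everything else is bookkeeping with the nondecreasing order of a tower and the triviality criterion for Galois groups.
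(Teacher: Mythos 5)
Your proof is correct and follows essentially the same route as the paper's: your endpoint-preserving bijection between the steps of $(F_<)$ and the nontrivial steps of $(F)$ is exactly the paper's map $\Phi_F$ (proved bijective there by a minimality argument where you use the contiguity of blocks of equal fields), and your observation that the trivial group can only match trivial steps is precisely the paper's restriction $\sigma_{\mid}$ of the witnessing permutation to the nontrivial steps. The multiset bookkeeping at the end merely repackages the paper's explicit composite permutation $\tau=\Phi_E^{-1}\circ\sigma_{\mid}\circ\Phi_F$.
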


\mespace
\begin{proof}
On sait déjà, par le corollaire 2.6 du chapitre 3, 
que $(E_<)$ et $(F_<)$ sont bien des tours galoisiennes (ce qui justifie leur
notation dans l'énoncé). Il nous reste ainsi à montrer qu'elles sont
équivalentes. Pour cela, prouvons que pour tout $j$ dans $\{1, \dots, m'\}$, il
existe un unique entier $i_j$ dans $\{1, \dots, m\}$ tel que l'on ait
$$F_{<j}=F_{i_j} \qquad \text{et} \qquad F_{<j-1}=F_{i_j-1} \;.$$
\pespace
\noindent
L'existence d'entiers vérifiant l'une ou l'autre condition est claire.
Examinons tout d'abord l'existence et l'unicité d'un entier vérifiant les deux
conditions simultanément.

Soit $j \in \{1, \dots, m'\}$ quelconque mais fixé.
Dans la démonstration du Fait 2.5 du chapitre 3, 
on a vu que l'entier
$$i_j := min \{i \in \{0, \dots, m\} \; \mid \; F_i=F_{<j} \}$$
(toujours $\geq 1$) convient.
Prenons un entier $i' \in \{1, \dots, m\}$ convenant également. La première
condition assure que $F_{i'}=F_{<j} $, et la minimalité de $i_j$ fournit alors
$$i_j \leq i' \;.$$
Dès lors, si $i_j \neq i' $, on a $i_j \leq i'-1$ ; et par croissance de $(F)$ et
stricte croissance de $(F_<)$ 
$$F_{<j-1} < F_{<j} = F_{i_j} \leq F_{i'-1} \;.$$ 
Mais $F_{<j-1} < F_{i'-1}$ signifie en particulier que $i'$ ne convient pas.
C'est donc que $i_j=i'$. Comme par définition $F_{i_j-1}=F_{<j-1} < F_{<j} =
F_{i_j}$ , nous pouvons considérer l'application
$$ \begin{array}{cccc}
\Phi_F \;:	&\{1, \dots, m'\}	&\longrightarrow	&\{i \in \{1, \dots, m\}
\; \mid \; F_i \neq F_{i-1} \}\\
		&j			&\longmapsto		&\Phi_F(j):=i_j\;.\\
\end{array}$$
On a
$$\forall j \in \{1, \dots, m'\} \quad F_{\Phi_F(j)}=F_{<j} \;, \quad
F_{\Phi_F(j)-1}=F_{<j-1} \;.$$
Montrons maintenant que $\Phi_F$ est une bijection.

{\emphase $\circ \; \Phi_F$ est injective :}\\
Comme l'ensemble de départ de $\Phi_F$ est totalement ordonné, il nous suffit de
prouver que $\Phi_F$  est strictement croissante. Puisque la tour $(F_<)$ est
stricte
$$\begin{array}{ccccc}
j < j'	&\implique	&F_{<j}					&<	&F_{<j'}\\
	&		&\text{\rotatebox[origin=c]{90}{=}}		&
	&\text{\rotatebox[origin=c]{90}{=}}	\\
	&\ssi		&F_{\Phi_F(j)}				&<	&F_{\Phi_F(j')}.
\end{array}$$
Ceci implique 
$$\Phi_F(j) < \Phi_F(j') \;.$$
En effet, si $\Phi_F(j') \leq \Phi_F(j)$, on aurait par la croissance de
$\{F_i\}_{\{0 \leq i \leq m \}}$,
$$F_{\Phi_F(j')} \leq F_{\Phi_F(j)} \quad : \quad \text{contradiction}.$$
En résumé
$$j < j' \quad \implique \quad \Phi_F(j) < \Phi_F(j') \;,$$
ce qui prouve l'injectivité de $\Phi_F$.

\pespace
{\emphase $\circ \; \Phi_F$ est surjective :}\\
Soit un entier $i$ quelconque mais fixé dans $\{1, \dots, m\}$ tel que $F_i \neq
F_{i-1}$. Comme $(F)$ raffine $(F_<)$ trivialement 
(Chap. 3, Prop. \& Déf. 2.1  
), on a, par la condition (RAFT) (Chap. 3, Déf 1.3.(2)
)
$$\exists j \in \{0, \dots, m'\} \quad F_i=F_{<j} \;.$$
Observons tout d'abord que $j$ ne peut être nul. En effet
$$j=0 \quad \implique \quad K=F_0 \leq F_{i-1} < F_i=F_{<0}=K \quad \implique
\quad K < K \quad : \quad \text{contradiction.}$$
C'est donc que $j \in \{1, \dots, m'\}$. 
On peut ainsi considérer son image par $\Phi_F$ : soit $i_j:=\Phi_F(j)$.
Par définition de $\Phi_F(j)$, $F_{i_j} \neq F_{i_j-1}$ et $F_{i_j}=F_{<j}=F_i$.
Supposons que les entiers $i_j$ et $i$ soient différents. Deux cas se
présentent :
\begin{itemize}
\item ou bien
$$ i_j < i \quad \ssi \quad i_j \leq i-1 \quad \implique \quad F_{i_j} \leq
F_{i-1} < F_i =F_{i_j} \; : \; \text{absurde ;}$$
\item ou bien
$$ i < i_j \quad \ssi \quad i \leq i_j-1 \quad \implique \quad F_{i} \leq
F_{i_j-1} < F_{i_j} =F_{i} \; : \; \text{absurde.}$$
\end{itemize}
On vient bien de montrer que $i=i_j=\Phi_F(j)$. Ceci étant vrai pour tout $i$
dans $\{1, \dots, m\}$ tel que $F_i \neq F_{i-1}$, on a prouvé la surjectivité
de $\Phi_F$.

\pespace
Nous avons besoin pour la suite de l'analogue pour $E$ de $\Phi_F$, à savoir la
bijection $\Phi_E$ définie par
$$ \begin{array}{cccc}
\Phi_E \;:	&\{1, \dots, n'\}	&\longrightarrow	&\{k \in \{1,
\dots, n\} \; \mid \; E_k \neq E_{k-1} \}\\
		&l			&\longmapsto		&\Phi_E(l):=k_l\\
\end{array}$$
où $k_l$ est l'unique entier vérifiant
$$E_{k_l}=E_{<l} \qquad \text{et} \qquad E_{k_l-1}=E_{<l-1} \;.$$
Nous avons fait l'hypothèse que les tours galoisiennes $(E)$ et $(F)$ sont
équivalentes : $(E) \sim (F)$, ce qui signifie (Déf. \ref{def:tourgalcomp}.(2))
que $m=n$ et
$$\exists \sigma \in S_m \quad \forall i \in \{1, \dots, m=n \} \quad
Gal(F_i/F_{i-1}) \isomto Gal(E_{\sigma(i)}/E_{\sigma(i)-1}) \;.$$
Pour prouver que $(E_<) \sim (F_<)$, notons tout d'abord le banal, mais décisif,
argument de théorie de Galois :
$$\begin{array}{ccc}
F_i / F_{i-1} \text{ non triviale}	&\ssi	&Gal(F_i / F_{i-1}) \neq
\iit\\
					&\ssi	&Gal(E_{\sigma(i)}/E_{\sigma(i)-1})\neq
\iit\\
					&\ssi	&E_{\sigma(i)} / E_{\sigma(i)-1}
					\text{ non triviale.}
\end{array}$$
Nous pouvons donc considérer la restriction $\sigma_{\mid}$ de $\sigma$ aux
marches non triviales :
$$ \begin{array}{cccc}
\sigma_{\mid} \;:	&\{i \in \{1, \dots, m=n\} \; \mid \; F_i \neq F_{i-1} \}	
&\longrightarrow	&\{k \in \{1,\dots, n\} \; \mid \; E_k \neq E_{k-1} \}\\
		&i			&\longmapsto		&\sigma(i) \;.\\
\end{array}$$
Puisque $\sigma$ est surjective, l'équivalence ci-dessus assure que
$\sigma_{\mid}$ est également surjective. L'injectivité de $\sigma_{\mid}$ est
une conséquence directe de celle de $\sigma$. Par conséquent, $\sigma_{\mid}$
est aussi une bijection.

\pespace
\noindent
Considérons dès lors le composé des bijections
$$ \begin{array}{cccc}
\tau := \Phi_E^{-1} \circ \sigma_{\mid} \circ \Phi_F \;:	&\{1, \dots, m'\}	
&\longrightarrow	&\{1, \dots, n'\}\\
						&j	&\longmapsto		
&(\Phi_E^{-1} \circ \sigma_{\mid} \circ \Phi_F)(j)\;.\\
\end{array}$$
Comme $\tau$ est encore une bijection, on a en particulier $m'=n'$ et $\tau \in
S_{m'}$.\\
Ainsi, par les conditions que vérifie $\Phi_F(j)$ et la propriété de définition
de $\sigma$, on a pour tout $j \in \{1, \dots, m'\}$
$$Gal(F_{<j}=F_{\Phi_F(j)} / F_{<j-1}=F_{\Phi_F(j)-1})	\isomto	
Gal(E_{\sigma(\Phi_F(j))} / E_{\sigma(\Phi_F(j))-1}) \;,$$
et la marche étant non triviale
$$\begin{array}{rcl}
&\isomto	&Gal(E_{\sigma_{\mid}(\Phi_F(j))} / E_{\sigma_{\mid}(\Phi_F(j))-1})\\
&	&\\
&=	&Gal(E_{\Phi_E(\Phi_E^{-1} \circ \sigma_{\mid} \circ \Phi_F(j))} /
E_{\Phi_E(\Phi_E^{-1} \circ \sigma_{\mid} \circ \Phi_F(j))-1}).\\
&	&\\
\text{Enfin par les deux}\!	&\!\!\text{conditions}\!&\!\text{que vérifie } \Phi_E(\Phi_E^{-1} \circ
\sigma_{\mid} \circ \Phi_F(j)) \\
&	&\\
Gal(F_{<j} / F_{<j-1})	&\isomto	
&Gal(E_{<\,\Phi_E^{-1} \circ \sigma_{\mid} \circ \Phi_F(j)} /
E_{<\,\Phi_E^{-1} \circ \sigma_{\mid} \circ \Phi_F(j)-1})\\
&	&\\
&\isomto	&Gal(E_{<\,\tau(j)} / E_{<\,\tau(j)-1}).
\end{array}$$
Ceci prouve exactement que $(E_<) \sim (F_<)$.
\end{proof}

\mespace
\begin{cor} 
Dans les notations de la proposition \ref{prop:tourgalstrictequiv}, toute marche
non triviale de $(F)$ est une marche de $(F_<)$.
\end{cor}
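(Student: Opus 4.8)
Le plan est d'exploiter directement la bijection $\Phi_F$ construite au cours de la d\'emonstration de la proposition \ref{prop:tourgalstrictequiv}. Je rappellerais d'abord que cette application
$$\Phi_F \;:\; \{1, \dots, m'\} \longrightarrow \{i \in \{1, \dots, m\} \; \mid \; F_i \neq F_{i-1}\}$$
y a \'et\'e \'etablie comme une bijection v\'erifiant, pour tout $j \in \{1, \dots, m'\}$, les deux \'egalit\'es simultan\'ees
$$F_{\Phi_F(j)}=F_{<j} \qquad \text{et} \qquad F_{\Phi_F(j)-1}=F_{<j-1} \;.$$
Tout l'int\'er\^et ici est que l'ensemble d'arriv\'ee de $\Phi_F$ est exactement l'ensemble des indices des marches non triviales de $(F)$.

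Ensuite, je me donnerais une marche non triviale quelconque de $(F)$, c'est-\`a-dire un indice $i \in \{1, \dots, m\}$ tel que $F_i \neq F_{i-1}$, de sorte que $i$ appartient \`a l'ensemble d'arriv\'ee de $\Phi_F$. La surjectivit\'e de $\Phi_F$ fournit alors un indice $j \in \{1, \dots, m'\}$ tel que $\Phi_F(j)=i$, et les deux conditions de d\'efinition rappel\'ees ci-dessus donnent d'un coup
$$F_i = F_{\Phi_F(j)} = F_{<j} \qquad \text{et} \qquad F_{i-1} = F_{\Phi_F(j)-1} = F_{<j-1} \;.$$
J'en conclurais que la marche $F_i/F_{i-1}$ de $(F)$ co\"incide avec la marche $F_{<j}/F_{<j-1}$ de $(F_<)$, ce qui est exactement l'assertion cherch\'ee.

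Le point qui pourrait sembler \^etre l'obstacle principal n'en est en r\'ealit\'e pas un : tout le travail substantiel a d\'ej\`a \'et\'e men\'e lors de la preuve de la bijectivit\'e de $\Phi_F$, et le corollaire n'en extrait que la surjectivit\'e. La seule pr\'ecaution \`a prendre est de se souvenir qu'une marche est un couple de corps cons\'ecutifs et non un corps isol\'e ; c'est pour cela que je m'appuierais sur les deux \'egalit\'es simultan\'ees, qui identifient \`a la fois le corps inf\'erieur et le corps sup\'erieur de la marche. On remarquera enfin que cet \'enonc\'e n'est autre que la r\'eciproque du Fait \ref{fait:marchetourstricte}, annonc\'ee imm\'ediatement apr\`es celui-ci : l\`a o\`u le Fait \ref{fait:marchetourstricte} assurait que toute marche de $(F_<)$ est une marche de $(F)$, la surjectivit\'e de $\Phi_F$ boucle la correspondance en garantissant la r\'eciproque pour les marches non triviales.
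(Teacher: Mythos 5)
Votre preuve est correcte et suit exactement la même voie que celle du texte : le corollaire n'est que l'expression de la surjectivité de $\Phi_F$, jointe aux deux égalités $F_{\Phi_F(j)}=F_{<j}$ et $F_{\Phi_F(j)-1}=F_{<j-1}$ qui identifient simultanément les deux corps de la marche. Vous ne faites qu'expliciter ce que la démonstration du texte résume en une phrase.
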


\pespace
\begin{proof}
C'est ce qu'exprime la surjectivité de l'application $\Phi_F$ de la
démonstration de la proposition \ref{prop:tourgalstrictequiv}.
\end{proof}

\gespace
\mespace
\section{Le cas des extensions galoisiennes}


Dans le cas particulier des extensions galoisiennes, les résultats auxquels nous
voulons aboutir (cf. introduction du présent chapitre) sont des conséquences
directes de leurs analogues pour les groupes. Les quatre propositions suivantes
ne se limitent pas à des extensions finies.

\pespace
\begin{prop} \label{prop:tourinduite}
Toute suite normale du groupe de Galois d'une extension galoisienne quelconque
induit une tour galoisienne de cette extension.
Précisément, soit $L \gal K$ une extension galoisienne de groupe $G:=Gal(L/K)$. Pour
tout suite normale
$$(S) \qquad G=G_0 \unrhd \dots \unrhd G_i \unrhd G_{i+1} \unrhd \dots \unrhd
G_m=\iit \;, $$
posons $T_i:=L^{G_i} \; (i=0,\dots,m)$. On a alors la tour galoisienne de $L
\gal K$ 
$$(T) \qquad K=T_0 \unlhd \dots \unlhd T_i \unlhd T_{i+1} \unlhd \dots \unlhd
T_m=L \;.$$
Nous disons que la tour galoisienne $(T)$ est induite par la suite normale
$(S)$.
\end{prop}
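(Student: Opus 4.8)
Le plan est d'établir que chaque marche $T_{i+1}/T_i$ de $(T)$ est galoisienne, ce qui donnera aussitôt que $(T)$ est une tour galoisienne par définition. Je commencerais par remarquer que, l'extension $L/K$ étant galoisienne, sa sous-extension $L/T_i$ l'est encore pour tout $i \in \{0,\dots,m\}$, et par identifier son groupe de Galois à l'adhérence de $G_i$ pour la topologie de Krull. En effet, la proposition \ref{prop:adh}.(2) (appliquée à l'extension $L/K$) donne $T_i=L^{G_i}=L^{\overline{G_i}}$, de sorte que le (0) de la section 3 du chapitre 1 fournit $\overline{G_i}=Gal(L/L^{G_i})=Gal(L/T_i)$ ; en particulier $Gal(L/T_{i+1})=\overline{G_{i+1}}$.

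Il faut ensuite vérifier que la suite des corps est bien croissante, ce qui est immédiat : le passage aux corps des invariants renversant les inclusions, $G_{i+1}\leq G_i$ entraîne $T_i=L^{G_i}\subseteq L^{G_{i+1}}=T_{i+1}$, et $T_{i+1}$ est donc un corps intermédiaire de la sous-extension galoisienne $L/T_i$.

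Le cœur de la preuve consiste à transférer la normalité aux adhérences. J'appliquerais la proposition \ref{prop:adh}.(1) aux sous-groupes $A:=G_{i+1}$ et $B:=G_i$ du groupe topologique $Gal(L/K)$ : de $G_{i+1} \unlhd G_i$ on déduit $\overline{G_{i+1}} \unlhd \overline{G_i}$, c'est-à-dire $Gal(L/T_{i+1}) \unlhd Gal(L/T_i)$. Il ne reste alors qu'à invoquer le théorème de Krull revisité pour l'extension galoisienne $L/T_i$ : la sous-extension $T_{i+1}/T_i$ est galoisienne si et seulement si le sous-groupe fermé $Gal(L/T_{i+1})$ est normal dans $Gal(L/T_i)$, propriété que l'on vient précisément d'obtenir. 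D'où $T_i \unlhd T_{i+1}$ pour tout $i$, et la tour galoisienne $(T)$ annoncée.

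La principale subtilité, invisible en degré fini, tient au recours à ces adhérences : les sous-groupes $G_i$ d'une suite normale n'étant pas supposés fermés, c'est la proposition \ref{prop:adh} — couplée à la proposition \ref{prop:topind}, qui assure la compatibilité de la topologie de Krull de $Gal(L/T_i)$ avec celle induite par $Gal(L/K)$ — qui permet de substituer à chaque $G_i$ son adhérence $\overline{G_i}$ sans altérer ni le corps des invariants $T_i$ ni la relation de normalité. En degré fini, tout sous-groupe est fermé et l'argument se réduit immédiatement au théorème fondamental classique de la théorie de Galois.
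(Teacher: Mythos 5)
Votre démonstration est correcte et suit essentiellement la même voie que celle du texte : croissance de la suite des corps par renversement des inclusions, puis transfert de la normalité $G_{i+1}\unlhd G_i$ aux adhérences via la proposition \ref{prop:adh}.(1), identification $\overline{G_i}=Gal(L/T_i)$ par le (0) de la section 3 du chapitre 1, et conclusion par la correspondance de Krull dans $L/T_i$. Vous explicitez simplement davantage certains points (recours à \ref{prop:adh}.(2) et \ref{prop:topind}) que le texte laisse implicites.
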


\begin{proof}
Que $(T)$ soit une tour de $L/K$ est trivial car
$$G_i \geq G_{i+1} \quad \implique \quad T_i=L^{G_i} \leq L^{G_{i+1}}=T_{i+1}\;.$$
Prouvons qu'elle est galoisienne. Par le (1) de la proposition 3.1
du chapitre 1 
$$G_i \unrhd G_{i+1} \quad \implique \quad \overline{G_i} \unrhd \overline{G_{i+1}} $$
au sens de la topologie de Krull de $G$ ; d'où
$$\overline{G_i}=Gal(L/L^{G_i}) \unrhd Gal(L/L^{G_{i+1}})=\overline{G_{i+1}}$$
i.e. $Gal(L/T_i) \unrhd Gal(L/T_{i+1})$
ce qui signifie que l'extension $T_{i+1}/T_i$ est galoisienne $(i=0,\dots,m-1)$.
Toutes les marches de $(T)$  étant galoisiennes, la tour $(T)$ est galoisienne
par définition (Chap. 2, Déf. \& Conv. 1.1.(2)). 
\end{proof}

\gespace
\begin{prop}  \label{prop:raftourinduite}
Tout raffinement d'une suite normale du groupe de Galois d'une extension
galoisienne induit un raffinement galoisien de la tour galoisienne
correspondante de l'extension.\\
Précisément, soient $L \gal K$ une extension galoisienne quelconque de groupe
$G:=Gal(L/K)$,
$$(S) \qquad G=G_0 \unrhd \dots \unrhd G_i \unrhd G_{i+1} \unrhd \dots \unrhd
G_m=\iit $$
une suite normale de $G$, et
$$(T) \qquad K=T_0 \unlhd \dots \unlhd T_i \unlhd T_{i+1} \unlhd \dots \unlhd
T_m=L $$
la tour galoisienne de $L/K$ induite par $(S)$ (Prop. \ref{prop:tourinduite}). Pour tout
raffinement  
$$(S') \qquad G=G'_0 \unrhd \dots \unrhd G'_j \unrhd G'_{j+1} \unrhd \dots \unrhd
G'_{m'}=\iit $$
de $(S)$ (cf. \cite[p.120]{Ro}), la tour galoisienne
$$(T') \qquad K=T'_0 \unlhd \dots \unlhd T'_j \unlhd T'_{j+1} \unlhd \dots \unlhd
T'_{m'}=L $$
induite par $(S')$ est un raffinement galoisien (cf. Chap. 3, Déf. 1.3.(4)) 
de $(T)$.
\end{prop}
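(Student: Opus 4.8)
Le plan est de ne pas vérifier directement la condition (RAFG) de la définition \ref{def:raffinementsuite}.(4), mais de se ramener au Fait \ref{fait:raffinement}.(2), selon lequel un raffinement qui est une tour galoisienne est automatiquement un raffinement galoisien. Il suffira donc d'établir deux points : que $(T')$ est bien une tour galoisienne de $L/K$, et que c'est un raffinement de $(T)$ au sens de la définition \& convention \ref{def:raffinement}.(1). Le premier est immédiat : il suffit d'appliquer la proposition \ref{prop:tourinduite} à la suite normale $(S')$ elle-même, ce qui fournit directement le caractère galoisien de la tour $(T')$ induite par $(S')$.

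Pour le second point, je partirais de la définition groupiste du raffinement d'une suite normale (cf. \cite[p.120]{Ro}) : dire que $(S')$ raffine $(S)$ signifie que tout terme de $(S)$ figure parmi ceux de $(S')$, autrement dit qu'il existe une suite finie d'indices $0 \leq i_0 < i_1 < \dots < i_m \leq m'$ telle que $G_k = G'_{i_k}$ pour tout $k \in \{0, \dots, m\}$. L'étape clef, et seule véritable manipulation, consiste alors à transférer cette relation des sous-groupes aux corps via la correspondance $H \mapsto L^H$, dont le caractère décroissant est assuré par le théorème de Krull (Chap. 1). On obtient ainsi $T_k = L^{G_k} = L^{G'_{i_k}} = T'_{i_k}$ pour tout $k$, ce qui est exactement la condition (RAF2) avec la même suite d'indices $i_0 < \dots < i_m$ ; la condition (RAF1), $m \leq m'$, en découle (cf. Rem. \ref{rem:raffinement}.(1)). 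Cela établit que $(T')$ est un raffinement de $(T)$, et la conclusion suit alors par le Fait \ref{fait:raffinement}.(2).

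La difficulté principale est en réalité minime : aucun calcul n'est à mener, et le seul point demandant de l'attention est de s'assurer que l'ordre des indices est préservé lors du passage aux corps. Comme $(S')$ est décroissante pour l'inclusion des groupes tandis que $(T')$ est croissante pour l'inclusion des corps, et que les deux sont numérotées de façon cohérente (indice croissant $\leftrightarrow$ sous-groupe décroissant $\leftrightarrow$ corps des invariants croissant), la suite $i_0 < \dots < i_m$ convient telle quelle, sans inversion. C'est précisément le recours au Fait \ref{fait:raffinement}.(2) qui permet d'éviter de contrôler individuellement la normalité des marches nouvellement insérées, celles-ci provenant directement des nouveaux termes normaux de $(S')$ et étant déjà prises en charge par la proposition \ref{prop:tourinduite}.
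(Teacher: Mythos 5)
Votre démonstration est correcte et suit essentiellement la même voie que celle du texte : transfert de la suite d'indices du raffinement de $(S)$ par $(S')$ aux corps via $H \mapsto L^H$ pour obtenir la condition (RAF2), puis conclusion par le Fait \ref{fait:raffinement}.(2) appliqué à la tour galoisienne $(T')$ fournie par la proposition \ref{prop:tourinduite}. Rien à redire.
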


\mespace
\begin{proof}
Par définition d'un raffinement d'une suite normale de groupe, on a $m \leq m'$
et l'existence d'une suite d'entiers
$$0 \leq  j_0 < j_1 < \dots < j_m \leq m'$$
telle que
$$\forall i \in \{0,\dots,m\} \qquad G_i=G'_{j_i} \;.$$
Dès lors, on a directement
$$\begin{array}{cccc}
\forall i \in \{0,\dots,m\} \quad	&L^{G_i}	&=	&L^{G'_{j_i}}\\
&\text{\rotatebox[origin=c]{90}{=}}	&
&\text{\rotatebox[origin=c]{90}{=}}\\
					&T_i		&{\scriptscriptstyle =\,=\,=\,=}
&T'_{j_i} \:.
\end{array}$$
Donc $(T')$ raffine $(T)$. Le Fait 1.5.(2) du chapitre 3 
nous permet de conclure : la tour galoisienne $(T')$ est un raffinement
galoisien de $(T)$.
\end{proof}

\gespace
Le résultat précédent vient de ce que l'on a posé la définition d'un raffinement
de tour de corps (au chapitre 3) de manière compatible, dans le cas d'une
extension galoisienne, avec la définition bien connue d'un raffinement d'une
suite normale de groupes. Cette compatibilité permet d'énoncer les réciproques
des propositions \ref{prop:tourinduite} et \ref{prop:raftourinduite}.

\gespace
\begin{prop} \label{prop:suiteinduite}

(1) Toute tour galoisienne de corps d'une extension galoisienne induit une suite
normale du groupe de Galois de cette extension.\\
Précisément, soit $L \gal K$ une extension galoisienne quelconque. Pour toute tour
galoisienne
$$(T) \qquad K=T_0 \unlhd \dots \unlhd T_i \unlhd T_{i+1} \unlhd \dots \unlhd
T_m=L $$
de $L/K$, on a, en posant $G_i:=Gal(L/T_i) \quad (i=0,\dots,m)$, la suite
normale de groupes
$$(S) \qquad Gal(L/K)=G_0 \unrhd \dots \unrhd G_i \unrhd G_{i+1} \unrhd \dots \unrhd
G_m=\iit \;.$$
Nous disons que la suite normale $(S)$ est "induite" par la tour galoisienne
$(T)$.

(2) La suite normale $(S)$ induite par le (1)  induit à son tour une tour
galoisienne par la proposition \ref{prop:tourinduite}, qui n'est autre que la
tour $(T)$ initiale. 
\end{prop}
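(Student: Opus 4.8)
Le plan est de tout d\'eduire de la correspondance de Galois sous sa forme infinie (le th\'eor\`eme de Krull revisit\'e \'enonc\'e plus haut), en observant que le (1) est exactement la r\'eciproque de la proposition \ref{prop:tourinduite}, tandis que le (2) affirme que cette construction et celle de la proposition \ref{prop:tourinduite} sont inverses l'une de l'autre.

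Pour le (1), je partirais de ce que, l'extension $L \gal K$ \'etant galoisienne, chacune de ses sous-extensions $L/T_i$ l'est encore. Les groupes $G_i:=Gal(L/T_i)$ sont donc des sous-groupes de $G_0=Gal(L/K)$, ferm\'es pour la topologie de Krull d'apr\`es le th\'eor\`eme de Krull classique. La croissance de la tour $(T)$ fournit, via le lemme \ref{lem:relordrebis}, les inclusions $G_{i+1}=Gal(L/T_{i+1}) \leq G_i=Gal(L/T_i)$. Il ne reste qu'\`a \'etablir la normalit\'e $G_{i+1} \unlhd G_i$ : c'est pr\'ecis\'ement ce que donne le (2) du th\'eor\`eme de Krull revisit\'e appliqu\'e \`a l'extension galoisienne $L/T_i$, puisque $T_{i+1}/T_i$ en est une extension galoisienne quotient par hypoth\`ese. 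Avec $G_0=Gal(L/K)$ et $G_m=Gal(L/L)=\iit$, on obtient bien la suite normale $(S)$ annonc\'ee.

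Pour le (2), j'appliquerais la proposition \ref{prop:tourinduite} \`a la suite normale $(S)$ construite au (1) : elle produit la tour galoisienne induite en posant $T'_i:=L^{G_i}$. Comme chaque $G_i=Gal(L/T_i)$ est d\'ej\`a ferm\'e, son adh\'erence le redonne, de sorte que $L^{G_i}=L^{\overline{G_i}}=L^{Gal(L/T_i)}$ ; or, l'extension $L/T_i$ \'etant galoisienne, le corps des invariants $L^{Gal(L/T_i)}$ n'est autre que $T_i$ par la bijection de Krull. On a donc $T'_i=T_i$ pour tout $i$, et la tour induite par $(S)$ co\"incide avec la tour $(T)$ de d\'epart.

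L'essentiel de la preuve \'etant une simple lecture de la correspondance de Galois, le seul point d\'elicat tient \`a la coh\'erence des deux \'enonc\'es en degr\'e infini : la proposition \ref{prop:tourinduite} est formul\'ee pour des sous-groupes quelconques et fait intervenir les adh\'erences $\overline{G_i}$, alors qu'ici les $G_i$ sont des groupes de Galois. Le point \`a contr\^oler soigneusement est donc l'\'egalit\'e $\overline{G_i}=G_i$, garantie par la fermeture des groupes de Galois, qui assure que l'aller-retour $(T) \longmapsto (S) \longmapsto (T)$ est bien l'identit\'e, sans perte d'information lors du passage \`a l'adh\'erence.
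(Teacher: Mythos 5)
Votre d\'emonstration est correcte et suit essentiellement la m\^eme voie que celle du texte : les deux se ram\`enent \`a une lecture directe de la correspondance de Galois (l'\'equivalence $T_i \unlhd T_{i+1} \ssi Gal(L/T_{i+1}) \unlhd Gal(L/T_i)$ pour le (1), et l'\'egalit\'e $T_i=L^{Gal(L/T_i)}=L^{G_i}$ due \`a la galoisianit\'e de $L/T_i$ pour le (2)). Votre v\'erification suppl\'ementaire de la fermeture des $G_i$ et de l'\'egalit\'e $\overline{G_i}=G_i$ explicite utilement ce que le texte laisse implicite en degr\'e infini, sans changer l'argument.
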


\pespace
\begin{proof}
(1) Le résultat est une conséquence immédiate de la théorie de Galois générale :
$$\forall i \in \{0,\dots,m-1\}  \qquad T_i \leq T_{i+1} \quad \ssi \quad
Gal(L/T_{i+1}) \leq Gal(L/T_i)$$
et
$$T_i \unlhd T_{i+1} \quad \ssi \quad Gal(L/T_{i+1}) \unlhd Gal(L/T_i)  \quad
\ssi \quad G_{i+1} \unlhd G_i \;.$$

(2) Pour tout entier $i$ dans $\{0,\dots,m\}$, $L/T_i$ est galoisienne, comme
sous-extension de $L \gal K$ ; donc 
$$T_i=L^{Gal(L/T_i)}=L^{G_i} \;.$$
\end{proof}

\gespace
\begin{prop} \label{prop:rafsuiteinduite}
Soit $L \gal K$ une extension galoisienne de groupe $G:=Gal(L/K)$. Tout
raffinement galoisien de $L \gal K$ induit un raffinement de la suite normale
correspondante.\\
Précisément : soient
$$(T) \qquad K=T_0 \unlhd \dots \unlhd T_i \unlhd T_{i+1} \unlhd \dots \unlhd
T_m=L $$
une tour galoisienne de $L \gal K$ et
$$(T') \qquad K=T'_0 \unlhd \dots \unlhd T'_j \unlhd T'_{j+1} \unlhd \dots \unlhd
T'_{m'}=L $$
un raffinement galoisien de $(T)$. Alors la suite normale
$$(S') \qquad G=G'_0 \unrhd \dots \unrhd G'_j \unrhd G'_{j+1} \unrhd \dots \unrhd
G'_{m'}=\iit $$
induite par $(T')$ (cf. Prop. \ref{prop:suiteinduite}.(1)) est un raffinement de
$$(S) \qquad G=G_0 \unrhd \dots \unrhd G_i \unrhd G_{i+1} \unrhd \dots \unrhd
G_m=\iit $$
induite par $(T)$.
\end{prop}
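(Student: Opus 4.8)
The plan is to mirror the argument of Proposition \ref{prop:raftourinduite}, simply reading the Galois dictionary of Proposition \ref{prop:suiteinduite} in the opposite direction. First I would make sure that the induced series $(S')$ is well defined: since $(T')$ is a galoisian refinement of the galoisian tower $(T)$, Proposition \ref{prop:ragtourgal} guarantees that $(T')$ is itself a galoisian tower, so Proposition \ref{prop:suiteinduite}.(1) legitimately produces the normal series $(S')$ with $G'_j = Gal(L/T'_j)$; likewise $G_i = Gal(L/T_i)$.

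Next I would unwind the definition of a refinement of a tower. Because $(T')$ is a galoisian refinement of $(T)$, it is in particular a refinement of $(T)$ (Définition \ref{def:raffinementsuite}.(4) adds only the condition (RAFG) on top of being a refinement). Hence by Définition \& Convention \ref{def:raffinement}.(1) we have $m \leq m'$ together with a strictly increasing sequence of indices
$$0 \leq j_0 < j_1 < \dots < j_m \leq m'$$
such that $T_i = T'_{j_i}$ for every $i \in \{0,\dots,m\}$.

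The heart of the proof is then a single application of the Galois correspondence: taking $Gal(L/\,\cdot\,)$ of each equality $T_i = T'_{j_i}$ yields
$$G_i = Gal(L/T_i) = Gal(L/T'_{j_i}) = G'_{j_i} \qquad (i=0,\dots,m).$$
The inequality $m \leq m'$ and the very same index sequence $j_0 < \dots < j_m$ now witness that $(S')$ refines $(S)$, which is exactly the defining data of a refinement of a normal series of groups (as recalled in the proof of Proposition \ref{prop:raftourinduite}).

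I do not expect any genuine obstacle: the entire content is that the notion of refinement of a tower of fields was deliberately set up in Chapter 3 to be the exact translate, under the order-reversing correspondence of Proposition \ref{prop:suiteinduite}, of the classical notion of refinement of a normal series. The only subtlety worth flagging is the verification, before invoking Proposition \ref{prop:suiteinduite}.(1) on $(T')$, that $(T')$ really is a galoisian tower — which is precisely what Proposition \ref{prop:ragtourgal} supplies.
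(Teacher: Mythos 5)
Votre démonstration est correcte et suit exactement la même démarche que celle du texte : on invoque la proposition \ref{prop:ragtourgal} pour s'assurer que $(T')$ est bien une tour galoisienne, puis on transporte la suite d'indices $0 \leq j_0 < \dots < j_m \leq m'$ et les égalités $T_i = T'_{j_i}$ par $Gal(L/\,\cdot\,)$ pour obtenir $G_i = G'_{j_i}$. Rien à ajouter.
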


\mespace
\begin{proof}
D'après la proposition 1.7 du chapitre 3, 
$(T')$ est une tour galoisienne de $L \gal K$. Par définition d'un raffinement d'une
tour de corps (Chap. 3, Déf. \& Conv. 1.1.(1)), 
 on a $m \leq m'$ et l'existence d'une suite d'indices
$$0 \leq j_0 < j_1 < \dots < j_m \leq m'$$
telle que
$$\forall i \in \{0,\dots,m\}  \qquad T_i = T'_{j_i} \;.$$
D'où par définition
$$\begin{array}{cccc}
\forall i \in \{0,\dots,m\} \quad 	&Gal(L/T_i)	&=
&Gal(L/T'_{j_i})\\
&\text{\rotatebox[origin=c]{90}{=}}	&&\text{\rotatebox[origin=c]{90}{=}}\\
&G_i	&{\scriptscriptstyle =\,=\,=\,=}	&G'_{j_i} \;.
\end{array}$$
\end{proof}

\mespace
\begin{cor} \label{cor:rapsuiteinduite}
Soit $L \gal K$ une extension galoisienne de groupe $G:=Gal(L/K)$. Pour tout
raffinement galoisien propre $(T')$ d'une tour galoisienne $(T)$ de $L \gal K$, la
suite normale $(S')$ de $G$ induite par $(T')$ (cf. Prop.
\ref{prop:suiteinduite}.) est un raffinement propre de la suite normale $(S)$
induite par $(T)$.
\end{cor}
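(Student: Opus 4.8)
The plan is to notice that the immediately preceding Proposition \ref{prop:rafsuiteinduite} has already carried out the bulk of the argument: it establishes that the normal series $(S')$ induced by $(T')$ is a refinement of the normal series $(S)$ induced by $(T)$. Hence the only thing left to prove is that this refinement is \emph{proper}, i.e.\ that $(S')$ contains a term distinct from every $G_i$. Throughout I write $G_i := Gal(L/T_i)$ for the terms of $(S)$ and $G'_j := Gal(L/T'_j)$ for the terms of $(S')$, observing at the outset that $(T')$ and $(S')$ have the same height $m'$, so the interior index set $\{1,\dots,m'-1\}$ relevant to properness is the same on the field side and on the group side.

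First I would unpack the hypothesis that $(T')$ is a proper refinement of $(T)$. Forgetting momentarily the word "galoisien", properness is condition (RAF3) of Definition \ref{def:raffinement}.(2): there exists an index $j \in \{1,\dots,m'-1\}$ such that
$$\forall i \in \{0,\dots,m\} \quad T'_j \neq T_i.$$
This single index is exactly the one that will witness properness of $(S')$ as well; no search for a new index is needed.

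The key step is to transport this inequality of intermediate fields into an inequality of subgroups via the Galois correspondence. Since $L/K$ is Galois, so is each subextension $L/M$, and the assignment $M \longmapsto Gal(L/M)$ from the intermediate fields of $L/K$ to the subgroups of $G$ is injective --- this is the classical Galois bijection, equivalently the th\'eor\`eme de Krull revisit\'e specialised to $N=L$. Applying injectivity to the relation above yields
$$G'_j = Gal(L/T'_j) \neq Gal(L/T_i) = G_i \qquad (i=0,\dots,m).$$
Thus the fixed index $j \in \{1,\dots,m'-1\}$ satisfies $G'_j \neq G_i$ for every $i$, which is precisely the group-theoretic analogue of (RAF3), i.e.\ the definition of a proper refinement of a normal series (cf.\ \cite[p.120]{Ro}).

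I expect no genuine obstacle: the entire content is the injectivity of the fixed-field map, and the fact that properness of a refinement is preserved under a bijective correspondence is essentially automatic once the index bookkeeping is checked. The only point requiring a word of care is confirming that the witnessing index lies in the correct range $\{1,\dots,m'-1\}$ for $(S')$, which holds because $(S')$ inherits the height $m'$ of $(T')$. Combining this properness with the refinement statement of Proposition \ref{prop:rafsuiteinduite} gives that $(S')$ is a proper refinement of $(S)$, as asserted.
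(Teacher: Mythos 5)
Your proposal is correct and follows essentially the same route as the paper: both reduce the statement to Proposition \ref{prop:rafsuiteinduite} for the refinement part, then transfer the witnessing index $j$ of (RAF3) through the Galois correspondence (the paper phrases it as $T_i=L^{G_i}$, $T'_j=L^{G'_j}$, so $G'_j=G_i$ would force $T'_j=T_i$; you phrase it as injectivity of $M\mapsto Gal(L/M)$ — the same fact read in the contrapositive). Nothing is missing.
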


\pespace
\begin{proof}
La proposition \ref{prop:rafsuiteinduite} assure déjà que $(S')$ est un
raffinement de $(S)$. De plus, dans les mêmes notations, on a, par définition
d'un raffinement propre de tour de corps 
(Chap. 3, Déf. \& Conv. 1.1.(2)), 
la condition
$$\exists j \in \{1,\dots,m'-1\} \quad \forall i \in \{0,\dots,m\} \quad T'_j
\neq T_i\;.$$
Or d'après le (1) de la proposition \ref{prop:suiteinduite}, $G_i:=Gal(L/T_i)$,
d'où
$$T_i=L^{G_i} \quad (i=0,\dots,m) \;.$$
De même
$$T'_j=L^{G'_j} \quad (j=0,\dots,m') \;.$$
Donc directement
$$\exists j \in \{1,\dots,m'-1\} \quad \forall i \in \{0,\dots,m\} \quad G'_j
\neq G_i\;.$$
\end{proof}

\gespace
Dans le cas d'une extension galoisienne finie, on peut compléter les résultats
précédents grâce à la bijection de Galois.

\mespace
\begin{prop} \label{prop:réciproqueinduite}
Soit $L \gal K$ une extension galoisienne finie de groupe $G:=Gal(L/K)$.

(1) Pour toute suite normale stricte
$$(S) \qquad G=G_0 \rhd \dots \rhd G_i \rhd G_{i+1} \rhd \dots \rhd
G_m=\iit \;,$$
la tour galoisienne $(T)$ induite par $(S)$ (cf. Prop. \ref{prop:tourinduite}) est
stricte.

(2) Pour tout raffinement propre $(S')$ d'une suite normale $(S)$ de
$G$, la tour $(T')$ induite par $(S')$ est un raffinement galoisien
propre de la tour $(T)$ induite par $(S)$.

(3) Pour toute suite normale $(S)$ de $(G)$, la suite normale de $G$ induite par
la tour galoisienne de $L \gal K$ induite par $(S)$ est égale à $(S)$.
\end{prop}

\mespace
\begin{proof}
(1) Par la bijection classique de Galois :
$$G_i \neq G_{i+1} \quad \implique \quad T_i=L^{G_i} \neq L^{G_{i+1}} =T_{i+1}
\qquad (i=0,\dots,m-1) \;.$$

(2) D'après la proposition \ref{prop:raftourinduite}, $(T')$ est un raffinement
de $(T)$. On a, par définition d'un raffinement propre d'une suite normale, la
condition
$$\exists j \in \{1,\dots,n-1\} \quad \forall i \in \{0,\dots,m\} \quad G'_j
\neq G_i\;.$$
Et par la bijection classique de Galois,
$$\begin{array}{cccc}
\exists j \in \{1,\dots,n-1\} \quad \forall i \in \{0,\dots,m\} \quad 
&L^{G'_j}	&\neq	&L^{G_i}\\
&\text{\rotatebox[origin=c]{90}{=}}&	&\text{\rotatebox[origin=c]{90}{=}}\\
&T'_j	&	&T_i \;.
\end{array}$$
Par définition et le théorème d'Artin dans l'extension finie $L \gal K$, on a
$$Gal(L/T_i)=Gal(L/L^{G_i})=G_i \qquad (i=0,\dots,m) \;.$$
\end{proof}

\gespace
Nous faisons maintenant le lien entre la notion de groupe simple et la notion
d'extension galsimple, toujours dans le cas d'une extension galoisienne finie.

\mespace
\begin{prop}
Pour qu'une extension galoisienne finie soit galsimple (cf. Chap. 2, Déf.
1.6.(2)), 
il faut et il suffit que son groupe de Galois soit simple.
\end{prop}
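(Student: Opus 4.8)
The plan is to translate the statement directly through the classical Galois correspondence, which for a finite Galois extension $L \gal K$ is a bijection between the intermediate fields $F$, $K \leq F \leq L$, and the subgroups $H$ of $G := Gal(L/K)$, given by $F \mapsto Gal(L/F)$ with inverse $H \mapsto L^H$. Two features of this correspondence do all the work: an intermediate extension $F/K$ is Galois if and only if $Gal(L/F) \unlhd G$, and the two endpoints match up as $F = K \ssi Gal(L/F) = G$ and $F = L \ssi Gal(L/F) = \iit$. Galsimplicity of $L/K$ is, by Definition~\ref{def:galsimple}, exactly the assertion that $L \neq K$ and that the only intermediate fields Galois over $K$ are $K$ and $L$; simplicity of $G$ is the assertion that $G \neq \iit$ and that its only normal subgroups are $\iit$ and $G$. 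These two assertions are visibly the same condition read on the two sides of the correspondence, so the whole proof is a dictionary argument.

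First I would record the non-triviality equivalence $L \neq K \ssi G \neq \iit$, which holds because $|G| = [L:K]$ for a finite Galois extension. Then, for the direct implication, assume $L \galsimple K$. Given any $H \unlhd G$, set $F := L^H$; normality of $H$ makes $F/K$ Galois, so galsimplicity forces $F = K$ or $F = L$, hence $H = G$ or $H = \iit$ by the endpoint dictionary above. Together with $G \neq \iit$, this says that $G$ is simple.

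Conversely, assume $G$ simple. Then $G \neq \iit$ gives $L \neq K$. Let $F$ be intermediate with $F/K$ Galois; then $H := Gal(L/F) \unlhd G$, so simplicity yields $H = \iit$ or $H = G$, that is $F = L$ or $F = K$. Thus $L/K$ is galsimple, completing the equivalence.

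I expect no genuine obstacle here: the content is entirely carried by the finite Galois correspondence, including the Artin part $Gal(L/L^H) = H$ needed so that $H \mapsto L^H$ is a true inverse and no topological closure intervenes, as it would in the infinite case treated by Proposition~\ref{prop:relordre} and the revisited Krull theorem. The only point demanding care is to verify the two endpoint identifications and the normal-subgroup/Galois-subextension correspondence with the orientation used in the definition of galsimplicity, i.e. matching a proper intermediate field $K \lhd F \lhd L$ with a proper normal subgroup $\iit \lhd H \lhd G$.
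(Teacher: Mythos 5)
Votre démonstration est correcte et suit essentiellement la même voie que celle du texte : tout repose sur la correspondance de Galois finie (y compris le théorème d'Artin $Gal(L/L^H)=H$), qui traduit mot à mot la galsimplicité de $L/K$ en la simplicité de $G$. La seule différence est de présentation — le texte raisonne deux fois par l'absurde et invoque la proposition sur les suites normales induites, tandis que vous déroulez le dictionnaire directement — mais le contenu mathématique est identique.
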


\mespace
\begin{proof}
Soit $L \gal K$ une extension galoisienne finie de groupe de Galois $G:=Gal(L/K)$.
Supposons $L \gal K$ galsimple. Par définition, elle n'est pas triviale et $G \neq
\iit$ car sinon
$$K=L^{Gal(L/K)}=L^G=L^{\petitiit}=L \;:\; \text{contradiction}\;.$$
Raisonnons par l'absurde en supposant que $G$ ne soit pas simple, i.e. qu'il
existe un sous-groupe $H$ de $G$ tel que l'on ait la suite normale stricte $G
\rhd H \rhd \iit$. Le (1) de la proposition \ref{prop:réciproqueinduite}
ci-dessus nous assure alors que l'on a la tour galoisienne stricte
$$K \lhd L^H \lhd L$$
qui contredit la galsimplicité de $L/K$.

\pespace
Inversement, si $G$ est simple, on sait que $G \neq \iit$ et donc que $L \neq
K$. Raisonnons à nouveau par l'absurde en supposant que $L/K$ ne soit pas
galsimple. C'est qu'il existe un corps intermédiaire strict $K < M < L$ avec
$M/K$ galoisienne. Alors, par la théorie de Galois générale et la
simplicité de $G$,
$$Gal(L/M) \unlhd G \quad \implique \quad \text{\Large ( }Gal(L/M)=G \,\text{ ou
}\, Gal(L/M)=\iit \text{ \Large )} \;.$$
Mais, si $Gal(L/M)=G$, on a
$$M=L^{Gal(L/M)}=L^G=L^{Gal(L/K)}=K \; : \; \text{contradiction} \;;$$
et si $Gal(L/M)=\iit$,
$$M=L^{Gal(L/M)}=L^{\petitiit}=L \; : \; \text{contradiction} \;.$$
C'est donc que l'extension $L/K$ est galsimple.
\end{proof}

\gespace
Dans le cas d'une extension galoisienne, les propositions précédentes font le lien
entre tours galoisiennes de corps et suites normales de groupes, avec leurs
raffinements. La correspondance se poursuit, dans le cas fini, entre tours de
composition galoisiennes et suites de composition.

\gespace
\begin{prop} \label{prop:tourcompositiongalfinie}
Toute extension galoisienne finie admet une tour de composition galoisienne.
\end{prop}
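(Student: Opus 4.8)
The plan is to transport the existence of a composition series for the finite group $G := Gal(L/K)$ across the dictionary between Galois towers and normal series set up in Propositions \ref{prop:tourinduite}, \ref{prop:suiteinduite} and \ref{prop:r�ciproqueinduite}. First I would invoke the classical group-theoretic fact that a finite group admits a composition series: since $G$ is finite, one constructs a strictly decreasing normal series
$$(S) \qquad G = G_0 \rhd G_1 \rhd \dots \rhd G_m = \iit$$
in which each successive quotient $G_i/G_{i+1}$ is simple — take $G_{i+1}$ maximal among the proper normal subgroups of $G_i$, the process terminating precisely because $G$ is finite. This is the one and only place finiteness intervenes, and it is exactly the ingredient the scholie following Corollary \ref{cor:equivtourcomp} signals as missing in the infinite case.

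Next I would apply Proposition \ref{prop:tourinduite} to $(S)$, obtaining the Galois tower $(T)$ of $L/K$ given by $T_i := L^{G_i}$, and Proposition \ref{prop:r�ciproqueinduite}(1) to see that $(T)$ is strict, since a strict normal series induces a strict tower. It then remains to establish that $(T)$ admits no proper Galois refinement. For this I would compute the Galois group of each step: as $L/T_i$ is Galois with $Gal(L/T_i)=G_i$ and $T_{i+1}=L^{G_{i+1}}$, the finite Galois correspondence yields $Gal(T_{i+1}/T_i) \isomto G_i/G_{i+1}$, which is simple by construction of $(S)$. By the preceding proposition (a finite Galois extension is galsimple if and only if its Galois group is simple), each marche $T_{i+1}/T_i$ is therefore galsimple.

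I would conclude with Proposition \ref{prop:compssigalgalsimple}: a Galois tower of a galtourable extension is a composition tower exactly when all its marches are galsimple. Since $(T)$ is strict with galsimple steps, it is a Galois composition tower of $L/K$, which is what we want. As a variant avoiding Proposition \ref{prop:compssigalgalsimple}, one may argue by contradiction: a proper Galois refinement of $(T)$ would, by Corollary \ref{cor:rapsuiteinduite}, induce a proper refinement of the normal series induced by $(T)$; but by Proposition \ref{prop:r�ciproqueinduite}(3) that induced series is again $(S)$, which, being a composition series, is maximal and admits no proper refinement.

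I expect no genuine mathematical obstacle here: all the substance is already packaged in the induced-tower dictionary of the previous propositions, and the single essential input is the finiteness of $G$, which supplies the composition series one starts from. The only care needed is bookkeeping — verifying that each step $T_{i+1}/T_i$ is genuinely a finite Galois extension (finite as a subextension of $L/K$, Galois as a marche of the Galois tower) so that the galsimple-versus-simple criterion applies, and that strictness of $(T)$ rules out trivial steps.
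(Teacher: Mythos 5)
Your proof is correct, and both of your routes land. Your \emph{variant} is in fact exactly the paper's own proof: after disposing of the trivial case via Fait \ref{fait:tourcompositiontriviale}, the paper takes a composition series $(C)$ of the finite group $G$, forms the induced tower $(T)$ via Proposition \ref{prop:tourinduite}, gets strictness from Proposition \ref{prop:r�ciproqueinduite}(1), and then argues by contradiction: a proper Galois refinement of $(T)$ would, by Corollaire \ref{cor:rapsuiteinduite}, induce a proper refinement of $(C)$ — impossible. Your \emph{primary} route is a genuine alternative: instead of pushing a hypothetical refinement back to the group side, you identify $Gal(T_{i+1}/T_i) \isomto G_i/G_{i+1}$ as simple, invoke the criterion "finite Galois extension galsimple $\ssi$ simple Galois group" proved just before, and conclude by the characterization of Proposition \ref{prop:compssigalgalsimple} (composition tower $\ssi$ all marches galsimple). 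This buys a positive, structural verification — you see \emph{why} the tower is of composition, marche by marche — at the cost of depending on the heavier Proposition \ref{prop:compssigalgalsimple}; the paper's contradiction argument is lighter, needing only the refinement dictionary. One cosmetic point: the paper treats the trivial extension $L=K$ separately, whereas your construction handles it vacuously (the empty composition series induces the trivial tower, which is strict with no marches); either way there is no gap.
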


\mespace
\begin{proof}
C'est clair pour l'extension triviale par le Fait
\ref{fait:tourcompositiontriviale}. Soit $L/K$ une extension
galoisienne finie non triviale de
groupe $G:=Gal(L/K)$. En tant que groupe fini non trivial, il admet une suite
normale de composition :
$$(C) \qquad G=G_0 \rhd \dots \rhd G_i \rhd G_{i+1} \rhd \dots \rhd
G_m=\iit \;.$$
Montrons que la tour galoisienne induite par $(C)$ (Prop.
\ref{prop:tourinduite})
$$(T) \qquad K=T_0 \unlhd \dots \unlhd T_i \unlhd T_{i+1} \unlhd \dots \unlhd
T_m=L $$
est une tour de composition de $L/K$.
Le (1) de la proposition \ref{prop:réciproqueinduite} nous assure que la tour
$(T)$ est stricte. Raisonnons par l'absurde en supposant l'existence d'un
raffinement galoisien propre
$$(T') \qquad K=T'_0 \unlhd \dots \unlhd T'_j \unlhd T'_{j+1} \unlhd \dots
\unlhd T'_{m'}=L $$
de $(T)$. Par le corollaire \ref{cor:rapsuiteinduite}, il induit un raffinement
propre $(C')$ de $(C)$, ce qui contredit le fait que $(C)$ soit une suite
normale de composition.
\end{proof}

\gespace
Les propositions précédentes conduisent directement à un analogue galoisien du
théorème de Schreier (cf. introduction de ce chapitre) pour une extension
galoisienne finie. Cependant, comme cet analogue sera généralisé à toutes les
extensions galtourables (cf. Th. \ref{th:1diss} ), nous omettons ici sa
démonstration dans ce cas particulier.

\mespace
\begin{prop} \label{prop:galschreiergaloisien}
Deux tours galoisiennes d'une même extension galoisienne finie admettent des
raffinements galoisiens équivalents.
\end{prop}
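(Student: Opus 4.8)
Le plan est de transporter le probl\`eme dans le groupe de Galois $G:=Gal(L/K)$, d'y invoquer le th\'eor\`eme de Schreier classique (cf. l'introduction de ce chapitre, \cite{Schr}), puis de redescendre aux tours de corps gr\^ace au dictionnaire \'etabli des propositions \ref{prop:tourinduite} \`a \ref{prop:rafsuiteinduite}. Soient donc
$$(F) \quad K=F_0 \unlhd \dots \unlhd F_m=L \qquad \text{et} \qquad (E) \quad K=E_0 \unlhd \dots \unlhd E_n=L$$
deux tours galoisiennes de l'extension galoisienne finie $L \gal K$. D'abord j'induirais, par le (1) de la proposition \ref{prop:suiteinduite}, les deux suites normales de $G$
$$(S) \quad G=G_0 \unrhd \dots \unrhd G_m=\iit \qquad \text{et} \qquad (S') \quad G=G'_0 \unrhd \dots \unrhd G'_n=\iit$$
o\`u $G_i:=Gal(L/F_i)$ et $G'_j:=Gal(L/E_j)$.

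Ensuite j'appliquerais le th\'eor\`eme de Schreier aux suites normales $(S)$ et $(S')$ du groupe $G$ : elles admettent des raffinements \'equivalents, disons $(R)$ raffinant $(S)$ et $(R')$ raffinant $(S')$, avec $(R) \sim (R')$ au sens des groupes (m\^eme longueur et, \`a permutation pr\`es, facteurs isomorphes). Par la proposition \ref{prop:raftourinduite}, le raffinement $(R)$ (resp. $(R')$) induit alors un raffinement galoisien $(T)$ de $(F)$ (resp. $(T')$ de $(E)$), la tour induite par $(S)$ \'etant pr\'ecis\'ement $(F)$ en vertu de la proposition \ref{prop:suiteinduite}.(2). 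Comme les inductions entre suites normales et tours galoisiennes sont des bijections r\'eciproques pr\'eservant les longueurs terme \`a terme, les tours $(T)$ et $(T')$ ont respectivement pour longueurs celles de $(R)$ et $(R')$, et il ne reste qu'\`a \'etablir l'\'equivalence $(T) \sim (T')$ au sens de la d\'efinition \ref{def:tourgalcomp}.(2).

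C'est l\`a le point d\'ecisif, mais il devient imm\'ediat en degr\'e fini : si $(T)$ a pour corps $T_i=L^{R_i}$, la th\'eorie de Galois classique (ou le th\'eor\`eme de Krull revisit\'e, partie (2), sp\'ecialis\'e au cas fini) fournit pour chaque marche l'isomorphisme
$$Gal(T_{i+1}/T_i) \isomto R_i / R_{i+1} \;,$$
et de m\^eme pour $(T')$. L'\'equivalence $(R) \sim (R')$ donne une permutation identifiant, \`a isomorphisme pr\`es, les facteurs $R_i/R_{i+1}$ \`a ceux de $(R')$ ; en la composant avec les deux familles d'isomorphismes pr\'ec\'edents, on obtient exactement la permutation des marches rendant isomorphes les groupes de Galois de $(T)$ et ceux des marches correspondantes de $(T')$, d'o\`u $(T) \sim (T')$. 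Le seul soin r\'eel \`a prendre est donc la traduction entre \'equivalence de suites normales (facteurs $R_i/R_{i+1}$) et \'equivalence de tours (groupes de Galois des marches), assur\'ee par l'isomorphisme de quotient ci-dessus ; les \'eventuelles r\'ep\'etitions apparues dans les raffinements de Schreier ne posent aucune difficult\'e, car la d\'efinition \ref{def:tourgalcomp}.(2) autorise les marches triviales, qui se correspondent entre elles. Tout le reste se r\'eduit \`a la combinatoire des bijections d'induction d\'ej\`a \'etablies, et la d\'emonstration d\'etaill\'ee dans ce cas particulier sera d'ailleurs omise, l'\'enonc\'e \'etant ensuite g\'en\'eralis\'e \`a toute extension galtourable au th\'eor\`eme \ref{th:1diss}.
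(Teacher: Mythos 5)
Votre démonstration est correcte et suit exactement la voie que le texte indique sans la détailler : le papier omet volontairement cette preuve en renvoyant aux propositions \ref{prop:tourinduite} à \ref{prop:rafsuiteinduite} et au théorème \ref{th:1diss}, et vous avez précisément explicité ce dictionnaire (montée aux suites normales, Schreier classique, redescente par induction, traduction de l'équivalence via $Gal(T_{i+1}/T_i)\isomto R_i/R_{i+1}$, légitime en degré fini où $Gal(L/L^{R_i})=R_i$ par le théorème d'Artin). Rien à redire.
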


\gespace
\begin{rem}
Dans le cas particulier d'une extension abélienne, cette proposition
\ref{prop:galschreiergaloisien} permet de retrouver la proposition 2.2 de
\cite{Do-M} qui fournit en outre la hauteur des tours de composition.
\end{rem}


\gespace
\gespace
\section{Premier théorème de dissociation}

Venons-en maintenant, pour les extensions galtourables, à l'analogue galoisien
du théorème de Schreier (cf. introduction du présent chapitre). La
démonstration moderne de l'équivalence des raffinements de Schreier utilise le
lemme de Zassenhaus ("Butterfly lemma"). Nous allons montrer que ce sont les
parallélogrammes galoisiens qui jouent, pour les extensions galtourables, le
rôle du lemme de Zassenhaus pour les groupes. Ces parallélogrammes ont de plus
l'avantage de rendre visuel l'esprit de la démonstration. Pour une meilleure
lisibilité de celle-ci, nous avons sorti la proposition suivante qui est en
fait un lemme technique.

\mespace
\begin{prop} \label{prop:butterfly}
Soient $L/K$ une extension galtourable de degré quelconque, et
$$(T^1) \qquad K=T^1_0 \unlhd \dots \unlhd T^1_i \unlhd T^1_{i+1} \unlhd \dots
\unlhd T^1_m=L $$
$$(T^2) \qquad K=T^2_0 \unlhd \dots \unlhd T^2_j \unlhd T^2_{j+1} \unlhd \dots
\unlhd T^2_n=L $$
deux tours galoisiennes de $L/K$. Alors nécessairement :\\
(1) Pour tous indices $i,j,k$ tels que $0 \leq i \leq m-1$ et $0 \leq k < j \leq
n-1$, on a le parallélogramme galoisien
$$[T^1_{i+1} T^2_k \cap T^1_i T^2_j, T^1_{i+1} T^2_{k+1} \cap T^1_i
T^2_j, T^1_{i+1} T^2_{k+1} \cap T^1_i T^2_{j+1}, T^1_{i+1} T^2_k \cap T^1_i
T^2_{j+1}]  \;.$$
En particulier, on a les isomorphismes de restriction
$$ \begin{array}{c}
Gal(T^1_{i+1} T^2_{k+1} \cap T^1_i T^2_{j+1} / T^1_{i+1} T^2_{k+1} \cap T^1_i
T^2_j)	\\
	\\
\text{\rotatebox[origin=c]{270}{$\isomto$}}	\\
	\\
Gal(T^1_{i+1} T^2_k \cap T^1_i T^2_{j+1} / T^1_{i+1} T^2_k \cap T^1_i
T^2_j) \:.
\end{array}$$
(2) Pour tous $i,j,k$ tels que $0 \leq k < i \leq m-1$ et $0 \leq j \leq n-1$,
on a le parallélogramme 
$$[T^1_k T^2_{j+1} \cap T^1_i T^2_j, T^1_{k+1} T^2_{j+1} \cap T^1_i
T^2_j, T^1_{k+1} T^2_{j+1} \cap T^1_{i+1} T^2_j, T^1_k T^2_{j+1} \cap T^1_{i+1}
T^2_j]  \;.$$
En particulier on a les isomorphismes de restriction
$$ \begin{array}{c}
Gal(T^1_{k+1} T^2_{j+1} \cap T^1_{i+1} T^2_j / T^1_{k+1} T^2_{j+1} \cap T^1_i
T^2_j)	\\
	\\
\text{\rotatebox[origin=c]{270}{$\isomto$}}	\\
	\\
Gal(T^1_k T^2_{j+1} \cap T^1_{i+1} T^2_j / T^1_k T^2_{j+1} \cap T^1_i
T^2_j) \:.
\end{array}$$
\end{prop}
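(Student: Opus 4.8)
The plan is to reduce everything to part (1): part (2) is the transpose of (1), obtained by exchanging the two towers $(T^1)$ and $(T^2)$, and the whole parallelogram apparatus (the Galois bigroup of a parallelogram, the scindement de la diagonale) is symmetric under this exchange, so it suffices to treat (1). Moreover, once the displayed quadruple is known to be a Galois parallelogram, the asserted restriction isomorphisms are nothing but the isomorphism of opposite edges furnished by the scindement de la diagonale (Prop. \ref{prop:scind}): in any parallelogram $[A,B,C,D]$ one has $Gal(C/B) \isomto Gal(D/A)$ by restriction, since $C = BD$ and $A = B \cap D$. So the real task is to produce the parallelogram itself.

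To do so I would set $P_0 := T^1_i$ and $P_1 := T^1_{i+1}$ (so that $P_1/P_0$ is Galois) and rewrite the four vertices as $A = P_1 T^2_k \cap P_0 T^2_j$, and likewise for $B,C,D$, each being the intersection of an "upper" compositum (involving $P_1$) with a "lower" one (involving $P_0$). The starting point is the translation theorem (natural irrationalities): translating the Galois step $T^2_{k+1}/T^2_k$, resp. $P_1/P_0$, by the appropriate fields shows that $P_1 T^2_{k+1}/P_1 T^2_k$ and $P_0 T^2_{k+1}/P_0 T^2_k$, resp. $P_1 T^2_k/P_0 T^2_k$, are Galois. These allow one to build the auxiliary Galois parallelogram $\widehat\Pi := [P_0 T^2_{k+1} \cap P_1 T^2_k,\ P_0 T^2_{k+1},\ P_1 T^2_{k+1},\ P_1 T^2_k]$; each of its edges is Galois, being a sub-extension of a translated Galois step (its base already contains $P_0 T^2_k$).

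From $\widehat\Pi$, together with the analogous parallelogram attached to the step $T^2_{j+1}/T^2_j$, I would invoke the theoreme de l'ecartele (Th. \ref{th:ecart}) and the functoriality corollary (Cor. \ref{cor:propfonct}) to check the four defining relations of $[A,B,C,D]$. The equalities $A = B \cap D$ and $C = BD$ are instances of the compositum identity and the intersection identity of the ecartele applied to the translated configuration, while the Galois character of the edges $B/A$ and $D/A$ is obtained by slicing the translated Galois steps $P_1 T^2_{k+1}/P_1 T^2_k$ and $P_0 T^2_{j+1}/P_0 T^2_j$ with the fields $P_0 T^2_j$ and $P_1 T^2_k$ respectively, through part (2-1) of Cor. \ref{cor:propfonct}.

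The delicate point is exactly this last verification. Cutting a Galois extension by an arbitrary field does not in general yield a Galois extension (think of $\mathbb Q(\sqrt[4]{2})/\mathbb Q$); the Galois character of $B/A$ holds only thanks to the interplay between the normality coming from the first tower ($P_1/P_0$) and that of the second-tower step $T^2_{k+1}/T^2_k$. This is precisely the content that the Zassenhaus lemma packages on the group side, transcribed here into the ecartele identities, which are valid exactly under the Galois hypotheses. The main work is therefore to choose the translations so that both sides of the intermediate parallelograms are genuinely Galois over their base (a double intersection), and this is controlled by checking at each step that this base already contains the appropriate translated field.
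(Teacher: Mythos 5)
Your scaffolding matches the paper's: the reduction of (2) to (1) by exchanging the towers, the remark that the restriction isomorphisms are automatic once the quadruple is a parallelogram (scindement de la diagonale, Prop.~\ref{prop:scind}), and your auxiliary parallelogram $\widehat\Pi$, which is exactly the parallelogram $P(i,k)=[T^1_{i+1}T^2_k\cap T^1_iT^2_{k+1},\,T^1_iT^2_{k+1},\,T^1_{i+1}T^2_{k+1},\,T^1_{i+1}T^2_k]$ that opens the paper's proof. Slicing $P(i,k)$ by the intermediate field $B:=T^1_{i+1}T^2_{k+1}\cap T^1_iT^2_j$ via Cor.~\ref{cor:propfonct}(2-1) does give the Galois edge $B/A$ with $A:=T^1_{i+1}T^2_k\cap T^1_iT^2_j$, as in the paper. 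The gap is the other half. Writing $C:=T^1_{i+1}T^2_{k+1}\cap T^1_iT^2_{j+1}$ and $D:=T^1_{i+1}T^2_k\cap T^1_iT^2_{j+1}$, you claim that $D/A$ Galois and $C=BD$ follow in one shot from the \'ecartel\'e and from ``slicing'' the translated step $T^1_iT^2_{j+1}/T^1_iT^2_j$ by $T^1_{i+1}T^2_k$. Neither works: Cor.~\ref{cor:propfonct}(2-1) would require a parallelogram having $T^1_iT^2_j$ as a genuine Galois side with $D$ intermediate over the opposite side, and no such parallelogram exists a priori because $T^1_iT^2_j$ is not Galois over $T^1_iT^2_{k+1}$ once $k+1<j$ (normality is not transitive along the second tower --- which is the entire difficulty the proposition must overcome); similarly the \'ecartel\'e would need $T^1_{i+1}T^2_{k+1}$ and $T^1_iT^2_{j+1}$ to be Galois over their intersection $C$, which is part of what is to be proved.

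The paper supplies the missing mechanism by a descending induction on $k$ from $j-1$ to $0$. At $k=j-1$ one has $B=T^1_iT^2_{k+1}$, the extension $T^1_iT^2_{j+1}/T^1_iT^2_{k+1}$ is a single translated Galois step, so $C$ is Galois over $B$ as an intersection of two Galois extensions of $T^1_iT^2_{k+1}$, and Cor.~\ref{cor:propfonct}(2-2) applied in the sub-parallelogram $P(i,j,k)=[A,B,T^1_{i+1}T^2_{k+1},T^1_{i+1}T^2_k]$ yields the quotient parallelogram $Q(i,j,k)=[A,B,C,D]$, which delivers $D=C\cap T^1_{i+1}T^2_k$, $C=BD$ and the remaining Galois edges all at once. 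For the step $k\rightarrow k-1$, the key observation is that the edge $D/A$ of $Q(i,j,k)$ is literally the edge ``$C/B$'' of the sought $Q(i,j,k-1)$, i.e.\ precisely the Galois hypothesis needed to apply Cor.~\ref{cor:propfonct}(2-2) again in $P(i,j,k-1)$. This telescoping propagation of the Galois property down the index $k$ is the field-theoretic transcription of the Zassenhaus lemma; it is the substance of the proof, and your sketch, which flags the ``delicate point'' but resolves it only by checking inclusions of base fields, does not contain it.
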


\mespace
\begin{proof}
(1) De l'extension galoisienne $T^1_{i+1} \gal T^1_i$, on déduit du corollaire
2.2 
du chapitre 2 l'extension galoisienne $(T^1_{i+1} T^2_k \gal T^1_i T^2_k)$. Or 
$$T^1_i T^2_k \leq T^1_{i+1} T^2_k \cap T^1_i T^2_{k+1} \leq T^1_{i+1} T^2_k 
\; ;$$
d'où la sous-extension galoisienne $(T^1_{i+1} T^2_k \gal T^1_{i+1} T^2_k \cap
T^1_i T^2_{k+1})$. De même, on a les implications
$$(T^2_{k+1} \gal  T^2_k) \; \implique \; (T^1_i T^2_{k+1} \gal  T^1_i T^2_k )
\; \implique \;(T^1_i T^2_{k+1} \gal T^1_{i+1} T^2_k \cap T^1_i T^2_{k+1}) \;.$$
Comme $T^1_{i+1} T^2_k T^1_i T^2_{k+1}=T^1_{i+1} T^2_{k+1}$, on a donc le
parallélogramme galoisien
$$P(i,k):=[T^1_{i+1} T^2_k \cap T^1_i T^2_{k+1} \,,\, T^1_i T^2_{k+1} \,,\,
T^1_{i+1} T^2_{k+1} \,,\, T^1_{i+1} T^2_k] \;.$$
Par ailleurs, de l'hypothèse $k < j$, i.e. $k+1 \leq j$, suit
$T^2_{k+1} \leq T^2_j$, d'où $T^1_i T^2_{k+1} \leq T^1_i T^2_j$. Donc également
$$T^1_{i+1} T^2_k \cap T^1_i T^2_{k+1} \leq T^1_{i+1} T^2_k \cap T^1_i T^2_j 
\;.$$
Comme $T^1_i T^2_{k+1} \leq T^1_{i+1} T^2_{k+1}$, on en tire aussi
$$T^1_i T^2_{k+1} \leq T^1_{i+1} T^2_{k+1} \cap T^1_i T^2_j \;.$$

\begin{figure}[!h]
\begin{center}
\vskip -6mm
\includegraphics[width=9.5cm]{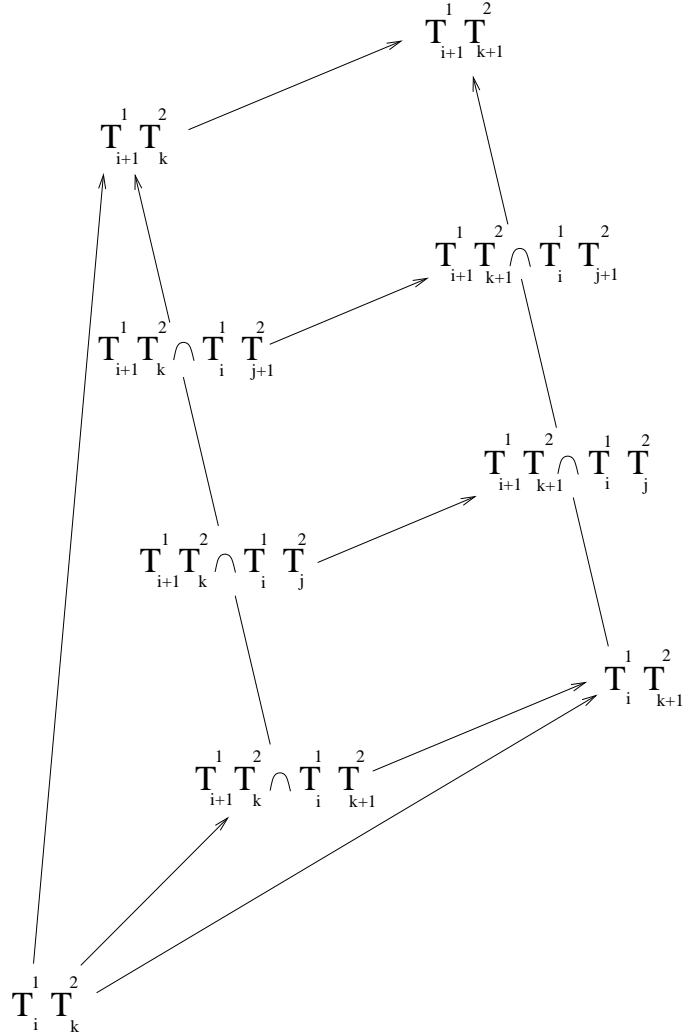}
\end{center}
\vskip -5mm
\rm{\caption{\label{fig:17} \leg Parallélogrammes $P(i,j,k)$ et $Q(i,j,k)$ }}
\vskip -3mm
\end{figure}

De plus
$$\begin{array}{c}
(T^1_{i+1} T^2_{k+1} \cap T^1_i T^2_j) \cap T^1_{i+1} T^2_k = T^1_{i+1} T^2_k
\cap T^1_i T^2_j\\
	\\
\text{{\Large ( }resp. } (T^1_{i+1} T^2_{k+1} \cap T^1_i T^2_{j+1}) \cap
T^1_{i+1} T^2_k = T^1_{i+1} T^2_k \cap T^1_i T^2_{j+1}   \text{ \Large ).}
\end{array}$$ 
On déduit alors du corollaire 4.3
.(2-1) du chapitre 1 que l'on a le sous-parallélogramme galoisien de $P(i,k)$ :
$$\begin{array}{cc}
&P(i,j,k) \\
&\text{\rotatebox[origin=c]{90}{=:}} \\ 
&\left[ T^1_{i+1} T^2_k \cap T^1_i T^2_j,T^1_{i+1} T^2_{k+1} \cap T^1_i T^2_j,
T^1_{i+1} T^2_{k+1},T^1_{i+1} T^2_k \right]\\
\\
\text{{\LARGE ( }resp. } 	&P(i,j+1,k) \\
&\text{\rotatebox[origin=c]{90}{=:}} \\ 
&\left[ T^1_{i+1} T^2_k \cap T^1_i T^2_{j+1},T^1_{i+1} T^2_{k+1} \cap T^1_i
T^2_{j+1},T^1_{i+1} T^2_{k+1},T^1_{i+1} T^2_k \right] \text{\LARGE ).}
\end{array}$$

Prouvons maintenant, par récurrence descendante sur $k \in \{j-1,\dots, 0\}$
que l'on a le parallégramme galoisien quotient
$$\begin{array}{c}
Q(i,j,k) \\
\text{\rotatebox[origin=c]{90}{=:}} \\ 
\left[ T^1_{i+1} T^2_k \cap T^1_i T^2_j,T^1_{i+1} T^2_{k+1} \cap T^1_i
T^2_j,T^1_{i+1} T^2_{k+1} \cap T^1_i T^2_{j+1},T^1_{i+1} T^2_k \cap T^1_i
T^2_{j+1}\right]\;. \end{array}$$

\pespace
{\it Rang $k=j-1$.}\\ Par le corollaire 2.2 du chapitre 2, 
on a l'implication
$$\text{\Large (  }(T^2_{j+1}=T^2_{k+2}) \gal T^2_{k+1} \text{\Large )} \quad
\implique \quad (T^1_i T^2_{j+1} \gal T^1_i T^2_{k+1}) \;.$$
De $T^1_{i+1} T^2_{k+1} \gal T^1_i T^2_{k+1}$ suit donc par intersection
$$(T^1_{i+1} T^2_{k+1} \cap T^1_i T^2_{j+1}) \gal (T^1_i T^2_{k+1}= T^1_{i+1} T^2_{k+1}
\cap T^1_i T^2_j) \;.$$
Par le corollaire 4.3.(2-2) du chapitre 1, 
appliqué dans le sous-parallélogramme $P(i,j,k)$, on en déduit alors
l'existence du parallélogramme galoisien quotient $Q(i,j,k)$.

\pespace
{\it Rang $k-1$.}\\ Supposons l'existence du parallélogramme $Q(i,j,k)$ (avec $k
\geq 1$) et prouvons celle de $Q(i,j,k-1)$. De la donnée de $Q(i,j,k)$ suit en
particulier l'extension galoisienne
$$(T^1_{i+1} T^2_k \cap T^1_i T^2_{j+1}) \gal (T^1_{i+1} T^2_k \cap T^1_i T^2_j)
\;.$$
L'existence de $Q(i,j,k-1)$ se déduit alors du corollaire 4.3.(2-2) du chapitre
1 
appliqué cette fois dans le sous parallélogramme $P(i,j,k-1)$ de $P(i,k-1)$.

La récurrence établissant l'existence des parallélogrammes quotients $Q(i,j,k)$
est donc prouvée. On en déduit en particulier l'isomorphisme annoncé.

\pespace
(2) Il s'agit du (1) mutatis mutandis, par la permutation
$$\left(\begin{array}{ccccccc}
T^1	&T^2	&i	&j	&k	&m	&n\\
T^2	&T^1	&j	&i	&k	&n	&m
\end{array}\right)\;.$$
\end{proof}

\gespace
Nous sommes maintenant en mesure de prouver l'analogue galoisien suivant au
théorème de Schreier pour les suites normales de groupes (\cite[p.124]{Ro},
\cite{Schr}).

\mespace
\begin{Th}  ($1^{\text{er}}$ théorème de dissociation, dit "de Galschreier")
\label{th:1diss}\index{Dissociation!$1^{\text{er}}$ Théorème (de)}\index{Théorème!de dissociation}\index{Théorème!de Galschreier}\\
Deux tours galoisiennes d'une même extension galtourable admettent des
raffinements qui sont des tours galoisiennes équivalentes.
\end{Th}

\mespace \noindent
{\it Scholies.} (1) Les extensions galtourables sont ici quelconques, de degré fini ou
infini.\\
(2) Ces raffinements sont nécessairement galoisiens en vertu du Fait 1.5.(2) du
chapitre 3. 

\mespace
\begin{proof}
Nous allons prouver que les tours galoisiennes $(T^1)$ et $(T^2)$ de la
proposition \ref{prop:butterfly} admettent des raffinements équivalents. En
faisant varier $k$ de $j-1$ à $0$ dans le (1) de cette proposition, on a 
$$\forall i \in \{0, \dots , m-1 \} \quad \forall j \in \{1, \dots, n-1 \}$$
$$\begin{array}{ccc}
Gal(T^1_{i+1} T^2_j \cap T^1_i T^2_{j+1} / T^1_{i+1} T^2_j \cap T^1_i
T^2_j)	&=	&Gal(T^1_{i+1} T^2_j \cap T^1_i T^2_{j+1} / T^1_i T^2_j)\\
\\
\text{\rotatebox[origin=c]{270}{$\isomto$}}	\\
\\
Gal(T^1_{i+1} T^2_{j-1} \cap T^1_i T^2_{j+1} / T^1_{i+1} T^2_{j-1} \cap T^1_i
T^2_j)\\
\\
\text{\rotatebox[origin=c]{270}{$\isomto$}}	\\
\vdots \\
\text{\rotatebox[origin=c]{270}{$\isomto$}}	\\
\\
Gal(T^1_{i+1} T^2_0 \cap T^1_i T^2_{j+1} / T^1_{i+1} T^2_0 \cap T^1_i
T^2_j)	&=	&Gal(T^1_{i+1} \cap T^1_i T^2_{j+1} / T^1_{i+1} \cap T^1_i
T^2_j) \;.\\
\end{array}$$
\pespace
Retenons que pour tous $i \in \{0, \dots, m-1\}$ et $j \in \{1, \dots, n-1\}$,
on a l'isomorphisme
$$isom^1_{(i,j)} : Gal(T^1_{i+1} \cap T^1_i T^2_{j+1} / T^1_{i+1} \cap T^1_i
T^2_j) \isomto Gal(T^1_{i+1} T^2_j \cap T^1_i T^2_{j+1} / T^1_i T^2_j) \;.$$

\pespace \noindent
De même, en faisant varier $k$ de $i-1$ à $0$ dans le (2) de la proposition
\ref{prop:butterfly}, on a
$$\forall i \in \{1, \dots , m-1 \} \quad \forall j \in \{0, \dots, n-1 \}$$
$$\begin{array}{ccc}
Gal(T^1_i T^2_{j+1} \cap T^1_{i+1} T^2_j / T^1_i T^2_{j+1} \cap T^1_i
T^2_j)	&=	&Gal(T^1_i T^2_{j+1} \cap T^1_{i+1} T^2_j / T^1_i T^2_j)\\
\\
\text{\rotatebox[origin=c]{270}{$\isomto$}}	\\
\\
Gal(T^1_{i-1} T^2_{j+1} \cap T^1_{i+1} T^2_j / T^1_{i-1} T^2_{j+1} \cap T^1_i
T^2_j)\\
\\
\text{\rotatebox[origin=c]{270}{$\isomto$}}	\\
\vdots \\
\text{\rotatebox[origin=c]{270}{$\isomto$}}	\\
\\
Gal(T^1_0 T^2_{j+1} \cap T^1_{i+1} T^2_j / T^1_0 T^2_{j+1} \cap T^1_i
T^2_j)	&=	&Gal(T^2_{j+1} \cap T^1_{i+1} T^2_j / T^2_{j+1} \cap T^1_i
T^2_j) \;.\\
\end{array}$$
\pespace
Retenons que pour tous $i \in \{1, \dots, m-1\}$ et $j \in \{0, \dots, n-1\}$,
on a l'isomorphisme
$$isom^2_{(i,j)} : Gal(T^2_{j+1} \cap T^1_{i+1} T^2_j / T^2_{j+1} \cap T^1_i
T^2_j) \isomto Gal(T^1_i T^2_{j+1} \cap T^1_{i+1} T^2_j / T^1_i T^2_j) \;.$$
D'où l'isomorphisme composé
$$isom_{(i,j)}:=\left\{\begin{array}{ll}
(isom^2_{(i,j)})^{-1} \circ isom^1_{(i,j)}	&\text{si } (i,j) \neq (0,0)\\
\\
isom^1_{(i,j)}					&\text{si } i=0\, ,\;j \neq 0\\

\\
(isom^2_{(i,j)})^{-1} 				&\text{si } i \neq 0\, ,\;j=0\\
\\
id_{\,Gal(T^1_1 \cap T^2_1/K)}			&\text{si } (i,j) = (0,0)\\
\end{array}\right.$$
Dans tous les cas, on a
$$isom_{(i,j)}\;: \quad Gal(T^1_{i+1} \cap T^1_i T^2_{j+1} / T^1_{i+1} \cap T^1_i
T^2_j) \isomto Gal(T^2_{j+1} \cap T^1_{i+1} T^2_j / T^2_{j+1} \cap T^1_i
T^2_j) \;.$$

\pespace \noindent
Remarquons par division euclidienne que, pour tout $l \in \{0, \dots ,mn-1\}$,
il existe un unique couple $(q^1_l,r^1_l) \in  \{0, \dots ,m-1\} \times  \{0,
\dots ,n-1\}$ et un unique $(q^2_l,r^2_l) \in  \{0, \dots ,n-1\} \times  \{0,
\dots ,m-1\}$ tels que
$$l=q^1_l  n + r^1_l =q^2_l  m + r^2_l \;.$$
Soient maintenant $(T'^1)$ et $(T'^2)$ les deux tours de $L/K$ définies par les
formules
$$(\frak{F}) \; \left\{\begin{array}{cc}
\forall l \in \{0, \dots ,mn-1\}	&T'^1_l:=T^1_{q^1_l+1} \cap
T^1_{q^1_l} T^2_{r^1_l} \;;\\
\\
					&T'^2_l:=T^2_{q^2_l+1} \cap
T^1_{r^2_l} T^2_{q^2_l}  \;;\\
\\
\\
T'^1_{mn}:=T^1_m \cap T^1_{m-1} T^2_n=L  \;;	&T'^2_{mn}:=T^2_n \cap T^1_m
T^2_{n-1}=L \;.
\end{array}\right.$$
Notons que
$$\left\{\begin{array}{l}
\forall i \in  \{0, \dots ,m-1\} \qquad T^1_i=T^1_{i+1} \cap T^1_i K=T^1_{i+1}
\cap T^1_i T^2_0=T'^1_{in} \;;\\
\\
T^1_m=L=T'^1_{mn} \;.
\end{array}\right.$$
La suite d'indices
$$0 \leq l^1_0:=0 < l^1_1:=n < \dots < l^1_i:=in < \dots < l^1_{m-1}:=(m-1)n <
l^1_m:=mn \leq mn$$
est donc telle que
$$\forall i \in  \{0, \dots ,m\} \qquad T'^1_{l_i}=T^1_i \;;$$ et 
$$(T'^1) \qquad K=T'^1_0 \leq T'^1_1 \leq \dots \leq T'^1_l \leq \dots
\leq T'^1_{mn-1} \leq \dots \leq T'^1_{mn}=L$$
est un raffinement de la tour $(T^1)$ de l'énoncé au sens de la définition \&
convention 1.1 du chapitre 3. 
De même, notons que
$$\left\{\begin{array}{l}
\forall j \in  \{0, \dots ,n-1\} \qquad T^2_j=T^2_{j+1} \cap K T^2_j=T^2_{j+1}
\cap T^1_0 T^2_j=T'^2_{jm} \;;\\
\\
T^2_n=L=T^2_{nm} \;.
\end{array}\right.$$
La suite d'indices
$$0 \leq l^2_0:=0 < l^2_1:=m < \dots < l^2_j:=jm < \dots < l^2_{n-1}:=(n-1)m <
l^2_n:=nm \leq nm$$
est donc telle que
$$\forall j \in  \{0, \dots ,n\} \qquad T'^2_{l_j}=T^2_j \;. $$
Et on vient de montrer que $(T'^2)$ raffine $(T^2)$.\\

Par le (1) (resp. le (2)) de la proposition \ref{prop:butterfly} pour $j \neq 0$
(resp. $j = 0$), on a la tour galoisienne
$$ \begin{array}{rcl}
(T'^1)\quad K=T'^1_0=	&T^1_1 \cap T^1_0 T^2_0	&\unlhd \, T'^1_1=T^1_1 \cap
T^1_0 T^2_1\unlhd \dots \unlhd T'^1_{n-1}=T^1_1 \cap T^1_0 T^2_{n-1} \\
\\
		&\unlhd \, T^1_1 \cap T^1_0 T^2_n	&\\
		&\text{\rotatebox[origin=c]{90}{=}} &\\ 
		&T^1_1	&\\
		&\text{\rotatebox[origin=c]{90}{=}} &\\ 
	T'^1_n=	&T^1_2 \cap T^1_1 T^2_0	&\unlhd \, T'^1_{n+1}=T^1_2 \cap
	T^1_1 T^2_1 \unlhd \dots \unlhd
T'^1_{2n-1}=T^1_2 \cap T^1_1 T^2_{n-1} \\
\\
		&\unlhd \, T^1_2 \cap T^1_1 T^2_n	&\\
		&\text{\rotatebox[origin=c]{90}{=}} &\\ 
		&T^1_2	&\\
		&\text{\rotatebox[origin=c]{90}{=}} &\\ 
	T'^1_{2n}=	&T^1_3 \cap T^1_2 T^2_0	&\unlhd \, T'^1_{2n+1}=T^1_3 \cap
	T^1_2 T^2_1 \unlhd \dots \unlhd
T'^1_{3n-1}=T^1_3 \cap T^1_2 T^2_{n-1} \\

		&\vdots					&\\
		&\vdots					&\\

		&\unlhd \, T^1_i \cap T^1_{i-1} T^2_n	&\\
		&\text{\rotatebox[origin=c]{90}{=}} &\\ 
		&T^1_i	&\\
		&\text{\rotatebox[origin=c]{90}{=}} &\\ 
	T'^1_{in}=	&T^1_{i+1} \cap T^1_i T^2_0	&\unlhd \,
	T'^1_{in+1}=T^1_{i+1} \cap	T^1_i T^2_1 \unlhd \dots \\
\\
		&		&\dots \unlhd
T'^1_{(i+1)n-1}=T^1_{i+1} \cap T^1_i T^2_{n-1} \\

		&\vdots					&\\

		&\text{\rotatebox[origin=c]{90}{=}} &\\ 
		&T^1_{m-1}	&\\
		&\text{\rotatebox[origin=c]{90}{=}} &\\ 
	T'^1_{(m-1)n}=	&T^1_m \cap T^1_{m-1} T^2_0	&\unlhd \,
	T'^1_{(m-1)n+1}=T^1_m \cap T^1_{m-1} T^2_1 \unlhd \dots \\
\\
	&	&\dots \unlhd T'^1_{mn-1}=T^1_m \cap T^1_{m-1} T^2_{n-1} \\
\\
		&\unlhd \, T^1_m \cap T^1_{m-1} T^2_n 	&=T'^1_{mn}=L \;.\\
\end{array}$$

De même par le (2) (resp. le (1)) de la proposition \ref{prop:butterfly} pour $i
\neq 0$ (resp. $i = 0$), on a la tour galoisienne

$$ \begin{array}{rcl}
(T'^2)\quad K=T'^2_0=	&T^2_1 \cap T^1_0 T^2_0	&\unlhd \, T'^2_1=T^2_1 \cap
T^1_1 T^2_0\unlhd \dots \unlhd T'^2_{m-1}=T^2_1 \cap T^1_{m-1} T^2_0 \\
\\
		&\unlhd \, T^2_1 \cap T^1_m T^2_0	&\\
		&\text{\rotatebox[origin=c]{90}{=}} &\\ 
		&T^2_1	&\\
		&\text{\rotatebox[origin=c]{90}{=}} &\\ 
	T'^2_m=	&T^2_2 \cap T^1_0 T^2_1	&\unlhd \, T'^2_{m+1}=T^2_2 \cap
	T^1_1 T^2_1 \unlhd \dots \unlhd
T'^2_{2m-1}=T^2_2 \cap T^1_{m-1} T^2_1 \\
\\
		&\unlhd \, T^2_2 \cap T^1_m T^2_1	&\\
		&\text{\rotatebox[origin=c]{90}{=}} &\\ 
		&T^2_2	&\\
		&\text{\rotatebox[origin=c]{90}{=}} &\\ 
	T'^2_{2m}=	&T^2_3 \cap T^1_0 T^2_2	&\unlhd \, T'^2_{2m+1}=T^2_3 \cap
	T^1_1 T^2_2 \unlhd \dots \unlhd
T'^2_{3m-1}=T^2_3 \cap T^1_{m-1} T^2_2 \\

		&\vdots					&\\
		&\vdots					&\\

		&\unlhd \, T^2_j \cap T^1_m T^2_{j-1}	&\\
		&\text{\rotatebox[origin=c]{90}{=}} &\\ 
		&T^2_j	&\\
		&\text{\rotatebox[origin=c]{90}{=}} &\\ 
	T'^2_{jm}=	&T^2_{j+1} \cap T^1_0 T^2_j	&\unlhd \,
	T'^2_{jm+1}=T^2_{j+1} \cap	T^1_1 T^2_j \unlhd \dots \\
\\
		&		&\dots \unlhd
T'^2_{(j+1)m-1}=T^2_{j+1} \cap T^1_{m-1} T^2_j \\

		&\vdots					&\\

		&\text{\rotatebox[origin=c]{90}{=}} &\\ 
		&T^2_{n-1}	&\\
		&\text{\rotatebox[origin=c]{90}{=}} &\\ 
	T'^2_{(n-1)m}=	&T^2_n \cap T^1_0 T^2_{n-1}	&\unlhd \,
	T'^2_{(n-1)m+1}=T^2_n \cap T^1_1 T^2_{n-1} \unlhd \dots \\
\\
	&	&\dots \unlhd T'^2_{nm-1}=T^2_n \cap T^1_{m-1} T^2_{n-1} \\
\\
		&\unlhd \, T^2_n \cap T^1_m T^2_{n-1} 	&=T'^2_{mn}=L \;.\\
\end{array}$$
\pespace

Il reste à montrer que les tours $(T'^1)$ et $(T'^2)$ ainsi construites sont
équivalentes (cf. Déf. \ref{def:tourgalcomp}). Elles ont même nombre de marches
$mn$. Considérons ensuite l'application
$$\begin{array}{rccl}
\sigma :	& \{1, \dots ,mn\} 	&\longrightarrow	& \{1, \dots
,mn\} \\
		&l			&\longmapsto		&\sigma(l):=
r^1_{l-1} m + q^1_{l-1} + 1 
\end{array}$$
où $q^1_{l-1}$ et $r^1_{l-1}$ sont définis, comme précédemment, par division
euclidienne de $l-1$ par $n$ :
$$l-1=q^1_{l-1}  n + r^1_{l-1} \qquad 0 \leq r^1_{l-1} \leq n-1 \;.$$
En particulier donc, $0 \leq q^1_{l-1} \leq m-1$, en sorte que $r^1_{l-1} m +
q^1_{l-1}$ est une division euclidienne par $m$, ce qui assure l'injectivité,
donc la bijectivité de l'application $\sigma$. Autrement dit, $\sigma$ est un
élément du groupe symétrique $S_{mn}$. Explicitement, $\sigma$ est l'identité si
$m=1$ ou $n=1$, et pour $m \geq 2, \: n \geq 2$ :
$$\sigma = \left(\begin{array}{*{10}{c}}
1	&2	&\dots	&n	&n+1	&n+2	&\dots	&2n	&\dots	&mn\\
1	&m+1	&\dots	&(n-1)m+1&2	&m+2	&\dots	&(n-1)m+2&\dots	&mn
\end{array}\right) .$$
Notons tout d'abord que $Gal(T'^1_{mn}=L / T'^1_{mn-1}=T^1_{m-1} T^2_{n-1}) =
Gal(T'^2_{\sigma(mn)} /T'^2_{\sigma(mn)-1})$
Fixons-nous un $l \in \{1, \dots ,mn-1\}$ avec toujours
$$l=q^1_l  n + r^1_l \qquad (q^1_l , r^1_l) \in  \{0, \dots ,m-1\} \times \{0,
\dots ,n-1\} \;.$$
\pespace
Envisageons deux cas.\\
{\it $1^{er}$ cas : }$r^1_l=0$.\\
Nécessairement $q^1_l \geq 1$ (sinon $l=0$ : absurde). Alors de $l-1=(q^1_l-1)n +
n-1$ suit
$$q^1_{l-1}=q^1_l-1 ,\quad r^1_{l-1}=n-1 \;.$$
d'où
$$\sigma(l)= (n-1) m + q^1_l \;.$$
On a déjà vu que
$$ \forall i \in \{0, \dots ,m-1\} \quad T'^1_{in}=T^1_i \;.$$
Donc, comme $T^2_n=L$, 
$$T'^1_l=T'^1_{q^1_ln}=T^1_{q^1_l}=T^1_{q^1_l} \cap T^1_{q^1_l-1} T^2_n \;.$$
Dès lors, par l'isomorphisme $isom_{(q^1_{l-1},n-1)}$,
$$\begin{array}{rcl}
Gal(T'^1_l/T'^1_{l-1}) 	&=		&Gal(T^1_{q^1_l} \cap T^1_{q^1_l-1}
T^2_n / T^1_{q^1_l} \cap T^1_{q^1_l-1} T^2_{n-1})\\
\\
			&\isomto	&Gal(T^2_n \cap T^1_{q^1_l} T^2_{n-1}
/ T^2_n \cap T^1_{q^1_l-1} T^2_{n-1})\\
\\
			&=		&Gal(T'^2_{(n-1)m+q^1_l} /
			T'^2_{(n-1)m+q^1_l-1})\\
			\\
			&=		&Gal(T'^2_{\sigma(l)} /
			T'^2_{\sigma(l)-1}) \;.\\	
\end{array}$$

\noindent{\it $2^{\text{nd}}$ cas : }$r^1_l \geq 1$.
\pespace
\noindent De $l-1=q^1_l n + r^1_l-1$ suit alors
$$q^1_{l-1}=q^1_l \;, \quad r^1_{l-1}=r^1_l - 1 \;;$$
d'où
$$\sigma(l)= (r^1_l - 1) m + q^1_l + 1 \;.$$
On en déduit par l'isomorphisme $isom_{(q^1_{l-1},r^1_{l-1})}$ que

$$\begin{array}{rcl}
Gal(T'^1_l/T'^1_{l-1}) 	&=		&Gal(T^1_{q^1_l+1} \cap T^1_{q^1_l}
T^2_{r^1_l} / T^1_{q^1_l+1} \cap T^1_{q^1_l} T^2_{r^1_l-1})\\
\\
			&\isomto	&Gal(T^2_{r^1_l} \cap T^1_{q^1_l+1}
			T^2_{r^1_l-1} / T^2_{r^1_l} \cap T^1_{q^1_l}
			T^2_{r^1_l-1}) \;.\\
\end{array}$$
Pour $q^1_l+1 \leq m-1$, ce dernier groupe est, par définition de la tour
$(T'^2)$, égal à
$$Gal(T'^2_{(r^1_l-1)m+q^1_l+1} / T'^2_{(r^1_l-1)m+q^1_l}) = 
Gal(T'^2_{\sigma(l)} /T'^2_{\sigma(l)-1}) \;.$$
Enfin si $q^1_l+1 = m$, on a $\sigma(l)=r^1_l m$, d'où
$$T^2_{r^1_l} \cap T^1_{q^1_l+1} T^2_{r^1_l-1} = T^2_{r^1_l} \cap L
T^2_{r^1_l-1} = T^2_{r^1_l} \;.$$
Et l'on a déjà noté que $T^2_{r^1_l}=T'^2_{r^1_l m}=T'^2_{\sigma(l)}$.

Ainsi dans tous les cas, on a prouvé que
$$\forall l \in \{1, \dots ,mn\} \quad Gal(T'^1_l/T'^1_{l-1}) \isomto
Gal(T'^2_{\sigma(l)} /T'^2_{\sigma(l)-1}) \;,$$
i.e. que $(T'^2) \sim (T'^1)$.
Ceci achève la démonstration du $1^{er}$ théorème de dissociation.
\end{proof}

\gespace
Dans le cas de tours strictes, le théorème \ref{th:1diss} précédent peut être
amélioré en le suivant :

\mespace
\begin{cor} \label{cor:1dissstrict}
Deux tours galoisiennes strictes d'une même extension galtourable admettent des
raffinements qui sont des tours galoisiennes strictes équivalentes.
\end{cor}

\mespace
\begin{proof}
Dans les notations des proposition \ref{prop:butterfly} et théorème
\ref{th:1diss}, les tours $(T^1)$ et $(T^2)$, supposées strictes, admettent les
raffinements équivalents $(T'^1)$ et $(T'^2)$, dont rien ne permet d'affirmer
qu'ils sont stricts.
La proposition \ref{prop:tourgalstrictequiv} nous montre quant à elle que les
tours galoisiennes strictes associées $(T'^1_<)$ et $(T'^2_<)$ sont équivalentes.
Or le corollaire 2.4 du chapitre 3 
conclut que $(T'^1_<)$ (resp. $(T'^2_<)$)
est encore un raffinement de $(T^1)$ (resp. $(T^2)$).
\end{proof}

\gespace
Ce théorème \ref{th:1diss} admet encore une généralisation aux tours
galtourables :

\mespace
\begin{Th} [] ($1^{er}$ théorème de dissociation bis)
\label{th:1dissbis}\index{Théorème!de
dissociation}\index{Dissociation!$1^{\text{er}}$ Théorème bis (de)}\\
Deux tours galtourables d'une même extension galtourable admettent des
raffinements qui sont des tours galoisiennes équivalentes.
\end{Th}

\mespace
\begin{proof}
Soient $L/K$ une extension galtourable et $(E^1)$, $(E^2)$ deux tours
galtourables de $L/K$. Par la proposition 1.9 du chapitre 3, 
$(E^1)$ (resp. $(E^2)$) admet un raffinement qui est une tour galoisienne $(T^1)$
(resp. $(T^2)$).
Le théorème \ref{th:1diss} assure alors que les tours galoisiennes $(T^1)$ et
$(T^2)$ de $L/K$ admettent des raffinements équivalents $(T'^1)$ et $(T'^2)$.
Or, par le Fait 1.6 du chapitre 3, 
$(T'^1)$ (resp. $(T'^2)$) est encore un raffinement de $(E^1)$ (resp. $(E^2)$).
\end{proof}

\mespace
De même, le théorème précédent admet une version pour les tours strictes :

\mespace
\begin{cor} \label{cor:1dissbisstrict}
Deux tours galtourables strictes d'une même extension galtourable  admettent des
raffinements qui sont des tours galoisiennes strictes équivalentes.
\end{cor}

\pespace
\begin{proof}
Celle du corollaire \ref{cor:1dissstrict} mutatis mutandis.
\end{proof}


\mespace
\mespace
\section{Deuxième et troisième théorèmes de dissociation}

Notons la proposition suivante, essentielle pour le théorème général
\ref{th:2diss} à suivre :

\mespace
\begin{prop} \label{prop:galinfpasgalsimple}
Une extension galoisienne infinie n'est jamais galsimple.
\end{prop}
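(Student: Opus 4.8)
The plan is to show that an infinite Galois extension $L/K$ always admits a proper Galois quotient extension, which by the very definition of galsimplicity is impossible for a galsimple extension. Recall that galsimplicity of $L/K$ (Chap. 2, Def. 1.6.(2)) forbids any intermediate field $F$ with $K \leq F \leq L$ and $F/K$ Galois other than $F = K$ or $F = L$. So it suffices to exhibit a field $F$ with $K < F < L$ such that $F/K$ is Galois, and then conclude by contradiction.

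First I would pick any element $\alpha \in L \setminus K$; this is possible since $L/K$ is infinite and in particular $L \neq K$. Because $L/K$ is Galois it is normal and separable, so the minimal polynomial $P(X) = Irr(\alpha, K, X)$ is separable and splits completely in $L$. I would then set $F := K(R)$, where $R$ is the (finite) set of roots of $P$ in $L$; that is, $F$ is the splitting field of $P$ over $K$ realized inside $L$, so that automatically $F \subseteq L$.

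The key step is that $F/K$ is finite and Galois: it is normal as the splitting field of $P$, separable because $P$ is separable, and of finite degree since $F$ is generated over $K$ by finitely many algebraic elements. Now $\alpha \in F \setminus K$ forces $K < F$, while $[F:K] < \infty$ together with $[L:K] = \infty$ forces $F \neq L$, i.e. $F < L$. Thus $K < F < L$ with $F/K$ Galois, which contradicts the galsimplicity of $L/K$; hence $L/K$ cannot be galsimple.

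I do not expect any serious obstacle here: the only point requiring care is the normality of $L/K$, which is precisely what guarantees that $P$ splits inside $L$ and hence that $F \subseteq L$. Alternatively, and in keeping with the infinite Galois theory developed above, I could argue on the group side: $G := Gal(L/K)$ is an infinite profinite group, so it possesses a proper open (hence closed) normal subgroup $U$, of finite index and therefore infinite, whence $1 \neq U \neq G$; the revisited Krull theorem then converts $U$ into a proper Galois quotient $L^{U}/K$ of $L/K$, again contradicting galsimplicity.
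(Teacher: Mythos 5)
Your proof is correct and follows essentially the same route as the paper: both arguments produce a finite Galois subextension $K \lhd F < L$ (proper on both sides because $[F:K]<\infty$ while $[L:K]=\infty$) and conclude by contradiction with galsimplicity. The only difference is that the paper cites the fact that $L$ is the union of its finite Galois subextensions over $K$, whereas you construct one explicitly as the splitting field inside $L$ of $Irr(\alpha,K,X)$ for some $\alpha\in L\setminus K$ --- which is precisely how that cited fact is proved.
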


\mespace
\begin{proof}
Soit $L/K$ une extension galoisienne infinie. Considérons $\{E_i\}_{i \in I}$
l'ensemble des corps intermédiaires entre $K$ et $L$ tels que l'extension
quotient $E_i/K$ soit galoisienne finie :
$$\forall i \in I \quad K \unlhd E_i \leq L \qquad [E_i:K] < \infty \;.$$
On sait \cite[p.16, Satz 2.3]{Ko1} (ou \cite{Ko}) que
$$L = \bigcup_{i \in I} E_i \;.$$
Si l'on avait $E_i=K$ pour tout $i$ dans $I$, on aurait donc $L=K$, ce qui
contredirait l'hypothèse $[L:K]=\infty$.
Par conséquent
$$\exists i_0 \in I \qquad E_{i_0} \neq K \;.$$
De plus
$$([E_{i_0}:K] < \infty \;, \quad [L:K] = \infty) \qquad \implique \qquad E_{i_0} \neq L
\;.$$
Ainsi $K \lhd E_{i_0} < L$, et l'extension $L/K$ n'est pas galsimple.
\end{proof}

\gespace
En général, un groupe n'admet pas de suite de composition. Les groupes finis en
admettent une, mais ce ne sont pas les seuls. Nous avons prouvé que les
extensions de corps, même séparables, n'admettent pas nécessairement de tour
galoisienne (Chap. 2, Ex. 1.10.(i) ). 
Avec la notion d'extension galtourable, le théorème suivant répond à la question
de savoir quelles sont exactement les extensions de corps admettant une tour de
composition galoisienne.

\mespace
\begin{Th}[] ($2^{\text{ème}}$ théorème de dissociation) 
\label{th:2diss}\index{Théorème!de
dissociation}\index{Dissociation!$2^{\text{ème}}$ Théorème (de)}\\
Une extension de corps admet une tour de composition galoisienne si et seulement
si elle est galtourable finie.
\end{Th}

\mespace \noindent
{\it Scholie.} En particulier, la proposition \ref{prop:tourcompositiongalfinie} se généralise donc aux
extensions galtourables.

\mespace
\begin{proof}
Soit $L/K$ une extension admettant une tour de composition galoisienne 
$$(F) \qquad K=F_0 \lhd \dots \lhd F_i \lhd \dots \lhd F_m=L \;.$$
Chacune des marches de $(F)$ est galsimple en vertu de la proposition
\ref{prop:compssigalgalsimple}. Par définition (Chap. 2, Déf. 1.4), 
$L/K$ est en particulier galtourable. Supposons qu'elle soit de degré infini.
De $\infty = [L:K]={\displaystyle\prod^{m-1}_{i=0}}[F_{i+1}:F_i]$ suit qu'au
moins l'une des marches est infinie :
$$\exists i_0 \in \{0, \dots,m-1\} \quad [F_{i_0+1}:F_{i_0}]=\infty \;.$$
Mais alors $F_{i_0+1} / F_{i_0}$ est galoisienne infinie et galsimple, ce qui
contredit la proposition \ref{prop:galinfpasgalsimple} précédente.

\pespace
Réciproquement, supposons l'extension $L/K$ galtourable finie non triviale (le
résultat est vrai pour l'extension triviale d'après le Fait
\ref{fait:tourcompositiontriviale}). Soit 
$$(F) \qquad K=F_0 \unlhd \dots \unlhd F_i \unlhd \dots \unlhd F_m=L$$
une tour galoisienne de $L/K$. Quitte à prendre sa tour stricte associée
$(F_<)$, on peut supposer que $(F)$ est une tour stricte en vertu du corollaire
2.6 du chapitre 3. 
L'extension $L/K$ étant de degré fini, il en est de même de chacune de ses marches
$F_{i+1} \gal F_i \quad (i=0, \dots, m-1)$. En tant qu'extensions galoisiennes
finies, celles-ci admettent des tours de composition galoisiennes d'après la
proposition \ref{prop:tourcompositiongalfinie} :
$$(T_i) \qquad F_i=T_{j_i} \lhd \dots \lhd T_{j_{i+1}}=F_{i+1}$$
où $j_i < j_{i+1}$ puisque $(F)$ est stricte.
De même que dans la démonstration de la proposition 1.9 du chapitre 3, 
la juxtaposition des tours galoisiennes $(T_i) \quad(i=0, \dots, m-1)$ donne la
tour galoisienne stricte
$$(T) \qquad K=F_0=T_{j_0} \lhd \dots \lhd T_{j_1}=F_1 \lhd \dots \lhd
T_{j_m}=F_m=L$$
de $L/K$. Chacune des marches de $(T)$ est une marche de l'une des tours
$(T_i)$, donc est galsimple (cf. Prop. \ref{prop:compssigalgalsimple}).
Finalement la tour galoisienne $(T)$ est elle-même de composition.
\end{proof}

\mespace
Nous sommes maintenant en mesure de démontrer l'analogue galoisien au théorème
de Jordan-Hölder annoncé dans l'introduction du présent chapitre.

\mespace
\begin{Th}[] ($3^{\text{ème}}$ théorème de dissociation, dit "de
Galjordanhölder") \label{th:3diss}\index{Théorème!de
dissociation}\index{Théorème!de
Galjordanhölder}\index{Dissociation!$3^{\text{ème}}$ Théorème (de)}\\
Soit $L/K$ une extension finie galtourable.\\
(1) Toute tour galoisienne stricte de $L/K$ admet un raffinement qui est une
tour de composition galoisienne de $L/K$.\\
(2) Deux tours de composition galoisiennes de $L/K$ sont équivalentes.
\end{Th}

\mespace \noindent
{\it Scholie. }Le raffinement du (1) est nécessairement un raffinement
galoisien en vertu du Fait 1.5.(2) du chapitre 3. 

\mespace
\begin{proof}
(1) D'après le $2^{\text{ème}}$ théorème de dissociation (Th. \ref{th:2diss}), $L/K$
admet une tour de composition galoisienne $(C)$. Soit $(T)$ une tour galoisienne stricte de
$L/K$. D'après le théorème de Galschreier (Th. \ref{th:1diss}), $(C)$ et $(T)$
admettent des raffinements $(C')$ et $(T')$ qui sont des tours galoisiennes
équivalentes. Ces raffinements sont galoisiens (Chap. 3, Fait 1.5.(2))
. Comme la tour de composition $(C)$ n'admet aucun raffinement galoisien
propre, $(C')$ est nécessairement un raffinement trivial de $(C)$. Or $(C)$
est stricte par définition. C'est donc la tour stricte associée à $(C')$
(Chap. 3, Prop. \& Déf. 2.1), 
d'où
$$(C)=(C'_<) \;.$$
Par la proposition \ref{prop:tourgalstrictequiv}, on en déduit que les tours
galoisiennes $(C)=(C'_<)$ et $(T'_<)$ sont équivalentes. Comme $(C)$ est de
composition, il résulte alors du corollaire \ref{cor:equivtourcomp} 
que le raffinement $(T'_<)$ de $(T)$ est une tour de composition galoisienne.
De plus d'après le corollaire 2.8 du chapitre 3, 
$(T'_<)$ est encore un raffinement de $(T)$.

\pespace \noindent
(2) Dans le (1) précédent, lorsque $(T)$ est une tour de composition
galoisienne, le même
argument que pour $(C)$ conduit à $(T'_<)=(T)$.
Et finalement $(T'_<) \sim (C'_<)$ signifie $(T) \sim (C)$. 
\end{proof}

\gespace
Comme le $1^{\text{er}}$ théorème de dissociation, ce théorème \ref{th:3diss}
admet une généralisation aux tours galtourables.

\mespace
\begin{Th}[] ($3^{\text{ème}}$ théorème de dissociation bis)\index{Théorème!de
dissociation}\index{Dissociation!$3^{\text{ème}}$ Théorème bis (de)} \\
Soit $L/K$ une extension finie galtourable.\\
(1) Toute tour galtourable stricte de $L/K$ admet un raffinement qui est une
tour de composition galoisienne de $L/K$.\\
(2) Deux tours de composition galoisiennes de $L/K$ sont équivalentes.
\end{Th}

\mespace
\begin{proof}
\noindent (1) Soit $(E)$ une tour galtourable stricte de $L/K$. Par la
proposition 2.9 du chapitre 3, 
elle admet un raffinement $(T)$ qui est une tour galoisienne stricte de $L/K$.
Et d'après le théorème \ref{th:3diss}.(1) précédent, $(T)$ admet un raffinement
$(C)$ qui est une tour de composition galoisienne de $L/K$. On déduit alors de
la transitivité de la notion de raffinement (Chap. 3, Fait 1.6
) que $(C)$ est un raffinement de $(E)$.

\noindent (2) Identique à celui du $3^{\text{ème}}$ théorème de dissociation.
\end{proof}

\addtocontents{toc}{\mespace\pespace}
\chapter{ILLUSTRATIONS ARITHMÉTIQUES ET GALSIMPLICITÉ}
\addtocontents{lof}{\gespace}
\addtocontents{lof}{\noindent Chapitre \thechapter}
\addtocontents{lof}{\pespace}
\addtocontents{toc}{\pespace}
\gespace
Ce chapitre 5 est un chapitre de transition, une respiration arithmétique avant
les tours d'élévation qui nous permettrons de dissocier toutes les extensions
finies et pas seulement les galtourables. La section 1 illustre les théorèmes de
Galschreier et Galjordanhölder du chapitre 4 par une extension galtourable de
degré 480 dont nous donnons deux tours galoisiennes différentes que l'on
raffine en deux tours de composition galoisiennes équivalentes. La section 2
fournit, via un résultat de Selmer-Serre, une classe infinie d'extensions
simples (au sens du (1) de la définition 1.6 du chapitre 2) 
mais non galoisiennes. La section 3 montre, via un contre-exemple, que le
"Théorème $M$" du chapitre 6 suivant ne s'étend pas au cas d'une extension
infinie. Ce contre-exemple justifie la finitude des extensions du chapitre 7
final. Enfin, la section 4 donne quelques propriétés des extensions galsimples
non galoisiennes, notamment leur transitivité qui nous sera utile pour la
maximalité des sous-extensions d'intourabilité.

\gespace
\gespace
\section[Illustrations des théorèmes par une extension de degré 480]
{Illustrations des théorèmes de Galschreier et \newline Galjordanhölder par une
extension de degré 480}
\mespace
On note génériquement $\zeta_n := e^{2i \pi /n} \quad (n \in {\mathbb N} 
\setminus \{ 0 \})$. 
Dans toute cette section, on considère les tours
$$(T^1) \qquad K=T^1_0:={\mathbb Q} < T^1_1:=T^1_0(i,\sqrt[4]{5}) <
T^1_2:=T^1_1(\zeta_{15},Y^{1/5},Z^{1/3})=L $$
et
\vskip -8mm
$$(T^2) \qquad K=T^2_0:={\mathbb Q} < T^2_1:=T^2_0(\zeta_{15}) <
T^2_2:=T^2_1(i,Y^{1/5}) < T^2_3=L \;,$$
où l'on désigne par :
$$\left\{\begin{array}{l}
- \, Y^{1/5} \text{ l'une quelconque des racines cinquièmes complexes de} \\ 
\qquad \qquad Y:=(2-\zeta_5)^3(2-\zeta_5^4)^2 \;;\\
- \, Z^{1/3} \text{ l'une quelconque des racines troisièmes complexes de} \\
\qquad \qquad Z:=6 - \sqrt{5} \;.
\end{array}\right.$$

\gespace
\begin{fait} \label{fait:5A} 
L'extension ${\mathbb Q}(\zeta_5)(Y^{1/5}) / {\mathbb Q}(\zeta_5)$ est cyclique
de degré 5.
\end{fait}

\mespace
\begin{proof}
Sinon, modulo ${\mathbb Q}(\zeta_5)^{\times 5}$,
$$\overline{Y}=\overline{\iit} \qquad \SSI \qquad
\overline{2-\zeta_5^4}=\overline{2-\zeta_5} \;.$$
En particulier, pour le corps local $C:={\mathbb Q}_5(\zeta_5)$, on a alors
$$\frac{2-\zeta_5^4}{2-\zeta_5} \in C^{\times 5} \;.$$
Prouvons que ceci n'est pas, grâce aux méthodes (et notations) de \cite{Se} et
\cite{Wy}. Tout d'abord, $2-\zeta_5^4$ est une unité principale de $C$ :
$$2-\zeta_5^4 \in U_C^1 \;,$$
car pour l'uniformisante $1- \zeta_5$, on a les valuations
$$\text{ord}(1-(2-\zeta_5^4))=\text{ord}(\zeta_5^4(1-\zeta_5))=1 \;.$$
Comme il en est de même de $2-\zeta_5$, on a donc 
$$\frac{2-\zeta_5^4}{2-\zeta_5} \in U_C^1 \;.$$
Calculons le défaut de cette unité principale :
$$\begin{array}{rcl}
\text{ord}(1-\frac{2-\zeta_5^4}{2-\zeta_5})	&=
&\text{ord}(-\zeta_5+\zeta_5^4)=\text{ord}(\zeta_5^4(1-\zeta_5^2))\\
					&=
&\text{ord}(1-\zeta_5)+\text{ord}(1+\zeta_5)\\
					&=	&1+\text{ord}(2-(1-\zeta_5))\\
					&=	&1 \;;
				
\end{array}$$
on en déduit que 
$$\text{def}\left(\frac{2-\zeta_5^4}{2-\zeta_5}\right)=1 \qquad \qquad
\text{(cf. \cite{Wy})} \;.$$
D'où la conclusion puisque
$$\frac{2-\zeta_5^4}{2-\zeta_5} \in C^{\times 5} \quad \SSI \quad
\text{def}(\frac{2-\zeta_5^4}{2-\zeta_5})=+ \infty\;.$$
\end{proof}

\gespace
\begin{fait} \label{fait:5B}
(0) La tour $(T^2)$ est galoisienne ;\\

\noindent (1) $\sqrt[4]{5} \notin {\mathbb Q}(i,\zeta_3,\zeta_5)$ ;\\

\noindent (2) $T_1^1 \cap T_1^2 = {\mathbb Q}(\sqrt{5})$ ;\\

\noindent (3) $T_1^1 \cap {\mathbb Q}(\zeta_5) = {\mathbb Q}(\sqrt{5})$ ;\\

\noindent (4) $Gal({\mathbb Q}(\zeta_{15}) / {\mathbb Q}(\sqrt{5})) \isomto
({\mathbb Z} / 2 {\mathbb Z})^2$.
\end{fait}

\mespace
\begin{proof}
(0) La marche $L=T_2^2(\sqrt[4]{5}, Z^{1/3})/T_2^2$ est le compositum de deux
extensions kummériennes.

\noindent (1) L'extension ${\mathbb Q}(i, \zeta_3, \zeta_{5}) \gal {\mathbb Q}$
est cyclotomique, donc abélienne. Si l'on avait $\sqrt[4]{5} \in {\mathbb Q}(i,
\zeta_3, \zeta_{5})$, l'extension ${\mathbb Q}(\sqrt[4]{5}) / {\mathbb Q}$
serait abélienne, comme quotient d'une extension abélienne, ce qui n'est pas. \\

\noindent (2) D'après \cite[p.74, Th.2]{La2}, $i \notin T_1^2= {\mathbb
Q}(\zeta_{15})$, d'où 
$$[T_1^2(i) : T_1^2]=2 \;.$$
Par ailleurs
$$\zeta_5 + \zeta_5^4 = \frac{\sqrt{5}-1}{2}$$
(explicitement écrit dans \cite[p.67-68]{Esc} !). Donc
$${\mathbb Q}(\sqrt{5}) <  {\mathbb Q}(\zeta_{5}) < T_1^2= {\mathbb Q}(\zeta_3,
\zeta_{5})\;.$$
En particulier, $\sqrt{5} \in T_1^2(i)$ ; tandis que par le (1), $\sqrt[4]{5}
\notin T_1^2(i)$. Dès lors,
$$[T_1^1 T_1^2 = {\mathbb Q}(i, \zeta_3, \zeta_{5}, \sqrt[4]{5}) :  T_1^2(i)]=2 \;.$$
On a ainsi la figure

\begin{figure}[!h]
\begin{center}
\vskip -6mm
\includegraphics[width=10.5cm]{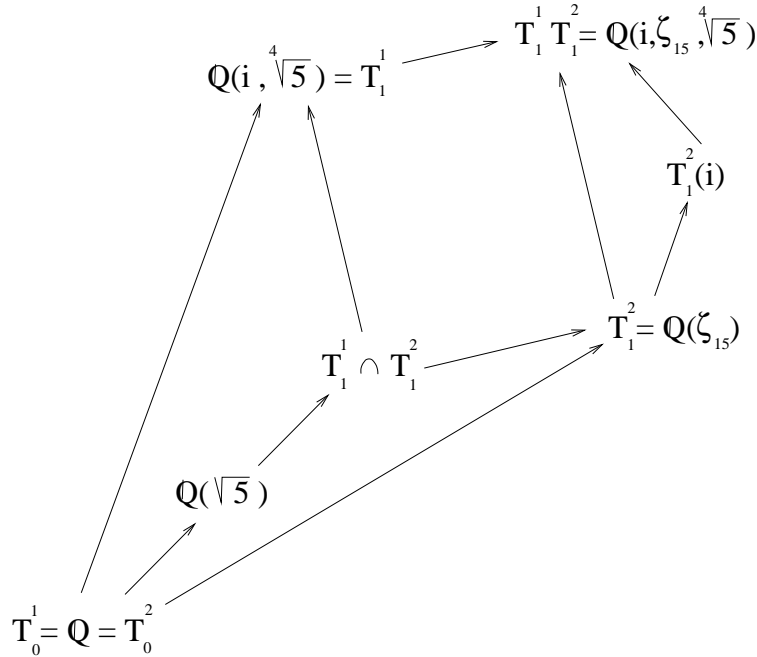}
\end{center}
\vskip -5mm
\rm{\caption{\label{fig:1} \leg Parallélogramme $[T_1^1 \cap T_1^2, T_1^2, T_1^1
T_1^2,T_1^1] $}}
\vskip -1mm
\end{figure}

\noindent dans laquelle le quadrilatère $(T_1^1 \cap T_1^2, T_1^2, T_1^1 T_1^2,T_1^1 )$
est un parallélogramme galoisien. En effet, l'extension $T_1^1 / T_1^1 \cap
T_1^2$ {\large (}resp. $T_1^2 / T_1^1 \cap T_1^2$ {\large )} est galoisienne comme sous-extension
de $T_1^1 / {\mathbb Q}$ {\large (}resp. $T_1^2 / {\mathbb Q}$ {\large )}
 qui est galoisienne de degré 8 car $T_1^1={\mathbb Q}(i,\sqrt[4]{5})$ est le
 corps de décomposition du polynôme $X^4-5$  {\large (}resp. car $[T_1^2={\mathbb
 Q}(\zeta_{15}): {\mathbb Q}]=\varphi(15)=8$ {\large )}. Par conséquent :
$$\begin{array}{rcl}
 8=[T_1^1:{\mathbb Q}]	&=	&[{\mathbb  Q}(\sqrt{5}): {\mathbb Q}] [T_1^1
 \cap T_1^2 : {\mathbb  Q}(\sqrt{5})] [T_1^1 : T_1^1 \cap T_1^2 ]\\
 			&=	&2 [T_1^1 \cap T_1^2 : {\mathbb  Q}(\sqrt{5})]
			[T_1^1 T_1^2: T_1^2]\\
			&=	&2 [T_1^1 \cap T_1^2 : {\mathbb  Q}(\sqrt{5})]
			[T_1^2(i): T_1^2] [T_1^1 T_1^2: T_1^2(i)]\\
			&=	&8 [T_1^1 \cap T_1^2 : {\mathbb  Q}(\sqrt{5})]\\
\ssi T_1^1 \cap T_1^2 	&= 	&{\mathbb  Q}(\sqrt{5})\;.
\end{array} $$

\noindent (3) Comme $\sqrt{5} \in {\mathbb Q}(\zeta_{5})$, on a par le (2) 
$${\mathbb Q}(\sqrt{5}) \leq T_1^1 \cap {\mathbb Q}(\zeta_{5}) \leq T_1^1 \cap
{\mathbb Q}(\zeta_{15}) = T_1^1 \cap T_1^2 = {\mathbb Q}(\sqrt{5}) \;.$$

\noindent (4) Comme $T_1^2={\mathbb Q}(\zeta_{15})$, on a le parallélogramme
$[{\mathbb Q}, {\mathbb Q}(\zeta_{3}), T_1^2, {\mathbb Q}(\zeta_{5})]$.

\begin{figure}[!h]
\begin{center}
\vskip -6mm
\includegraphics[width=8cm]{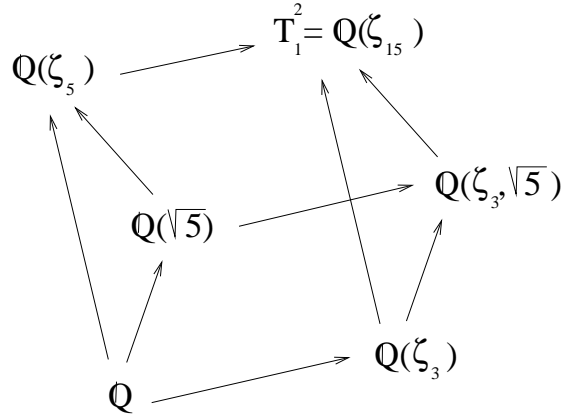}
\end{center}
\vskip -5mm
\rm{\caption{\label{fig:2} \leg Un sous-parallélogramme de $[{\mathbb Q}, {\mathbb
Q}(\zeta_{3}), T_1^2, {\mathbb Q}(\zeta_{5}) ]$}}
\vskip 1mm
\end{figure}

D'après le corollaire 4.3.(1-1) du chapitre 1, 
on a le sous-parallélogramme \\$[{\mathbb Q}(\sqrt{5}), {\mathbb Q}(\zeta_{3},
\sqrt{5}), T_1^2, {\mathbb Q}(\zeta_{5})]$. On en déduit, par scindement de la
diagonale (Chap. 1, Prop. 4.1
) que 
$$Gal(T_1^2 / {\mathbb Q}(\sqrt{5})) \isomto Gal(T_1^2 / {\mathbb Q}(\zeta_3,
\sqrt{5})) \times Gal(T_1^2 / {\mathbb Q}(\zeta_5)) \;.$$
Comme $[T_1^2 : {\mathbb Q}(\zeta_3, \sqrt{5})] = [T_1^2 : {\mathbb
Q}(\zeta_5)]=2$, on a bien le résultat annoncé :
$$Gal(T_1^2 / {\mathbb Q}(\sqrt{5})) \isomto
({\mathbb Z} / 2 {\mathbb Z})^2 \;.$$
\end{proof}

\gespace
Nous avons déjà dit que la tour $(T^2)$ est galoisienne {\large (}(0) du Fait
précédent{\large )}, le lecteur aura deviné qu'il en est de même de la tour
$(T^1)$. Nous le démontrons maintenant, en utilisant deux fois un puissant outil
introduit par Richard Massy dans \cite{M2} sous le nom de "moyenne galoisienne"
("Galois average").
\mespace

\begin{prop} \label{prop:5C}
L'extension $L=T_2^1 / T_1^1$ est galoisienne, non abélienne, de degré 60.
\end{prop}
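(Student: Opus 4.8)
L'extension $L=T_2^1/T_1^1$ est galoisienne, non abélienne, de degré 60.

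The plan is to work relative to the intermediate field $M:=T^1_1(\zeta_{15})$ and to peel off the three radical layers $\zeta_{15}$, $Y^{1/5}$, $Z^{1/3}$ one at a time. First I would pin down the degree. Since $\mathbb{Q}(\zeta_{15})/\mathbb{Q}$ is Galois, its translate $M/T^1_1$ is Galois by the translation (``natural irrationalities'') theorem of Chapter~2, and restriction gives $Gal(M/T^1_1)\isomto Gal(\mathbb{Q}(\zeta_{15})/\mathbb{Q}(\zeta_{15})\cap T^1_1)$. By Fait \ref{fait:5B}.(2) this intersection is $\mathbb{Q}(\sqrt{5})$, so $[M:T^1_1]=[\mathbb{Q}(\zeta_{15}):\mathbb{Q}(\sqrt5)]=4$ and $Gal(M/T^1_1)\isomto(\mathbb{Z}/2\mathbb{Z})^2$ by Fait \ref{fait:5B}.(4). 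Because $M$ contains $\zeta_5$ and $\zeta_3$, the two remaining layers are Kummer. I would show $[M(Y^{1/5}):M]=5$ by transporting Fait \ref{fait:5A} along $M/\mathbb{Q}(\zeta_5)$, an extension generated by $i$, $\zeta_3$ and $\sqrt[4]5$ (with $\sqrt5\in\mathbb{Q}(\zeta_5)$), hence of $2$-power degree and prime to $5$, so $Y$ stays a non-fifth-power; similarly $[M(Z^{1/3}):M]=3$ once $Z=6-\sqrt5$ is checked not to be a cube in $M$. As $\gcd(3,5)=1$ these layers are linearly disjoint over $M$, giving $[L:M]=15$ and $[L:T^1_1]=60$.

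The heart of the argument, and the place where the Galois average of \cite{M2} enters, is normality. Let $\tau\in Gal(M/T^1_1)$ be the automorphism fixing $T^1_1$ and $\zeta_3$ with $\tau(\zeta_5)=\zeta_5^{4}$. A direct computation gives $\tau(Y)=(2-\zeta_5^{4})^3(2-\zeta_5)^2$, whence
$$Y\,\tau(Y)=\bigl[(2-\zeta_5)(2-\zeta_5^{4})\bigr]^5=\bigl(5-2(\zeta_5+\zeta_5^4)\bigr)^5=(6-\sqrt5)^5=Z^5,$$
using $\zeta_5+\zeta_5^4=\tfrac{\sqrt5-1}{2}$ (Fait \ref{fait:5B}.(2)). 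Thus $Z=(2-\zeta_5)(2-\zeta_5^4)$ is the norm Galois average of $2-\zeta_5$, while $Y$ is its weighted Galois average with exponents $(3,2)$, the two being tied by $Y\tau(Y)=Z^5$: this is the twofold use of the averaging construction.

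This identity is exactly what forces all $T^1_1$-conjugates of the generators into $L$. The conjugates of $\zeta_{15}$ stay in $\mathbb{Q}(\zeta_{15})\subseteq L$; the conjugates of $Z^{1/3}$ are the $\zeta_3^jZ^{1/3}\in L$ since $Z\in T^1_1$; and the $T^1_1$-conjugates of $Y$ are just $Y$ and $\tau(Y)=Z^5/Y$, so the conjugates of $Y^{1/5}$ are the $\zeta_5^iY^{1/5}$ together with the $\zeta_5^i\,Z/Y^{1/5}$ (fifth roots of $Z^5/Y$), all lying in $L$ because $Z\in T^1_1$. Hence every $T^1_1$-embedding of $L$ maps $L$ onto itself, so $L/T^1_1$ is normal, and being separable it is Galois of degree $60$.

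Finally, non-abelianity I would extract from the cubic layer alone. Since $Z\in T^1_1$ is not a cube and $\zeta_3\notin T^1_1$, the splitting field $T^1_1(\zeta_3,Z^{1/3})$ of $X^3-Z$ is a Galois subextension of $L/T^1_1$ with group $S_3$: writing $\rho$ for complex conjugation on $\zeta_3$ and $\delta$ for $Z^{1/3}\mapsto\zeta_3Z^{1/3}$, one checks $\rho\delta\rho^{-1}=\delta^{-1}$. Thus $Gal(L/T^1_1)$ has the non-abelian quotient $S_3$ and is itself non-abelian. The main obstacle is the normality step: without the averaging identity $Y\tau(Y)=Z^5$ the conjugates of $Y^{1/5}$ would escape $L$, so the real work is the verification of this identity together with the non-degeneracy facts $Y\notin M^{\times5}$ and $Z\notin M^{\times3}$, the latter being the kind of local computation already carried out in Fait \ref{fait:5A}.
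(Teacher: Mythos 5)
Your proof is correct, but it takes a genuinely different route from the paper's. The paper's argument is built entirely on Massy's Galois average: it first computes $ga_{T_1^1T_1^2/T_1^1}^{\eta}(\overline{2-\zeta_5})=\overline{Y}$ for the cyclotomic homomorphism $\eta$ into ${\mathbb F}_5^{\times}$ and deduces from the main theorem of \cite{M2} that $T_1^1T_1^2(Y^{1/5})/T_1^1$ is Galois; it then performs a second ($3$-)average, $ga^{\iit}(\overline{Y}^{\,-1/5})=\overline{N(Y^{1/5})}=\overline{6-\sqrt5}=\overline{Z}$, to get normality of $L/T_1^1$; and it invokes \cite[Th.1.2.(3.2)]{M2} for non-abelianity. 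You replace this machinery by the explicit identity $Y\,\tau(Y)=Z^5$ --- which is in fact the same computation that underlies both of the paper's averages, since $N_{T_1^1T_1^2/T_1^1}(Y)=Y^2\tau(Y)^2=Z^{10}\equiv Z$ modulo cubes --- and then track conjugates by hand: the minimal polynomial of $Y^{1/5}$ over $T_1^1$ divides $(X^5-Y)(X^5-\tau(Y))$, whose roots $\zeta_5^iY^{1/5}$ and $\zeta_5^iZ/Y^{1/5}$ all lie in $L$ because $Z\in T_1^1$; this gives normality directly and is more elementary and self-contained, at the price of not exhibiting the systematic criterion the paper is showcasing. Your non-abelianity argument via the explicit $S_3$ quotient $Gal(T_1^1(\zeta_3,Z^{1/3})/T_1^1)$ is likewise more concrete than the paper's appeal to the structure of averaged extensions, and it is valid since $\zeta_3\notin T_1^1={\mathbb Q}(i,\sqrt[4]{5})$. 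The one step you leave unverified is $Z=6-\sqrt5\notin M^{\times 3}$, which you need both for $[L:M]=15$ and for your cubic layer to contribute a full $S_3$; the paper settles it globally rather than locally, by the norm computation $N_{T_1^1T_2^2/{\mathbb Q}}(6-\sqrt5)=31^{80}\notin{\mathbb Q}^{\times 3}$, and you should import that (or an equivalent check) to close your sketch.
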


\mespace
\begin{proof}
Soient $v$ et $w$ les générateurs de 
$$Gal(T_1^2={\mathbb Q}(\zeta_{15}) / {\mathbb Q}) = Gal({\mathbb
Q}(\zeta_{3}) / {\mathbb Q}) \times Gal({\mathbb Q}(\zeta_{5}) /
{\mathbb Q})$$
définis par
$$v(\zeta_3)=\zeta_3^2 \:, \; v(\zeta_5)=\zeta_5 \;; \quad w(\zeta_5)=\zeta_5^2
\:, \;w(\zeta_3)=\zeta_3 \;.$$
On a 
$$ w^2(\zeta_5+\zeta_5^4)=\zeta_5^4+\zeta_5=\frac{\sqrt{5}-1}{2} \;;$$
donc $w^2$ laisse fixe $\sqrt{5}$ et
$$Gal(T_1^2/{\mathbb Q}(\sqrt{5}))=\{1,v\} \times \{1,w^2\} \;.$$
Or, d'après le (2) du Fait \ref{fait:5B}, on a le parallélogramme galoisien
$$[{\mathbb Q}(\sqrt{5}), T_1^2, T_1^1 T_1^2, T_1^1] \;.$$ 
En notant $\widetilde{v}$ et $\widetilde{w^2}$ les prolongements respectifs de
$v$ et $w^2$ à $T_1^1 T_1^2$, on en déduit que
$$Gal(T_1^1 T_1^2 / T_1^1)=\{1,\widetilde{v}\} \times \{1,\widetilde{w^2}\} \;.$$
Soit $\eta$ "l'homomorphisme cyclotomique" (\cite{M2}) de $Gal(T_1^1 T_1^2/
T_1^1)$ dans le groupe multiplicatif ${\mathbb F}_5^{\times}$ du corps à cinq
éléments, défini par
$$\widetilde{w^2}(\zeta_5)=\zeta_5^4 \;\ssi \; \eta(\widetilde{w^2})=4
\;; \qquad \widetilde{v}(\zeta_5)=\zeta_5 \; \ssi \; \eta(\widetilde{v})=1
\;.$$
Soient $y:=2 - \zeta_5$ et $\overline{y}$ sa classe dans $(T_1^1 T_1^2)^{\times}
/ (T_1^1 T_1^2)^{\times 5}$. La moyenne galoisienne de $T_1^1 T_1^2$ sur $T_1^1$
pour $\eta$ en $\overline{y}$ est par définition \cite[Sect. 2]{M2}
$$\begin{array}{rcl}
ga_{T_1^1 T_1^2 / T_1^1}^{\eta}(\overline{y})	&=	&\text{\Large (}
{\displaystyle \prod_{\gamma \in Gal(T_1^1 T_1^2 / T_1^1)}}
\gamma(\overline{y})^{\eta(\gamma)^{-1}} \text{\Large )}^{4^{-1}} \\
\\
						&=	&\text{\large (}
\overline{y} \, \widetilde{v}(\overline{y}) \, \widetilde{w^2}(\overline{y})^4
\, \widetilde{v}\widetilde{w^2}(\overline{y})^4 \text{\Large )}^{4^{-1}} \\
\\
						&=	&\overline{y}^4 \,
\widetilde{v}(\overline{y})^4 \, \widetilde{w^2}(\overline{y}) \,
\widetilde{v}\widetilde{w^2}(\overline{y}) \\
\\
						&=
&\overline{(2-\zeta_5)^4 \, (2-\zeta_5)^4\, (2-\zeta_5^4)\, (2-\zeta_5^4)} \\
\\
						&=
&\overline{(2-\zeta_5)^3 \, (2-\zeta_5^4)^2}= \overline{Y} \;.
\end{array}$$
D'après \cite{M2}, l'extension $T_1^1 T_1^2(Y^{1/5}) / T_1^1$ est galoisienne.
On a $T_1^2(Y^{1/5}) \cap T_1^1 T_1^2= T_1^2$ par primalité des degrés car $[
T_1^1 T_1^2: T_1^2]=4$ (cf. (2) de la démonstration du Fait \ref{fait:5B}) et
$[T_1^2(Y^{1/5}) : T_1^2] \in \{1,5\}$. On en déduit le parallélogramme
galoisien $[T_1^2, T_1^1 T_1^2, T_1^1 T_1^2(Y^{1/5}), T_1^2(Y^{1/5})]$.

\begin{figure}[!h]
\begin{center}
\vskip -6mm
\includegraphics[width=10cm]{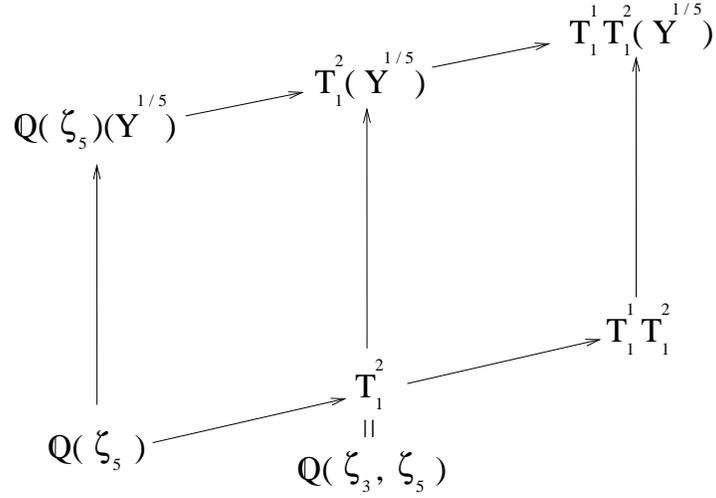}
\end{center}
\vskip -5mm
\rm{\caption{\label{fig:3} \leg Deux parallélogrammes adjacents}}
\vskip -3mm
\end{figure}

Or d'après le Fait \ref{fait:5A}, on a $[{\mathbb Q}(\zeta_{5})(Y^{1/5}) :
{\mathbb Q}(\zeta_{5})]=5$. A nouveau par primalité des degrés, on en déduit le
parallélogramme
$$[{\mathbb Q}(\zeta_{5}),T_1^2, T_1^2(Y^{1/5}), {\mathbb
Q}(\zeta_{5})(Y^{1/5})] \;.$$
Par conséquent
$$5=[T_1^2(Y^{1/5}) : T_1^2] = [T_1^1 T_1^2(Y^{1/5}) : T_1^1 T_1^2]\:.$$
On obtient ainsi le degré de l'extension galoisienne $T_1^1 T_1^2(Y^{1/5})/
T_1^1$ : 
$$[T_1^1 T_1^2(Y^{1/5}) : T_1^1] = 5 [T_1^1 T_1^2 : T_1^1] = 20 \;.$$
\pespace
Montrons que l'extension $T_2^1 / T_1^1$ est obtenue par une $3$-moyenne au
dessus de l'extension $T_1^1 T_1^2(Y^{1/5}) / T_1^1$. Pour l'homomorphisme
trivial 
$$\begin{array}{crccc} 
\iit :	&G:=	&Gal(T_1^1 T_1^2(Y^{1/5}) / T_1^1)	&\rightarrow
&{\mathbb F}_3^{\times} \;,\\
	&	&\gamma					&\mapsto
&1
\end{array}$$
on a
$$\begin{array}{rcl}
ga_{T_1^1 T_1^2(Y^{1/5}) / T_1^1}^{\iit}(\overline{Y}^{\,-1/5})	&=	&\text{\Large (}
{\displaystyle \prod_{\gamma \in G} \gamma(\overline{Y}^{\,-1/5}) \text{\Large
)}^{20^{-1}} =
\text{\Large (} \prod_{\gamma \in G} \gamma(\overline{Y}^{\,-1/5}) \text{\Large
)}^{-1} }\\
\\
	&=	&\overline{N_{T_1^1 T_1^2(Y^{1/5}) / T_1^1}(Y^{\,1/5})} \;.
\end{array}$$
Notons que $T_1^1 T_1^2(Y^{1/5})=T_1^1 T_2^2$, et
$$\begin{array}{rcl}
N_{T_1^1 T_2^2 / T_1^1}(Y^{\,1/5})	&=	&N_{T_1^1 T_1^2 /
T_1^1} \text{\Large (}N_{T_1^1 T_2^2 / T_1^1 T_1^2}(Y^{\,1/5})\text{\Large
)}\\
\\
						&=	&N_{T_1^1 T_1^2 /
T_1^1} ( Y^{\,1/5} \, \zeta _5 Y^{\,1/5} \, \zeta _5 ^2 Y^{\,1/5} \, \zeta^3 _5
Y^{\,1/5} \, \zeta _5^4 Y^{\,1/5})\\
\\
						&=	&N_{T_1^1 T_1^2 /
T_1^1} (Y) \\
\\
						&=	&Y \,\widetilde{v}(Y)
\,\widetilde{w^2}(Y) \,\widetilde{v}\widetilde{w^2}(Y) \;.
\end{array}$$
Comme $\widetilde{v}(\zeta_5)=\zeta_5$, on a $\widetilde{v}(Y)=Y$ et
$$\begin{array}{rcl}
ga_{T_1^1 T_2^2 / T_1^1}^{\iit}(\overline{Y}^{\,-1/5})	&=	&\overline{
Y^2 \,\widetilde{w^2}(Y)^2 } \\
\\
							&=
&\overline{(2-\zeta_5)^6 \, (2-\zeta_5^4)^4 \, (2-\zeta_5^4)^6 \,
(2-\zeta_5)^4} \\
\\
							&=
&\overline{(2-\zeta_5) \, (2-\zeta_5^4)} \\
\\
							&=
&\overline{4 - 2(\zeta_5 + \zeta_5^4) + 1} \\
\\
 							&=
&\overline{4 - (\sqrt{5} - 1) + 1} \\
\\
 							&=
&\overline{6 - \sqrt{5}} \;.\\
\end{array}$$
On en déduit, toujours par \cite{M2}, que l'extension $L/T_1^1$ est galoisienne,
car on a :
$$\begin{array}{rcl}
L	&=	&T^1_2=T^1_1(\zeta_{15},Y^{1/5},Z^{1/3})=T^1_1
T^2_1(Y^{1/5},Z^{1/3})\\
\\
	&=	&T^1_1 T^2_1(Y^{1/5})(Z^{1/3})=T^1_1 T^2_2(Z^{1/3})\;.\\
\end{array}$$
Elle est de degré 
$$[L:T_1^1]=[T_1^1 T_2^2 (Z^{1/3}) : T_1^1 T_2^2] [T_1^1 T_2^2 :
T_1^1] =3 \times 20  = 60$$
car $Z=6-\sqrt{5} \notin (T_1^1 T_2^2)^{\times 3}$. En effet sinon on aurait
$N_{T_1^1 T_2^2 / {\mathbb Q}}(6 - \sqrt{5}) \in {\mathbb Q}^{\times 3} $,
ce qui n'est pas, puisque
$$\begin{array}{rcl}
N_{T_1^1 T_2^2 / {\mathbb Q}}(6 - \sqrt{5})	&=	&N_{{\mathbb
Q}(\sqrt{5})/ {\mathbb Q}}(N_{T_1^1 T_2^2 /
{\mathbb Q}(\sqrt{5})}(6 - \sqrt{5}))\\
\\
						&=	&N_{{\mathbb
Q}(\sqrt{5})/ {\mathbb Q}}(6 - \sqrt{5})^{80}\\
\\
						&=	&31^{80} \notin {\mathbb
Q}^{\times 3} \;.\\
\end{array}$$
Enfin $L/T_1^1$ est non abélienne, car soit $\check{v}$ un prolongement de
$\widetilde{v}$ à $T_1^1 T_2^2$ :

\begin{figure}[!h]
\begin{center}
\vskip -6mm
\includegraphics[width=8cm]{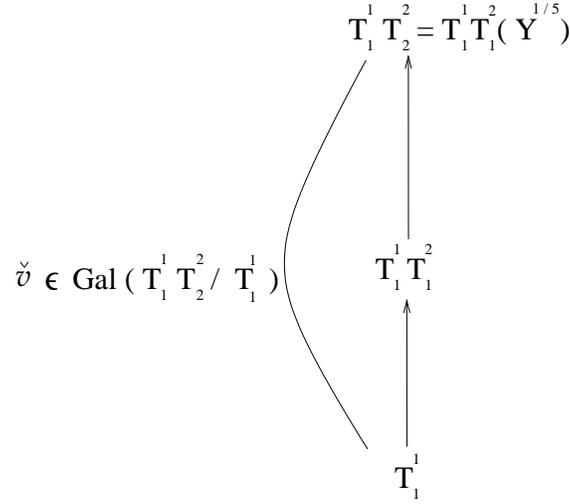}
\end{center}
\vskip -5mm
\rm{\caption{\label{fig:4} \leg Extension galoisienne $T_1^1 T_2^2 / T_1^1$}}
\vskip -3mm
\end{figure}

\newpage
\noindent Clairement
$$\check{v}(\zeta_3)=\widetilde{v}(\zeta_3)=\zeta_3^2 \;.$$
L'homomorphisme cyclotomique de $Gal(T_1^1 T_2^2 /T_1^1)$ dans ${\mathbb
F}^{\times}_3$ n'est donc pas trivial. Comme
$\overline{Z}=\overline{6-\sqrt{5}}$ est dans l'image de la moyenne galoisienne
$ga^1_{T_1^1 T_2^2 / T_1^1}$ pour cet homomorphisme trivial, il résulte du
\cite[Th.1.2.(3.2)]{M2} que l'extension $L/T_1^1$ ne peut être abélienne.
\end{proof}

\mespace
\begin{cor} \label{cor:5D}
L'extension $L/{\mathbb Q}$ est de degré 480.
\end{cor}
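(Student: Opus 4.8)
Le plan est d'exploiter directement la transitivit� des degr�s dans la tour ${\mathbb Q} \leq T_1^1 \leq L$, l'essentiel du travail ayant d�j� �t� accompli dans les r�sultats qui pr�c�dent. D'abord, je rappellerais que $[T_1^1 : {\mathbb Q}] = 8$ : ce degr� a �t� �tabli dans la d�monstration du (2) du Fait \ref{fait:5B}, o� le corps $T_1^1 = {\mathbb Q}(i,\sqrt[4]{5})$ est identifi� comme corps de d�composition du polyn�me $X^4 - 5$ sur ${\mathbb Q}$, donc comme extension galoisienne de degr� $8$.

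Ensuite, j'invoquerais la proposition \ref{prop:5C} qui vient d'�tre d�montr�e et qui fournit $[L : T_1^1] = 60$. Il ne resterait alors qu'� multiplier ces deux degr�s :
$$[L : {\mathbb Q}] = [L : T_1^1]\,[T_1^1 : {\mathbb Q}] = 60 \times 8 = 480 \;.$$

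Aucune difficult� s�rieuse n'est � pr�voir ici, puisque la partie d�licate du calcul --- notamment l'emploi r�p�t� de la moyenne galoisienne de \cite{M2} pour �tablir la galoisianit� et le degr� $60$ de l'extension $L/T_1^1$ --- a d�j� �t� trait�e dans la proposition \ref{prop:5C}. Le seul point restant � garder � l'esprit est la valeur $[T_1^1 : {\mathbb Q}] = 8$, qui est d�sormais acquise ; on conclut par simple transitivit� des degr�s, ce corollaire n'�tant qu'un bilan num�rique des deux degr�s d�j� calcul�s.
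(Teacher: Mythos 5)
Votre preuve est correcte et suit exactement la m�me d�marche que celle du texte : multiplicativit� des degr�s dans la tour ${\mathbb Q} \leq T_1^1 \leq L$, avec $[T_1^1:{\mathbb Q}]=8$ tir� de la d�monstration du (2) du Fait 1.2 et $[L:T_1^1]=60$ fourni par la proposition 1.3. Rien � redire.
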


\mespace
\begin{proof}
C'est clair puisque
$$[L:{\mathbb Q}]=[L:T_1^1] [T_1^1:{\mathbb Q}]$$
où $[L:T_1^1]=60$ (Prop. \ref{prop:5C}) et $[T_1^1:{\mathbb Q}]=8$ d'après la
démonstration du (2) du Fait \ref{fait:5B}.
\end{proof}

\gespace
La proposition suivante illustre le théorème de Galschreier ($1^{\text{er}}$
théorème de dissociation) pour une extension de degré 480.

\mespace
\begin{prop} \label{prop:5E}
On a les tours galoisiennes (strictes) équivalentes de $L/K$ (Chap. 4, Déf. 1.1.(2)
) $(T'_1) \,\sim\, (T'_2)$ où
$$\begin{array}{rr}
(T'^1) 	&K={\mathbb Q}=:T'^1_0 \; \lhd \; T'^1_1:={\mathbb Q}(\sqrt{5}) \; \lhd \;
T'^1_2:={\mathbb Q}(i,\sqrt{5}) \; \lhd \; T'^1_3:={\mathbb Q}(i,\sqrt[4]{5})  \\
\\
& \!\!\!\!\lhd \; T'^1_4:={\mathbb Q}(i,\sqrt[4]{5},\zeta_{15}) \; \lhd \;
T'^1_5:={\mathbb Q}(i,\sqrt[4]{5},\zeta_{15},Y^{1/5}) \; \lhd \;
T'^1_6:=T'^1_5(Z^{1/3})=L
\end{array}$$
et
$$\begin{array}{rr}
(T'^2) \qquad 	&K={\mathbb Q}=:T'^2_0 \; \lhd \; T'^2_1:={\mathbb Q}(\sqrt{5}) \; \lhd \;
T'^2_2:={\mathbb Q}(\zeta_{15}) \; \lhd \; T'^2_3:={\mathbb Q}(i,\zeta_{15})  \\
\\
&\; \lhd \; T'^2_4:={\mathbb Q}(i,\zeta_{15},Y^{1/5}) \; \lhd \;
T'^2_5:={\mathbb Q}(i,\sqrt[4]{5},\zeta_{15},Y^{1/5}) \; \lhd \;
T'^2_6:=L \;.
\end{array}$$
\end{prop}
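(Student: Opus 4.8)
The claim is that two explicit six-step towers $(T'^1)$ and $(T'^2)$ of the degree-480 extension $L/K={\mathbb Q}$ are galoisiennes strictes et équivalentes. My plan is to treat the three assertions separately: first that each tower is strictly increasing, second that every marche is galoisienne, and third that the two towers are équivalentes in the sense of the definition \ref{def:tourgalcomp}.(2), i.e. that there is $\sigma\in S_6$ matching up the Galois groups of the marches up to isomorphism.

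First I would establish strictness and compute the degrees of all twelve marches. This is mostly bookkeeping resting on facts already proved: $[T_1^1:{\mathbb Q}]=8$ with the subfield chain ${\mathbb Q}\lhd{\mathbb Q}(\sqrt5)\lhd{\mathbb Q}(i,\sqrt5)\lhd{\mathbb Q}(i,\sqrt[4]5)$ being strict of successive degrees $2,2,2$ (using Fait \ref{fait:5B}.(1) for the last step), then $[{\mathbb Q}(\zeta_{15}):{\mathbb Q}(\sqrt5)]=4$ split into two degree-$2$ marches by Fait \ref{fait:5B}.(4), then a degree-$5$ marche from $Y^{1/5}$ (Fait \ref{fait:5A} together with primalité des degrés as in the proof of Proposition \ref{prop:5C}), and finally a degree-$3$ marche from $Z^{1/3}$ (shown $Z\notin(T_1^1T_2^2)^{\times3}$ in Proposition \ref{prop:5C}). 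For $(T'^2)$ I would do the analogous computation, reading off ${\mathbb Q}\lhd{\mathbb Q}(\sqrt5)\lhd{\mathbb Q}(\zeta_{15})$ of degrees $2,4$, then the adjunction of $i$ (degree $2$ by Fait \ref{fait:5B}.(2), since $i\notin{\mathbb Q}(\zeta_{15})$), then $Y^{1/5}$ (degree $5$), then $\sqrt[4]5$ (degree $2$), then $Z^{1/3}$ (degree $3$). Multiplying gives $8\cdot 60=480$ on each side, consistent with Corollaire \ref{cor:5D}, and confirms strictness.

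Next I would verify that every marche is galoisienne. The delicate marches are the radical ones. For $(T'^1)$: the cyclotomic and quadratic steps are visibly galoisiennes; the degree-$5$ marche $T'^1_5/T'^1_4$ and the whole block $L/T_1^1$ are handled by invoking Proposition \ref{prop:5C}, whose proof already shows that $T_1^1T_1^2(Y^{1/5})/T_1^1$ is galoisienne (via the moyenne galoisienne for $\eta$ at $\overline Y$) and that $L/T_1^1$ is galoisienne (via the trivial-homomorphism moyenne producing $\overline Z$); since $T'^1_4=T_1^1(\zeta_{15})=T_1^1T_1^2$, the marches $T'^1_5/T'^1_4$ and $T'^1_6/T'^1_5$ are galoisiennes. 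For $(T'^2)$ the needed normality is exactly assertion (0) of Fait \ref{fait:5B} refined step by step, again leaning on the kummérien and cyclotomic structure. The main obstacle here is making the two radical marches genuinely galoisiennes over their bases rather than merely over ${\mathbb Q}(\zeta_5)$: this is precisely why Massy's moyenne galoisienne was deployed in Proposition \ref{prop:5C}, and I expect to quote those computations rather than redo them.

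Finally, for the équivalence I would exhibit the permutation $\sigma$ pairing marches of equal degree. The degree multiset on each side is $\{2,2,2,2,5,3\}$ for $(T'^1)$ and $\{2,4,2,5,2,3\}$ for $(T'^2)$, so a naïve degree match fails on the $4$; hence I must track the actual Galois groups. By the scindement de la diagonale (Proposition \ref{prop:scind}) and Fait \ref{fait:5B}.(4), the composite degree-$4$ data redistributes: both towers realize the same collection of marche-groups, namely four copies of ${\mathbb Z}/2{\mathbb Z}$, one of ${\mathbb Z}/5{\mathbb Z}$, and one of ${\mathbb Z}/3{\mathbb Z}$, because the degree-$4$ marche ${\mathbb Q}(\zeta_{15})/{\mathbb Q}(\sqrt5)$ in $(T'^2)$ has group $({\mathbb Z}/2{\mathbb Z})^2$, the same pair of ${\mathbb Z}/2{\mathbb Z}$ factors that appear split across two separate marches in $(T'^1)$. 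I would make this precise by applying the parallélogramme galoisien structure (Théorème \ref{th:propfonct} and its Corollaire \ref{cor:propfonct}) to the relevant inscribed parallelograms, obtaining restriction isomorphisms between corresponding marche-groups, and then define $\sigma$ explicitly. This matching is the real content of the statement, and it is the step where I would spend the most care; once the group isomorphisms are in hand, the conclusion $(T'^1)\sim(T'^2)$ follows directly from Définition \ref{def:tourgalcomp}.(2).
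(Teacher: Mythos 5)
There is a genuine error in your equivalence step, rooted in a miscount of the marches of $(T'^1)$. The fourth marche $T'^1_4/T'^1_3=\mathbb{Q}(i,\sqrt[4]{5},\zeta_{15})/\mathbb{Q}(i,\sqrt[4]{5})$ adjoins $\zeta_{15}$ in a \emph{single} step of degree $4$ (indeed $[T_1^1T_1^2:T_1^1]=[T_1^2:\mathbb{Q}(\sqrt{5})]=4$ by the parall\'elogramme of Fait \ref{fait:5B}.(2)); it is not split into two quadratic steps — that splitting only occurs later, in the composition towers $(T"^1)$, $(T"^2)$ of Corollaire \ref{cor:5F}. Your degree multiset $\{2,2,2,2,5,3\}$ for $(T'^1)$ has product $240\neq 480$; the correct multiset is $\{2,2,2,4,5,3\}$, identical to that of $(T'^2)$. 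Consequently your proposed resolution — matching the $(\mathbb{Z}/2\mathbb{Z})^2$ of $\mathbb{Q}(\zeta_{15})/\mathbb{Q}(\sqrt{5})$ against ``the same pair of $\mathbb{Z}/2\mathbb{Z}$ factors split across two separate marches'' of $(T'^1)$ — is both unnecessary and inadmissible: D\'efinition \ref{def:tourgalcomp}.(2) requires a bijection $\sigma\in S_6$ with $Gal(T'^1_i/T'^1_{i-1})\isomto Gal(T'^2_{\sigma(i)}/T'^2_{\sigma(i)-1})$ marche par marche, and a Klein four-group cannot be matched with two copies of $\mathbb{Z}/2\mathbb{Z}$. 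The correct matching pairs the unique degree-$4$ marches directly, $Gal(T'^1_4/T'^1_3)\isomto Gal(T'^2_2/T'^2_1)\isomto(\mathbb{Z}/2\mathbb{Z})^2$ via the restriction isomorphism in the parall\'elogramme $[\mathbb{Q}(\sqrt{5}),\mathbb{Q}(\zeta_{15}),\mathbb{Q}(i,\sqrt[4]{5},\zeta_{15}),\mathbb{Q}(i,\sqrt[4]{5})]$, the three remaining quadratic marches on each side all having group $\mathbb{Z}/2\mathbb{Z}$ and the degree-$5$ and degree-$3$ marches pairing off. With that correction your direct-verification route does go through (and the radical marches are in fact Kummer over bases already containing $\zeta_5$, resp. $\zeta_3$, so the moyenne galoisienne of Proposition \ref{prop:5C} is only needed to know the degrees, not the normality).

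Note also that the paper takes a different route entirely: it does not verify the towers by hand, but proves that each $T'^i_l$ coincides with the field $T^1_{q+1}\cap T^1_qT^2_r$ (etc.) prescribed by the formulas $(\frak{F})$ in the proof of the premier th\'eor\`eme de dissociation applied to $(T^1)$ and $(T^2)$, using the \'ecartel\'e/parall\'elogramme corollaries of Chapter 1; strictness, the Galois property and the equivalence are then all inherited from Th\'eor\`eme \ref{th:1diss}. Your approach is more computational but self-contained; the paper's is shorter and simultaneously illustrates the general construction, which is the stated purpose of the section.
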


\mespace
\begin{proof}
Prouvons qu'elles sont obtenues à partir des tours $(T^1)$ et $(T^2)$ (cf. début
de la présente section) par les formules $({\mathcal F})$ de la démonstration du
$1^{\text{er}}$ théorème de dissociation  (Chap. 4, Th. 3.2
). On a :
$${\mathbb Q}= T_1^1 \cap T_0^1 T_0^2 = T_1^2 \cap T_0^1 T_0^2 \;;$$
$${\mathbb Q}(\sqrt{5})\; \stackrel{\ref{fait:5B}.(2)}{\displaystyle =} T_1^1
\cap T_1^2 = T_1^1 \cap T_0^1 T_1^2 = T_1^2 \cap T_1^1 T_0^2 \;;$$
$${\mathbb Q}(\zeta_{15}) = T_1^2 = T_2^2 \cap T_0^1 T_1^2 \;;$$
$${\mathbb Q}(i,\sqrt[4]{5}) = T_1^1 = T_2^1 \cap T_1^1 T_0^2 \;.$$
D'après la démonstration de la proposition \ref{prop:5C}, on dispose du
parallélogramme galoisien
$$[T_1^2, T_1^2(Y^{1/5}), T_1^1 T_1^2(Y^{1/5}), T_1^1 T_1^2]$$
dans lequel on peut utiliser le (1-1) du corollaire 4.3 
du chapitre 1 : le corps intermédiaire $T_1^2(i)$ induit le sous-parallélogramme
$$[T_1^2(i), T_1^2(i,Y^{1/5}), T_1^1 T_1^2(Y^{1/5}), T_1^1 T_1^2]\;.$$

\begin{figure}[!h]
\begin{center}
\vskip -6mm
\includegraphics[width=7.5cm]{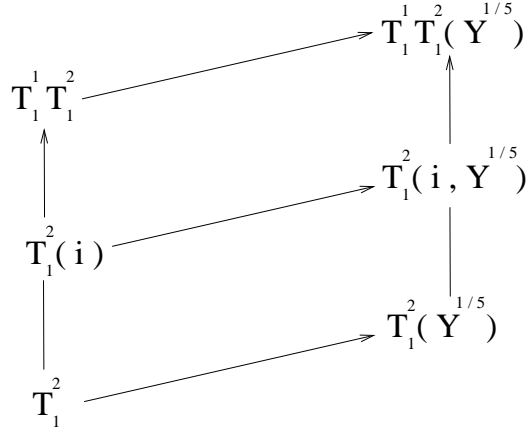}
\end{center}
\vskip -4mm
\rm{\caption{\label{fig:5} \leg Sous-parallélogramme $[T_1^2(i), T_1^2(i,Y^{1/5}),
T_1^1 T_1^2(Y^{1/5}), T_1^1 T_1^2]$ }}
\vskip -1mm
\end{figure}

\noindent On en déduit en particulier que
$${\mathbb Q}(i,\zeta_{15}) = T_1^2(i)= T_1^2(i,Y^{1/5}) \cap T_1^1 T_1^2 =
T_2^2 \cap T_1^1 T_1^2\;.$$
Par ailleurs
$${\mathbb Q}(i,\sqrt[4]{5}, \zeta_{15}) = T_1^1 T_1^2 = T_2^1 \cap T_1^1 T_1^2 \;;$$
$${\mathbb Q}(i, \zeta_{15}, Y^{1/5} ) = T_2^2 = T_3^2 \cap T_0^1 T_2^2 \;;$$
$${\mathbb Q}(i, \sqrt[4]{5},\zeta_{15}, Y^{1/5} ) = T_1^1 T_2^2 = T_2^1 \cap
T_1^1 T_2^2 =T_3^2 \cap T_1^1 T_2^2 \;.$$
Enfin dans le parallélogramme galoisien $[T_1^1 \cap T_1^2 \;
\stackrel{\ref{fait:5B}.(2)}{\displaystyle =} \;{\mathbb Q}(\sqrt{5}), T_1^2
, T_1^1 T_1^2, T_1^1 ]$, on déduit du (1-1) du corollaire 4.3 
 du chapitre 1 appliqué au corps intermédiaire ${\mathbb Q}(i,\sqrt{5}) \leq
T_1^1$, le sous-parallélogramme
$$[{\mathbb Q}(i, \sqrt{5}), T_1^2(i), T_1^1 T_1^2, T_1^1 ] \;.$$
En particulier
$$\begin{array}{rcl}
{\mathbb Q}(i, \sqrt{5})	&=	&T_1^1 \cap T_1^2(i) = T_1^1 \cap
{\mathbb Q}(i, \zeta_{15})\\
\\				&=	&T_1^1 \cap (T_2^2 \cap T_1^1 T_1^2 ) =
T_1^1 \cap T_2^2\\
\\
				&=	&T_1^1 \cap T_0^1 T_2^2
\end{array}$$
\end{proof}

\gespace

Le corollaire suivant illustre le théorème de Galjordanhölder ($3^{\text{ème}}$
théorème de dissociation : Chap. 4, Th. 4.3
) pour la même extension de degré 480.

\mespace
\begin{cor} \label{cor:5F}
On a les tours de composition galoisiennes équivalentes de $L/K$ $(T"_1)
\,\sim\, (T"_2)$ où
$$\begin{array}{rr}
(T"^1) \qquad 	&K={\mathbb Q}=:T"_0^1 \; \lhd \; T"_1^1:={\mathbb Q}(\sqrt{5}) \; \lhd \;
T"_2^1:={\mathbb Q}(i,\sqrt{5})  \qquad\qquad\qquad\qquad\qquad\\
\\
&\; \lhd \; T"_3^1:={\mathbb Q}(i,\sqrt[4]{5})
\; \lhd \; T"_4^1:={\mathbb Q}(i,\sqrt[4]{5},\zeta_{5})
\; \lhd \; T"_5^1:={\mathbb Q}(i,\sqrt[4]{5},\zeta_{15})  \\
\\
&\; \lhd \; T"_6^1:={\mathbb Q}(i,\sqrt[4]{5},\zeta_{15},Y^{1/5}) \; \lhd \;
T"_7^1:={\mathbb Q}(i,\sqrt[4]{5},\zeta_{15},Y^{1/5}, Z^{1/3})=L
\end{array}$$
et
$$\begin{array}{rr}
(T"^2) \qquad 	&K={\mathbb Q}=:T"^2_0 \; \lhd \; T"^2_1:={\mathbb Q}(\sqrt{5}) \; \lhd \;
T"^2_2:={\mathbb Q}(\zeta_{5})  \qquad\qquad\qquad\qquad\qquad\\
\\
&\; \lhd \; T"^2_3:={\mathbb Q}(\zeta_{15})
\; \lhd \; T"^2_4:={\mathbb Q}(i,\zeta_{15})
\; \lhd \; T"^2_5:={\mathbb Q}(i,\zeta_{15},Y^{1/5})  \\
\\
&\; \lhd \; T"^2_6:={\mathbb Q}(i,\sqrt[4]{5},\zeta_{15},Y^{1/5}) \; \lhd \;
T"^2_7:={\mathbb Q}(i,\sqrt[4]{5},\zeta_{15},Y^{1/5}, Z^{1/3})=L \;.
\end{array}$$
\end{cor}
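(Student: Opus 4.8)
Le corollaire \ref{cor:5F} affirme que les deux tours $(T"^1)$ et $(T"^2)$ de l'extension $L/\mathbb{Q}$ de degré 480 sont des tours de composition galoisiennes équivalentes. Il faut donc établir trois choses pour chacune d'elles : que ce sont bien des tours galoisiennes, qu'elles sont de composition, et enfin qu'elles sont équivalentes entre elles. Par le troisième théorème de dissociation (Th. \ref{th:3diss}) deux tours de composition d'une même extension finie galtourable sont automatiquement équivalentes, de sorte que l'équivalence résultera formellement du seul fait que les deux tours sont bien des tours de composition galoisiennes ; mais l'esprit d'un corollaire illustratif est d'en donner la vérification explicite.

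**Plan de la preuve.** Le plan est de partir des tours $(T'^1) \sim (T'^2)$ déjà établies à la proposition \ref{prop:5E} et de montrer que $(T"^1)$ et $(T"^2)$ en sont des raffinements galoisiens obtenus en intercalant le seul corps manquant $\mathbb{Q}(\zeta_5)$ entre $T'^1_3=\mathbb{Q}(i,\sqrt[4]{5})$ et $T'^1_4=\mathbb{Q}(i,\sqrt[4]{5},\zeta_{15})$ (resp. entre $T'^2_1=\mathbb{Q}(\sqrt 5)$ et $T'^2_2=\mathbb{Q}(\zeta_{15})$). Premièrement je vérifierais que chaque marche est galoisienne : toutes les marches des tours $(T'^i)$ le sont par la proposition \ref{prop:5E}, et les deux nouvelles marches s'analysent directement — $\mathbb{Q}(\zeta_5)/\mathbb{Q}(\sqrt 5)$ est cyclique de degré $2$ (car $\sqrt 5 \in \mathbb{Q}(\zeta_5)$ via $\zeta_5+\zeta_5^4=\tfrac{\sqrt 5 -1}{2}$, cf. Fait \ref{fait:5B}.(2)), et de même pour $\mathbb{Q}(i,\sqrt[4]{5},\zeta_{15})/\mathbb{Q}(i,\sqrt[4]{5},\zeta_5)$, qui n'est autre que la translatée par $\mathbb{Q}(i,\sqrt[4]{5})/\mathbb{Q}$ de l'extension $\mathbb{Q}(\zeta_{15})/\mathbb{Q}(\zeta_5)=\mathbb{Q}(\zeta_3,\zeta_5)/\mathbb{Q}(\zeta_5)$, donc galoisienne par le théorème de l'extension galoisienne translatée (Th. \ref{th:galtranslat�e}).

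**Caractère de composition via la galsimplicité des marches.** Deuxièmement, par la proposition \ref{prop:compssigalgalsimple}, il suffit de montrer que chaque marche de $(T"^1)$ et $(T"^2)$ est galsimple. Comme chaque marche est galoisienne finie, ceci équivaut à ce que son groupe de Galois soit simple (Prop. de la section 2 reliant galsimplicité et simplicité du groupe). Je vérifierais donc que chacune des sept marches de chaque tour est de degré premier — auquel cas le groupe de Galois, cyclique d'ordre premier, est simple. Le calcul des degrés est purement arithmétique et repose sur les degrés déjà établis : $[\mathbb{Q}(\sqrt 5):\mathbb{Q}]=2$, $[\mathbb{Q}(\zeta_5):\mathbb{Q}(\sqrt 5)]=2$, $[\mathbb{Q}(\zeta_{15}):\mathbb{Q}(\zeta_5)]=2$ (facteur $\zeta_3$), les degrés $2$ pour les étages ajoutant $i$ ou passant de $\sqrt 5$ à $\sqrt[4]{5}$, le degré $5$ pour l'étage $Y^{1/5}$ (Fait \ref{fait:5A} et calcul de degrés dans la preuve de la Prop. \ref{prop:5C}), et le degré $3$ pour l'étage $Z^{1/3}$ (car $6-\sqrt 5 \notin (T_1^1 T_2^2)^{\times 3}$, établi par l'argument de norme $31^{80}\notin \mathbb{Q}^{\times 3}$ à la fin de la Prop. \ref{prop:5C}). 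Le produit des degrés doit valoir $480=2^5\cdot 3\cdot 5$, ce qui recoupe le Corollaire \ref{cor:5D} et confirme l'exhaustivité.

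**Équivalence et obstacle principal.** Une fois établi que $(T"^1)$ et $(T"^2)$ sont deux tours de composition galoisiennes de la même extension galtourable finie $L/\mathbb{Q}$, leur équivalence est immédiate par le point (2) du troisième théorème de dissociation (Th. \ref{th:3diss}). Le principal obstacle ne sera pas conceptuel mais calculatoire : il réside dans la justification soigneuse que les marches ajoutant $\mathbb{Q}(\zeta_5)$ sont bien de degré $2$ et surtout dans le contrôle des intersections et composita garantissant que les degrés se multiplient sans chute (primalité des degrés), point déjà délicat dans la proposition \ref{prop:5E} et qui exige d'invoquer précisément les parallélogrammes galoisiens du chapitre 1 (notamment le scindement de la diagonale, Prop. \ref{prop:scind}, et le corollaire 4.3 des propriétés fonctorielles) pour les étages non élémentaires. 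Je traiterais donc chaque tour marche par marche, en renvoyant systématiquement aux degrés de la preuve de la proposition \ref{prop:5E}, puis conclurais en une ligne par le théorème \ref{th:3diss}.
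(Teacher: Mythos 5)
Votre preuve est correcte, et sa premi\`ere moiti\'e (caract\`ere galoisien des tours via le raffinement de $(T'^1)$ et $(T'^2)$ par un seul corps nouveau, puis galsimplicit\'e de chaque marche par primalit\'e des degr\'es $2,2,2,2,2,5,3$ de produit $480$) co\"incide avec celle du texte, qui invoque exactement la proposition 1.7 du chapitre 3 puis la proposition \ref{prop:compssigalgalsimple}. La diff\'erence r\'eelle porte sur l'\'equivalence $(T"^1) \sim (T"^2)$ : vous la d\'eduisez en une ligne du point (2) du th\'eor\`eme \ref{th:3diss}, ce qui est logiquement valide puisque $L/{\mathbb Q}$ est galtourable et que les deux tours sont de composition ; le texte, lui, la d\'emontre \emph{explicitement}, en reportant l'\'equivalence des marches communes depuis la proposition \ref{prop:5E} (les $1^{\text{\`ere}}$, $4^{\text{\`eme}}$, $5^{\text{\`eme}}$, $6^{\text{\`eme}}$, $7^{\text{\`eme}}$ marches de $(T"^2)$ correspondant aux $1^{\text{\`ere}}$, $2^{\text{\`eme}}$, $6^{\text{\`eme}}$, $3^{\text{\`eme}}$, $7^{\text{\`eme}}$ de $(T"^1)$) et en traitant les deux marches nouvelles au moyen des parall\'elogrammes $[{\mathbb Q}(\sqrt{5}),{\mathbb Q}(\zeta_{5}),{\mathbb Q}(i,\zeta_{5},\sqrt[4]{5}),{\mathbb Q}(i,\sqrt[4]{5})]$ et $[{\mathbb Q}(\zeta_{5}),{\mathbb Q}(\zeta_{15}),{\mathbb Q}(i,\zeta_{15},\sqrt[4]{5}),{\mathbb Q}(i,\zeta_{5},\sqrt[4]{5})]$ pour obtenir $Gal(T"^1_4/T"^1_3) \isomto Gal(T"^2_2/T"^2_1)$ et $Gal(T"^1_5/T"^1_4) \isomto Gal(T"^2_3/T"^2_2)$. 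Votre raccourci est plus \'economique, mais il rend l'exemple tautologique au regard du but affich\'e de la section, qui est pr\'ecis\'ement d'\emph{illustrer} le th\'eor\`eme de Galjordanh\"older en exhibant la permutation r\'ealisant l'\'equivalence ; l'approche du texte livre cette permutation concr\`ete, la v\^otre ne la donne pas. Vous avez d'ailleurs vous-m\^eme identifi\'e ce point en introduction, sans en tirer la cons\'equence dans votre conclusion.
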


\mespace
\begin{proof}
La tour $(T"^1)$ est un raffinement galoisien de la tour galoisienne $(T'^1)$ de
la proposition \ref{prop:5E} précédente, avec un seul nouveau corps $T"_4^1$.
Par la proposition 1.7 
 du chapitre 3, $(T"^1)$ est donc une tour galoisienne. Puisque $L/{\mathbb Q}$
est de degré 480 (Cor. \ref{cor:5D}), on a nécessairement :
$$\begin{array}{ll}
[T"^1_7 : T"^1_6]=3 \;,\quad 	& [T"^1_6 : T"^1_5]=5 \\ { } 
[T"^1_5 : T"^1_4]=2 \;,\quad 	& [T"^1_4 : T"^1_3]=2 \\ { }
[T"^1_3 : T"^1_2]=2 \;,\quad 	& [T"^1_2 : T"^1_1]=2 \\ { }
[T"^1_1 : T"^1_0]=2 \;.\quad 	 
\end{array}$$
Toutes les marches de $(T"^1)$ sont donc simples (Chap. 2, Ex. 1.10
.(iii)) et a fortiori galsimples (Chap. 2, Prop. 1.8
.(1)). On déduit alors de la proposition 1.3 
 du chapitre 4 que $(T"^1)$ est une tour de composition galoisienne. Par un
 raisonnement identique, on établit que $(T"^2)$ est aussi une tour de
 composition galoisienne.\\
 \indent Remarquons également que $(T"^2)$ est un raffinement galoisien de $(T'^2)$ avec
 un seul nouveau corps : $(T"_2^2)$. D'après la proposition \ref{prop:5E}, les
 $1^{\text{ère}}$, $4^{\text{ème}}$, $5^{\text{ème}}$, $6^{\text{ème}}$,
 $7^{\text{ème}}$ marches de $(T"^2)$ sont équivalentes respectivement aux
 $1^{\text{ère}}$, $2^{\text{ème}}$, $6^{\text{ème}}$, $3^{\text{ème}}$,
 $7^{\text{ème}}$ de $(T"^1)$.
 De plus, on a le parallélogramme galoisien 
 $$[{\mathbb Q}(\sqrt{5}), {\mathbb
 Q}(\zeta_{15}), {\mathbb  Q}(i, \zeta_{15}, \sqrt[4]{5}), {\mathbb 
 Q}(i,\sqrt[4]{5})]$$
et la figure suivante :

\begin{figure}[!h]
\begin{center}
\vskip -6mm
\includegraphics[width=14.5cm]{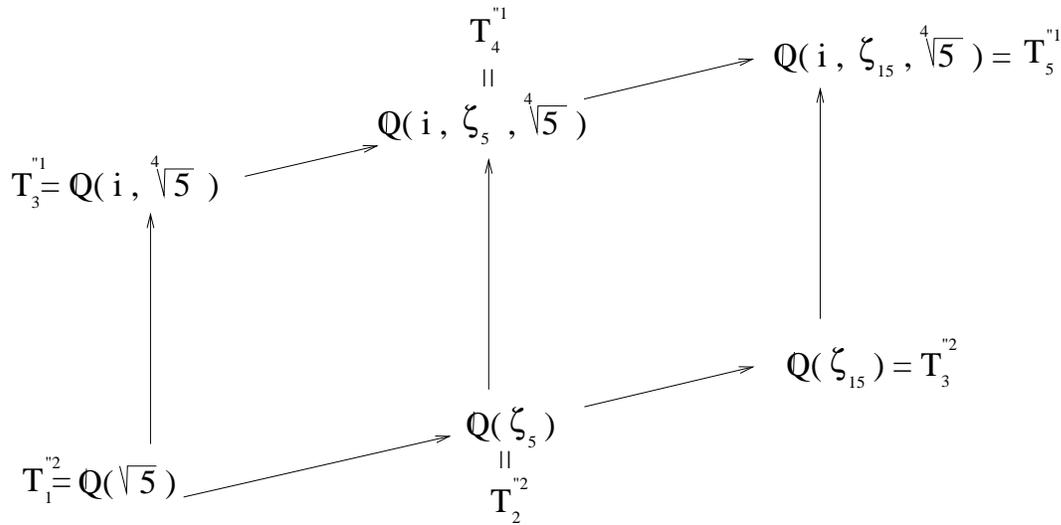}
\end{center}
\vskip -5mm
\rm{\caption{\label{fig:6} \leg Parallélogramme $[T"^2_1, T"^2_3, T"^1_5, T"^1_3]$}}
\vskip 1mm
\end{figure}
 
L'extension ${\mathbb Q}(\zeta_{5}) / {\mathbb Q}(\sqrt{5})$ est quadratique,
donc galoisienne, et par le corollaire 4.3 du chapitre 1 
on a les parallélogrammes
$$[{\mathbb Q}(\sqrt{5}), {\mathbb
 Q}(\zeta_{5}), {\mathbb  Q}(i, \zeta_{5}, \sqrt[4]{5}), {\mathbb 
 Q}(i,\sqrt[4]{5})]$$
et
$$[{\mathbb Q}(\zeta_{5}), {\mathbb
 Q}(\zeta_{15}), {\mathbb  Q}(i, \zeta_{15}, \sqrt[4]{5}), {\mathbb 
 Q}(i, \zeta_{5}, \sqrt[4]{5})] \;.$$
Donc en particulier
$$Gal(T"^1_4 / T"^1_3) \isomto Gal(T"^2_2 / T"^2_1)$$
et
$$Gal(T"^1_5 / T"^1_4) \isomto Gal(T"^2_3 / T"^2_2) \;.$$

Ceci achêve de prouver que les tours de composition galoisiennes $(T"^1)$ et $(T"^2)$ sont
équivalentes.
\end{proof}

\gespace
\gespace
\section{Exemple d'extensions simples}
\mespace
Nous exhibons ici une classe infnie d'extensions simples non galoisiennes :
celles de l'exemple 1.10.(iii) 
du chapitre 2 : l'extension ${\mathbb Q}(\theta) / {\mathbb Q}$ où $\theta^n -
\theta -1=0 \; (n \geq3 )$ est simple mais non galoisienne.
\pespace
Nous utilisons la propriété classique suivante des groupes de permutations :

\mespace
\begin{fait} \label{fait:5G}
Pour tout entier $n \geq 3$, le groupe symétrique $S_{n-1}$ s'identifie au stabilisateur d'un
point quelconque dans $S_n$. Alors, $S_{n-1}$ est un
sous-groupe maximal (intransitif) de $S_n$.
\end{fait}

\mespace
\begin{proof}
Cf. \cite[p.268]{D-M}.
\end{proof}

\gespace
L'argument principal de notre démonstration est un résultat de
Selmer-Serre\footnote{\,Remerciements au Professeur C.U. Jensen de l'Université de
Copenhague qui me le fit connaître lors d'un cours de D.E.A à Valenciennes en
1999.} :
\pespace
\begin{prop} \label{prop:5H}
Pour tout entier $n \geq 2$, le polynôme
$$P_n(X):=X^n-X-1$$
est irréductible sur ${\mathbb Q}$, et l'on a 
$$Gal(L_n/{\mathbb Q}) \isomto S_n$$
où $L_n$ désigne le corps de décomposition dans ${\mathbb C}$ de $P_n(X)$.
\end{prop}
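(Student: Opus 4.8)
For all $n \geq 2$, the polynomial $P_n(X) = X^n - X - 1$ is irreducible over $\mathbb{Q}$, and $Gal(L_n/\mathbb{Q}) \cong S_n$ where $L_n$ is the splitting field of $P_n$ in $\mathbb{C}$.

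Let me think about how to prove this.

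This is the classical Selmer–Serre result. The two assertions are: (1) irreducibility over $\mathbb{Q}$, and (2) that the Galois group is the full symmetric group $S_n$.

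**Irreducibility.** Selmer's theorem states that $X^n - X - 1$ is irreducible over $\mathbb{Q}$ for all $n \geq 2$. This is a nontrivial result. The standard proof works in $\mathbb{C}$: if the polynomial factored as $P_n = g \cdot h$ over $\mathbb{Z}$ (by Gauss's lemma, we may assume integer coefficients), one studies the product of $|\alpha|$ over roots $\alpha$ of one factor. The key observation is that each root $\alpha$ of $P_n$ satisfies $\alpha^n = \alpha + 1$, and one uses the inequality relating $|\alpha|$ and the constant terms. Specifically, Selmer's argument considers $\prod_i(\alpha_i^2)$ type quantities, or more precisely uses that if $\alpha$ is a root then $|\alpha|$ cannot be too close to $1$ uniformly, deriving a contradiction from the factorization by examining the resultant/discriminant structure.

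**The Galois group is $S_n$.** This is Serre's contribution. The strategy has two main ingredients:

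First, one computes the discriminant or reduces mod suitable primes to show the group is transitive and contains a transposition. The Galois group $G = Gal(L_n/\mathbb{Q})$ acts on the $n$ roots; by irreducibility it is transitive, so $n \mid |G|$.

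Second — and this is the heart of Serre's argument — one shows that the discriminant of $P_n$, or the factorization type modulo appropriate primes via the Frobenius, produces a transposition in $G$. Serre's key insight uses the fact that $P_n$ has exactly one multiple root pattern over $\overline{\mathbb{F}_p}$ for suitable $p$: the discriminant of $X^n - X - 1$ factors in a controlled way, and one exhibits a prime $p$ such that $P_n \bmod p$ has exactly one repeated root (of multiplicity $2$) and is otherwise separable. This forces the inertia group at $p$ to be generated by a transposition, hence $G$ contains a transposition.

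**Concluding.** A transitive subgroup of $S_n$ containing a transposition and acting primitively is all of $S_n$. To get from transitive-with-a-transposition to $S_n$, one invokes the classical lemma: a primitive permutation group containing a transposition is the full symmetric group. Primitivity here can be obtained because the point stabilizer is maximal — this is exactly where Fait 5.8 ($S_{n-1}$ maximal in $S_n$) enters: it characterizes when the action is primitive. Combined with transitivity (from irreducibility) and the transposition (from the ramification analysis mod $p$), we conclude $G = S_n$.

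**Main obstacle.** The hard part is the mod-$p$ ramification argument that produces a transposition: one must locate a prime $p$ at which $P_n$ has discriminant divisible by $p$ to exactly the first power with a single double root, controlling the shape of the factorization of the discriminant $\Delta(P_n) = \pm(n^n - (n-1)^{n-1}(\,\cdots))$. Serre's elegant resolution is to analyze $P_n$ and $P_n'= nX^{n-1} - 1$ simultaneously: their common roots (giving the discriminant) satisfy both $\alpha^n = \alpha+1$ and $n\alpha^{n-1}=1$, yielding $\alpha = n/((n-1))\cdot(\text{something})$, which pins down the structure of $\Delta$ and shows each prime dividing it does so in a way forcing a single transposition in inertia.

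Given the depth of both Selmer's and Serre's theorems, I expect the author simply to cite the literature (Selmer for irreducibility, Serre for the Galois group) rather than reproduce these arguments in full.
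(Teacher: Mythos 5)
Your closing guess is exactly what the paper does: its entire proof of this proposition is a one-line citation of Selmer for the irreducibility and of Serre for the identification of the Galois group with $S_n$, so your sketch of those two arguments is supplementary background rather than a divergence. One small correction: the maximality of $S_{n-1}$ in $S_n$ (the paper's preceding Fait) plays no role in this proposition; the paper invokes it only afterwards, in the corollary, to deduce that the stabilizer $Gal(L_n/{\mathbb Q}(\theta))$ is a maximal subgroup and hence that ${\mathbb Q}(\theta)/{\mathbb Q}$ is a simple extension --- it is not what supplies primitivity in Serre's argument, which instead concludes from transitivity together with the fact that the group is generated by the transpositions coming from inertia.
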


\mespace
\begin{proof}
Cf. \cite{Sel} pour l'irréductibilité et \cite{Se} pour le reste.
\end{proof} 

\gespace
\begin{cor} \label{cor:5I}
Pour tout entier $n \geq 3$, l'extension ${\mathbb Q}(\theta) / {\mathbb Q}$ où
$$\theta^n - \theta -1=0$$
est simple et non galoisienne : $({\mathbb Q}(\theta) \,\simple \nongal {\mathbb
Q})$.
\end{cor}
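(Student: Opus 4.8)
Il s'agit de prouver que pour tout $n \geq 3$, l'extension ${\mathbb Q}(\theta)/{\mathbb Q}$, où $\theta$ est une racine de $P_n(X)=X^n-X-1$, est à la fois simple et non galoisienne. Les deux ingrédients décisifs sont déjà disponibles : la proposition \ref{prop:5H} de Selmer-Serre, qui donne $Gal(L_n/{\mathbb Q}) \isomto S_n$ où $L_n$ est le corps de décomposition de $P_n$, et le Fait \ref{fait:5G}, qui affirme que $S_{n-1}$ (vu comme stabilisateur d'un point) est un sous-groupe maximal de $S_n$. Le point de traduction clé est que ${\mathbb Q}(\theta)$ correspond, dans la correspondance de Galois relative à $L_n/{\mathbb Q}$, au stabilisateur de la racine $\theta$ dans $S_n$.

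**Plan de la démonstration.** D'abord je poserais $G:=Gal(L_n/{\mathbb Q}) \isomto S_n$ et j'identifierais $H:=Gal(L_n/{\mathbb Q}(\theta))$ au stabilisateur de $\theta$ sous l'action de $G$ sur les $n$ racines de $P_n$. Comme $P_n$ est irréductible (Prop. \ref{prop:5H}), cette action est transitive, donc $H$ est d'indice $n$ dans $G$, ce qui via le Fait \ref{fait:5G} identifie $H$ à une copie de $S_{n-1}$ maximale dans $S_n \isomto G$. Ensuite, pour la simplicité : les corps intermédiaires $F$ entre ${\mathbb Q}$ et ${\mathbb Q}(\theta)$ sont en bijection décroissante (bijection de Galois classique appliquée dans $L_n/{\mathbb Q}$) avec les sous-groupes de $G$ contenant $H$. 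La maximalité de $H \isomto S_{n-1}$ signifie qu'il n'existe aucun sous-groupe strictement compris entre $H$ et $G$ ; il n'existe donc aucun corps $F$ tel que ${\mathbb Q} < F < {\mathbb Q}(\theta)$, c'est-à-dire que ${\mathbb Q}(\theta)/{\mathbb Q}$ est simple (cf. Chap. 2, Déf. \ref{def:galsimple}.(1)). Enfin, pour la non-galoisianité : ${\mathbb Q}(\theta)/{\mathbb Q}$ est galoisienne si et seulement si $H$ est normal dans $G$. Or le stabilisateur d'un point sous une action fidèle et transitive n'est normal que s'il est trivial ; ici $H \isomto S_{n-1} \neq \iit$ pour $n \geq 3$, et $H \neq G$ puisque $[G:H]=n \geq 3$, donc $H$ n'est pas normal et l'extension n'est pas galoisienne.

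**Mise en forme et principale difficulté.** Concrètement je rédigerais ainsi : après l'identification $G \isomto S_n$ agissant sur les racines, je noterais que le point stabilisé par $H$ correspond à la racine $\theta$, de sorte que $H$ est exactement une des copies de $S_{n-1}$ du Fait \ref{fait:5G}. La simplicité suit immédiatement de la maximalité via la correspondance de Galois, et la non-normalité suit du fait classique qu'un sous-groupe maximal propre non trivial d'un groupe non abélien transitif sur plus de deux points ne saurait être distingué — ou plus directement, en exhibant un conjugué $gHg^{-1}$ (le stabilisateur d'une autre racine) différent de $H$. Le seul point qui demande un peu de soin est de justifier proprement que $H$ s'identifie bien à un stabilisateur de point, donc à $S_{n-1}$, et non à un autre sous-groupe d'indice $n$ ; cela repose sur l'irréductibilité de $P_n$ qui garantit la transitivité de l'action et le fait que $\deg P_n = [{\mathbb Q}(\theta):{\mathbb Q}] = [G:H] = n$. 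Une fois cette identification établie, tout découle formellement ; je ne m'attends donc pas à un véritable obstacle, l'essentiel du travail ayant été concentré dans les énoncés déjà admis (Prop. \ref{prop:5H} et Fait \ref{fait:5G}).
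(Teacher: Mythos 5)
Votre démonstration est correcte et suit essentiellement la même voie que celle du texte : identification de $Gal(L_n/{\mathbb Q}(\theta))$ au stabilisateur d'une racine via l'isomorphisme $Gal(L_n/{\mathbb Q}) \isomto S_n$ et le comptage $[G:H]=[{\mathbb Q}(\theta):{\mathbb Q}]=n$, puis maximalit� de $S_{n-1}$ (Fait \ref{fait:5G}) pour la simplicit� et non-normalit� de $S_{n-1}$ dans $S_n$ pour la non-galoisianit�. La seule diff�rence est cosm�tique : le texte exhibe un conjugu� explicite $(1\:2)^{(1\:n)}=(2\:n)\notin S_{n-1}$ l� o� vous invoquez l'argument g�n�ral sur les stabilisateurs d'une action fid�le transitive, ce qui revient au m�me.
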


\mespace
\begin{proof}
Avec les notations de la proposition \ref{prop:5H}, soient d'une part ${\mathcal
R}:=\{\theta_1, \dots , \theta_n \}$ l'ensemble des
racines (nécessairement distinctes) de $P_n(X)$ dans $L_n$, et d'autre part
$E_n:={\mathbb Q}(\theta_n)$, $H_n:=Gal(L_n/E_n)$. La numérotation des racines
de $P_n(X)$ induit un homomorphisme explicite
$$\begin{array}{rccc}
\psi_n \;:	&Gal(L_n/{\mathbb Q})	&\rightarrow	&S_n \\
		&\sigma			&\mapsto	&\psi_n(\sigma)
\end{array}$$
défini par $\text{\large (}\psi_n(\sigma)\text{\large )}(i)=j$ quand
$\sigma(\theta_i)=\theta_j \:(1 \leq i,j \leq
n)$. En vertu de la proposition \ref{prop:5H}, $\psi_n$ est un isomorphisme. Il
est clair que $\gamma(\theta_n)=\theta_n$ pour tout $\gamma \in H_n$, de sorte que 
$$\forall \gamma \in H_n \quad \text{\large (}\psi_n(\gamma)\text{\large )}(n)=n \;,$$
ce qui exprime que $\psi_n(H_n)$ est inclus dans le stabilisateur de $n$ dans
$S_n$, lui même isomorphe à $S_{n-1}$ par le Fait \ref{fait:5G} :
$$\psi_n(H_n) \leq Stab_{S_n}(n)=S_{n-1} \;.$$
Ainsi :
$$|\psi_n(H_n)| = |H_n| = [L_n:E_n] = \frac{[L_n:{\mathbb Q}]}{[E_n:{\mathbb
Q}]}=\frac{n!}{n}=|S_{n-1}| \;,$$
d'où l'on déduit que
$$\psi_n(H_n)=S_{n-1}$$
est un sous-groupe maximal (Fait \ref{fait:5G}) de $\psi_n\text{\large (}
Gal(L_n / {\mathbb Q})\text{\large )}=S_n$ (Prop. \ref{prop:5H}). Par
conséquent, $H_n=Gal(L_n/E_n)$ est un sous-groupe maximal de $Gal(L_n / {\mathbb
Q})$. Il résulte alors de la bijection de Galois que $E_n / {\mathbb Q}$ est une
extension simple.
\pespace
Par ailleurs, l'hypothèse $n \geq 3$ implique quant à elle que $S_{n-1}$ n'est
pas normal dans $S_n$ (par exemple $(1 \: 2)^{(1 \: n)}=(2 \: n) \notin
S_{n-1}$) ; donc $E_n / {\mathbb Q}$ n'est pas galoisienne.
\end{proof}

\gespace
\gespace
\section{Un contre-exemple au "Théorème $M$" pour une extension infinie}
\gespace
Nous montrerons au chapitre 6 suivant, via le "Théorème $M$", que toute extension
finie se dissocie canoniquement en une "extension quotient galtourable maximale"
et une "sous-extension d'intourabilité maximale". Il est alors naturel de se
demander si cette dissociation admet un analogue dans
le cas d'une extension infinie. Le contre-exemple suivant de cette section,
inséré dans le présent chapitre car indépendant de ce qui suit, nous prouve en
particulier que, dans nos définitions, cet analogue n'existe pas.

\gespace
\begin{notas} \label{notas:5J}
Dans toute cette section, notons $(E)$ la suite infinie de corps emboîtés
$$(E) \qquad E_0 \unlhd E_1 \unlhd \dots \unlhd E_n \unlhd E_{n+1} \unlhd \dots
\leq E_{\infty}$$
avec
$$\left\{\begin{array}{l}
E_0 := {\mathbb Q} \;, \;x_0:=2\;;\\
\\
E_{n} := {\mathbb Q}(x_{n}) = E_{n-1}(x_n) \;,\; x_n := \sqrt[2^n]{2} \qquad (n
\in {\mathbb N} \setminus \{0\}) \;;\\
\\
E_{\infty} := {\displaystyle \bigcup_{n \in {\mathbb N}} E_n }\;.$$
\end{array}\right.$$
\end{notas}

\pespace
{\it Scholie. } Il ne s'agit pas d'une tour de corps au sens de notre définition
\& convention 1.1 
du chapitre 2 , puisqu'il y a une infinité de corps.\\

Pour la démonstration du Fait \ref{fait:5M} à suivre, nous aurons besoin des lemmes
suivants :

\mespace
\begin{lem} \label{lem:5K}
Tout corps intermédiaire $M$ entre $E_0$ et $E_{\infty}$ : $E_0 \leq M \leq
E_{\infty}$ induit l'ensemble non-vide
$$I_M:=\{n \in {\mathbb N} \;|\; x_n \in M \} = \{n \in {\mathbb N} \;|\; E_n \leq M
\}  \;,$$
qui est infini si et seulement si $M=E_{\infty}$.
\end{lem}

\mespace
\begin{proof}
Notons tout d'abord que les deux définitions de $I_M$ coïncident, puisque $x_n$
est un élément primitif de $E_n$ sur ${\mathbb Q}$ pour tout $n \in {\mathbb
N}$. 
\pespace
Comme $x_0=2$, $I_M$ contient toujours $0$, et n'est donc jamais vide. Soit
maintenant $M$ un corps intermédiaire tel que $I_M$ soit infini. Alors :
$$\forall n \in {\mathbb N} \quad \exists m \in {\mathbb N} \quad m \geq n
\qquad m \in I_M \;;$$
i.e.
$$\forall n \in {\mathbb N} \quad \exists m \geq n \quad E_m \leq M \;.$$
Par la croissance de la suite $(E_n)_{n \in {\mathbb N}}$ , on a donc
$$\forall n \in {\mathbb N} \quad \exists m \geq n \quad E_n \leq E_m
\leq M \;.$$
Ainsi
$$E_{\infty} = {\displaystyle \bigcup_{n \in {\mathbb N}}} E_n \leq M \leq
E_{\infty} \qquad \text{i.e. } E_{\infty} = M \;.$$
La réciproque est immédiate puisque $I_{E_{\infty}}= {\mathbb N}$.
\end{proof}

\gespace
\begin{lem} \label{lem:5L}
Pour tout corps intermédiaire $M$ entre $E_0$ et $E_{\infty}$ : $E_0 \leq M \leq
E_{\infty}$, vérifiant la propriété suivante
$$\exists m \in {\mathbb N} \quad x_m \in M \quad x_{m+1} \notin M \;,$$
le polynôme minimal de $x_{m+2}$ sur $M$ est
$$Irr(x_{m+2},M,X) = X^4-x_m \;.$$
\end{lem}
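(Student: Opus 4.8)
The plan is to exhibit $x_{m+2}$ as a root of the degree-four binomial $X^4 - x_m \in M[X]$ and then to prove that this binomial is irreducible over $M$: being monic of degree $4$ and having $x_{m+2}$ as a root, it will automatically be the minimal polynomial $Irr(x_{m+2},M,X)$. That $x_{m+2}$ is a root is an immediate computation from the very definition of the $x_n$ (Notations \ref{notas:5J}): one has $x_{m+2}^2 = x_{m+1}$ and $x_{m+1}^2 = x_m$, hence $x_{m+2}^4 = x_m$, so $x_{m+2}$ annihilates $X^4 - x_m$. Everything thus reduces to an irreducibility statement, and I would never try to factor by hand.

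To establish irreducibility I would invoke the classical criterion for binomials $X^n-a$ (cf. \cite{La}, or the Th\'eor\`emes de \cite{A-V} et \cite{Vi} cit\'es dans la preuve de l'Exemple \ref{ex:existence}.(i)): for $n=4$, the polynomial $X^4 - a$ is irreducible over a field $K$ if and only if $a \notin K^2$ and $-4a \notin K^4$. Applying this with $K=M$ and $a = x_m$, two verifications remain. First, $x_m \notin M^2$: if one had $x_m = c^2$ with $c \in M$, then $c^2 = x_m = x_{m+1}^2$ would force $c = \pm x_{m+1}$, whence $x_{m+1} \in M$, contradicting the hypothesis $x_{m+1} \notin M$.

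The second and only genuinely delicate point is $-4x_m \notin M^4$, and here the decisive observation is that $M$ is a \emph{real} field. Indeed, by Notations \ref{notas:5J} each $x_n$ is the real positive $2^n$-th root of $2$, so $E_\infty \subseteq \mathbb{R}$ and therefore $M \subseteq \mathbb{R}$; consequently every fourth power $y^4$ with $y \in M$ satisfies $y^4 \geq 0$, so $-4y^4 \leq 0$, while $x_m = \sqrt[2^m]{2} > 0$. Thus $x_m$ cannot lie in $-4M^4$. These two verifications yield the irreducibility of $X^4 - x_m$ over $M$, and the lemma follows. The hard part is precisely this sign condition, which is exactly where the clause $-4a \notin K^4$ of the binomial criterion bites; it is disposed of painlessly here only because every intermediate field between $E_0$ and $E_\infty$ is real. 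As an alternative route one could first identify $M$ with $E_m$ via Lemma \ref{lem:5K} together with the description of the intermediate fields of $\mathbb{Q}(\sqrt[2^k]{2})$ recalled in Exemple \ref{ex:existence}.(i), and then conclude from $[E_{m+2}:E_m]=4$ by multiplicativity of degrees; but the binomial criterion avoids this classification and seems the more economical choice.
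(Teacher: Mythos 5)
Your proof is correct, but it does not follow the paper's route. The paper keeps everything by hand: it first bounds the degree via the tower $M \subset M(x_{m+1}) \subset M(x_{m+2})$, getting $[M(x_{m+2}):M] \in \{2,4\}$, then rules out degree $2$ by listing the four roots $\pm x_{m+2}, \pm i\,x_{m+2}$ of $X^4-x_m$ and observing that any monic quadratic factor with root $x_{m+2}$ would have constant term $i^{l}x_{m+1}$ for some $l\in\{1,2,3\}$ --- excluded for $l=2$ because $x_{m+1}\notin M$ and for $l=1,3$ because $M$ is a real field. You instead outsource the case analysis to the binomial irreducibility criterion of Lang (quoted by the paper itself in Section 4 of this chapter, just after this lemma), reducing everything to the two verifications $x_m\notin M^2$ and $x_m\notin -4M^4$; your first verification is the same use of $x_{m+1}\notin M$ as the paper's $l=2$ case, and your second is the same use of the realness of $M$ as the paper's $l=1,3$ cases, repackaged as a sign condition. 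The two arguments are therefore mathematically equivalent in content; yours is shorter and more systematic, the paper's is self-contained and makes visible exactly which quadratic factors are being excluded. Two small remarks: the theorems of \cite{A-V} and \cite{Vi} that you cite alongside \cite{La} concern the classification of intermediate fields of ${\mathbb Q}(\sqrt[n]{a})/{\mathbb Q}$, not the irreducibility of binomials, so only the reference to \cite{La} is apposite there; and your sketched alternative via Lemma \ref{lem:5K} would need an extra step (that every intermediate field of $E_\infty/E_0$ other than $E_\infty$ is some $E_k$, obtained by intersecting with each $E_n$ and passing to the union), since Lemma \ref{lem:5K} alone does not identify $M$ --- but as you only offer this as an aside and do not rely on it, your proof stands as written.
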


\mespace
\begin{proof}
Comme $x_m \in M$, $P_m(X):=X^4-x_m \in M[X]$ ; et on a $P_m(x_{m+2})=0$ par
définition des $x_n$. Donc $Irr(x_{m+2},M,X)$ divise
$$P_m(X)=(X-x_{m+2})(X+x_{m+2})(X-i \, x_{m+2})(X+i \, x_{m+2})\;.$$
Par ailleurs, l'extension $M(x_{m+1})/M$ est quadratique puisque
$x_{m+1}=\sqrt{x_m} \notin M$. On en déduit que
$$[M(x_{m+2}):M]=2[M(\sqrt{x_{m+1}}):M(x_{m+1})] \in \{2,4\} \;.$$
Mais si le polynôme minimal de $x_{m+2}$ sur $M$ était de degré 2, il existerait
un $l \in \{1,2,3\}$ tel que
$$Irr(x_{m+2},M,X) = (X-x_{m+2})(X-i^l \, x_{m+2}) \;;$$
et son terme constant serait $i^l \, x_{m+2}^2=i^l \, x_{m+1} \in M$.
Or ceci est imposible pour $l=2$ car $x_{m+1} \notin M$ ; et pour $l=1$ ou $3$
car $M$ est un corps réel.\\
Finalement $[M(x_{m+2}):M]=4$ et $P_m(X)=Irr(x_{m+2},M,X)$.
\end{proof}

\gespace
\begin{fait} \label{fait:5M}
L'extension $E_{\infty} / E_0$ n'est pas galtourable : $E_{\infty} \nongaltou
E_0$, et n'admet aucune sous-extension galsimple.
\end{fait}

\mespace
\begin{proof}
Raisonnons par l'absurde en supposant que l'extension $E_{\infty} / E_0$ soit
galtourable, i.e. qu'elle admet une tour galoisienne
$$(T) \qquad E_0=T_0 \unlhd \dots \unlhd T_j \unlhd \dots \unlhd
T_p=E_{\infty} \;.$$
Comme $E_{\infty} / E_0$ est clairement de degré infini, l'une au moins des
marches de $(T)$ est infinie :
$$\exists j \in \{0, \dots , p-1 \} \quad [T_{j+1}:T_j]=\infty.$$
Notons $j$ le plus grand entier tel que la marche $T_{j+1}/ T_j$ soit infinie :
$$\forall k \in \{0, \dots , p-1 \} \quad k \geq j+1 \quad \implique \quad
[T_{k+1}:T_k] < \infty \;.$$
Par transitivité du degré, l'extension $E_{\infty} / T_{j+1}$ est finie, et elle
est séparable puisque nous sommes en caractéristique nulle. On peut donc lui
appliquer le théorème de l'élément primitif : 
$$\exists x \in E_{\infty}  \quad E_{\infty}=T_{j+1}(x) \;.$$
Ainsi :

\begin{figure}[!h]
\begin{center}
\vskip -6mm
\includegraphics[width=6.5cm]{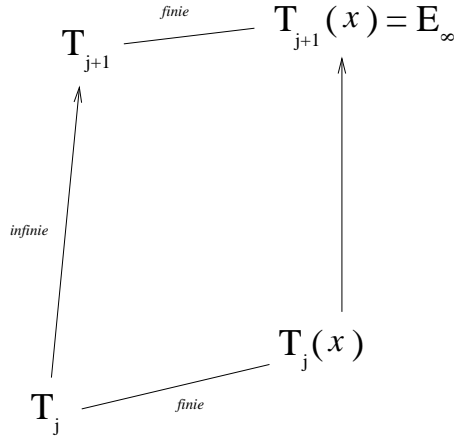}
\end{center}
\vskip -5mm
\rm{\caption{\label{fig:7} \leg L'extension infinie $E_{\infty} / T_{j}(x)$}}
\vskip -1mm
\end{figure}

\noindent d'où il  résulte que $[E_{\infty}:T_{j}(x)]=\infty$. En particulier $T_j(x)$ est
distinct de $E_{\infty}$. On déduit alors du lemme \ref{lem:5K} que l'ensemble
$I_{T_j(x)}$ est fini. Notons $m$ son plus grand élément :
$$m:=max \;I_{T_j(x)} \;.$$
On a $m \in I_{T_j(x)}$ et $m+1 \notin I_{T_j(x)}$, i.e.  $x_m \in
{T_j(x)}$ et  $x_{m+1} \notin {T_j(x)}$.
Le lemme \ref{lem:5L} nous assure alors que
$$Irr(x_{m+2},{T_j(x)},X) = X^4-x_m \;.$$
Mais par ailleurs, l'extension $E_{\infty} / T_j(x)$ est galoisienne en tant que
translatée de la marche galoisienne $T_{j+1} \gal T_j$ de $(T)$. Comme $x_{m+2}
\in E_{\infty}$, toutes les racines du polynôme irréductible $X^4-x_m$ doivent
être dans $E_{\infty}$ : absurde, puisque $E_{\infty}$ est un corps réel tandis que deux
des racines de $X^4-x_m$ sont complexes non-réelles.Il est donc prouvé que
$E_{\infty} / E_0$ n'est pas galtourable.

Raisonnons encore une fois par l'absurde en supposant l'existence d'une
sous-extension galsimple $E_{\infty} \galsimple M$ de $E_{\infty}/ E_0$. Par
définition (cf. Chap. 2, Déf. 1.6), $E_{\infty} / M$ est non-triviale, et on
déduit du lemme \ref{lem:5K} que $I_M$ est fini. Notons $m:=max \;I_M$. On a :
$$x_m \in M \quad \text{et} \quad x_{m+1} \notin M \;;$$
d'où par le lemme \ref{lem:5L}
$$Irr(x_{m+2},M,X) = X^4-x_m \;.$$
Ceci implique en particulier que $[M(x_{m+2}):M]=4$ et l'on a la tour
$$M \lhd M(x_{m+1}) < M(x_{m+2}) \leq E_{\infty}$$
qui contredit la galsimplicité de $E_{\infty} \galsimple M$.
\end{proof}

\mespace
\mespace
\section{Extensions galsimples non galoisiennes}
\mespace
Nous utiliserons les résultats de cette section pour mettre en évidence le rôle
central du corps d'intourabilité de toute extension finie.
\pespace
La proposition suivante, vraie quant à elle pour un degré quelconque, montre que
la galsimplicité passe toujours au quotient.

\mespace
\begin{prop} \label{prop:quotientgalsimple}
Toute extension quotient propre d'une extension galsimple est galsimple non
galoisienne. Précisément :
$$\forall (L \galsimple K) \qquad \forall M \qquad K < M < L \quad \implique
\quad (M \galsimple \nongal K) \;.$$
\end{prop}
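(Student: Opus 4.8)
The plan is to derive the whole statement directly from the defining conditions of galsimplicity in Definition~\ref{def:galsimple}.(2), applied to the given galsimple extension $L/K$, with no further machinery. Fix an intermediate field $M$ with $K < M < L$. I will treat the two conclusions, ``$M/K$ non galois'' and ``$M/K$ galsimple'', separately.

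First I would dispatch the non-Galois claim. From $K < M < L$ we read off $M \neq K$ and $M \neq L$, so $M$ is a field strictly between $K$ and $L$. If $M/K$ were Galois, then $M$ would be a field satisfying $K \leq M \leq L$ with $M/K$ Galois yet $M \neq K$ and $M \neq L$, which directly contradicts the galsimplicity of $L/K$. Hence $M/K$ is not Galois, i.e. $M \nongal K$.

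Next I would establish that $M/K$ is galsimple. Non-triviality $M \neq K$ is immediate from $K < M$. For the remaining condition I take any field $F$ with $K \leq F \leq M$ such that $F/K$ is Galois, and show $F = K$ (this is even stronger than the required $K \leq F < M \implies F = K$). Since $M < L$, the inclusion $F \leq M$ gives $F \leq M < L$, whence $F \neq L$. Applying the galsimplicity of $L/K$ to $F$, which satisfies $K \leq F \leq L$ with $F/K$ Galois, forces $F = K$ or $F = L$; as $F \neq L$, we conclude $F = K$. Thus the only Galois subextension of $M/K$ is the trivial one, so $M/K$ is galsimple, and together with the previous paragraph this yields $M \galsimple \nongal K$.

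I do not expect a genuine obstacle here: the result is an immediate unwinding of the definition, the only point requiring care being to keep the inclusions in the right direction, namely that every intermediate Galois extension of the quotient $M/K$ is a fortiori an intermediate Galois extension of $L/K$ lying strictly below $L$, hence pinned down to $K$. Note that no degree or topological hypothesis enters, consistent with the statement holding in arbitrary degree.
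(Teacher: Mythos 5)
Your proof is correct and follows essentially the same route as the paper's: the non-Galois part is the direct contradiction with the galsimplicity of $L/K$, and the galsimplicity of $M/K$ follows by observing that any Galois intermediate field of $M/K$ is a Galois intermediate field of $L/K$ strictly below $L$, hence equal to $K$. The only (harmless) difference is that you pin down \emph{all} Galois $F$ with $K \leq F \leq M$ rather than only those with $F < M$, which is a slightly stronger conclusion obtained for free.
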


\begin{proof}
La galsimplicité de $L/K$ interdit que l'extension quotient propre $M/K$ soit
galoisienne. Et elle est nécessairement galsimple car s'il existait un corps $F$
tel que $K \lhd F < M$, on en déduirait $K \lhd F < L$ : contradiction.
\end{proof}

\mespace
Prouvons maintenant qu'il y a transitivité de la galsimplicité non galoisienne.

\pespace
\begin{prop}
Pour toute tour de corps $F_0 \leq F_1 \leq F_2$, le fait que les extensions
$F_1 / F_0$  et $F_2 / F_1$ soient galsimples non galoisiennes implique qu'il en
est de même de l'extension $F_2 / F_0$. Précisément, dans nos notations (cf. Chap.
2, Not. 1.9) : 
$$\text{\rm \large (} \;(F_1 \galsimple \nongal F_0) \;, \: (F_2 \galsimple \nongal
F_1) \; \text{\rm \large )} \quad \implique \quad (F_2 \galsimple \nongal F_0) \;.$$
\end{prop}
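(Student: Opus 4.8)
The plan is to establish something slightly stronger than galsimplicity, namely that \emph{the only} intermediate field $F$ with $F_0 \leq F \leq F_2$ for which $F/F_0$ is galoisienne is $F=F_0$ itself. This one statement delivers both halves of the conclusion at once: the galsimplicité of $F_2/F_0$ follows because the alternative ``$F=F_0$ ou $F=F_2$'' of Définition \ref{def:galsimple}.(2) is then satisfied trivially, and the non-galoisianité of $F_2/F_0$ follows because taking $F=F_2$ would force $F_2=F_0$, which is impossible since $F_0 < F_1 \leq F_2$ (here $F_1 \neq F_0$ because $F_1/F_0$, being galsimple, is non triviale). Note this also settles the non-trivialité $F_2 \neq F_0$ demanded by the definition.

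So I would fix such an $F$, with $F_0 \leq F \leq F_2$ and $F/F_0$ galoisienne, and form the compositum $FF_1$, which clearly satisfies $F_1 \leq FF_1 \leq F_2$. The first step is to translate: since $F$ and $F_1$ both lie in the common field $F_2$ and $F_1/F_0$ is algébrique, the théorème de l'extension galoisienne translatée (Th. \ref{th:galtranslat\'ee}, i.e. the ``natural irrationalities'' statement) applied to the galoisienne extension $F/F_0$ translated by $F_1/F_0$ shows that $FF_1/F_1$ is galoisienne. Now I invoke the galsimplicité of $F_2/F_1$: the extension $FF_1/F_1$ sits between $F_1$ and $F_2$ and is galoisienne, so Définition \ref{def:galsimple}.(2) forces either $FF_1=F_1$ or $FF_1=F_2$.

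The case $FF_1=F_2$ is excluded: it would make $F_2/F_1=FF_1/F_1$ galoisienne, contradicting the hypothesis that $F_2/F_1$ is non galoisienne. Hence $FF_1=F_1$, that is $F \leq F_1$. We are now entirely inside the lower step: $F_0 \leq F \leq F_1$ with $F/F_0$ galoisienne, so the galsimplicité of $F_1/F_0$ gives $F=F_0$ or $F=F_1$; and $F=F_1$ is again excluded, since it would make $F_1/F_0$ galoisienne against hypothesis. Therefore $F=F_0$, which is exactly what was claimed.

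The argument is short once one notices that translating upward by $F_1$ and descending through the two galsimplicités are exactly complementary operations. The step deserving the most care — and the unique place where the two ``non galoisienne'' hypotheses are actually consumed — is the systematic elimination of the two extremal cases $FF_1=F_2$ and $F=F_1$: these are precisely the borderline configurations in which a proper galoisienne quotient could appear, and each is annihilated by the non-galoisianité of the corresponding elementary step. I expect no genuine obstacle beyond this bookkeeping, since Théorème \ref{th:galtranslat\'ee} requires only algébricité of $F_1/F_0$ (so the result holds in arbitrary degree, finite or infinite) and no separability input is needed beyond what ``galoisienne'' already carries.
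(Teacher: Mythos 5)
Votre démonstration est correcte et suit essentiellement la même voie que celle du texte : translation de l'extension galoisienne quotient par $F_1/F_0$ (Th. 2.1 du chapitre 2), puis descente en deux temps via la galsimplicité de $F_2/F_1$ et celle de $F_1/F_0$, les deux hypothèses de non-galoisianité éliminant chacune le cas extrémal correspondant. La seule différence est organisationnelle : vous établissez directement l'énoncé unifié « le seul quotient galoisien est $F_0$ » là où le texte traite d'abord la non-galoisianité puis raisonne par l'absurde sur la galsimplicité.
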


\mespace
\begin{proof}
On sait déjà que l'extension $F_2 / F_0$ n'est pas galoisienne, sans quoi
l'on aurait la sous-extension galoisienne $F_2 \gal F_1$.
Raisonnons par l'absurde en supposant que l'extension $F_2 / F_0$ ne soit pas
galsimple. Comme elle n'est pas triviale puisque $F_1 / F_0$  (et $F_2 / F_1$)
ne l'est pas, cela signifie qu'il existe un corps intermédiaire $M$ tel que
$$F_0 \lhd M < F_2 \;.$$
En translatant alors l'extension galoisienne $M \gal F_0$ par l'extension $F_1 / F_0$, on
obtient l'extension galoisienne $M F_1 / F_1$, et donc
$$F_1 \unlhd M F_1 \leq F_2 \:.$$
Comme l'extension $(F_2 \galsimple \nongal F_1)$ est galsimple non galoisienne, 
son extension galoisienne quotient $(M F_1 / F_1)$ est nécessairement triviale :
$$M F_1 = F_1 \quad \text{i.e.} \qquad M \leq F_1 \;.$$
De sorte que l'on a la tour
$$F_0 \lhd M \leq F_1 \;.$$
Comme l'extension $F_1 \galsimple F_0$ est galsimple, on a donc $M=F_1$.
Et finalement l'extension $(M \gal F_0)=(F_1 \gal F_0)$ est galoisienne :
contradiction.
\end{proof}

\gespace
La proposition \ref{prop:5fin} suivante fournit une classe infinie d'extensions galsimples non
galoisiennes immédiatement utilisables car sur le corps $\mathbb Q$ des nombres
rationnels. Nous nous en servirons dans les exemples du chapitre 6.

\gespace
Rappelons d'abord le critère d'irréductibilité des polynômes $X^n-a$ que nous
utiliserons également au chapitre 6

\mespace
{\it {\bf Theorem.} \cite[p.297, Th. 9.1]{La} \\
Let $K$ be a field and $n \geq2$ an integer. Let $a \in K$, $a \neq 0$. Assume
that for all prime numbers $p$ such that $p | n$ we have $a \notin K^p$, and if
$4 | n$ then $a \notin -4K^4$. Then $X^n -a$ is irreducible in $K[X]$.}

\mespace
\begin{prop} \label{prop:5fin}
Soit $L={\mathbb Q}(\alpha) / {\mathbb Q}$ une extension radicale où 
$\alpha^n = a \in \mathbb Q$ avec\\
(1) $a$ positif : $a \in \mathbb Q_+$, et $n \geq 3$ impair,\\
(2) pour tout diviseur $d$ de $n$ : $d\mid n $, $a \notin \mathbb Q^d$.\\
Alors l'extension $L/\mathbb Q$ est galsimple non galoisienne.
\end{prop}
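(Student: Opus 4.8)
Le plan est de ramener la galsimplicit\'e \`a une description compl\`ete des corps interm\'ediaires de $L/\mathbb{Q}$, puis d'exploiter la r\'ealit\'e de $\alpha = \sqrt[n]{a}$ pour \'ecarter la normalit\'e.

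D'abord, j'appliquerais le crit\`ere d'irr\'eductibilit\'e de Lang rappel\'e ci-dessus : comme $n$ est impair on a $4 \nmid n$, et l'hypoth\`ese (2) fournit en particulier $a \notin \mathbb{Q}^p$ pour tout premier $p \mid n$ ; donc $X^n - a$ est irr\'eductible sur $\mathbb{Q}$ et $[L:\mathbb{Q}] = n \geq 3$. En particulier $L \neq \mathbb{Q}$, de sorte que l'extension est non triviale, ce qui est la premi\`ere exigence de la D\'ef. \ref{def:galsimple}.(2).

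Ensuite, la pi\`ece ma\^itresse est la proposition \'etablie dans la d\'emonstration de l'exemple \ref{ex:existence}.(i) : sous l'hypoth\`ese (2), tout corps interm\'ediaire $F$, $\mathbb{Q} \leq F \leq L$, est de la forme $F = \mathbb{Q}(\sqrt[d]{a})$ pour un diviseur $d$ de $n$. Je montrerais alors que, pour tout diviseur $d \mid n$ avec $d \geq 3$, l'extension $\mathbb{Q}(\sqrt[d]{a})/\mathbb{Q}$ n'est pas galoisienne. En effet, le m\^eme crit\`ere d'irr\'eductibilit\'e (appliqu\'e \`a $d$, impair comme diviseur de $n$) assure que $X^d - a$ est irr\'eductible ; or, $d$ \'etant impair, ce polyn\^ome n'a qu'une seule racine r\'eelle, la racine $\sqrt[d]{a}$, ses $d - 1 \geq 2$ autres racines \'etant complexes non r\'eelles. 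Comme $\mathbb{Q}(\sqrt[d]{a}) \subseteq \mathbb{R}$, ce corps ne contient pas toutes les racines de $X^d - a$, et l'extension $\mathbb{Q}(\sqrt[d]{a})/\mathbb{Q}$ n'est donc pas normale.

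Enfin, je conclurais en combinant ces deux points. Puisque $n$ est impair, tout diviseur $d$ de $n$ vaut $1$ ou est impair $\geq 3$. Si $F = \mathbb{Q}(\sqrt[d]{a})$ est galoisien sur $\mathbb{Q}$, l'\'etape pr\'ec\'edente interdit $d \geq 3$ ; donc $d = 1$ et $F = \mathbb{Q}$, ce qui est exactement la condition de galsimplicit\'e. Par ailleurs, en prenant $d = n \geq 3$, la m\^eme \'etape montre que $L = \mathbb{Q}(\sqrt[n]{a})$ elle-m\^eme n'est pas galoisienne sur $\mathbb{Q}$. D'o\`u $L \galsimple \nongal \mathbb{Q}$. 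Le point d\'elicat n'est pas le raisonnement de r\'ealit\'e, \'el\'ementaire, mais l'appel \`a la description des corps interm\'ediaires, qui repose sur les r\'esultats de \cite{A-V} ou \cite{Vi} ; une fois ce th\'eor\`eme acquis, tout le reste d\'ecoule de la parit\'e de $n$.
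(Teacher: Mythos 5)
Your proof is correct, and it follows the paper's overall architecture: Lang's irreducibility criterion gives $[L:\mathbb{Q}]=n$, and the classification of the intermediate fields of $\mathbb{Q}(\sqrt[n]{a})/\mathbb{Q}$ as the $\mathbb{Q}(\sqrt[d]{a})$, $d\mid n$ (the proposition quoted in the proof of Example 1.10.(i)), reduces everything to showing that no $\mathbb{Q}(\sqrt[d]{a})$ with $d\geq 3$ is Galois over $\mathbb{Q}$. Where you diverge is precisely at that last step. The paper argues that if $\mathbb{Q}(a^{1/\delta})/\mathbb{Q}$ were Galois of degree $\delta\geq 2$, it would contain $\mu_\delta$, hence $\varphi(\delta)$ would divide $\delta$; since $\delta$ is odd it has an odd prime divisor $p$, and the even integer $p-1$ would divide the odd degree $\delta$: contradiction. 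You instead use the archimedean embedding: $X^d-a$ is irreducible, hence is the minimal polynomial of $\sqrt[d]{a}$, and for odd $d\geq 3$ it has $d-1\geq 2$ non-real roots, which cannot lie in the real field $\mathbb{Q}(\sqrt[d]{a})$; so the extension is not normal. Both arguments are sound and both exploit the oddness of $n$; yours is the more elementary (no cyclotomic degree formula), while the paper's parity argument is the one that survives the generalization carried out in Chapter 6 (Lemme 3.3 and Fait 3.4), where the base field is $\mathbb{Q}(\zeta_n)$ and the reality argument is no longer available. One shared tacit point, not a gap specific to you: the intermediate-field proposition is stated for the \emph{real} root $\sqrt[n]{a}$, so both proofs implicitly take $\alpha$ real (or transport the conclusion to a conjugate root, which preserves galsimplicity and non-normality).
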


\mespace
\begin{proof}
D'après le théorème précédent, 
le polynôme $X^n - a$ est irréductible sur $\mathbb Q$, d'où $X^n -
a=Irr(\alpha,{\mathbb Q},L)$ et $[L:{\mathbb Q}]=n$. Soit alors $I$ un corps
intermédiaire entre $\mathbb Q$ et $L$. D'après la proposition de la
démonstration de l'exemple 1.10 du chapitre 2, 
il
existe nécessairement un diviseur $\delta$ de $n$ tel que $I={\mathbb Q}(\beta)$
avec $\beta^\delta=a$. Raisonnons par l'absurde en supposant que $I/{\mathbb Q}$
soit une extension galoisienne non triviale : $I \gal {\mathbb Q}$, $[I :
{\mathbb Q}] \geq 2$. Le polynôme $X^\delta - a$ est aussi irréductible sur
${\mathbb Q}$ car
$$d|\delta \quad \implique \quad d|n \quad \implique \quad a \notin {\mathbb
Q}^d$$
et le critère d'irréductibilité s'applique. Donc $X^\delta - a = Irr(\beta,{\mathbb Q},X)$
et 
$$2 \leq [I : {\mathbb Q}]=\delta \mid n \text{ impair.}$$
Or dire que $I={\mathbb Q}(a^{1/\delta})/{\mathbb Q}$ est galoisienne implique
que $\mu_\delta \subseteq I$.

\begin{figure}[!h]
\begin{center}
\vskip -6mm
\includegraphics[width=5.5cm]{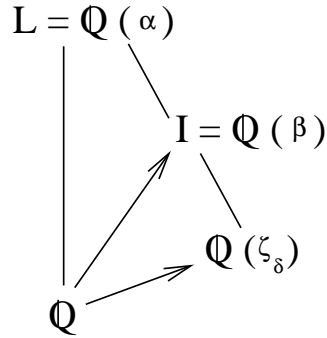}
\end{center}
\vskip -5mm
\rm{\caption{\label{fig:8}\leg Galsimplicité de ${\mathbb
Q}(a^{1/n})/{\mathbb Q}$ contredite}}
\vskip 1mm
\end{figure}

 Et on sait que $[{\mathbb Q}(\mu_\delta) : {\mathbb Q}]=
\varphi(\delta)$ où $2 \leq \delta$ impair est divisible par un nombre premier
impair $p$. Il en résulte que l'entier pair $p-1$ divise $[I:{\mathbb Q}]$
impair : contradiction. Il est donc prouvé que $L/{\mathbb Q}$ est galsimple.
\end{proof}

\addtocontents{toc}{\mespace\pespace}
\chapter{THÉORÈME $M$}
\addtocontents{lof}{\gespace}
\addtocontents{lof}{\noindent Chapitre \thechapter}
\addtocontents{lof}{\pespace}
\addtocontents{toc}{\pespace}
\gespace
Ce chapitre 6 détaille un résultat non publié de Richard Massy (confer
remerciements) que nous sommes convenus d'appeler le "théorème $M$"
\footnote{\, plutôt que l'ambigu jeu de mots "corps de Massy" !}(Sect. 1). Ce théorème $M$
est la clef de la généralisation à toutes les extensions finies des théorèmes de
dissociation précédents pour les extensions galtourables. Il découvre qu'à toute
extension finie $L/K$ est attachée un invariant unique, son "corps
d'intourabilité $M(L/K)$" qui est un corps intermédiaire au-delà duquel
l'extension n'est plus galtourable. Le rôle central du corps d'intourabilité
$M(L/K)$ est mis en lumière section 2 : il dissocie l'extension $L/K$ en une
extension quotient galtourable maximale $M(L/K) \galtou K$ et une sous-extension
d'intourabilité maximale $L/M(L/K)$, ou bien triviale, ou bien galsimple non
galoisienne. Dans la dernière section, nous réalisons tous les corps
cyclotomiques ${\mathbb Q} (\zeta_{n})$ comme corps d'intourabilité d'une classe
infinie d'extensions $L/{\mathbb Q}$ avec $L/{\mathbb Q} (\zeta_{n})$ galsimple,
donc non triviale, et non galoisienne : $(L \galsimple \nongal {\mathbb Q}
(\zeta_{n}))$ (cf. Chap.2, Not. 1.9). 

\gespace
\gespace
\section{Quatrième théorème de dissociation}
\mespace
Il s'agit  ici d'énoncer et de prouver le résultat central suivant pour
généraliser les théorèmes de dissociation du chapitre 4, qui se limitaient au
cas galtourable.\mespace

\begin{Th} ($4^{\text{ème}}$ théorème de dissociation, dit "Théorème
$M$")\label{th:6A}\index{Théorème!de dissociation}\index{Théorème!M}\index{Dissociation!$4^{\text{ème}}$ Théorème (de)}\\
Pour toute extension finie $L/K$ , il existe un corps intermédiaire $M$ et un
seul entre $K$ et $L$, vérifiant à la fois les deux propriétés suivantes :

\pespace
(1) L'extension $M/K$ est galtourable ;

\pespace
(2) La sous-extension $L/M$ est soit triviale, soit galsimple non galoisienne.\\
\end{Th}
\vskip -4mm
\noindent{\it Scholie. }Dans nos notations (Chap. 2, Not. 1.9), 
ce théorème $M$ se symbolise comme suit :
$$\forall L/K \quad [L:K] < \infty \;, \; \exists ! \,M \quad K \leq M \leq L
\quad (M \galtou K \;,\; L=M \text{ ou } (L \galsimple \nongal M)) \;.$$

\mespace
\begin{proof}
{\emphase Existence.} Le théorème est trivial pour $L=K$ car $K \gal K$ implique $K
\galtou K$ (Chap.2, Déf.1.4, Scholie (2)). 
Procédons par récurrence sur $n:= [L:K] \geq 2$ en supposant l'existence d'un
corps intermédiaire de l'énoncé pour toute extension de degré $\leq n-1$.
Deux cas se présentent :\\
\clearpage
\noindent - Ou bien $L/K$ est galsimple. Si elle est galoisienne (resp. non galoisienne)
$M=L$ (resp. $M=K$) convient.\\
- Ou bien $L/K$ n'est pas galsimple. Comme elle n'est pas triviale, cela
signifie par définition (Chap. 2, Déf. 1.6.(2)) 
qu'il existe un corps $K_1$ tel que
$$K \lhd K_1 < L \;.$$
En particulier $[L:K_1] \leq n-1$ et l'hypothèse de récurrence s'applique : il
existe un corps $M$ tel que
$$(M \galtou K_1 \;,\; L=M \text{ ou } (L \galsimple \nongal M)) \;.$$
Or $K_1/K$ est galtourable en tant qu'extension galoisienne, donc par le Fait
2.7 du chapitre 2,
$$(K_1 \galtou K \;,\; M \galtou K_1) \quad \implique \quad (M \galtou K) \;;$$
et le corps $M$ remplit les conditions de l'énoncé.\\

{\emphase Unicité. }Soit $M'$ un autre corps intermédiaire entre $K$ et $L$ satisfaisant
les deux conditions de l'énoncé. Comme la translatée d'une extension galtourable
est toujours une extension galtourable (cf. Chap. 2, Cor 2.4 ou Th. 2.3), 
avoir $M/K$ galtourable implique que l'extension $MM'/M'$ est galtourable.

\begin{figure}[!h]
\begin{center}
\vskip -6mm
\includegraphics[width=1.5cm]{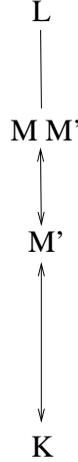}
\end{center}
\vskip -5mm
\rm{\caption{\label{fig:6-1} \leg Translatée de $M/K$ par $M'$}}
\vskip -1mm
\end{figure}

Considérons une tour galoisienne $(F)$ de $MM'/M'$
$$(F) \qquad M'=F_0 \unlhd \dots \unlhd F_i \unlhd \dots \unlhd F_m=MM' \;,$$
et sa tour stricte associée $(F_<)$ (qui existe même pour une extension triviale : cf.
Chap. 3, remarque. 2.2.(1)). D'après le corollaire 2.6 du chapitre 3, 
cette tour stricte associée $(F_<)$ est nécessairement galoisienne :
$$(F_<) \qquad M'=F_{<0} \lhd \dots \lhd F_{<j} \lhd \dots \lhd F_{<h}=MM' \;.$$
Supposons que l'on ait $h \geq 1$ de sorte que
$$M' \lhd F_{<1} \leq MM' \leq L \;.$$
En particulier $L \neq M'$, d'où $(L \galsimple \nongal M')$ par définition de
$M'$. Mais alors, on ne peut avoir $M' \lhd F_{<1} < L$ par la galsimplicité de
$L/M'$, ni $F_{<1}=L$ puisque $L/M'$ n'est pas galoisienne. La seule possibilité
est donc que $h=0$, autrement dit $MM'=M'$ i.e. $M \leq M'$. En échangeant les
rôles de $M$ et $M'$, on prouve de même que $M' \leq M$, et finalement $M=M'$.
\end{proof}

\gespace
\begin{rem} \label{rem:6B}
Nous venons de démontrer que toute extension finie se dissocie en une extension
galtourable et, éventuellement, une extension galsimple. Le théorème $M$
justifie donc à lui seul l'introduction de la galtourabilité et de la
galsimplicité.
\end{rem}

\gespace
\begin{defn} \label{def:6C}
Dans les notations du théorème $M$ (Th. \ref{th:6A}) précédent, nous appelons
$M=M(L/K)$ "Le corps d'intourabilité\index{Corps d'intourabilité} de $L/K$".
\end{defn}

\gespace
\begin{cor} \label{cor:6D}
Une extension finie $L/K$ est galtourable si et seulement si son corps
d'intourabilité coïncide avec son corps sommet. Autrement dit, on a
l'équivalence
$$(L \galtou K) \quad \ssi \quad M(L/K)=L \;.$$
En particulier, on a toujours l'égalité $M(M(L/K)/K)=M(L/K)$.
\end{cor}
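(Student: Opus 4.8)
Looking at the corollary, I need to prove two things: the equivalence $(L \galtou K) \ssi M(L/K)=L$ for finite extensions, and the identity $M(M(L/K)/K)=M(L/K)$.

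**Understanding the statement.** Let me first recall what $M(L/K)$ is: by the Théorème $M$ (Th. \ref{th:6A}), it is the unique intermediate field $M$ with $K \leq M \leq L$ such that $M/K$ is galtourable and $L/M$ is either trivial or galsimple non-galoisienne. The claim is that this invariant equals $L$ precisely when $L/K$ is itself galtourable.

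**My plan.** The proof should proceed by directly invoking the uniqueness half of the Théorème $M$. For the forward direction of the equivalence, the plan is to assume $L \galtou K$ and show $M=L$ satisfies both defining conditions of the intourability field: condition (1) holds because $L/K$ is galtourable by hypothesis, and condition (2) holds because $L/M = L/L$ is the trivial extension. By the uniqueness clause of Th. \ref{th:6A}, the field satisfying these two properties is unique, so $M(L/K)=L$. Conversely, assume $M(L/K)=L$; then by property (1) of the defining characterization, the extension $M(L/K)/K = L/K$ is galtourable, which is exactly what we want. This direction is essentially immediate from reading off property (1).

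**The second identity.** For $M(M(L/K)/K)=M(L/K)$, I would set $M:=M(L/K)$ and apply the equivalence just proved to the extension $M/K$. By property (1) of Th. \ref{th:6A} applied to $L/K$, the extension $M/K=M(L/K)/K$ is galtourable. Since $M/K$ is a finite extension (as a sub-extension of the finite $L/K$), the equivalence $(M \galtou K) \ssi M(M/K)=M$ applies, giving $M(M(L/K)/K)=M(L/K)$ directly. The only small point to verify is that $M/K$ is finite, which follows from transitivity of degrees since $[M:K]$ divides $[L:K]<\infty$.

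**Expected obstacle.** I do not anticipate a serious obstacle here; the corollary is a formal consequence of the existence-and-uniqueness theorem. The one place requiring mild care is ensuring the finiteness hypothesis is available when applying the equivalence to $M/K$ in the second identity—but this is guaranteed by multiplicativity of degrees. The proof is essentially a matter of correctly unwinding the definition of $M(L/K)$ and citing the uniqueness in Th. \ref{th:6A}; the cleanest write-up will emphasize that $M=L$ trivially fulfills both defining properties in the forward direction, letting uniqueness do all the work.
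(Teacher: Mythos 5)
Your proof is correct and follows essentially the same route as the paper: the forward direction observes that $L$ itself satisfies the two defining conditions of the Théorème $M$ and invokes uniqueness, the converse reads off property (1), and the identity $M(M(L/K)/K)=M(L/K)$ follows by applying the equivalence to the finite galtourable extension $M(L/K)/K$. No gaps.
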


\mespace
\begin{proof}
Si $L/K$ est galtourable, le corps $L$ lui-même vérifie les deux conditions du
théorème \ref{th:6A}. Par unicité du corps d'intourabilité, on en déduit
$M(L/K)=L$. Inversement dans ce cas, $L/K$ est galtourable puisqu'il en est
ainsi de $M(L/K)/K$ par définition.
\end{proof}

\gespace
\begin{ex} \label{ex:6E}
Dans les notations 3.1 du chapitre 5 ; 
$$E_0={\mathbb Q} \;, \quad E_\infty= {\displaystyle\bigcup_{n \in {\mathbb
N}}}\,{\mathbb Q}(\sqrt[2^n]{2}) \;,$$
il n'existe pas de corps intermédiaire entre $E_0$ et $E_\infty$ tel que les
conditions (1) et (2) du théorème \ref{th:6A} soient vérifiées.
\end{ex}

\mespace
\begin{proof}
C'est clair d'après le Fait 3.4 du chapitre 5. 
\end{proof}

\gespace
\gespace
\section{Le rôle central du corps d'intourabilité}
\mespace
Nous allons prouver que le corps d'intourabilité du théorème $M$ commande la
galtourabilité (resp. l'intourabilité) des extensions quotients (resp. des
sous-extensions) de toute extension finie en induisant deux maximalités
déterminantes.

\gespace
\begin{propdef} \label{propetdef:6F}
Soient $L/K$ une extension finie et $M(L/K)$ son corps d'intourabilité (Déf.
\ref{def:6C}).

\pespace
(1) Pour toute extension quotient galtourable $F \galtou K$ de $L/K$ : $K \leq F
\leq L$, on a nécessairement $F \leq M(L/K)$.

\pespace
(2) L'extension galtourable $M(L/K) \galtou K$ est maximale dans l'ensemble des
extensions quotients galtourables de $L/K$ muni de la relation d'ordre du (1) du
lemme 2.3 du chapitre 1.

\pespace
(3) L'extension $M(L/K) \galtou K$ est la seule extension quotient de $L/K$
vérifiant la maximalité du (2).\\

Nous appelons $M(L/K) \galtou K$ "l'extension quotient galtourable maximale de
$L/K$."\index{Extension!quotient galtourable maximale}
\end{propdef}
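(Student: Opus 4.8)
The plan is to prove assertion (1) first, since it is the substantive part, and then to read off (2) and (3) as purely formal consequences. Throughout I would set $M := M(L/K)$ and recall from the Th\'eor\`eme $M$ (Th.~\ref{th:6A}) that $M/K$ is galtourable and that $L/M$ is either trivial or galsimple non galoisienne; in particular $M/K$ is itself a galtourable quotient extension of $L/K$, so it belongs to the poset in which maximality is claimed.

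For (1), I would take an arbitrary galtourable quotient extension $F \galtou K$ with $K \leq F \leq L$ and form the compositum $FM$ inside $L$. Since $F/K$ is galtourable and $F$ and $M$ lie in the common field $L$, the th\'eor\`eme de l'extension galtourable translat\'ee (Chap.~2, Th.~2.3), applied by translating $F/K$ along $M/K$, gives that $FM/M$ is galtourable. I then distinguish the two cases of the Th\'eor\`eme $M$ for $L/M$. If $L/M$ is trivial, then $M=L$ and the chain $M \leq FM \leq L$ forces $FM=M$, hence $F \leq M$. If $L/M$ is galsimple non galoisienne, I argue by contradiction assuming $FM \neq M$: then $FM/M$ is galtourable and nontrivial, so taking a galoisian tower of $FM/M$ and passing to its associated strict tower --- still galoisienne by le corollaire~2.6 du chapitre~3 --- its first step yields a field $N$ with $M \lhd N \leq FM \leq L$ and $N/M$ galoisienne. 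The galsimplicity of $L/M$ then forces $N=L$, so that $L/M$ would be galoisienne, contradicting the fact that $L/M$ is not galoisienne. Hence $FM=M$, i.e. $F \leq M$, which proves (1).

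For (2) and (3), assertion (1) shows that every galtourable quotient $F/K$ of $L/K$ satisfies $F \subseteq M$, that is $(F/K) \leq (M/K)$ for the order of le (1) du lemme~2.3 du chapitre~1, under which $(F/K) \leq (F'/K) \ssi F \subseteq F'$. Thus the galtourable quotient $M/K$ is the greatest element of the ordered set of galtourable quotient extensions of $L/K$; being greatest, it is in particular maximal, which is (2). For (3), if $F/K$ is any maximal galtourable quotient, then (1) gives $(F/K) \leq (M/K)$, and the maximality of $F/K$ forces $F=M$; therefore $M(L/K) \galtou K$ is the unique maximal galtourable quotient extension.

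The delicate point is assertion (1), and more precisely its galsimple case: one must on the one hand invoke the translation theorem correctly to make $FM/M$ galtourable, and on the other hand extract a genuinely nontrivial galoisian step from an extension $FM/M$ that is only galtourable --- for which the associated strict tower (Cor.~2.6, Chap.~3) is the right tool, exactly as in the uniqueness argument of Th.~\ref{th:6A}. Once such a nontrivial galoisian step is produced, the galsimplicity of $L/M$ closes the argument at once, and parts (2) and (3) are pure order theory.
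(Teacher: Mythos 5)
Your proposal is correct and follows essentially the same route as the paper: translate $F/K$ by $M/K$ to make $FM/M$ galtourable, extract a strict galoisian tower of $FM/M$ (Cor.~2.6, Chap.~3), and let the galsimplicity and non-galoisianness of $L/M$ kill its first step, forcing $FM=M$; parts (2) and (3) are then the same order-theoretic formalities. The only cosmetic difference is that the paper does not separate the case $L=M$ explicitly (it deduces the non-triviality of $L/M$ from $M \lhd F_1 \leq L$), which changes nothing of substance.
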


\mespace
\begin{proof}
Posons $M:=M(L/K)$.
\pespace
(1) Soit $F \galtou K$ une extension quotient galtourable de $L/K$. D'après le
corollaire 2.4 du chapitre 2, 
l'extension $FM \galtou M$ est également galtourable. Elle admet ainsi une tour
galoisienne que l'on peut supposer stricte (Chap. 3, Cor. 2.6) : 
$$M=F_0 \lhd \dots \lhd F_n=F \, M \;.$$
Raisonnons par l'absurde en supposant $M \neq FM$. Dès lors on a $n \geq 1$, et
en particulier $M \lhd F_1 \leq L$. L'extension $L/M$ est donc non triviale.
C'est qu'elle est galsimple non galoisienne en vertu du théorème $M$. Or avoir
$F_1=L$ contredit le fait que $L/M$ est non galoisienne, et avoir $F_1 \neq L$
contredit la galsimplicité de $L/M$. Ceci prouve que nécessairement $M=FM$, i.e.
$F \leq M$. 

\pespace
\noindent (2) Pour toute extension quotient galtourable $F \galtou K$, on a $F
\leq M$ d'après le (1) précédent, ce qui signifie, par définition de la relation
d'ordre, que
$$(F \galtou K) \; \leq \; (M \galtou K)\;.$$

\pespace
\noindent (3) Si $(M' \galtou K)$ est une extension quotient galtourable
maximale de $L/K$, on a $M' \leq M$ d'après le (1), i.e. $(M' \galtou K) \; \leq
\; (M \galtou K)$, d'où $M'=M$ par la maximalité de $M' \galtou K$.
\end{proof}

\gespace
La proposition suivante induira la notion de "tour d'élévation" du chapitre 7
final.

\mespace
\begin{prop} \label{prop:6G}
Soit $L/K$ une extension finie. Pour tout corps intermédiaire $F$ : $K \leq F
\leq L$, le corps d'intourabilité de l'extension quotient $F/K$ est inclus dans
celui de l'extension $L/K$. Précisément :\\
$$M(F/K) \; \leq \; F \, \cap \, M(L/K) \;.$$
\end{prop}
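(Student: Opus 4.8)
Pour $K \leq F \leq L$ avec $L/K$ finie, on a $M(F/K) \leq F \cap M(L/K)$.

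Je dissocie l'inclusion à prouver en ses deux composantes. D'abord, $M(F/K) \leq F$ est immédiat par définition même du corps d'intourabilité : le théorème $M$ (Th.~\ref{th:6A}) appliqué à l'extension $F/K$ place $M(F/K)$ comme corps intermédiaire entre $K$ et $F$, donc en particulier $M(F/K) \leq F$. Toute la difficulté porte sur la seconde inclusion $M(F/K) \leq M(L/K)$, et c'est là le cœur du travail.

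L'idée directrice est d'exploiter la maximalité de $M(L/K) \galtou K$ (Prop.\&Déf.~\ref{propetdef:6F}) en exhibant $M(F/K)$ comme une extension quotient galtourable de $L/K$. Posons $G := M(F/K)$. Par définition du corps d'intourabilité, l'extension $G/K$ est galtourable : $G \galtou K$. Or $G$ est un corps intermédiaire de $F/K$, donc aussi de $L/K$ puisque $K \leq G \leq F \leq L$. Ainsi $G/K$ est une \emph{extension quotient galtourable de $L/K$} au sens de la définition \ref{propetdef:6F}. Le point (1) de cette proposition\&définition s'applique alors directement et livre $G \leq M(L/K)$, c'est-à-dire $M(F/K) \leq M(L/K)$.

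En combinant les deux inclusions $M(F/K) \leq F$ et $M(F/K) \leq M(L/K)$, on conclut que $M(F/K)$ est contenu dans l'intersection, d'où
$$M(F/K) \; \leq \; F \cap M(L/K) \;,$$
ce qui établit la proposition. L'obstacle principal que je redoutais — prouver directement $M(F/K) \leq M(L/K)$ — s'évanouit en réalité grâce à la formulation de la maximalité en \ref{propetdef:6F}.(1) : il suffit de reconnaître que la \emph{galtourabilité} de $G/K$, établie pour l'extension $F/K$, reste valide lorsqu'on regarde $G$ comme corps intermédiaire de $L/K$, puisque la galtourabilité de $G \galtou K$ ne dépend que de l'extension $G/K$ elle-même et non du corps sommet ambiant. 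Aucune hypothèse supplémentaire n'est requise, et en particulier on n'a pas besoin que $L/F$ ou $F/K$ possède de propriété galoisienne ou galtourable.
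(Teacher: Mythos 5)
Votre démonstration est correcte et suit essentiellement la même voie que celle du texte : on observe que $M(F/K)\galtou K$ est une extension quotient galtourable de $L/K$ (puisque $K \leq M(F/K) \leq F \leq L$) et l'on applique le (1) de la proposition \& définition \ref{propetdef:6F} pour obtenir $M(F/K) \leq M(L/K)$, l'inclusion $M(F/K) \leq F$ étant immédiate par le théorème $M$. Votre rédaction a simplement le mérite d'expliciter cette dernière inclusion, que le texte laisse implicite.
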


\mespace
\begin{proof}

L'extension $M(F/K) \galtou K$ est galtourable (Th. \ref{th:6A}).
D'après le (1) de la proposition \& définition \ref{propetdef:6F}, on a donc
nécessairement
$$M(F/K) \; \leq \;  M(L/K) \;.$$

\pespace $ $\\
\begin{figure}[!h]
\begin{center}
\vskip -10mm
\includegraphics[width=4.7cm]{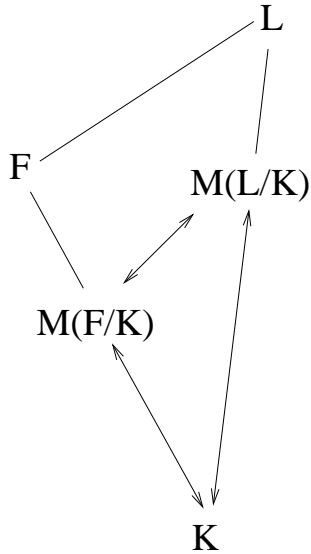}
\end{center}
\vskip -5mm
\rm{\caption{\label{fig:6-2} \leg Corps d'intourabilité d'une extension quotient}}
\vskip -3mm
\end{figure}

\end{proof}

\gespace
\begin{propdef} \label{propetdef:6H}
Soient $L/K$ une extension finie et $M(L/K)$ son corps d'intourabilité (Déf.
\ref{def:6C}).

\pespace
\noindent (0) Nous appelons "sous-extension d'intourabilité de $L/K$"
\index{Sous-extension!d'intourabilité}toute
sous-extension \\ $L/F$ , $K \leq F \leq L$, ou bien triviale : $L=F$, ou bien galsimple
non galoisienne : $(L \galsimple \nongal F)$.

\pespace
\noindent (1) Pour toute sous-extension d'intourabilité $L/F$ de $L/K$, on a
nécessairement $M(L/K) \leq F$.

\pespace
\noindent (2) L'extension $(L/M(L/K))$ est maximale dans l'ensemble des
sous-extensions d'intourabilité de $L/K$ muni de la relation d'ordre du (1) du
lemme 2.3 du chapitre 1. 

\pespace
\noindent (3) L'extension $(L/M(L/K))$ est la seule sous-extension d'intourabilité
de $L/K$ vérifiant la maximalité du (2).\\

Nous appelons $(L/M(L/K))$ "la sous-extension d'intourabilité maximale de
$L/K$"\index{Sous-extension!d'intourabilité maximale}.
\end{propdef}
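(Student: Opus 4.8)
The statement to prove (Proposition \& Définition \ref{propetdef:6H}) asserts three facts about the sub-extension $L/M(L/K)$: part (1) that every intourability sub-extension $L/F$ forces $M(L/K)\leq F$; part (2) that $L/M(L/K)$ is maximal among intourability sub-extensions for the order of Lemma 2.3.(1); and part (3) that it is the unique such maximal element. The whole proof is structured around part (1), from which (2) and (3) follow formally, exactly as in the dual statement (Proposition \& Définition \ref{propetdef:6F}) which I would take as my template.

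\textbf{Part (1): the heart of the argument.} Let $L/F$ be an intourability sub-extension, so either $L=F$ or $(L\galsimple\nongal F)$. I must show $M:=M(L/K)\leq F$. The natural idea is to argue that $M/F$ — or rather the relationship between $M$ and $F$ — is controlled by the galtourable nature of $M/K$ and the galsimple-non-galois nature of $L/F$. First I would translate the galtourable extension $M\galtou K$ by $F/K$: by Corollaire 2.4 (or Théorème 2.3) of Chapter 2, the extension $MF\galtou F$ is galtourable. It therefore admits a Galois tower, which by Corollaire 2.6 of Chapter 3 I may take to be strict and still Galois:
$$F=F_0 \lhd \dots \lhd F_n = MF \;.$$
Arguing by contradiction, suppose $F\neq MF$, so $n\geq 1$ and in particular $F\lhd F_1\leq MF\leq L$. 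Thus $L\neq F$, so by hypothesis $(L\galsimple\nongal F)$. But then $F\lhd F_1\leq L$ with $F_1/F$ Galois contradicts the galsimplicity of $L/F$ (if $F_1<L$) and contradicts the non-galois character of $L/F$ (if $F_1=L$ — an extension that is Galois over $F$). Either way we reach a contradiction, so $F=MF$, i.e. $M\leq F$, which is exactly what part (1) claims.

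\textbf{Parts (2) and (3): formal consequences.} For part (2), recall the order of Lemma 2.3.(1) on sub-extensions: $(L/F')\leq(L/F)\ssi F\subseteq F'$. Part (1) says every intourability sub-extension $L/F$ satisfies $M\subseteq F$, which by definition of the order means $(L/F)\leq(L/M(L/K))$; since $L/M(L/K)$ is itself an intourability sub-extension by the Théorème $M$ (Th. \ref{th:6A}), it is an upper bound for the whole set and hence maximal. For part (3), if $L/M'$ is another maximal intourability sub-extension, part (1) applied to it gives $M\subseteq M'$, i.e. $(L/M')\leq(L/M(L/K))$; the maximality of $L/M'$ then forces $M'=M(L/K)$. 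This mirrors verbatim the proof of Proposition \& Définition \ref{propetdef:6F}.(2)--(3).

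\textbf{Where the difficulty lies.} The only substantive step is the dichotomy argument inside part (1), and the obstacle is making sure the translated tower is genuinely usable: I must invoke Corollaire 2.6 of Chapter 3 to know that passing to the associated strict tower $(F_<)$ preserves the Galois property of every step (otherwise $F_1/F$ might only be galtourable, not Galois, and the contradiction with galsimplicity would collapse). Once the first step $F\lhd F_1$ is known to be a genuine (strict) Galois extension, the clash with the two defining properties of a galsimple-non-galois extension is immediate. Everything else is bookkeeping with the order relation of Lemma 2.3, so I expect no further obstacles.
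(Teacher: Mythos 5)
Votre démonstration est correcte, mais elle suit pour le point (1) une route réellement différente de celle du texte. Vous translatez l'extension galtourable $M(L/K)\galtou K$ par $F/K$ pour obtenir $M(L/K)F\galtou F$, vous en prenez une tour galoisienne stricte associée (Cor.~2.6 du chapitre~3), et vous faites entrer la première marche $F\lhd F_1\leq L$ en collision avec les deux propriétés de $(L\galsimple\nongal F)$ : c'est mot pour mot l'argument d'\emph{unicité} de la démonstration du théorème $M$, dont vous exploitez (à juste titre) le fait qu'il n'utilise jamais la galtourabilité de $M'/K$, seulement celle de $M/K$ et la nature de $L/M'$. Le texte procède autrement : il traite le cas $F=L$ à part, puis distingue selon que $F=M(F/K)$ ou non ; dans le second cas il invoque la transitivité des extensions galsimples non galoisiennes (Prop.~4.2 du chapitre~5) pour obtenir $(L\galsimple\nongal M(F/K))$, et conclut dans les deux cas par l'unicité du théorème $M$ appliquée au couple $\bigl(M(F/K)\galtou K,\ L\galsimple\nongal M(F/K)\bigr)$, d'où $M(L/K)=M(F/K)\leq F$. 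Votre approche est plus autonome (elle ne requiert pas la proposition de transitivité du chapitre~5, construite précisément pour cet usage) et elle met en évidence que la preuve d'unicité du théorème $M$ établit en fait un énoncé plus fort que l'unicité ; celle du texte est plus courte une fois les résultats antérieurs acquis et cite le théorème $M$ comme une boîte noire. Les points (2) et (3) sont identiques dans les deux rédactions, et vous avez raison de signaler explicitement que $L/M(L/K)$ appartient bien à l'ensemble considéré (par le théorème $M$), condition nécessaire pour parler de sa maximalité.
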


\mespace
\begin{proof}
(1) Soit $L/F$ une sous-extension d'intourabilité de $L/K$. Quand $F=L$, le
résultat est trivial. Supposons $F \neq L$. D'après le (0), c'est donc que $L/F$
est une extension galsimple non galoisienne : $(L \galsimple \nongal F)$.
Distinguons alors deux cas.\\
- Si $F$ est égal au corps d'intourabilité de $F/K$, on déduit du théorème $M$
(Th. \ref{th:6A}) dans $L/K$ que
$$\text{\Large (} (F=M(F/K) \galtou K) \;,\quad (L \galsimple \nongal F)
\text{\Large )} \quad \implique \quad F=M(L/K) \;,$$
et l'on a le résultat voulu.\\
- Si $F \neq M(F/K)$, on a l'extension galsimple non galoisienne $(F \galsimple
\nongal M(F/K))$, et par la proposition 4.2 du chapitre 5 : 
$$\text{\Large (} (F \galsimple \nongal M(F/K)) \;,\quad (L \galsimple \nongal F)
\text{\Large )} \quad \implique \quad \text{\Large (}L \galsimple \nongal M(F/K)
\text{\Large )} \;.$$
On en déduit, une fois encore par le théorème $M$, que
$$\text{\Large (} (M(F/K) \galtou K) \;,\quad (L \galsimple \nongal M(F/K))
\text{\Large )} \quad \implique \quad M(L/K)=M(F/K) \leq F\;.$$

\pespace
\noindent (2) Soit $L/F$ une sous-extension d'intourabilité de $L/K$. D'après le
(1), on a $M(L/K) \leq F$, ce qui signifie $(L/F) \leq (L/M(L/K))$.

\pespace
\noindent (3) Si $L/M'$ est une sous-extension d'intourabilité maximale de
$L/K$, on déduit du (1) que $M(L/K) \leq M'$, i.e. $(L/M') \leq (L/M(L/K))$,
d'où $M'=M(L/K)$ par la maximalité de $L/M'$.
\end{proof}

\gespace
\begin{defn} \label{def:6I}
Soient $L/K$ une extension finie et $M(L/K)$ son corps d'intourabilité. 
\pespace
\noindent (1) Nous appelons "degré de galtourabilité de
$L/K$"\index{Degré!de galtourabilité}, et nous notons
$[L:K]_{gal}$, le degré de l'extension quotient galtourable maximale de $L/K$ (Prop. \&
Déf. \ref{propetdef:6F}) :
$$[L:K]_{gal} := [M(L/K) : K] \;.$$

\pespace
\noindent (2) Nous appelons "degré d'intourabilité de
$L/K$"\index{Degré!d'intourabilité}, et nous notons
$[L:K]_{int}$, le degré de la sous-extension d'intourabilité maximale de $L/K$
(Prop. \& Déf. \ref{propetdef:6H}) :
$$[L:K]_{int} := [L : M(L/K)] \;.$$

\pespace
\noindent (3) Nous appelons "degré de tourabilité de
$L/K$"\index{Degré!de tourabilité}, et nous notons
$[L:K]_{tour}$, le couple d'entiers formé par le degré de galtourabilité et le
degré d'intourabilité de $L/K$ :
$$[L:K]_{tour} :=([L:K]_{gal} ,[L:K]_{int}) \;.$$
\end{defn}

\gespace
\gespace
\section{Exemples de corps d'intourabilité}
Nous avons déjà exhibé une classe infinie d'extensions simples non galoisiennes
${\mathbb Q}(\theta) / {\mathbb Q}$ où $n \geq 3$ et $\theta^n - \theta -1 = 0$
(Chap. 2, Ex. 1.10.(iii)). 
Ceci prouve en particulier que :

\mespace
\begin{prop} \label{prop:6J}
Tout entier naturel distinct de $0$ et $2$ est un degré d'intourabilité
(Déf. 2.4.(2)).
\end{prop}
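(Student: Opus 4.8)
The statement to prove is Proposition~\ref{prop:6J}: every natural number different from $0$ and $2$ occurs as a degree of intourability $[L:K]_{int}$ (Definition~\ref{def:6I}.(2)), i.e. as $[L:M(L/K)]$ for some finite extension. The degrees $0$ and $2$ must be excluded: degree $0$ makes no sense for an extension, and degree $2$ is impossible because any extension of degree $2$ is automatically normal, hence galoisian, so its intourability subextension can never be galsimple non-galoisian of degree $2$ — a degree-$2$ extension $L/M(L/K)$ would force $L/M(L/K)$ galoisian, contradicting the definition of the intourability subextension (which must be trivial or galsimple non-galoisian).

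**The plan.** The natural strategy is to exhibit, for each integer $n\in\{1\}\cup\{3,4,5,\dots\}$, a concrete extension whose intourability subextension has degree exactly $n$. The remark preceding the proposition points the way: the simple non-galoisian extensions $\mathbb{Q}(\theta)/\mathbb{Q}$ with $\theta^n-\theta-1=0$ (Example~\ref{ex:existence}.(iii), established via Selmer--Serre in Corollary~\ref{cor:5I}) are galsimple non-galoisian of degree $n$ for every $n\ge 3$. For such an extension $L=\mathbb{Q}(\theta)$, I claim $M(L/\mathbb{Q})=\mathbb{Q}$: since $L/\mathbb{Q}$ is galsimple and non-galoisian, the pair $(M,L)=(\mathbb{Q},L)$ satisfies both conditions of the Theorem~$M$ (Theorem~\ref{th:6A}) — namely $\mathbb{Q}/\mathbb{Q}$ is galtourable and $L/\mathbb{Q}$ is galsimple non-galoisian — so by uniqueness $M(L/\mathbb{Q})=\mathbb{Q}$. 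Hence $[L:\mathbb{Q}]_{int}=[L:\mathbb{Q}]=n$, realizing every $n\ge 3$.

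**Handling the case $n=1$.** The value $1$ is realized by any galtourable extension, for instance the trivial extension $\mathbb{Q}/\mathbb{Q}$ or any galoisian extension: by Corollary~\ref{cor:6D}, $L/K$ galtourable gives $M(L/K)=L$, so $[L:K]_{int}=[L:M(L/K)]=[L:L]=1$. Thus $n=1$ is covered. Combining this with the previous paragraph, every integer in $\{1\}\cup\{n:n\ge 3\}$ is a degree of intourability, which is exactly the set of naturals distinct from $0$ and $2$.

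**The main obstacle.** The only delicate point is verifying that $\mathbb{Q}(\theta)/\mathbb{Q}$ is genuinely galsimple (not merely simple) and non-galoisian, so that $(\mathbb{Q},L)$ is a legitimate choice in Theorem~$M$. This is precisely the content already secured in Corollary~\ref{cor:5I}, which shows via the Selmer--Serre result (Proposition~\ref{prop:5H}, $\mathrm{Gal}(L_n/\mathbb{Q})\isomto S_n$) and the maximality of $S_{n-1}$ in $S_n$ (Fait~\ref{fait:5G}) that $\mathbb{Q}(\theta_n)/\mathbb{Q}$ is simple non-galoisian; and every simple extension is galsimple by Proposition~1.8.(1). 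So the substance of the argument is entirely inherited from the already-proved Corollary~\ref{cor:5I}, and the proof of Proposition~\ref{prop:6J} reduces to invoking Theorem~$M$'s uniqueness to compute $M(L/\mathbb{Q})=\mathbb{Q}$ and reading off the degree. The proof is therefore short; the conceptual weight sits in the earlier arithmetic machinery rather than in any new computation here.
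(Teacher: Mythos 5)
Your proposal is correct and follows exactly the route the paper takes: the paper's ``proof'' is precisely the preceding remark that the simple non-galoisian extensions $\mathbb{Q}(\theta)/\mathbb{Q}$ with $\theta^n-\theta-1=0$ (Corollaire 2.3 du chapitre 5) realize every degree $n\geq 3$ via the Théorème $M$, with $n=1$ trivial and $n=2$ excluded because quadratic extensions are galoisian (the paper's scholie). Your only addition is to spell out the uniqueness argument showing $M(L/\mathbb{Q})=\mathbb{Q}$, which the paper leaves implicit.
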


\pespace
\noindent{\it Scholie. }Une extension quadratique étant nécessairement galoisienne, $2$
est exclu.

\pespace
Cependant, le corps d'intourabilité des extensions induites par les polynômes 
$\theta^n - \theta -1 = 0$ est toujours égal à ${\mathbb Q}$. Dans cette
section, nous exhibons une classe infinie d'extensions finies où les extensions
quotients galtourables maximales, ainsi que les sous-extensions d'intourabilité
maximales, sont non triviales.

\pespace
Nous utiliserons le Fait élémentaire suivant, dans lequel une racine primitive
$\nu^{\text{ème}}$ de l'unité est notée génériquement $\zeta_\nu \quad ( \nu \in
{\mathbb N} \setminus \{0\})$.

\mespace
\begin{fait} \label{fait:6K}
Pour tout couple d'entiers $(m,n)$ premiers entre eux, on a le parallélogramme
galoisien cyclotomique
$$[{\mathbb Q}, {\mathbb Q}(\zeta_m), {\mathbb Q}(\zeta_{mn}), {\mathbb
Q}(\zeta_n)] \;.$$
\end{fait}

\begin{proof}
Écrivons $m=p_1^{e_1} \, \dots \, p_r^{e_r}$ et $n=q_1^{f_1} \, \dots \,
q_s^{f_s}$ où les $p_i$ et les $q_j$ sont deux à deux étrangers. La
décomposition en facteurs premier de $mn$ est donc $mn=p_1^{e_1} \, \dots \,
p_r^{e_r} q_1^{f_1} \, \dots \, q_s^{f_s}$, et d'après \cite[p.74, Th.2]{La2}
$$\begin{array}{rl}
{\mathbb Q}(\zeta_{mn})	&={\mathbb Q}(\zeta_{p_1^{e_1}}) \dots {\mathbb Q}(\zeta_{p_r^{e_r}})
\, {\mathbb Q}(\zeta_{q_1^{f_1}})  \dots {\mathbb Q}(\zeta_{q_s^{f_s}})\\
			&={\mathbb Q}(\zeta_{m}) \, {\mathbb Q}(\zeta_{n})\;.
\end{array}$$
De plus
$$ {\mathbb Q}(\zeta_{m}) \cap {\mathbb Q}(\zeta_{n})= {\mathbb Q} \qquad
\text{(\cite[p.11]{Wa} ou \cite[p.75]{La2})} \;.$$
D'où la conclusion.
\end{proof}

\mespace
\begin{lem} \label{lem:6L} 
Soient deux entiers $d$ et $n$ tels que 
$$d \geq 2 \; , \quad 4 \nmid d \;, \quad n \geq 1 \;, \quad pgcd(d,n)=1 \;.$$
Notons $E_n:={\mathbb Q} (\zeta_n)$ le $n^{\text{ème}}$ corps cyclotomique sur
$\mathbb Q$. Pour tout nombre premier $l \in \mathbb N$ ne divisant pas $n$ et
tout $\rho \in \mathbb C$ tel que $\rho^d=l$, le polynôme minimal de $\rho$ sur
$E_n$ est $X^d-l$ et $E_n(\rho) / E_n$ est de degré $d$ :
$$Irr(\rho, E_n,X) = X^d-l \; , \quad [E_n(\rho) : E_n]=d \;.$$
\end{lem}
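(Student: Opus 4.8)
This lemma asserts that for coprime integers $d$ and $n$ (with $d\geq 2$, $4\nmid d$), and for a prime $l$ not dividing $n$, the polynomial $X^d-l$ remains irreducible over the cyclotomic field $E_n=\mathbb{Q}(\zeta_n)$. The plan is to apply the irreducibility criterion for $X^d-a$ quoted just before Proposition \ref{prop:5fin} in the excerpt (Lang, Theorem 9.1): $X^d-l$ is irreducible over a field $F$ as soon as $l\notin F^p$ for every prime $p\mid d$, and, when $4\mid d$, also $l\notin -4F^4$. Since the hypothesis $4\nmid d$ is assumed, the second condition is vacuous, so the entire problem reduces to verifying that $l$ is not a $p$-th power in $E_n$ for each prime $p$ dividing $d$.

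\begin{proof}
Let $p$ be any prime divisor of $d$. By the irreducibility criterion (Lang, Theorem 9.1, quoted above), since $4\nmid d$ it suffices to show that $l\notin E_n^{\,p}$ for every such $p$; the irreducibility of $X^d-l$ over $E_n$ and the equality $[E_n(\rho):E_n]=d$ then follow at once, with $Irr(\rho,E_n,X)=X^d-l$.

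First I argue that $p\mid d$ forces $p\nmid n$: indeed $p\mid d$ and $\mathrm{pgcd}(d,n)=1$ give $p\nmid n$. The key step is then to rule out $l=\beta^p$ for some $\beta\in E_n$. Suppose to the contrary that $l=\beta^p$ with $\beta\in E_n$. Taking the norm $N:=N_{E_n/\mathbb{Q}}$ down to $\mathbb{Q}$ yields $N(l)=N(\beta)^p$, i.e. $l^{\varphi(n)}=N(\beta)^p$ with $N(\beta)\in\mathbb{Q}$. A cleaner route, which I expect to be the decisive one, is to localize at the prime $l$. Because $l\nmid n$, the prime $l$ is unramified in $E_n/\mathbb{Q}$; fixing any prime $\mathfrak{l}$ of $E_n$ above $l$, the ramification index is $e(\mathfrak{l}\mid l)=1$, so the normalized $\mathfrak{l}$-adic valuation $v_\mathfrak{l}$ restricts to the ordinary $l$-adic valuation on $\mathbb{Q}$. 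Hence $v_\mathfrak{l}(l)=1$. But $l=\beta^p$ would give $v_\mathfrak{l}(l)=p\,v_\mathfrak{l}(\beta)$, forcing $p\mid 1$, which is absurd since $p\geq 2$.

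Therefore $l\notin E_n^{\,p}$ for every prime $p\mid d$, the hypotheses of the irreducibility criterion are met, and $X^d-l$ is irreducible over $E_n$. Consequently $X^d-l=Irr(\rho,E_n,X)$ and $[E_n(\rho):E_n]=\deg(X^d-l)=d$, as claimed.
\end{proof}

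The main obstacle is the non-$p$-power claim $l\notin E_n^{\,p}$, and the cleanest argument uses that $l$ is unramified in the cyclotomic field $E_n/\mathbb{Q}$ precisely because $l\nmid n$ — a standard fact about cyclotomic fields, consistent with the references (La2, Wa) already cited in the surrounding text. The valuation argument then makes the coprimality hypothesis $\mathrm{pgcd}(d,n)=1$ do exactly its intended work: it guarantees $p\nmid n$, hence unramifiedness and $v_\mathfrak{l}(l)=1$, which is incompatible with $l$ being a nontrivial $p$-th power. The condition $4\nmid d$ is used only to discard the exceptional $-4F^4$ clause of the criterion; everything else is routine valuation bookkeeping that I would not expand further.
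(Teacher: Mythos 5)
Your proof is correct and follows essentially the same route as the paper's: both reduce the problem to Lang's criterion for $X^d-l$ (where $4\nmid d$ kills the exceptional clause) and rule out $l\in E_n^{\,p}$ for each prime $p\mid d$ by exploiting the factorization of $l$ in $\mathbb{Z}[\zeta_n]$ together with the fact that $l\nmid n$ forces $l$ to be unramified in $E_n$. The paper phrases this as "a $p$-th power would force $l$ to ramify, contradiction," while you phrase it as "$v_{\mathfrak{l}}(l)=1$ cannot equal $p\,v_{\mathfrak{l}}(\beta)$," which is the same valuation computation.
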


\mespace
\begin{proof}
Soit $q$ l'un quelconque des nombres premiers divisant $d$ ($\geq 2$).
Raisonnons par l'absurde en supposant que $l \in E_n^q$, i.e. $l= e_n^q$ avec
$e_n \in E_n$. A fortiori pour les idéaux engendrés dans l'anneau ${\mathbb Z}
[\zeta_n]$ des entiers de $E_n$ 
$$l \, {\mathbb Z} [\zeta_n] = (e_n \, {\mathbb
Z}[\zeta_n])^q \;,$$
de sorte que, pour tout idéal premier $\mathcal P$ de ${\mathbb
Z}[\zeta_n]$ 
$$\text{ord}_{\mathcal P}(l \, {\mathbb Z} [\zeta_n]) = q \,\text{ord}_{\mathcal
P}(e_n \, {\mathbb Z} [\zeta_n]) \;,$$
d'où $\text{ord}_{\mathcal P}(e_n \, {\mathbb Z} [\zeta_n]) \geq 0$. L'idéal $e_n
\, {\mathbb Z} [\zeta_n]$ est donc entier, et se décompose en idéaux premiers
dans l'anneau de Dedekind ${\mathbb Z} [\zeta_n]$ 
$$e_n \, {\mathbb Z} [\zeta_n] = {\frak P}_1^{v_1} \dots \, {\frak P}_r^{v_r}
\quad, \qquad v_i > 0 \quad (i=1, \dots , r) \;.$$
On en déduit
$$l \, {\mathbb Z} [\zeta_n] = {\frak P}_1^{q\,v_1} \dots \,{\frak P}_r^{q\,v_r}
\quad, \qquad v_i > 0 \quad (i=1, \dots , r)$$
avec $q \geq 2$. 
Ceci exprime que $l \, {\mathbb Z}$ se ramifie dans $E_n$. D'après
\cite[p.74, Th. 2]{La2}, ceci implique $l \mid n$ : contradiction. C'est donc
que, pour tout $q$ divisant $d$, $l \notin E_n^q$. Comme $4 \nmid d$, on déduit
alors du critère d'irréductibilité rappelé avant la proposition 4.3 
du chapitre 5  que $X^d-l$ est irréductible dans $E_n[X]$. D'où la conclusion. 
\end{proof}

\pespace
\begin{rem}
Une autre démonstration du Lemme \ref{lem:6L} est d'utiliser la galsimplicité de
${\mathbb Q}(\rho) / {\mathbb Q}$ (cf. Chap. 5, Prop. 4.3). 
\end{rem}

\gespace
On en déduit le

\mespace
\begin{fait} \label{fait:6M} 
Dans les notations du lemme \ref{lem:6L}, supposons de plus que $d$ soit impair.
Pour tout entier $\delta \neq d$ divisant $d$ : $\delta \mid d$, on a
$$\mu_p \,\cap \, E_n(\rho^\delta)= \iit$$
quel que soit le nombre premier $p$ divisant $\frac{d}{\delta}$ : $p \mid
\frac{d}{\delta}$. En particulier, pour tout entier $\delta'$, multiple de
$\delta$, distinct de $\delta$ et divisant $d$ : $\delta' \mid d$, l'extension
$(E_n(\rho^\delta)/E_n(\rho^{\delta'}))$ n'est pas galoisienne.
\end{fait}

\mespace
\begin{proof}
Prouvons d'abord le résultat pour $\delta=1$. Raisonnons par l'absurde en
supposant que $\mu_p \,\cap \, E_n(\rho) \neq \iit$. Comme $pgcd(d,n)=1$
(cf. lemme \ref{lem:6L}), $p$ ne divise pas $n$, et par le Fait \ref{fait:6K}, on
a le parallélogramme galoisien 
$$[{\mathbb Q}, {\mathbb Q}(\zeta_n), {\mathbb Q}(\zeta_{pn}), {\mathbb
Q}(\zeta_p)] \quad\text{(cf. F{\tiny IG}. 28)}\;.$$

\begin{figure}[!h]
\begin{center}
\vskip -6mm
\includegraphics[width=7.5cm]{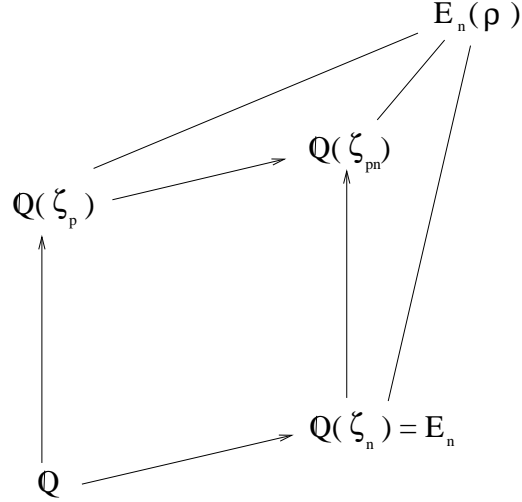}
\end{center}
\vskip -3mm
\rm{\caption{\label{fig:6-3} \leg Parallélogramme $[{\mathbb Q}, {\mathbb Q}(\zeta_n), {\mathbb Q}(\zeta_{pn}), {\mathbb
Q}(\zeta_p)]$}}
\vskip 1mm
\end{figure}

\noindent Or $[E_n(\rho) : E_n]=d$ par lemme \ref{lem:6L} précédent tandis que 
$$[{\mathbb Q}(\zeta_{pn}) : E_n] = [{\mathbb Q}(\zeta_{p}) : {\mathbb Q}] = p-1
\;.$$
Mais alors $p-1$ pair divise $d$ impair : absurde. Ceci prouve l'égalité $\mu_p
\,\cap \, E_n(\rho)= \iit$. Soit maintenant $\delta'$ un entier différent
de $1$ divisant $d$ : $1 \neq \delta' \mid d$. Appliquons le lemme \ref{lem:6L}
à $\frac{d}{\delta'}$ et $\rho^{\delta'}$ : comme
$(\rho^{\delta'})^{\frac{d}{\delta'}}=\rho^d=l$, on a
$$[E_n(\rho^{\delta'}) : E_n] = \frac{d}{\delta'} \;.$$
On déduit alors de $[E_n(\rho^{\delta}) : E_n]=d$, que
$$[E_n(\rho) : E_n(\rho^{\delta'})] = \delta' \;.$$
Il en résulte que le polynôme minimal de
$\rho$ sur $E_n(\rho^{\delta'})$ est :
$$Irr(\rho, E_n(\rho^{\delta'}),X) = X^{\delta'} -
\rho^{\delta'}=\prod_{j=0}^{\delta'-1}(X-\zeta_{\delta'}^j \, \rho) \;.$$
Avoir $E_n(\rho^{\delta}) / E_n(\rho^{\delta'})$ galoisienne impliquerait donc que
$\zeta_{\delta'} \in E_n(\rho^{\delta})$, et a fortiori que $\zeta_p \in
E_n(\rho^{\delta})$ pour tout nombre premier $p$ divisant $\delta' \neq 1$ :
contradiction d'après ce qui précède puisque par définition $\delta'$ divise
$d$. Il est donc prouvé que $E_n(\rho^{\delta}) \,\nongal E_n(\rho^{\delta'})$
pour le cas $\delta=1$.

Soit enfin $\delta$ un entier différent de $d$ divisant $d$ : $d\neq \delta \mid
d$. Posons
$$P:=\rho^{\delta} \;, \quad r:=\frac{d}{\delta} \;, \quad
\delta":=\frac{\delta'}{\delta}\;.$$
Clairement
$$P^r=\rho^d=l \;,\quad P^{\delta"}=\rho^{\delta'} \;.$$
De plus $\delta' \neq \delta$ i.e. $\delta" \neq 1$ et $\delta"$ divise $r$ car
$\delta'$ divise $d$. On peut donc appliquer le résultat précédent pour
$\delta=1$ avec la substitution
$$\left(\begin{array}{ccc}
d	&\rho	&\delta'\\
r	&P	&\delta"
\end{array}\right)$$
qui nous dit que l'extension 
$$(E_n(P) / E_n(P^{\delta"}))=(E_n(\rho^{\delta}) / E_n(\rho^{\delta'}))$$
n'est pas galoisienne.
\end{proof}

\gespace
Nous sommes maintenant en mesure de prouver le
\mespace
\begin{Th} \label{th:6N} 
Pour tous entiers : $d \geq 3$ impair et $n \geq 1$, tels que l'on ait $pgcd(d,n)=1$, le couple $(\varphi(n),d)$ (où $\varphi$ désigne l'indicateur d'Euler) est un degré de tourabilité (Déf. \ref{def:6I}).
\end{Th}

\noindent{\it Scholie .} Cet énoncé sera amélioré au théorème \ref{th:6P} (cf.
infra).

\gespace
\begin{proof}
Comme dans le Fait \ref{fait:6M}, on se place dans les notations du lemme
\ref{lem:6L}. Décomposons $d$ en produit de nombres premiers $p_i$ non
nécessairement distincts : $d={ \prod\limits_{i=0}^{k}}\,p_i$ , et posons
$$\forall m \in \{1, \dots, k+1\} \quad \delta_m:=d / (\prod_{j=0}^{m-1}p_j)\;,
\quad L_m:=E_n(\rho^{\delta_m})\;.$$
Prouvons, par récurrence finie sur $m$, que pour tout entier dans $\{1, \dots,
k+1 \}$ l'extension $L_m / E_n$ est galsimple non galoisienne :
$$\forall m \in \{1, \dots, k+1\} \quad (L_m \galsimple \nongal E_n) \;.$$
Pour $m=1$, on déduit directement du Fait \ref{fait:6M} appliqué avec
$\delta'=d$ que l'extension $L_1:=E_n(\rho^{\delta_1}) / E_n$ n'est pas
galoisienne. De plus, comme $(\rho^{\delta_1})^{p_0}=\rho^d=l$, on sait par le
lemme \ref{lem:6L} que
$$[L_1 : E_n]=p_0 \;,$$
de sorte que $L_1 / E_n$ est une extension simple (Chap. 2, Ex. 1.10.(iii)),
donc galsimple. Supposons maintenant le résultat vrai pour $m \in \{ 1, \dots ,
k \}$ et démontrons
le pour $m + 1$. Par définition
$$\delta_m = p_m \, \delta_{m+1} \;.$$
D'autre part, en vertu du lemme \ref{lem:6L} à nouveau 
$$[L_m = E_n(\rho^{\delta_m}) : E_n] = \frac{d}{\delta_m} \;.$$
On en déduit
$$\quad [L_{m+1} :E_n ] = p_m  \frac{d}{\delta_m} \;,$$
et
$$[L_{m+1} : L_m] = p_m \;.$$
Ceci prouve que l'extension $L_{m+1} / L_m$ est simple, donc galsimple.
Et d'après le Fait \ref{fait:6M}, elle est non galoisienne :
$$L_{m+1} = E_n(\rho^{\delta_{m+1}}) \nongal E_n(\rho^{\delta_m}) = L_m \;.$$
De plus, par l'hypothèse de récurrence, on a $(L_m \galsimple \nongal E_n)$.
D'après la proposition 4.2 du chapitre 5, on en déduit l'extension
galsimple non galoisienne $(L_{m+1} \galsimple \nongal E_n)$. Ceci achève le
raisonnement par récurrence.

En particulier pour $m=k+1$, on a prouvé que l'extension $(E_n(\rho) / E_n)$ 
est galsimple non galoisienne. Finalement, comme l'extension $E_n={\mathbb
Q}(\zeta_n) / {\mathbb Q}$ est galoisienne, donc galtourable, il résulte du
théorème $M$ (Th. \ref{th:6A}) que l'extension $(E_n(\rho) / E_n)$ est de degré
de tourabilité $(\varphi(n),d)$.
\end{proof}

\gespace
Pour généraliser le théorème \ref{th:6N} précédent, rappelons que

\mespace
\begin{prop} \cite[AVII.61, 5)]{Bo2} \label{prop:6O}\\
Soit $G$ un groupe commutatif fini. Pour tout entier $q$ diviseur de l'ordre de
$G$ : $q \, \big|  \mid \! G \!\mid$, il existe un sous-groupe de $G$
d'ordre $q$. 
\end{prop}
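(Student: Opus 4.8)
The final statement is Proposition \ref{prop:6O}: a finite commutative group $G$ has a subgroup of order $q$ for every divisor $q$ of $|G|$. Let me sketch how I would prove this.

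The plan is to proceed by induction on $|G|$, using the structure of finite abelian groups obtained from Cauchy's theorem as the engine. First I would observe that the result is trivial when $q=1$ or $q=|G|$, so we may assume $1 < q < |G|$. Let $p$ be a prime dividing $q$; since $q \mid |G|$, we have $p \mid |G|$ as well. By Cauchy's theorem for abelian groups — which is itself an elementary consequence of induction plus the existence of an element of prime order — there exists an element $x \in G$ of order $p$. Write $H := \langle x \rangle$, a subgroup of order $p$. Because $G$ is commutative, $H$ is automatically normal in $G$, so the quotient $G/H$ is a well-defined commutative group of order $|G|/p$, which is strictly smaller than $|G|$.

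Next I would apply the inductive hypothesis to $G/H$. Since $p \mid q$ and $q \mid |G|$, the integer $q/p$ divides $|G|/p = |G/H|$. By induction, $G/H$ contains a subgroup $\overline{K}$ of order $q/p$. Let $K$ be the preimage of $\overline{K}$ under the canonical projection $\pi \colon G \to G/H$. Then $K$ is a subgroup of $G$ containing $H = \ker\pi$, and by the correspondence theorem together with the counting relation $|K| = |\overline{K}| \cdot |H|$, we obtain
$$|K| = \frac{q}{p} \cdot p = q.$$
This exhibits the desired subgroup of order $q$ and closes the induction.

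The only genuine obstacle is supplying Cauchy's theorem for the prime $p$, i.e. the existence of an element of order $p$ when $p \mid |G|$. In the abelian setting this is cheap: take any nontrivial $g \in G$ of order $n$; if $p \mid n$ then $g^{n/p}$ has order $p$, and otherwise one passes to the quotient $G/\langle g\rangle$ and uses induction on the order to locate an element of order $p$ there, lifting it back. Everything else — normality of subgroups (free from commutativity), the multiplicativity of orders in the extension $1 \to H \to K \to \overline{K} \to 1$, and the correspondence between subgroups of $G/H$ and subgroups of $G$ containing $H$ — is standard and requires no delicate argument. Since this is a classical result cited from Bourbaki, a reference-style proof along these lines is entirely adequate for the present purposes.
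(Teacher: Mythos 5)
Your proof is correct. Note, however, that the paper gives no proof of this proposition at all: it is simply quoted from Bourbaki (\cite[AVII.61, 5)]{Bo2}) as a classical fact and used as a black box in the proof of Th\'eor\`eme 3.6 of chapter 6. Your induction via Cauchy's theorem for abelian groups (pass to $G/\langle x\rangle$ with $x$ of prime order $p\mid q$, find a subgroup of order $q/p$ there, and pull back through the canonical projection) is the standard argument and fills in exactly what the citation delegates to Bourbaki.
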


\gespace
\begin{Th} \label{th:6P}
Pour tous entiers : $d \geq 3$ impair et $n \geq 1$, tels que l'on ait $pgcd(d,n)=1$, le couple $(n,d)$ est un degré de tourabilité.
\end{Th}

\mespace
\begin{proof}
Comme $pgcd(d,n^2)=1$, on sait par le théorème \ref{th:6N} que le couple
$(\varphi(n^2),d)$ est un degré de tourabilité. Précisément, le corps cyclotomique
$E_{n^2}={\mathbb Q}(\zeta_{n^2})$ est le corps d'intourabilité de l'extension
$(E_{n^2}(\rho) / {\mathbb Q})$ où $\rho^d=l$ est un nombre premier ne divisant pas
$n$ (lemme \ref{lem:6L}). Retenons en particulier que
$$(E_{n^2}(\rho) \galsimple \nongal E_{n^2}) \;,\quad [E_{n^2}(\rho) : E_{n^2}]=d
\;.$$
Par ailleurs, il résulte directement de la formule explicitant l'indicateur
d'Euler que $n$ divise $\varphi(n^2)$. Appliquons la proposition \ref{prop:6O} au groupe abélien $Gal(E_{n^2} / {\mathbb Q})$ : comme $n$ divise son ordre
$\varphi(n^2)$, il existe un sous-groupe $H$ d'ordre $\varphi(n^2)/n$ :
$$\mid \! Gal(E_{n^2}/ {\mathbb Q}) \! \mid \,=\, \frac{\varphi(n^2)}{n} \, n \,=\, \mid \! H \! \mid \, n\;.$$
Considérons alors le corps des invariants dans $E_{n^2}$ de $H$ : $F_n :=
E_{n^2}^H$. Par le théorème d'Artin, $Gal(E_{n^2} / F_n)=H$, d'où
$$[F_n : {\mathbb Q}] = \frac{[E_{n^2} : {\mathbb Q}]}{[E_{n^2} : F_n]} =
\frac{n \, \mid H \mid}{\mid H \mid} = n \;;$$
et l'extension $F_n / {\mathbb Q}$ est galtourable (puisqu'elle est abélienne
!). En adjoignant $\rho$ à $F_n$, nous allons prouver que l'extension $F_n(\rho)
/ {\mathbb Q}$ est de degré de tourabilité égal à $(n,d)$. En translatant
l'extension galoisienne $E_{n^2} \gal F_n$ par $F_n(\rho) / F_n$ , nous obtenons
l'extension galoisienne $E_{n^2}(\rho) \gal F_n(\rho)$ avec
$$Gal(E_{n^2}(\rho) / F_n(\rho)) \isomto Gal(E_{n^2} / (F_n(\rho) \cap E_{n^2}))$$
(cf. Chap. 2, Th. 2.1). On en déduit en particulier l'inégalité
$$[E_{n^2}(\rho) : F_n(\rho)] \leq [E_{n^2} : F_n] \;.$$

\begin{figure}[!h]
\begin{center}
\vskip -6mm
\includegraphics[width=6.0cm]{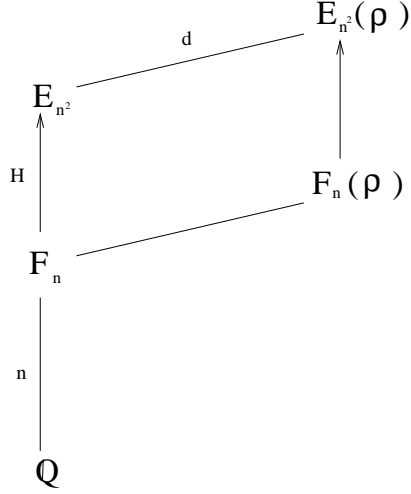}
\end{center}
\vskip -3mm
\rm{\caption{\label{fig:6-4} \leg $(n,d)$ degré de tourabilité}}
\vskip 1mm
\end{figure}

Or
$$[E_{n^2}(\rho) : E_{n^2}][E_{n^2} : F_n] = [E_{n^2}(\rho) : F_n] =
[E_{n^2}(\rho) : F_{n}(\rho)][F_{n}(\rho) :F_n] \;.$$
L'inégalité précédente oblige donc à avoir
$$[E_{n^2}(\rho) : E_{n^2}] \leq [F_{n}(\rho) :F_n] \;.$$
Mais $\rho$ annule le polynôme $X^d-l \in F_n[X]$ ; donc $[F_{n}(\rho) : F_n]
\leq d$. Comme on a rappelé que $[E_{n^2}(\rho) : E_{n^2}]=d$, ceci prouve que
$$[F_{n}(\rho) :F_n] = d\;.$$
Par ailleurs,
$$[E_{n^2}(\rho) : F_{n}(\rho)] = \frac{[E_{n^2}(\rho) : F_{n}]}{[F_{n}(\rho) :
F_n]} = \frac{[E_{n^2}(\rho) : F_{n}]}{d} = \frac{[E_{n^2}(\rho) :
F_{n}]}{[E_{n^2}(\rho) : E_{n^2}]} = [E_{n^2} : F_n] \;.$$
Mais aussi
$$[E_{n^2}(\rho) : F_{n}(\rho)] = [ E_{n^2} : F_n(\rho) \cap E_{n^2}] \;.$$
Il en résulte donc que
$$F_n(\rho) \cap E_{n^2} = F_n \;.$$
Montrons maintenant que l'extension $F_n(\rho) / F_n$ est galsimple. Supposons
qu'il existe un corps intermédiaire $N$ galoisien sur $F_n$ :
$$F_n \unlhd N \leq F_n(\rho) \;.$$
En translatant l'extension galoisienne $N \gal F_n$ par $E_{n^2} / F_n$, on
obtient l'extension galoisienne $N \, E_{n^2} \gal E_{n^2}$ qui est un quotient de
l'extension galsimple non galoisienne $(E_{n^2}(\rho) \galsimple \nongal
E_{n^2})$ (cf. supra). Par conséquent
$$E_{n^2} \unlhd N \, E_{n^2} \leq E_{n^2}(\rho) \quad \implique \quad 
\left\{\begin{array}{c}
 N \, E_{n^2} = E_{n^2}(\rho) \\
\text{ou}\\ 
E_{n^2}=  N \, E_{n^2} 
\end{array}\right.
\;.$$
Mais on ne peut avoir $N \, E_{n^2} = E_{n^2}(\rho)$ puisque $E_{n^2}(\rho)$
n'est pas galoisien sur $E_{n^2}$. C'est donc que $E_{n^2}=  N \, E_{n^2}$, i.e.
$N \leq E_{n^2}$. Ainsi
$$F_n \leq N = N \cap E_{n^2} \leq F_n(\rho) \cap E_{n^2} = F_n \;,$$
d'où $N=F_n$, ce qui établit la galsimplicité de $F_n(\rho) / F_n$. De plus, si
$F_n(\rho) / F_n$ était galoisienne, il en serait de même, par translation avec
$E_{n^2} / F_n$, de l'extension $E_{n^2}(\rho) \nongal E_{n^2}$ : contradiction.
Finalement, $F_n(\rho) / F_n$ est galsimple non galoisienne : $(F_n(\rho)
\galsimple \nongal F_n)$, ce qui prouve que l'extension $F_n(\rho) / {\mathbb Q}$
est degré de tourabilité $(n,d)$.
\end{proof}

\addtocontents{toc}{\mespace\pespace}
\chapter{TOURS D'ÉLÉVATION ET \hskip-17mm { } \newline DISSOCIATION DES EXTENSIONS FINIES}
\addtocontents{lof}{\gespace}
\addtocontents{lof}{\noindent Chapitre \thechapter}
\addtocontents{lof}{\pespace}
\addtocontents{toc}{\pespace}
\gespace
Grâce au théorème $M$, détaillé au chapitre précédent, nous sommes en mesure de
généraliser aux extensions finies quelconques les analogues aux théorèmes de
Schreier et de Jordan-Hölder du chapitre 4 pour les extensions galtourables.
Nous allons voir, grâce à la notion de tour d'élévation, que des définitions
tout à fait canoniques conduisent à des énoncés très similaires, bien que leurs
démonstrations soient différentes.

\gespace
\gespace
\section{Tours d'élévation}
\mespace

\begin{Thdef} \label{thdef:7A}\index{Théorème!de la tour d'élévation}(dit "de la tour d'élévation\footnote{\,Le
dictionnaire Le Robert donne la définition suivante : {\it Élévation.
$\diamond$ Géom. Projection sur un plan vertical parallélement à une des faces
de l'objet.} On peut considérer que c'est ce qu'évoque la figure 30.}") \\
Soit $L/K$ une extension finie quelconque. Toute tour
$$(F) \qquad K=F_0 \leq F_1 \leq \dots \leq F_i \leq \dots \leq F_m=L$$
de $L/K$ induit une tour galtourable (cf. Chap. 2, Déf. 1.5) 
constituée des corps d'intourabilité (Chap. 6, Déf. 1.3) 
sur $K$ de chacun des corps de $(F)$ :
$$\begin{array}{r}
(M)  \quad K=M_0 := M(F_0/K) \lessgtr M_1:= M(F_1/K) \lessgtr \dots \lessgtr
M_i:=M(F_i/K) \lessgtr \dots \\
\dots \lessgtr M_m:=M(F_m/K)=M(L/K) \;.
\end{array}$$

Nous appelons la tour $(M)$ "la tour d'élévation de $M(L/K)/K$ associée à la
tour $(F)$"\index{Tour!d'élévation}, et la notons
$$(M) = ({\mathcal El}[M(L/K) \galtou K, (F)]) \;.$$

Nous disons en abrégé que "$(E)$ est une tour d'élévation de $M(L/K)$" si et
seulement s'il existe une tour $(F)$ de $L/K$ telle que $(E) = ({\mathcal
El}[M(L/K) \galtou K, (F)])$.

\begin{figure}[!h]
\begin{center}
\vskip -1mm
\includegraphics[width=5.4cm]{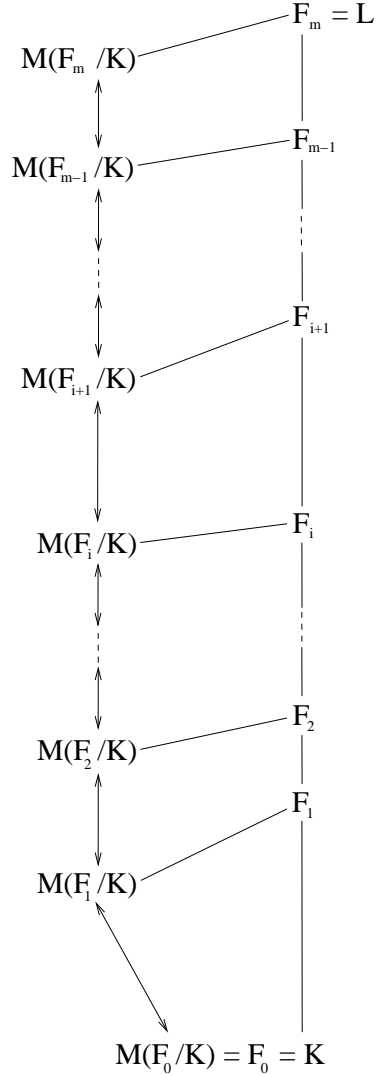}
\end{center}
\vskip -5mm
\rm{\caption{\label{fig:7-1} \leg Tour d'élévation de $M(L/K)$ associée à $(F)$}}
\vskip -3mm
\end{figure}
\end{Thdef}

\mespace
\begin{proof}
D'après la proposition 2.2 du chapitre 6, 
on a
$$M(F_{i-1}/K) \, \leq \, F_{i-1} \cap M(F_i/K) \qquad (i=1,\dots,m)$$
et donc $M_{i-1} \leq M_i \quad (i=1,\dots,m)$.
Or les extensions $M_i/K$ étant galtourables, il en est de même des
sous-extensions $M_i/M_{i-1} \quad (i=1,\dots,m)$ en vertu de la proposition 2.9
du chapitre 2. 
D'où la conclusion.
\end{proof}

Nous voulons ensuite introduire, à partir de la définition précédente, les tours
d'élévation de l'extension $L/K$ elle-même, et plus seulement de son extension
quotient galtourable maximale $M(L/K) \galtou K$. On sait, par le théorème $M$,
que la sous-extension $L/M(L/K)$ peut être ou bien triviale, ou bien galsimple
non galoisienne. Cette alternative pose une difficulté : rajouter
systématiquement le corps $L$ à une tour de $M(L/K) \galtou K$, c'est le répéter
lorsque l'extension $L/K$ est galtourable. Or une telle répétition rend
impossible l'obtention de tours strictes, et par suite de tours de composition.
Nous évitons cet écueil par la définition suivante

\mespace
\begin{defn} \label{def:7B}
Soient $L/K$ une extension et $M$ un corps intermédiaire entre $K$ et $L$ : $K
\leq M \leq L$. Soit de plus 
$$(E) \qquad K=E_0 \leq E_1 \leq \dots \leq E_m=M$$
une tour de $M/K$. Nous appelons "tour de $L/K$ induite par
$(E)$"\index{Tour!induite}, et nous
notons
$$((E) \dasharrow L)$$
la tour de $L/K$ définie de la façon suivante
$$((E) \dasharrow L) := \left\{\begin{array}{ll}
(E)						&\text{si } M=L\\
K=E_0 \leq E_1 \leq \dots \leq E_m=M < L	&\text{si } M\neq L
\end{array}\right. \;.$$
\end{defn}

\gespace
\begin{defn}  \label{def:7C}
Soient $L/K$ une extension finie et
$$(F) \qquad K=F_0 \leq F_1 \leq \dots \leq F_m=L$$
une tour quelconque de $L/K$.

Nous appelons "tour d'élévation de $L/K$ associée à $(F)$"\index{Tour!d'élévation}, et nous notons
$({\mathcal El}[L/K, (F)])$, la tour de $L/K$ induite par la tour d'élévation
associée à $(F)$ de l'extension quotient galtourable maximale de $L/K$ :
$$({\mathcal El}[L/K, (F)]):=(({\mathcal El}[M(L/K) \galtou K, (F)]) \dasharrow
L) \;.$$

Nous disons en abrégé que "$(E)$ est une tour d'élévation de $L/K$", si et
seulement s'il existe une tour $(F)$ de $L/K$ telle que $(E)=({\mathcal El}[L/K,
(F)])$.
\end{defn}

\gespace
\begin{rem} \label{rem:7D}
Les définitions \ref{thdef:7A} et \ref{def:7C} coïncident lorsque l'extension
$L/K$ est galtourable. En particulier, il résulte du Th. \& Déf. \ref{thdef:7A}
que toute tour d'élévation d'une extension $L/K$ galtourable est galtourable.
\end{rem}

\gespace
Tirons des définitions précédentes les résultats directs suivants

\mespace
\begin{fait} \label{fait:7E}
Dans les notations de la définition \ref{def:7B}, on a l'équivalence
$$(E) \text{ stricte} \qquad \SSI \qquad ((E) \dasharrow L) \text{ stricte.}$$
\end{fait}

\mespace
\begin{proof}
Le résultat est évident si $M=L$. Si $M \neq L$, il est immédiat puisque la
dernière marche de $((E) \dasharrow L)$ est toujours non triviale.
\end{proof}

\gespace
\begin{prop} \label{prop:7F}
Soit $L/K$ une extension (galtourable) finie. Pour toute tour galtourable $(T)$
de $L/K$, la tour d'élévation de $L/K$ associée à $(T)$ est égale à $(T)$ :
$$({\mathcal El}[L/K, (T)])=(T) \;.$$
\end{prop}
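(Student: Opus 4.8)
The plan is to reduce the whole statement to the single identity $M(L/K)=L$. Since $L/K$ is galtourable and finite, the corollaire 1.4 du chapitre 6 gives at once
$$M(L/K)=L.$$
First I would record this equality, because it trivialises the outer of the two nested constructions that enter the definition of $({\mathcal El}[L/K,(T)])$.

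Next I would unwind D\'efinition \ref{def:7C}: by definition
$$({\mathcal El}[L/K,(T)])=(({\mathcal El}[M(L/K)\galtou K,(T)])\dasharrow L).$$
The elevation tower ${\mathcal El}[M(L/K)\galtou K,(T)]$ of Th. \& D\'ef. \ref{thdef:7A} is a tour of $M(L/K)/K$, and by the equality above its summit field is $M(L/K)=L$. Hence, in the induced-tower construction of D\'efinition \ref{def:7B}, we are in the case $M=L$, where $(\,\cdot\,\dasharrow L)$ acts as the identity; this is exactly the content of the remarque \ref{rem:7D}. The task therefore reduces to showing
$$({\mathcal El}[M(L/K)\galtou K,(T)])=(T).$$

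The core step is to identify the fields of this elevation tower. Writing $(T)\colon K=T_0\lessgtr\dots\lessgtr T_i\lessgtr\dots\lessgtr T_m=L$, its $i$-th field is by definition $M_i:=M(T_i/K)$, the corps d'intourabilit\'e of $T_i/K$. I would then argue that each quotient $T_i/K$ is itself galtourable: the truncation $K=T_0\lessgtr T_1\lessgtr\dots\lessgtr T_i$ is a tour galtourable of $T_i/K$, since each of its marches is a march of $(T)$ and hence galtourable; by the corollaire 1.10 du chapitre 3 an extension admitting a tour galtourable is galtourable, so $T_i/K$ is galtourable. Applying the corollaire 1.4 du chapitre 6 once more, this time to $T_i/K$, yields $M(T_i/K)=T_i$, that is $M_i=T_i$ for every $i$. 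The two towers then have the same fields in the same positions and so are equal in the sense of the d\'efinition \& convention 1.1.(3) du chapitre 2, which is the claim.

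I expect no genuine mathematical obstacle here; the only care needed is bookkeeping. The crux is the second paragraph's reduction, making sure that $M(L/K)=L$ really renders $(\,\cdot\,\dasharrow L)$ trivial, together with the observation in the third paragraph that galtourability must be invoked for each intermediate field $T_i$ separately, not merely for $L/K$. Once these are in place the result is a clean consequence of the corollaire 1.4 du chapitre 6, the corollaire 1.10 du chapitre 3, and the defining constructions of the elevation and induced towers.
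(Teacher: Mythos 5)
Your proof is correct and follows essentially the same route as the paper: truncate $(T)$ at each index, invoke the corollaire 1.10 du chapitre 3 to get that each $T_i/K$ is galtourable, and conclude $M(T_i/K)=T_i$ by the corollaire 1.4 du chapitre 6. You merely make explicit the step (left implicit in the paper's ``d'o\`u la conclusion'') that $M(L/K)=L$ renders the induced-tower construction $(\,\cdot\,\dasharrow L)$ trivial.
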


\mespace
\begin{proof}
Notons
$$(T) \qquad K=T_0 \lessgtr T_1 \lessgtr \dots \lessgtr T_i \lessgtr \dots
\lessgtr T_m=L$$
une tour galtourable de $L/K$. Considérons les tours ratio (Chap. 3, Déf. 3.1.(2))
$$(T_r) \qquad K=T_0 \lessgtr T_1 \lessgtr \dots \lessgtr T_r \qquad (r=0,
\dots ,m)$$
Ce sont des tours galtourables. D'après le corollaire 1.10 du chapitre 3,
les extensions $T_r/ K \; (r=0,\dots ,m)$ sont donc galtourables. Et l'on
déduit du corollaire 1.4 du chapitre 6 
que 
$$\forall r \in \{0,\dots ,m\} \qquad M(T_r/K)=T_r \;.$$
D'où la conclusion.
\end{proof}

\mespace
\begin{lem} \label{lem:7G}
Soient $L/K$ une extension quelconque et $M$ un corps intermédiaire entre $K$ et
$L$ : $K \leq M \leq L$. Pour tout entier $r$ et toute tour
$$(F) \qquad K=F_0 \leq F_1 \leq \dots \leq F_r=M$$
de $M/K$, on a l'égalité
$$(rat_r ((F) \dasharrow L))=(F) $$
où $(rat_r ((F) \dasharrow L))$ désigne la tour ratio à l'indice $r$ (cf. Chap.
3, Déf. 3.1.(2)) 
de la tour de $L/K$ induite par $(F)$ (Déf. \ref{def:7B}).
\end{lem}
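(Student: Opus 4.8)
The plan is to verify the claimed equality of towers directly from Definition \ref{def:7B} and from the definition of the tour ratio (Def.\ \ref{def:resrat}.(2)), comparing the two towers by means of the equality convention for towers (Chapter 2, Def.\ \& Conv.\ 1.1.(3)). Since the given tower $(F)$ has height $r$ (its fields are indexed $F_0,\dots,F_r$), the argument splits according to the two clauses in the definition of $((F)\dasharrow L)$, namely $M=L$ and $M\neq L$; the whole point is to track the height of $((F)\dasharrow L)$ so that $rat_r$ suppresses exactly the field(s) that $\dasharrow$ may have appended.

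First I would treat the case $M=L$. Here Definition \ref{def:7B} gives $((F)\dasharrow L)=(F)$, a tower of height $r$. The tour ratio at the top index of any tower returns that tower unchanged --- this is the identity $(rat_m(F))=(F)$ recorded in the Fait that immediately precedes this lemma, read with $m=r$. Hence $(rat_r((F)\dasharrow L))=(rat_r(F))=(F)$, as required.

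Next I would treat the case $M\neq L$. Now Definition \ref{def:7B} produces the tower
$$((F)\dasharrow L) \qquad K=F_0\leq F_1\leq\dots\leq F_r=M<L,$$
of height $r+1$; writing its fields as $G_i:=F_i$ for $0\leq i\leq r$ and $G_{r+1}:=L$, the index $r$ is admissible for $rat_r$ because $0\leq r\leq r+1$. By Def.\ \ref{def:resrat}.(2), the tower $rat_r((F)\dasharrow L)$ is obtained by deleting the $(r+1)-r=1$ topmost field $G_{r+1}=L$, so it retains precisely $G_0,\dots,G_r$, that is $F_0,\dots,F_r$. Comparing heights (both equal to $r$) and fields index by index then yields $(rat_r((F)\dasharrow L))=(F)$.

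No genuine obstacle arises: the statement is a bookkeeping verification, and the only care needed is to confirm in each case that $r$ is a legitimate index for the ratio operation and that $rat_r$ removes exactly the field(s) adjoined by $\dasharrow$ --- none when $M=L$, and the single field $L$ when $M\neq L$. This is precisely what makes the tour ratio the right device for recovering $(F)$ from its induced tower $((F)\dasharrow L)$, which is the role the lemma will play in the sequel.
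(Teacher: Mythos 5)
Your proof is correct and follows exactly the paper's route: the case split on $M=L$ versus $M\neq L$, the appeal to $(rat_r(F))=(F)$ via the Fait in the first case, and the observation that $rat_r$ deletes precisely the appended field $L$ in the second. The extra bookkeeping on heights and index admissibility only makes explicit what the paper leaves implicit.
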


\mespace
\begin{proof}
- Si $M=L$, $((F) \dasharrow L)=(F)$ et $(rat_r (F)=(F))$ (cf. Chap. 3, Fait
3.2).\\ 
- Si $M \neq L$, i.e. $M < L$, on a
$$((F) \dasharrow L) \qquad K=F_0 \leq F_1 \leq \dots \leq F_r=M < L\;.$$
Il en découle, par définition d'une tour ratio, que
$$(rat_r ((F) \dasharrow L)) \qquad K=F_0 \leq F_1 \leq \dots \leq F_r=M$$
i.e. $(rat_r ((F) \dasharrow L))=(F)$.
\end{proof}

\gespace
Le lemme précédent nous permet d'énoncer la proposition suivante, dont l'intérêt
est de faire passer d'une tour de $M(L/K) \galtou K$ à une tour de $L/K$.

\mespace
\begin{prop} \label{prop:7H}
Soient $L/K$ une extension finie et $M(L/K)$ son corps d'intourabilité (Chap. 6,
Déf. 1.3). Pour toute tour
$$(E) \qquad K=E_0 \leq E_1 \leq \dots \leq E_i \leq \dots \leq E_{m-1} \leq
E_m=M(L/K)$$
de l'extension quotient galtourable maximale de $L/K$, on a
$$(E)=(rat_m ((E) \dasharrow L))$$
et
$$({\mathcal El}[M(L/K) \galtou K, (inf_L(E))])= ({\mathcal El}[M(L/K) \galtou
K, (E)])$$
où $(inf_L(E))$ désigne la tour inflatée à $L$ de $(E)$ (cf. Chap. 3, Déf.
3.1.(3)).
\end{prop}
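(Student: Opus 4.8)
Le plan est de traiter s\'epar\'ement les deux \'egalit\'es annonc\'ees : la premi\`ere est une application litt\'erale du lemme \ref{lem:7G}, et la seconde un bref calcul explicite des deux tours d'\'el\'evation en jeu, reposant in fine sur le corollaire \ref{cor:6D} du chapitre 6.

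Pour l'\'egalit\'e $(E)=(rat_m ((E) \dasharrow L))$, il suffit d'appliquer le lemme \ref{lem:7G} avec $M:=M(L/K)$, l'entier $r:=m$ et la tour $(F):=(E)$. En effet, $(E)$ est exactement une tour de $M(L/K)/K$ de hauteur $m$ aboutissant \`a $M(L/K)$, ce qui est l'hypoth\`ese de ce lemme, et sa conclusion est mot pour mot l'identit\'e souhait\'ee. Cette premi\`ere assertion ne demande donc aucun travail suppl\'ementaire.

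Pour la seconde \'egalit\'e, on d\'eroule les deux membres gr\^ace \`a la d\'efinition d'une tour d'\'el\'evation (Th. \& D\'ef. \ref{thdef:7A}). D'une part, $(inf_L(E))$ est, par la d\'efinition de la tour inflat\'ee (Chap. 3, D\'ef. 3.1.(3)), la tour de $L/K$
$$(inf_L(E)) \qquad K=E_0 \leq \dots \leq E_{m-1} \leq L$$
qui est bien une tour de $L/K$ puisque $E_{m-1}\leq E_m=M(L/K)\leq L$ ; sa tour d'\'el\'evation associ\'ee a donc pour $i$-\`eme corps $M(E_i/K)$ pour $0\leq i\leq m-1$, et pour corps sommet $M(L/K)$. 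D'autre part, en lisant $(E)$ comme une tour de l'extension $M(L/K)/K$, sa tour d'\'el\'evation associ\'ee a pour $i$-\`eme corps $M(E_i/K)$ pour $0\leq i\leq m-1$, et pour corps sommet $M(E_m/K)=M(M(L/K)/K)$.

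Les deux tours co\"incident ainsi \`a chaque indice $0,\dots,m-1$ par construction, et le seul point \`a v\'erifier est l'\'egalit\'e des corps sommets. C'est ici qu'intervient le corollaire \ref{cor:6D} du chapitre 6, qui donne $M(M(L/K)/K)=M(L/K)$ puisque $M(L/K)/K$ est galtourable (Th\'eor\`eme $M$). Les deux corps sommets valent donc $M(L/K)$, les deux tours d'\'el\'evation ont les m\^emes corps \`a chaque indice, et sont par cons\'equent \'egales (Chap. 2, D\'ef. \& Conv. 1.1.(3)). Le seul obstacle, b\'enin, est de bien interpr\'eter la notation ${\mathcal El}[M(L/K)\galtou K,(E)]$ : il faut y voir $(E)$ comme une tour de l'extension $M(L/K)/K$, et non de $L/K$, le corollaire \ref{cor:6D} garantissant pr\'ecis\'ement que cette relecture ne modifie pas le corps sommet de la tour d'\'el\'evation.
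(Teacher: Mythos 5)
Votre preuve est correcte et suit essentiellement la même démarche que celle du texte : la première égalité par application directe du lemme \ref{lem:7G}, et la seconde en explicitant les deux tours d'élévation via le Th. \& Déf. \ref{thdef:7A} puis en identifiant les corps sommets grâce à $M(M(L/K)/K)=M(L/K)$ (corollaire \ref{cor:6D}). La remarque finale sur la lecture de $(E)$ comme tour de $M(L/K)/K$ explicite un point que le texte laisse implicite, sans changer l'argument.
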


\mespace
\begin{proof}
La première égalité est immédiate en vertu du lemme \ref{lem:7G}. Et par
définition d'une tour inflatée
$$(inf_L(E)) \qquad K=E_0 \leq E_1 \leq \dots \leq E_i \leq \dots \leq E_{m-1} \leq
L \;.$$
Par application directe du Th. \& Déf. \ref{thdef:7A}, on en déduit la tour
d'élévation de $M(L/K) \galtou K$ :
$$\begin{array}{r}
({\mathcal El}[M(L/K) \galtou K, (inf_L(E))]) \qquad K=M(E_0/K) \lessgtr
M(E_1/K) \lessgtr \dots \qquad \qquad \qquad\\
\dots \lessgtr M(E_{m-1}/K) \lessgtr M(L/K) \;.
\end{array}$$
D'autre part, on a vu au corollaire 1.4 du chapitre 6 
que
$$M(M(L/K)/K) = M(L/K) \;.$$
Pour obtenir l'égalité des tours d'élévation de l'énoncé, il suffit alors
d'appliquer à nouveau le Th. \& Déf. \ref{thdef:7A} à la tour $(E)$.
\end{proof}

\gespace
\begin{prop} \label{prop:7I}
Soient $L/K$ une extension finie quelconque et
$$(E) \qquad K=E_0 \leq \dots \leq E_j \leq \dots \leq E_n=L$$
une tour de $L/K$. On a l'équivalence :\\
$(E)$ est une tour d'élévation de $L/K$ si et seulement si $(E)$ est induite par
une tour galtourable de $M(L/K) \galtou K$.
\end{prop}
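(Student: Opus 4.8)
Il s'agit de prouver l'équivalence de la proposition \ref{prop:7I} :
$$
(E) \text{ est une tour d'élévation de } L/K \quad \SSI \quad (E) = ((T) \dasharrow L)
$$
pour une tour galtourable $(T)$ de $M(L/K) \galtou K$. Je prouverais les deux implications séparément, en m'appuyant de façon décisive sur les définitions \ref{def:7B} et \ref{def:7C} ainsi que sur les propositions \ref{prop:7F} et \ref{prop:7H} déjà établies.

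\medskip

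\emph{Sens direct.} Supposons que $(E)$ soit une tour d'élévation de $L/K$. Par la définition \ref{def:7C}, il existe une tour $(F)$ de $L/K$ telle que
$$
(E) = ({\mathcal El}[L/K, (F)]) = (({\mathcal El}[M(L/K) \galtou K, (F)]) \dasharrow L) \;.
$$
Posons $(T) := ({\mathcal El}[M(L/K) \galtou K, (F)])$. D'après le théorème \& définition \ref{thdef:7A}, cette tour $(T)$ est une tour galtourable de $M(L/K) \galtou K$ : c'est exactement la tour des corps d'intourabilité, dont toutes les marches sont galtourables. On a donc bien $(E) = ((T) \dasharrow L)$ où $(T)$ est galtourable, ce qui est la conclusion voulue du sens direct.

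\medskip

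\emph{Sens réciproque.} Supposons maintenant que $(E) = ((T) \dasharrow L)$ pour une certaine tour galtourable
$$
(T) \qquad K=T_0 \lessgtr T_1 \lessgtr \dots \lessgtr T_m=M(L/K)
$$
de $M(L/K) \galtou K$. Il faut exhiber une tour $(F)$ de $L/K$ telle que $({\mathcal El}[L/K, (F)]) = (E)$. L'idée naturelle est de prendre $(F) := (inf_L(T))$, la tour inflatée à $L$ de $(T)$ (définition 3.1.(3) du chapitre 3), ou bien directement $(T)$ vue comme tour de $M(L/K)/K$. Appliquons la proposition \ref{prop:7H} à la tour $(T)$ de l'extension quotient galtourable maximale : elle fournit
$$
({\mathcal El}[M(L/K) \galtou K, (inf_L(T))]) = ({\mathcal El}[M(L/K) \galtou K, (T)]) \;.
$$
Or, puisque $M(M(L/K)/K) = M(L/K)$ (corollaire 1.4 du chapitre 6) et que $(T)$ est galtourable, la proposition \ref{prop:7F} appliquée à l'extension galtourable $M(L/K)/K$ donne $({\mathcal El}[M(L/K) \galtou K, (T)]) = (T)$. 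En induisant alors à $L$ par la définition \ref{def:7C}, on obtient $({\mathcal El}[L/K, (inf_L(T))]) = ((T) \dasharrow L) = (E)$, ce qui prouve que $(E)$ est une tour d'élévation de $L/K$.

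\medskip

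\textbf{L'obstacle principal.} Le point le plus délicat sera de traiter avec rigueur l'alternative du théorème $M$ : selon que $L/M(L/K)$ est triviale ou galsimple non galoisienne, l'opérateur $\dasharrow L$ agit différemment (ajout ou non du corps $L$), et il faut vérifier que la construction $(F) := (inf_L(T))$ redonne exactement $(E)$ \emph{sans répétition parasite} de $L$ dans le cas galtourable. C'est précisément pour éviter cet écueil que la définition \ref{def:7B} a été posée, et le Fait \ref{fait:7E} ainsi que le lemme \ref{lem:7G} ($(rat_r ((F) \dasharrow L)) = (F)$) seront les outils techniques permettant de contrôler la cohérence entre la tour de $M(L/K)/K$ et la tour induite de $L/K$. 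Il conviendra enfin de s'assurer que le choix de la tour $(F)$ redonnant $(E)$ n'est pas unique en général, mais que son existence suffit pour conclure l'équivalence.
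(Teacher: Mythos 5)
Votre démonstration est correcte et suit essentiellement la même voie que celle du texte : sens direct par la définition \ref{def:7C} et le théorème \& définition \ref{thdef:7A}, sens réciproque en posant $(F):=(inf_L(T))$ et en combinant les propositions \ref{prop:7F} et \ref{prop:7H}. L'« obstacle » que vous signalez à la fin est déjà absorbé par ces deux propositions et par la définition \ref{def:7B}, de sorte qu'aucune distinction de cas supplémentaire n'est nécessaire.
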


\mespace
\begin{proof}
Supposons que $(E)$ soit une tour d'élévation de $L/K$. Par la définition
\ref{def:7C}, elle est induite par une tour $(M)$ d'élévation de $M(L/K) \galtou
K$ ; autrement dit il existe une tour $(F)$ de $L/K$ telle que $(M)$ soit la
tour d'élévation de $M(L/K) \galtou K$ associée à $(F)$. Et l'on a vu au Th.
\& Déf. \ref{thdef:7A} que $(M)$ est une tour galtourable.

\indent Inversement, supposons que $(E)$ soit induite par une tour galtourable
$(T)$ de $M(L/K) \galtou K$ : $(E)=((T) \dasharrow L)$. D'après la proposition
\ref{prop:7F}
$$(T) =  ({\mathcal El}[M(L/K)\galtou K, (T)]) \;,$$
et par la proposition \ref{prop:7H}
$$({\mathcal El}[M(L/K) \galtou K, (T)]) = ({\mathcal El}[M(L/K) \galtou K,
(inf_L(T))])\;.$$
Finalement
$$\begin{array}{rl}
(E)	&=({\mathcal El}[M(L/K) \galtou K, (inf_L(T))]) \dasharrow L)\\
\\
	&=({\mathcal El}[L/K, (inf_L(T))])
\end{array}$$
(cf. Déf. \ref{def:7C}).
\end{proof}

\gespace
Prouvons enfin le résultat suivant.
\begin{prop} \label{prop:7J}
Soient $L/K$ une extension finie, et $(F)$ une tour stricte de $L/K$ telle que
la tour d'élévation $({\mathcal El}[L/K, (F)])$ de $L/K$ associée à $(F)$
soit une tour stricte. Alors, tout raffinement galoisien strict de $({\mathcal
El}[L/K, (F)])$ est encore une tour d'élévation de $L/K$.
\end{prop}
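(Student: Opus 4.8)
The plan is to reduce everything to the already-established machinery of Proposition \ref{prop:7I}, which characterizes tours d'élévation of $L/K$ as exactly those tours induced by galtourable tours of $M(L/K) \galtou K$. Given a strict galois refinement $(R)$ of the tour d'élévation $(E) := ({\mathcal El}[L/K, (F)])$, I want to exhibit a galtourable tour $(T')$ of $M(L/K) \galtou K$ such that $(R) = ((T') \dasharrow L)$; by Proposition \ref{prop:7I} this will certify that $(R)$ is a tour d'élévation of $L/K$.

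\textbf{Key steps.} First I would separate the two cases of the Théorème $M$ (Th. \ref{th:6A}) governing the top extension $L/M(L/K)$. Write $M := M(L/K)$. Since $(E)$ is a tour d'élévation, by Definition \ref{def:7C} it is $((T) \dasharrow L)$ for some galtourable tour $(T)$ of $M \galtou K$, and the last corps appearing before $L$ is $M$ itself. The crucial structural observation is that $M$ occurs as a corps of the strict tour $(E)$: either $M = L$ (the galtourable case, where $(E) = (T)$ is already galtourable and Remark \ref{rem:7D} applies), or $M \neq L$, in which case the final marche $L/M$ is galsimple non galoisienne. Second, in the case $M \neq L$, I would locate the index $r$ with $E_r = M$ and split $(R)$ at the position $j_r$ lying above $M$, using the restriction/ratio apparatus of Section~3 (Prop. \ref{prop:raffinementresrat} and its galois analogue Prop. \ref{prop:ragresrat}). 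The ratio part $(rat_{j_r}(R))$ is a refinement of $(rat_r(E)) = (T)$ (by Lemma \ref{lem:7G}, $(rat_r((T)\dasharrow L)) = (T)$), and being a galois refinement of the galtourable tour $(T)$ it is itself galtourable by Proposition du chapitre 3 asserting that a galois refinement of a galtourable tour is galtourable. Set $(T') := (rat_{j_r}(R))$.

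\textbf{The decisive point.} The main obstacle is controlling the restricted part $(res_{j_r}(R))$, i.e. what $(R)$ does strictly above $M$. Here I would invoke the galsimplicité non galoisienne of $L/M$: since $M/M$ to $L$ admits no proper galois quotient, the condition (RAFG) defining a galois refinement forces that no genuinely new corps can be inserted strictly between $M$ and $L$ that is galois over its predecessor — any such corps $R_k$ distinct from all $F_i$ would, by (RAFG), give $R_{k-1} \unlhd R_k$ with $M \le R_{k-1} < R_k \le L$, contradicting the galsimplicité of $L/M$ (via Prop. \ref{prop:quotientgalsimple}, a proper galois quotient of a galsimple extension cannot exist). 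Hence the only corps $(R)$ can contain above $M$ are $M$ and $L$, so $(res_{j_r}(R))$ reduces to the single marche $M < L$, which is exactly the inflation step. This shows $(R) = ((T') \dasharrow L)$ with $(T')$ galtourable, and Proposition \ref{prop:7I} concludes. I expect the careful verification that the galsimplicité argument correctly rules out all intermediate corps — matching the strictness of $(R)$ against the no-proper-galois-quotient property — to be the technically delicate part, requiring attention to the convention distinguishing $M = L$ from $M \neq L$ exactly as in Definition \ref{def:7B}.
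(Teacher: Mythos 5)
Your proposal follows essentially the same route as the paper: the same case split on $M(L/K)=L$ versus $M(L/K)<L$, the same use of the $rat/res$ machinery to show $(rat_{j_r}(R))$ is a galois refinement of the galtourable tour $(rat_r(E))$ hence galtourable, and the same appeal to the galsimplicit\'e of $L/M(L/K)$ together with (RAFG) to exclude any new corps strictly between $M(L/K)$ and $L$, concluding via la proposition \ref{prop:7I}. The only point to tighten is the one you yourself flag as delicate: the contradiction with la galsimplicit\'e requires that the predecessor of the offending corps be $M(L/K)$ itself (galsimplicit\'e ne dit rien des extensions $R_k/R_{k-1}$ avec $R_{k-1}>M(L/K)$), so you must take the \emph{first} new corps above $M(L/K)$ --- exactly ce que fait le texte en consid\'erant $R_{j_m+1}$ avec $R_{j_m}=M(L/K)$.
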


\mespace
\begin{proof}
Distinguons deux cas.\\
\noindent (1)  $M(L/K)=L$. Il résulte directement des définitions \ref{def:7C}
\& \ref{def:7B} que la tour d'élévation de $L/K$ associée à $(F)$ est une tour
galtourable de $L/K$. Par la proposition 1.8 du chapitre 3, 
tout raffinement galoisien de celle-ci est encore une tour galtourable de $L/K$
; et donc une tour d'élévation de $L/K$ en vertu de la proposition
\ref{prop:7F}.

\noindent (2) $M(L/K) < L$. Pour
$$(F) \qquad K=F_0 < \dots < F_i < \dots < F_m=L \;,$$
on a dans ce cas
$$\begin{array}{r}({\mathcal El}[L/K,(F)])  \qquad K=M_0 := M(F_0/K) \lessgtr \dots \lessgtr M_i:=M(F_i/K) \lessgtr \dots \qquad \qquad\qquad \\
\dots \lessgtr M_m:=M(L/K) < M_{m+1}:=L\;.
\end{array}$$
Soit $(R)$ un raffinement galoisien strict de $({\mathcal El}[L/K,(F)])$.
D'après la définition 1.3.(4) et la remarque 1.2.(2) du chapitre 3, il s'écrit
$$\begin{array}{r}
(R) \qquad K=R_0 < \dots < R_{j_1}=M_1 < \dots <R_j < \dots < R_{j_i}=M_i <
\dots \qquad \qquad\qquad\\
\dots < R_{j_m}=M_m < \dots < R_{j_{m+1}}=L
\end{array}$$
où 
$$0=j_0 < j_1 < \dots < j_m < j_{m+1}\;.$$
D'autre part, il résulte de la proposition 3.6.(1) du chapitre 3 que la tour
ratio de $(R)$ à l'indice $j_m$ (Chap. 3, Déf. 3.1.(2))
$$\begin{array}{r}
(rat_{j_m}(R)) \qquad K=R_0 < \dots < R_{j_1}=M_1 < \dots <R_j < \dots \qquad \qquad\qquad \\
\dots < R_{j_i}=M_i < \dots < R_{j_m}=M_m=M(L/K) \end{array}$$
est un raffinement galoisien de la tour
$$(rat_m({\mathcal El}[L/K, (F)])) \qquad K=M_0 \lessgtr \dots \lessgtr
M_i \lessgtr \dots \lessgtr M_m=M(L/K) \;.$$
Cette dernière étant galtourable, on déduit de la proposition 1.8 du chapitre 3
que $(rat_{j_m}(R))$ est aussi une tour galtourable de $M(L/K) \galtou K$. En
vertu de la proposition \ref{prop:7I} précédente, il suffit maintenant pour
conclure de prouver que la tour $(R)$ est induite par la tour galtourable
$(rat_{j_m}(R))$. Raisonnons par l'absurde en supposant que cela ne soit pas le
cas. Comme par définition on a l'implication
$$j_m+1=j_{m+1} \quad \implique \quad (R)=((rat_{j_m}(R)) \dasharrow L) \;,$$
cela signifierait que
$$j_m+1 \neq j_{m+1} \quad \implique \quad j_m+1 < j_{m+1}  \;;$$
et le raffinement $(R)$ étant strict,
$$M(L/K)=R_{j_m} < R_{j_m+1} < R_{j_{m+1}} =L \;.$$
Il est de plus galoisien ; donc par la condition (RAFG) de la définition 1.3 du
chapitre 3, 
on en déduirait que $R_{j_m+1}$ est galoisien sur $R_{j_m}$. Mais avoir
$$M(L/K) \lhd R_{j_m+1} < L$$
contredit la galsimplicité de $L/M(L/K)$ (cf. Théorème $M$ du chapitre 6).
\end{proof}

\gespace
\gespace
\section{Tour de composition et Théorèmes de dissociation}
\gespace
Nous avons introduit au chapitre 4 la notion de "tour de composition
galoisienne" d'une extension galtourable. Nous allons maintenant définir sa
généralisation à n'importe quelle extension finie.

\gespace
\begin{defn} \label{def:7K}
Soit $L/K$ une extension finie quelconque. Nous appelons "tour de composition de
$L/K$"\index{Tour!de composition} toute tour d'élévation de $L/K$ stricte qui n'admet aucun raffinement
galoisien propre.
\end{defn}

\gespace
Cette notion de tour de composition pour n'importe quelle extension finie
généralise celle de tour de composition galoisienne pour les extensions
galtourables (Chap. 4, Sect. 1). 
En effet :

\mespace
\begin{lem} \label{lem:7L}
Soit $L \galtou K$ une extension galtourable finie. Toute tour de composition
galoisienne de $L/K$ est une tour de composition de $L/K$ au sens de la
définition \ref{def:7K} précédente.
\end{lem}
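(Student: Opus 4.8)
The plan is to check the definition of a composition tower (Def. \ref{def:7K}) clause by clause against what a Galois composition tower (Def. \ref{def:tourgalcomp}) already provides. A composition tower of $L/K$ in the sense of Def. \ref{def:7K} is, by definition, a strict elevation tower of $L/K$ admitting no proper Galois refinement. Now let $(C)$ be a Galois composition tower of $L/K$. By Def. \ref{def:tourgalcomp}, $(C)$ is already strict and already admits no proper Galois refinement, so two of the three requirements hold verbatim. The entire content of the lemma therefore reduces to the single point that $(C)$ is an \emph{elevation tower} of $L/K$.

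First I would observe that $(C)$ is a galtourable tower of $L/K$. Indeed, every step of $(C)$ is a Galois extension, and a Galois extension is galtourable (scholie (2) following Def. \ref{def:galtourable}); hence all steps of $(C)$ are galtourable, which is exactly the definition of a galtourable tower (Def. \ref{def:tourgaltourable}). Since $L/K$ is finite and galtourable, Proposition \ref{prop:7F} applies to the galtourable tour $(C)$ and yields $({\mathcal El}[L/K, (C)]) = (C)$.

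Then I would invoke the definition of an elevation tower (Def. \ref{def:7C}): $(C)$ is an elevation tower of $L/K$ precisely when there exists a tour $(F)$ of $L/K$ with $(C) = ({\mathcal El}[L/K, (F)])$. Taking $(F) := (C)$, which is legitimate since $(C)$ is itself a tour of $L/K$, the equality just obtained shows that $(C)$ is an elevation tower of $L/K$. Combined with the two conditions inherited directly from Def. \ref{def:tourgalcomp}, all three clauses of Def. \ref{def:7K} are satisfied, so $(C)$ is a composition tower of $L/K$.

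The argument is essentially a matter of bookkeeping, so I do not expect a genuine obstacle; the only places to be careful are verifying that a Galois tower does qualify as a galtourable tower so that Proposition \ref{prop:7F} genuinely applies, and confirming that the adjectives \textbf{strict} and \textbf{admitting no proper Galois refinement} denote literally the same conditions in both definitions (they do), so that nothing further needs to be established for them.
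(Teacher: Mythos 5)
Your proof is correct and follows exactly the paper's own argument: reduce the statement to showing that a Galois composition tower is an elevation tower, note that it is a fortiori a galtourable tower, and conclude via Proposition \ref{prop:7F} that it equals its own elevation tower. The extra bookkeeping you include (explicitly matching the strictness and no-proper-Galois-refinement clauses) is implicit in the paper's one-line proof but adds nothing different in substance.
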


\mespace
\begin{proof}
Soit $(T)$ une tour de composition galoisienne de $L/K$ (Chap. 4, Déf. 1.1.(1)).
Il suffit de montrer que $(T)$ est une tour d'élévation de $L/K$. Or $(T)$ est a
fortiori une tour galtourable ; donc par la proposition \ref{prop:7F},
$(T)=({\mathcal El}[L/K, (T)])$.
\end{proof}

\gespace
\begin{prop} \label{prop:7M}
Soit $L \galtou K$ une extension galtourable finie. L'ensemble non-vide des
tours de composition galoisiennes de $L/K$ est égal à l'ensemble des tours de
composition de $L/K$ au sens de la définition \ref{def:7K}.
\end{prop}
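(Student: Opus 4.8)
The plan is to prove three things: that the set of Galois composition towers of $L/K$ is non-empty, and that each of the two sets is contained in the other. The non-emptiness is immediate: since $L/K$ is galtourable and finite, the $2^{\text{�me}}$ th�or�me de dissociation (Th. \ref{th:2diss}) guarantees that it admits a Galois composition tower.

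For the inclusion of the Galois composition towers into the composition towers of D�finition \ref{def:7K}, nothing new is needed: this is exactly the content of Lemme \ref{lem:7L}, which I would simply invoke.

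The substance lies in the reverse inclusion. First I would use that $L/K$ is galtourable finite to get $M(L/K)=L$ by le corollaire \ref{cor:6D}; then la remarque \ref{rem:7D} shows that every elevation tower of $L/K$ is a galtourable tower of $L/K$. So if $(C)$ is a composition tower of $L/K$ in the sense of D�finition \ref{def:7K}, it is in particular a \emph{strict galtourable} tower of $L/K$ admitting no proper Galois refinement. The goal then reduces to showing that such a $(C)$ is in fact a Galois tower, since it is already strict and without proper Galois refinement, hence would qualify as a Galois composition tower. To this end I would apply the strict version of la proposition \ref{prop:ragtourgaltou} (Chap.~3): every strict galtourable tower admits a Galois refinement that is itself a strict Galois tower; call it $(T)$. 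Because $(C)$ admits no proper Galois refinement, $(T)$ must be a trivial (non-proper) refinement of $(C)$. Both $(C)$ and $(T)$ being strict, the elementary \emph{Fait} closing Section~2 of Chapter~3 (a strict trivial refinement of a strict tower is the tower itself) forces $(T)=(C)$. Hence $(C)=(T)$ is a strict Galois tower, and having no proper Galois refinement it is a Galois composition tower; this gives the reverse inclusion and completes the equality.

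The main obstacle --- really the only non-bookkeeping point --- is the passage from ``galtourable'' to ``Galois'' for the steps of $(C)$: it is essential that the refinement supplied by la proposition \ref{prop:ragtourgaltou} can be taken \emph{strict}, so that the no-proper-refinement hypothesis, combined with strictness, pins it down to $(C)$ itself rather than to some tower sharing merely the same underlying set of intermediate fields.
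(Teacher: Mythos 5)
Votre d�monstration est correcte et suit essentiellement la m�me d�marche que celle du texte : non-vacuit� par le deuxi�me th�or�me de dissociation, une inclusion par le lemme \ref{lem:7L}, et pour l'inclusion r�ciproque la r�duction d'une tour de composition $(C)$ (stricte, galtourable, sans raffinement galoisien propre) � une tour galoisienne via un raffinement galoisien forc� d'�tre trivial. La seule variante est de d�tail : vous invoquez la version stricte de la proposition \ref{prop:ragtourgaltou} puis le Fait final de la section 2 du chapitre 3 pour obtenir directement $(T)=(C)$, l� o� le texte prend un raffinement $(R)$ non n�cessairement strict, identifie $(C)=(R_<)$ par l'unicit� de la tour stricte associ�e, et conclut par le corollaire \ref{cor:tourgalstricte} ; les deux cheminements sont �quivalents.
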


\mespace
\begin{proof}
L'ensemble des tours de composition galoisiennes de $L/K$ est non-vide en vertu du second théorème de dissociation (Chap. 4, Th. 4.2). 
Et par le lemme \ref{lem:7L}, il est inclus dans celui des tours de composition
de $L/K$ au sens de la définition ci-dessus.

Prouvons l'autre inclusion en considérant une tour de composition $(C)$ de $L/K$
comme définie au \ref{def:7K}. C'est en particulier une tour d'élévation de $L/K$,
et elle est galtourable parce que $L/K$ l'est (remarque \ref{rem:7D}). Selon la
proposition 1.9 du chapitre 3, 
elle admet donc un raffinement galoisien $(R)$ qui est une tour galoi\-sienne. Or,
en tant que tour de composition, $(C)$ n'admet pas de raffinement galoisien
propre. Ceci établit que le raffinement $(R)$ de $(C)$ est trivial (Chap. 3,
Déf. 1.3.(2)). 
De plus, $(C)$ est stricte en tant que tour de composition. Par définition d'une
tour stricte associée (Chap. 3, Prop. \& Déf. 2.1), 
on en déduit que $(C)=(R_<)$. Comme $(R)$ est une tour galoisienne, le
corollaire 2.6 du chapitre 3 
assure finalement que $(C)$ est une tour de composition galoisienne.
\end{proof}

\gespace
Nous avons donné une caractérisation des tours d'élévation (Prop. \ref{prop:7I}) ; nous sommes maintenant en mesure de faire de même pour les tours de composition.

\mespace
\begin{prop} \label{prop:7N}
Soient $L/K$ une extension finie et
$$(C) \qquad K=C_0 \leq \dots \leq C_i \leq \dots \leq C_m=L$$
une tour de $L/K$. On a l'équivalence :\\
$(C)$ est une tour de composition si et seulement si elle est induite par une
tour de composition galoisienne de l'extension quotient galtourable maximale de
$L/K$.
\end{prop}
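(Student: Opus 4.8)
Le plan est de me ramener à l'extension quotient galtourable maximale $M(L/K) \galtou K$ et d'exploiter la caractérisation de la proposition \ref{prop:7I}, selon laquelle les tours d'élévation de $L/K$ sont exactement les tours induites par les tours galtourables de $M(L/K) \galtou K$. Posons $M:=M(L/K)$. Je traiterais d'abord à part le cas galtourable $M=L$ : on a alors $((T) \dasharrow L)=(T)$ pour toute tour $(T)$ de $M/K$, et l'énoncé revient à dire que les tours de composition de $L/K$ (définition \ref{def:7K}) coïncident avec les tours de composition galoisiennes de $L \galtou K$, ce qui est précisément la proposition \ref{prop:7M}. Reste le cas $M \neq L$, dans lequel le théorème $M$ (chapitre 6) fournit la galsimplicité non galoisienne de $L/M$. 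Dans ce cas, pour $(C)$ tour d'élévation stricte, la proposition \ref{prop:7I} donne $(C)=((T) \dasharrow L)$ pour une tour galtourable $(T)$ de $M \galtou K$, à savoir $(T)=(rat_p(C))$ où $p$ est l'indice du corps $M$ dans $(C)$ (proposition \ref{prop:7H}), le fait \ref{fait:7E} assurant que $(T)$ est stricte. Comme $rat_p(C)=(T)$ et $res_p(C)=(M<L)$, l'outil décisif sera la découpe ratio/restreinte galoisienne propre (proposition 3.7 du chapitre 3).

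Pour le sens réciproque, je supposerais $(T)$ de composition galoisienne. Son caractère strict et le fait \ref{fait:7E} rendent $(C)$ stricte ; sa galoisianité, donc sa galtourabilité, fait de $(C)$ une tour d'élévation par \ref{prop:7I}. Il reste à écarter tout raffinement galoisien propre $(R)$ de $(C)$ : par la proposition 3.7.(1) du chapitre 3, ou bien $(rat_{j_p}(R))$ est un raffinement galoisien propre de $rat_p(C)=(T)$, ce qui contredit que $(T)$ soit de composition, ou bien $(res_{j_p}(R))$ est un raffinement galoisien propre de $res_p(C)=(M<L)$. Dans ce dernier cas, en considérant le premier corps $R_{k_0}$ de $(res_{j_p}(R))$ distinct de $M$ et de $L$, la croissance de la tour force $R_{k_0-1}=M$, et la condition (RAFG) donne $M \lhd R_{k_0}$ avec $M < R_{k_0} < L$, contredisant la galsimplicité non galoisienne de $L/M$. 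Ainsi $(C)$ est bien une tour de composition.

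Pour le sens direct, je supposerais $(C)$ de composition. Le fait \ref{fait:7E} donne $(T)$ stricte. Si $(T)$ admettait un raffinement galoisien propre $(R_T)$, la proposition 3.7.(2) du chapitre 3 appliquée avec $(S)=res_p(C)$ (raffinement identité, donc galoisien trivial) et $(R)=(R_T)$ produirait un raffinement galoisien propre de $(C)$, absurde ; donc $(T)$ n'admet aucun raffinement galoisien propre. Étant stricte et galtourable, $(T)$ admet un raffinement galoisien qui est une tour galoisienne stricte (proposition 2.9 du chapitre 3) ; faute de raffinement propre ce raffinement est trivial, donc égal à $(T)$ par le fait 2.10 du chapitre 3, ce qui montre que $(T)$ est galoisienne. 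Ainsi $(T)$ est stricte, galoisienne, sans raffinement galoisien propre : c'est une tour de composition galoisienne de $M(L/K) \galtou K$, et $(C)=((T) \dasharrow L)$ en est bien induite.

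Le principal obstacle sera le contrôle fin de la condition (RAFG) dans la partie restreinte $res_p(C)=(M<L)$ : il faudra localiser précisément le premier nouveau corps et vérifier qu'il est galoisien sur $M$ pour pouvoir invoquer la galsimplicité de $L/M$ ; tout le reste n'est qu'une application soigneuse des propositions de découpe ratio/restreinte du chapitre 3 et de la caractérisation \ref{prop:7I}, en gardant à l'esprit que la stricture de $(C)$ (comme tour de composition) garantit que $M$ n'y figure qu'une fois, d'où l'unicité de $(T)=(rat_p(C))$.
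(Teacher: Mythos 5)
Votre démonstration est correcte et suit pour l'essentiel la même route que celle du texte : identification de $(T)=(rat_r(C))$ via le lemme \ref{lem:7G} et la proposition \ref{prop:7I}, découpe ratio/restreinte par la proposition 3.7 du chapitre 3 dans les deux sens, et contradiction avec la galsimplicité non galoisienne de $L/M(L/K)$ (théorème $M$) pour écarter tout nouveau corps dans la marche $M(L/K) < L$. Les seules variantes sont cosmétiques : vous isolez d'emblée le cas $M(L/K)=L$ via la proposition \ref{prop:7M} là où le texte le traite dans l'analyse de cas de la tour restreinte, et vous explicitez (par la proposition 2.9 et le fait 2.10 du chapitre 3) pourquoi $(T)$ est galoisienne, point que le texte laisse à la charge de la proposition \ref{prop:7M}.
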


\mespace
\begin{proof}
Supposons que $(C)$ soit une tour de composition de $L/K$. C'est une tour
d'élévation de $L/K$, donc par la proposition \ref{prop:7I}, elle est induite
par une tour galtourable $(T)$ de $M(L/K) \galtou K$. Montrons que $(T)$ est en
fait une tour de composition galoisienne de $M(L/K) \galtou K$. D'après le Fait
\ref{fait:7E}, elle est stricte. Notons $r$ la hauteur de $(T)$. D'après le
lemme \ref{lem:7G}, 
$$(T)=(rat_r((T) \dasharrow L))=(rat_r(C)) \;.$$
Raisonnons par l'absurde en supposant qu'il existe un raffinement galoisien
propre $(R)$ de $(T)$. D'après la proposition 3.7.(2) du chapitre 3, 
$(res_r(C))$ et $(R)$ induisent un raffinement galoisien propre de $(C)$ :
contradiction, puisque $(C)$ n'en admet pas.
Finalement, $(T)$ est une tour stricte sans aucun raffinement galoisien propre.
C'est de plus une tour d'élévation d'après la proposition \ref{prop:7F},
puisqu'elle est galtourable. C'est donc bien une tour de composition de $L/K$.

Réciproquement, supposons que $(C)$ soit induite par une tour de composition
galoisienne $(T)$ de $M(L/K) \galtou K$.
D'après le Fait \ref{fait:7E}, $(C)$ est stricte, tandis que par la proposition
\ref{prop:7I}, c'est une tour d'élévation de $L/K$. Soit $r$ la hauteur de
$(T)$. Raisonnons par l'absurde en supposant l'existence d'un raffinement
galoisien propre de $(C)$. Comme $(T)$ n'admet pas de raffinement galoisien
propre, on déduit alors du (1) de la proposition 3.7 du chapitre 3 
qu'il existe un raffinement galoisien propre de la tour resteinte $(res_r(C))$.
Or celle-ci est :\\
- Ou bien la tour triviale (Chap. 2, Déf. \& Conv. 1.1.(1))
$$(res_r(C)) \qquad M(L/K)=F_0=L$$
lorsque $L/K$ est galtourable. Mais cette tour triviale est une tour de
composition galoisienne (Chap. 4, Fait 1.2) 
et n'admet donc aucun raffinement galoisien propre.\\
- Ou bien la tour stricte
$$(res_r(C)) \qquad M(L/K) < L \;.$$
Notons
$$(R) \qquad M(L/K)=R_0 \leq \dots \leq R_j \leq \dots \leq R_n=L$$
son raffinement galoisien susmentionné. Par définition d'un raffinement propre
(Chap. 3, Déf. \& Conv. 1.1.(2)), 
l'ensemble $\{j \in \{1, \dots , n-1\} \mid R_0 < R_j < R_n \}$ est non
vide, et l'on peut considérer son plus petit élément $j_0$. Alors, d'une part
la minimalité de $j_0$ implique que $R_{j_0-1}=M(L/K)$, d'autre part la condition
(RAFG) de définition d'un raffinement galoisien (Chap. 3, Déf. 1.3.(4)) conduit
à 
$$M(L/K) = R_{j_0-1} \lhd R_{j_0} < R_n =L \;.$$
Mais ceci contredit la galsimplicité de $L/M(L/K)$ (Chap. 6, Th.
1.1).\\
\indent L'existence d'un raffinement galoisien propre de $(C)$ est donc
impossible, ce qui finit de prouver que $(C)$ est une tour de composition de
$L/K$ au sens de la définition \ref{def:7K}.
\end{proof}

\gespace
Nous allons enfin pouvoir prouver les derniers théorèmes de dissociation qui
sont les analogues pour les extensions finies quelconques des $1^{\text{er}}$ et
$3^{\text{ème}}$ théorèmes de dissociation du chapitre 4 pour les extensions
galtourables. Mais nous n'avons jusqu'ici défini l'équivalence de deux tours
d'une même extension que lorsque ces tours sont galoisiennes. La définition
suivante de l'équivalence de deux tours induites implique en particulier celle
de l'équivalence de deux tours de composition non galoisiennes en vertu de la
proposition
\ref{prop:7N} précédente.

\mespace
\begin{defn} \label{def:7O}
Soient $L/K$ une extension finie quelconque, $(T)$ et $(T')$ deux tours
galoisiennes de l'extension quotient galtourable maximale $M(L/K) \galtou K$ de
$L/K$. Nous disons que les tours induites de $L/K$ par $(T)$ et $(T')$ sont
équivalentes\index{Tour!equivalente@équivalente} si et seulement si les tours galoisiennes $(T)$ et $(T')$ le sont
au sens de la définition 1.1.(2) du chapitre 4 :
$$((T) \dasharrow L) \, \sim \, ((T') \dasharrow L) \quad
\stackrel{\text{Déf.}}{\SSI} \quad (T) \sim (T') \;.$$
\end{defn}

\gespace
\begin{Th} ($5^{\text{ème}}$ théorème de
dissociation)\label{th:7P}\index{Dissociation!$5^{\text{ème}}$ Théorème (de)}\index{Théorème!de dissociation}\\
Deux tours d'élévation d'une même extension finie quelconque admettent des
raffinements galoisiens qui sont des tours d'élévation équivalentes de cette
extension.
\end{Th}

\mespace
\begin{proof}
Soient $L/K$ une extension finie et $(E^1)$, $(E^2)$ deux tours d'élévation de
$L/K$. Lorsque $L/K$ est galtourable, il en est de même de $(E^1)$ et $(E^2)$
par définition ; donc il suffit d'utiliser le $1^{\text{er}}$ théorème de
dissociation bis (Chap. 4, Th. 3.4) 
avec la proposition \ref{prop:7F}.

\indent Supposons désormais $L/K$ non galtourable, i.e. telle que $M(L/K) < L$
(Chap. 6, Cor. 1.4). 
Les tours d'élévation $(E^1)$ et $(E^2)$ sont respectivement induites par des
tours galtourables $(T^1)$ et $(T^2)$ de l'extension quotient galtourable
maximale $M(L/K) \galtou K$ (cf. Déf. \ref{def:7C} et Th. \& Déf.
\ref{thdef:7A}). 
En notant $r_i \quad (i=1,2)$ la hauteur de la tour
$$(T^i) \qquad K=T_0^i \lessgtr \dots \lessgtr T_{r_i}^i=M(L/K) \;,$$
on a par le lemmme \ref{lem:7G}
$$(T^i) = (rat_{r_i}(E^i)) \;,$$
et donc
$$(E^i) \qquad K=E^i_0=T^i_0 \lessgtr \dots \lessgtr E^i_{r_i}=T^i_{r_i}=M(L/K)
< E^i_{r_i+1}=L \;.$$

Ceci permet d'appliquer le (2) de la proposition 3.6 du chapitre 3.\\ 
- D'une part, la tour restreinte $(res_{r_i}(E^i))$ est un raffinement galoisien
d'elle-même en vertu du (2) de la remarque 1.4 du chapitre 3.\\ 
- D'autre part, on sait par le théorème 3.4 du chapitre 4 que $(T^1)$ et
$(T^2)$ admettent des raffinements galoisiens $(T'^1)$ et $(T'^2)$ qui sont des
tours galoisiennes équivalentes.

On déduit alors de la proposition 3.6 précitée 
qu'il existe un raffinement galoisien $(E'^i)$ de $(E^i)$ tel que
$$(res_{j_{r_i}}(E'^i))=(res_{r_i}(E^i)) \quad \text{et} \quad
(rat_{j_{r_i}}(E'^i))=(T'^i) \;.$$
Par conséquent :
$$(E'^i) \quad K=E'^i_0 \unlhd \dots \unlhd E'^i_{j_0}=E^i_0 \unlhd \dots
\unlhd E'^i_{j_{r_i}}=E^i_{r_i}=M(L/K) < E'^i_{j_{r_i}+1}=E^i_{r_i+1}=L \;.$$
En particulier $(E'^i)$ est la tour de $L/K$ induite par $(T'^i)$
$$(E'^i)=((T'^i) \dasharrow L) \qquad (i=1,2) \;.$$
Comme $(T'^1)$ et $(T'^2)$ sont des tours galoisiennes, a fortiori galtourables, de
$M(L/K) \galtou K$, on déduit de la proposition \ref{prop:7I} que les
raffinements galoisiens $(E'^1)$ et $(E'^2)$ sont des tours d'élévation de
$L/K$. Enfin, elles sont équivalences au sens de la définition \ref{def:7O}
puisqu'il en est ainsi de $(T'^1)$ et $(T'^2)$.
\end{proof}

\gespace
\begin{Th} ($6^{\text{ème}}$ théorème de
dissociation)\label{th:7Q}\index{Dissociation!$6^{\text{ème}}$ Théorème (de)}\index{Théorème!de dissociation}\\
Soit $L/K$ une extension finie quelconque.
\pespace
\noindent (1) Toute tour d'élévation stricte de $L/K$ admet un raffinement galoisien qui
est une tour de composition de $L/K$.
\pespace
\noindent (2) Deux tours de composition de $L/K$ sont équivalentes.
\end{Th}

\mespace
\begin{proof}
(1) Si $L/K$ est galtourable, i.e. si $L=M(L/K)$ (Chap. 6, Cor. 1.4), 
toute tour d'élévation de $L/K$ est galtourable puisque
$$({\mathcal El}[L/K, (F)]) = ({\mathcal El}[M(L/K) \galtou K, (F)]) \quad
\text{(Th. \& Déf. \ref{thdef:7A}).}$$
D'après le $3^{\text{ème}}$ théorème de dissociation bis (Chap. 4, Th. 4.4.(1)),
une tour d'élévation stricte de $L/K$ admet donc un raffinement qui est une tour
de composition galoisienne de $L/K$. Par le Fait 1.5.(2) du chapitre 3,
ce raffinement est galoisien, et c'est une tour de composition de $L/K$ en vertu
de la proposition \ref{prop:7M}.

Plaçons-nous maintenant dans le cas nouveau où $M(L/K) < L$. Soit $(E)$ une tour
d'élévation stricte de $L/K$. Elle est induite par une tour d'élévation $(T)$ de
$M(L/K) \galtou K$ (Déf. \ref{def:7C}) qui est stricte d'après le Fait
\ref{fait:7E} du présent chapitre, et galtourable (Th. \& Déf. \ref{thdef:7A}) :
$$(T) \qquad K=T_0 \lessgtr \dots \lessgtr T_i \lessgtr \dots \lessgtr
T_r=M(L/K) \;;$$
d'où 
$$\begin{array}{r}
(E)=((T) \dasharrow L) \qquad K=E_0:=T_0 \lessgtr \dots \lessgtr E_i:=T_i \lessgtr \dots \qquad\qquad\qquad\\
\dots \lessgtr E_r:=T_r=M(L/K) < E_{r+1}:=L \;.
\end{array}$$
En particulier, par définition (Chap. 3, Déf. 3.1), 
$$(T)=(rat_r(E))$$
et
$$(res_r(E)) \qquad E_r=M(L/K) < E_{r+1}=L \;.$$
Nous allons raffiner les tours $(rat_r(E))$ et $(res_r(E))$. D'une part, il est
clair que $(res_r(E))$ est un raffinement galoisien de lui-même (Chap. 3,
remarques 1.4.(2)). 
D'autre part $(T)$ étant une tour galtourable stricte, on déduit du (1) du
théorème 4.4 du chapitre 4 
que $(rat_r(E))$ admet un raffinement galoisien $(C)$ qui est une tour de
composition galoisienne de $M(L/K) \galtou K$. Il résulte alors du (2) de la
proposition 3.6 du chapitre 3 
qu'il existe un raffinement galoisien $(E')$ de $(E)$ tel que l'on ait à la
fois
$$(res_{j_r}(E'))=(res_{r}(E)) \quad , \quad
(rat_{j_r}(E'))=(C) \;.$$
Par conséquent, la tour $(E')$ s'écrit
$$\begin{array}{r}
(E') \qquad K=E'_0=C_0 \lhd \dots \lhd E'_{j_i}=C_{j_i}=E_i \lhd \dots \lhd
E'_j=C_j \lhd \dots \qquad\qquad\qquad\\
\dots \lhd E'_{j_r}=C_{j_r}=E_r=M(L/K) < E'_{j_r+1}=E_{r+1}=L \;.
\end{array}$$
On constate en particulier que la tour $(E')$ est induite par la tour de
composition galoisienne $(C)$ de $M(L/K) \galtou K$ :
$$(E') = ((C) \dasharrow L) \;.$$
En vertu de la proposition \ref{prop:7N}, $(E')$ est donc une tour de
composition de $L/K$.
\end{proof}

\addtocontents{toc}{\gespace\gespace\gespace}
\bibliographystyle{plain}
\bibliography{plouplou}
\listoffigures
\printindex
\tableofcontents
\end{document}